\theoremstyle{plain}
\newtheorem{theorem}{Theorem}[section]
\newtheorem{corollary}[theorem]{Corollary}
\newtheorem{lemma}[theorem]{Lemma}
\newtheorem{proposition}[theorem]{Proposition}
\newtheorem{computation}[theorem]{Computation}
\newtheorem*{claim*}{Claim}
\newtheorem*{problem*}{Problem}
\newtheorem*{conjecture*}{Conjecture}
\theoremstyle{definition}
\newtheorem{definition}[theorem]{Definition}
\newtheorem{problem}[theorem]{Problem}
\newtheorem{question}[theorem]{Question}
\newcommand\al{\alpha}
\newcommand\bt{\beta}
\newcommand\gm{\gamma}
\newcommand\dl{\delta}
\newcommand\lm{\lambda}
\newcommand\sg{\sigma}
\newcommand\cF{\mathcal{F}}
\newcommand\cH{\mathcal{H}}
\newcommand\cJ{\mathcal{J}}
\newcommand\cM{\mathcal{M}}
\newcommand\F{\mathbb{F}}
\newcommand\N{\mathbb{N}}
\newcommand\Q{\mathbb{Q}}
\newcommand\R{\mathbb{R}}
\newcommand{\A}{\mathrm{A}}
\newcommand{\B}{\mathrm{B}}
\newcommand{\C}{\mathrm{C}}
\newcommand\la{\langle}
\newcommand\ra{\rangle}
\newcommand\lla{\langle\!\langle}
\newcommand\rra{\rangle\!\rangle}
\DeclareMathOperator{\Aut}{Aut}
\DeclareMathOperator{\Inn}{\mathrm{Inn}}
\DeclareMathOperator{\End}{\mathrm{End}}
\DeclareMathOperator{\Ann}{\mathrm{Ann}}
\DeclareMathOperator{\Miy}{\mathrm{Miy}}
\newcommand\ad{\mathrm{ad}}
\newcommand\one{\mathbbm{1}}
\newcommand{\Id}{\mathrm{Id}}
\newcommand{\ie}{i.e.\ }
\newcommand{\eg}{e.g.\ }
\setlist[enumerate,1]{label={\upshape (\alph*)}}
\setlist[enumerate,2]{label={\upshape (\roman*)}}
\title{Automorphism groups of axial algebras}
\author{I.B.~Gorshkov\footnote{Sobolev Institute of Mathematics, Novosibirsk, Russia, email: ilygor8@gmail.com},
J.~M\textsuperscript{c}Inroy\footnote{Department of Physical, Mathematical and Engineering Sciences, University of Chester, Exton Park, Parkgate Rd, Chester, CH1 4BJ, UK, and School of Mathematics, University of Bristol, Fry Building, Woodland Road, Bristol, BS8 1UG, UK, email: j.mcinroy@chester.ac.uk}, 
T.M.~Mudziiri~Shumba\footnote{Sobolev Institute of Mathematics, Novosibirsk, Russia, email: tendshumba@gmail.com}, \\ and 
S.~Shpectorov\footnote{School of Mathematics, University of Birmingham, Edgbaston, Birmingham, B15 2TT, UK, email: s.shpectorov@bham.ac.uk}}
\begin{document}
\maketitle

\begin{abstract}
Axial algebras are a class of commutative non-associative algebras which have a natural group of automorphisms, called the Miyamoto group.  The motivating example is the Griess algebra which has the Monster sporadic simple group as its Miyamoto group.  Previously, using an expansion algorithm, about 200 examples of axial algebras in the same class as the Griess algebra have been constructed in dimensions up to about 300.  In this list, we see many reoccurring dimensions which suggests that there may be some unexpected isomorphisms.  Such isomorphisms can be found when the full automorphism groups of the algebras are known.  Hence, in this paper, we develop methods for computing the full automorphism groups of axial algebras and apply them to a number of examples of dimensions up to $151$.
\end{abstract}

\section{Introduction}

Axial algebras are a class of commutative non-associative algebras which typically have large finite automorphism groups.  They are generated by special idempotents called axes.  These axes satisfy axioms generalising some key properties observed in the Griess algebra, which has the Monster sporadic simple group as its automorphism group.  However, they encompass a much richer class of algebras than just the Griess algebra, with a variety of different automorphism groups.  Recently, axial behaviour has been observed in algebras arising in various areas of mathematics, including vertex operator algebras, non-linear PDEs \cite{T}, and flows on manifolds \cite{F,F1}.

The main focus in the area of axial algebras is currently on two specific classes of algebras, of Jordan and Monster type, which are characterised by the fusion laws given in Figure \ref{fig:fusion laws}. 
\begin{figure}[h]
\setlength{\tabcolsep}{4pt}
\renewcommand{\arraystretch}{1.4}
\centering
	\begin{minipage}[t]{0.35\linewidth}
		\begin{tabular}{|c||c|c|c|}
        \hline
		$\star$ & $1$ &$0$&$\eta$\\
		\hline  \hline
		$1$&$1$& &$\eta$\\
		\hline
		$0$& &$0$&$\eta$\\
		\hline
		$\eta$&$\eta$&$\eta$&$1,0$\\
        \hline
		\end{tabular}
	\end{minipage}
	\begin{minipage}[t]{0.25\linewidth}
		\begin{tabular}{|c||c|c|c|c|}
        \hline
		$\star$ & $1$ &$0$&$\alpha$& $\beta$\\
		\hline \hline
		$1$&$1$& &$\alpha$& $\beta$\\
		\hline
		$0$& &$0$&$\alpha$& $\beta$\\
		\hline
		$\alpha$&$\alpha$&$\alpha$&$1,0$&$\beta$\\
		\hline
		$\beta$&$\beta$&$\beta$&$\beta$&$1,0,\alpha$\\
        \hline
		\end{tabular}
	\end{minipage}
\caption{The $\cJ(\eta)$ and $\cM(\alpha,\beta)$ fusion laws.}
\label{fig:fusion laws}
\end{figure}

The fusion law limits the possible eigenvalues of the adjoint action of an axis and also restricts the multiplication of eigenvectors.  When a fusion law is $C_2$-graded (as the two above are), such algebras $A$ are inherently related to groups: every axis $a\in A$ defines an involution $\tau_a\in\Aut(A)$, called the \emph{Miyamoto} (or tau) involution. Tau involutions for all axes generate a subgroup of $\Aut(A)$ called the \emph{Miyamoto group} $\Miy(A)$ of $A$. Therefore, axial algebras of Jordan and Monster type always admit substantial automorphism groups.

This is in fact no surprise, because the axioms of axial algebras were modelled after specific 
examples related to groups. Idempotents in Jordan algebras, admitting classical groups and 
$G_2$, satisfy the Peirce decomposition \cite{M}, which is nothing but the fusion law 
$\cJ(\frac{1}{2})$, so algebras of Jordan type are simply a generalisation of Jordan 
algebras.\footnote{Jordan algebras are axial only when they are generated by idempotents. This 
excludes, for example, all nilpotent Jordan algebras.} The Monster fusion law first arose, 
with $\al=\frac{1}{4}$ and $\bt=\frac{1}{32}$, in the Griess algebra, whose automorphism group 
is the Monster sporadic simple group. The work around the Moonshine Conjecture in the last two 
decades of the 20th century led to the introduction of the class of vertex operator algebras 
(VOAs) and construction of the key example of the Moonshine VOA $V^\natural$, whose weight-2 
component is the Griess algebra. Further work on VOAs by Miyamoto \cite{Mi} led to the concept 
of an Ising vector in the weight-2 component of an OZ VOA and the corresponding tau 
involution. The attempt by Miyamoto and, especially, Sakuma \cite{Mi1,S} to classify VOAs 
generated by two Ising vectors led Ivanov \cite{I} to introduce the class of Majorana 
algebras, which are the direct predecessors of axial algebras and, specifically, algebras of 
Monster type. Note that algebras of Jordan type are a subclass of the algebras of Monster type 
arising when one of the higher (\ie $\al$, or $\bt$) eigenvalues disappears. For a survey on 
algebras of Jordan and Monster type see \cite{MS1}.

While the smaller class of algebras of Jordan type is close to being fully understood and 
classified, we are far from that for algebras of Monster type. Much of the recent work 
focussed on finding new examples of such algebras. This includes building examples 
for specific groups by computer. Seress \cite{Se} developed an algorithm in the computer 
algebra system GAP \cite{GAP} for computing Majorana algebras for concrete small groups. 
He was able to construct algebras for quite a few groups, the largest of which was the algebra 
of dimension 286 for the Mathieu sporadic simple group $M_{11}$. The limitations of this 
program were that it only worked with $2$-closed algebras and utilised the full set of axioms 
of Majorana algebras. After Seress' passing, his program was restored and improved by Pfeiffer 
and Whybrow \cite{PW}. In particular, the program can now handle some $3$-closed algebras, too.

Another approach, based on expansions, was pioneered by Shpectorov and implemented in GAP by 
Rehren. This was developed significantly by M\textsuperscript{c}Inroy and Shpectorov \cite{axialconstruction, 
MS} and ported into MAGMA \cite{ParAxlAlg, MAGMA}. The expansion program does not require the 
algebra to be $n$-closed for any $n$ and it does not use any of the axioms specific for 
Majorana algebras. In total, we currently have approximately $200$ individual algebras 
computed in the most interesting case of Monster type $(\frac{1}{4},\frac{1}{32})$, where the 
Griess algebra occurs. These algebras are classified in terms of their Miyamoto group, but 
also in terms of their axets (roughly speaking the set of axes) and shapes (see Section \ref{background} for the precise definitions).

This extensive zoo of examples is the starting point of this paper. We want to achieve a 
better understanding of them and find patterns that suggest where individual examples may 
be members of infinite series. In particular, looking at the tables of examples, one 
immediately notices that the dimensions of the algebras tend to repeat, which suggests that 
some of the examples may be isomorphic. Note that there currently are no available practical 
methods of finding such isomorphisms, which motivates the following. 

\begin{problem}
Develop computationally efficient tools to determine whether two axial algebras are isomorphic.
\end{problem}

The same algebra may indeed arise several times with different axets and Miyamoto groups. In 
such a case, there would be one largest realisation of the algebra, involving all possible 
axes in it and the full automorphism group. Note that this full realisation may be unknown for 
the algebra, especially if it is big. So we have the following natural question.


\begin{problem}
Find the full automorphism group of an axial algebra.
\end{problem}

We note that the general problem of finding the automorphism group of an algebra or finding 
an isomorphism between two algebras is very difficult.  In particular, there is no reason a 
priori, why the automorphism group cannot be infinite.  To alleviate this in the axial case, 
we show the following.

\begin{theorem}
Let $A$ be a finite-dimensional axial algebra over a field $\F$ of characteristic not two with fusion law $\cF \subseteq \F$.  If $\frac{1}{2} \notin \cF$, then $\Aut(A)$ is finite.
\end{theorem}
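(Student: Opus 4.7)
The plan is to show that the set of all $\cF$-axes in $A$ is finite, and that $\Aut(A)$ acts faithfully on this finite set. Let $I = \{x \in A : x \cdot x = x\}$, which is an affine algebraic subvariety of $A$ cut out by the polynomial equation $x^2 = x$. Every axis is an idempotent, so the set $\cH$ of $\cF$-axes of $A$ is contained in $I(\F)$.

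First, I would compute the tangent space at a point $a \in I$. Differentiating $x \cdot x - x = 0$ in a direction $v \in A$, and using commutativity, gives $2 a \cdot v - v = 0$, so
\[
T_a I \;=\; \ker\!\bigl(\ad_a - \tfrac{1}{2}\Id\bigr).
\]
If $a \in \cH$, then by definition of axis, $\ad_a$ acts semisimply on $A$ with all eigenvalues in $\cF$; since $\tfrac{1}{2} \notin \cF$, we conclude $T_a I = 0$. Thus every axis is an isolated point of $I$.

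Next I would extract finiteness of $\cH$. Base-changing to the algebraic closure $\bar\F$, the variety $I \otimes \bar\F$ of idempotents of $A \otimes \bar\F$ is Noetherian and therefore has only finitely many irreducible components; among these, only finitely many are zero-dimensional. The tangent-space computation is preserved under base change (the spectrum of $\ad_a$ is unchanged), so each $a \in \cH$ remains isolated in $I(\bar\F)$ and therefore corresponds to one of these zero-dimensional components. Hence $\cH$ is finite.

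Finally, since $A$ is an axial algebra it is generated by axes, so the finite set $\cH$ in particular generates $A$; and any $\varphi \in \Aut(A)$ sends an $\cF$-axis to an $\cF$-axis, because each condition in the definition (idempotent, primitive, semisimple spectrum in $\cF$, $\cF$-graded products of eigenspaces) is algebraically invariant. The restriction map $\Aut(A) \to \mathrm{Sym}(\cH)$ is therefore well-defined and injective, giving $|\Aut(A)| \le |\cH|! < \infty$. The main subtlety I expect is the algebro-geometric passage from ``every axis has trivial tangent space in $I$'' to ``there are only finitely many axes''; the cleanest route is base change to $\bar\F$, where the isolated-points-of-a-Noetherian-variety argument is standard, and then observing that the tangent-space obstruction is insensitive to the extension.
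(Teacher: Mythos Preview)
Your argument is correct, and the core computation---that the condition $x^2=x$ linearised at an axis $a$ forces a tangent vector to lie in the $\tfrac12$-eigenspace of $\ad_a$---is exactly the same algebraic heart as in the paper. However, the two proofs package this observation in genuinely different geometric frameworks. The paper regards $\Aut(A)$ itself as an affine algebraic group over $\bar\F$, passes to its Lie algebra $L$ (which acts on $A$ by derivations), and shows that any derivation $d$ kills every axis because $d(x)=d(x^2)=2x\,d(x)$ forces $d(x)\in A_{1/2}(x)=0$; since $\ker d$ is a subalgebra containing all axes, $d=0$, so $L=0$ and $\Aut(A)$ is zero-dimensional, hence finite. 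You instead work with the idempotent variety $I\subseteq A$, show each axis is an isolated point of $I$, and conclude finiteness of the set of axes from the Noetherian property; faithfulness of the action of $\Aut(A)$ on this finite generating set then finishes.

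The paper's route is slightly slicker once one is willing to invoke the fact (Humphreys, \S10.7) that the Lie algebra of $\Aut(A)$ acts by derivations: after that, the argument is two lines. Your route trades that piece of algebraic-group theory for elementary scheme-theoretic facts (Zariski tangent space bounds local dimension; a Noetherian space has finitely many irreducible components), which some readers may find more self-contained. Your version also yields as a by-product that the set of all $\cF$-axes is finite, a fact the paper uses implicitly throughout the later computational sections but never isolates as a corollary of this theorem.
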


It is known that some Jordan algebras, which are axial algebras of Jordan type $\frac{1}{2}$, have infinite automorphism groups.  However, if $\frac{1}{2}$ is in $\cF$, then the automorphism group could still be finite and we give an example of such an algebra in Section \ref{example}.  We also develop further methods in Section \ref{Finiteness} for showing that $\Aut(A)$ is finite even if $\frac{1}{2} \in \cF$ and pose some open questions.

Even if the automorphism group is finite, finding it in general is a difficult problem.  Since 
an axial algebra is generated by axes, it follows that the automorphism group must act 
faithfully on the full set of axes.  So to find the automorphism group, we can instead try to 
find the full set of axes.

\begin{problem}
Find the full set of axes for an axial algebra.
\end{problem}

As axes are idempotents with special properties, one can try to find idempotents in a na\"ive 
way by solving a system of quadratic equations, often using Gr\"obner basis methods.  For a 
finite dimensional algebra, by B\'ezout's Theorem, one should expect there to be $2^{\dim(A)}$ 
solutions, or even infinitely many, when the idempotent variety is of positive dimension.  So, 
this approach may only work for low-dimensional algebra, up to the dimension of about 10.  
Furthermore, once the idempotents have been found, there is a further computation to determine 
which of them are, in fact, axes.  

In Section \ref{eigenvalue-0}, we reduce the problem of finding all axes in an axial algebra 
$A$ to a calculation in the $0$-eigenspace $A_0(a)$ of a known axis $a$.  This $A_0(a)$ is a 
subalgebra and it typically has a significantly smaller dimension. Using the classification of 
$2$-generated axial algebras of Monster type $\left(\frac{1}{4},\frac{1}{32}\right)$, we 
describe a more nuanced algorithm which works well for algebras up to dimension $35$ without 
any need for human input.

We then further generalise the idea of reducing to a smaller subalgebra. Instead of searching 
in the $0$-eigenspace $A_0(a)$ with respect to a single axis, we now do it relative to a set 
$Y=\{a_1,a_2,\ldots a_k\}$ of axes such that each $\tau_i=\tau_{a_i}$ fixes all axes in $Y$. 
Consequently, all involutions $\tau_i$ commute, and so $E=\la \tau_{a_i}\ra_{i\in 
\{1,2,\ldots,k\}}$ is an elementary abelian $2$-subgroup of $\Miy(A)$. For a tuple $(\lm_1,\lm_2,\ldots,\lm_k)\in\cF^k$ we define the components 
$A_{(\lm_1,\lm_2,\ldots,\lm_k)}(Y):=\bigcap_{i=1}^k A_{\lm_i}(a_i)$. If the fusion law $\cF$ 
is Seress, then the joint $0$-eigenspace $U:=A_{(0,0,\ldots ,0)}(Y)$ is a subalgebra, and 
every other component is a $U$-module.

Once we have found the automorphism group of this even smaller subalgebra $U$, we can try to 
extend automorphisms of $U$ to other components $A_{(\lm_1,\lm_2,\ldots,\lm_k)}(Y)$. Each of 
these calculations is now a system of linear, rather than quadratic equations and hence easily 
solvable.  Typically, if the component is small, the solution is a choice of scalar, however 
the solution set may also have higher dimension. By comparing the extensions for different 
components, we find further restrictions which can be used to find the exact scalars and hence 
find the full extension to $\Aut(A)$, if it exists.

We started this project with a very limited goal of finding the automorphism groups of the 
algebras for the symmetric group $S_4$ in dimensions from $6$ to $25$.  These can be computed 
using our nuanced algorithm. Using the more powerful component method and combining computer 
calculations with group-theoretic proofs, we can complete much larger examples, 
the largest currently being an algebra of dimension $151$ for the group $2\times\Aut(A_6)$. 
We expect that eventually, after additional work, we will be able to finish all known 
algebras including the $M_{11}$ example of dimension 
$286$. We summarise the current results in Table \ref{results}. Each algebra is identified by 
the initial Miyamoto group $G$, the initial axet, dimension, and the shape. We follow 
\cite{axialconstruction} concerning the shape notation. Where we found new axes, they are 
shown in the next column, organised in $G$-orbits. The final column contains the full 
automorphism group of $A$. As you can see, we found quite a few cases where the full 
automorphism group is bigger, although in all cases it is still a natural extension of the 
initial group $G$.

\begin{table}[ht!]
\centering
\renewcommand{\arraystretch}{1.5}
\begin{tabular}{cccccc}\hline\hline
$G$&axet&dim&shape&new&$\mathrm{Aut}(A)$\\
\hline\hline
$S_4$&6&13&3A2B&&$S_4$\\
&6&6&3C2B&&$S_4$\\
&$3+6$&23&4A3A2A&$1+6$&$C_2\times S_4$\\
&$3+6$&25&4A3A2B&&$S_4$\\
&$3+6$&12&4A3C2B&&$S_4$\\
&$3+6$&13&4B3A2A&&$S_4$\\
&$3+6$&16&4B3A2B&3&$S_4$\\
&$3+6$&9&4B3C2A&&$S_4$\\
&$3+6$&12&4B3C2B&3&$S_4$\\
\hline
$A_5$&15&26&3A2A&&$S_5$\\
&15&46&3A2B&&$S_5$\\
&15&20&3C2A&&$S_5$\\
&15&21&3C2B&&$S_5$\\
\hline
$S_5$&$10$&10&3C2B&&$S_5$\\
&$10+15$&61&4A&$1+10$&$C_2\times S_5$\\
&$10+15$&36&4B&&$S_5$\\
\hline
$L_3(2)$&21&57&4A3C&&$\Aut(L_3(2))$\\
&21&49&4B3A&&$\Aut(L_3(2))$\\
&21&21&4B3C&&$\Aut(L_3(2))$\\
\hline
$A_6$&45&121&4A3A3A&&$\Aut(A_6)$\\
\hline
$S_6$&15&15&3C2B&&$S_6$\\
&$15+45$&151&4A4A3A2A&$1+15+15+15$&$C_2\times \Aut(A_6)$\\
\hline
$S_3\times S_3$&$3+3+9$&18&3A2A&&$S_3\wr C_2$\\
&$3+3+9$&25&3A2B&$1+3+3$&$C_2\times (S_3\wr C_2)$\\
\hline
$(S_4\times S_3)^+$&$3+18$&24&3C3C3C2A&&$C_3:S_3: S_4$\\
&$3+18$&27&3C3C3C2B&3&$C_3:S_3:S_4$\\
\hline
$S_7$&21&21&3C2B&&$S_7$\\
\hline\hline
\end{tabular}
\caption{Results}\label{results}
\end{table}

In exploring some of the examples given in Table \ref{results}, we notice that some 
algebras contain \emph{twin} axes, which are axes $a$ and $b$ such that $\tau_a=\tau_b$. We 
show how these can be found in a computationally efficient way for algebras of Monster type 
$(\al,\bt)$ by solving mostly linear equations.

Closely related to the notion of twins is the existence of Jordan axes. These are the axes 
which satisfy the tighter Jordan fusion law $\mathcal{J}(\al)$, that is, their 
$\bt$-eigenspaces are trivial. As a result, their tau involutions are trivial. However, as 
the Jordan type fusion law is also graded, there is a different involution, called a 
\emph{sigma involution}, associated to each Jordan axis. We show that, 
at least in the special case $(\al,\bt)=\left(\frac{1}{4},\frac{1}{32}\right)$, all Jordan 
axes are contained in the fixed subalgebra $A_G$, where $G$ is the Miyamoto group of $A$. 
Clearly, this subalgebra is typically very small, and so all Jordan axes can be found in an 
efficient way. The action of the sigma involution of a Jordan axis naturally leads to the 
existence of twins in the algebra. In all the cases where we found Jordan axes, the algebra 
has a unique Jordan axis.

\begin{problem}
Do there exist axial algebras of Monster type with more than one Jordan axis?
\end{problem}

Such examples can be easily constructed in direct sums of algebras, so we are of course only 
interested in indecomposible (\ie connected) examples.  Additionally, algebras of Jordan type are also algebras of Monster type, so we also ignore these.

The paper is organised as follows. In Section \ref{background}, we provide the necessary 
background information about axial algebras, including the key definitions and some of the 
relevant results.  Finiteness of the automorphism group is discussed in Section 
\ref{Finiteness} and, in Section \ref{example}, we give an example where $\al=\frac{1}{2}$, 
but the automorphism group of the 
algebra is still finite.  In Section \ref{Monster algebras}, we show that subalgebras of unital Frobenius algebras are also unital.  This is used in Section \ref{naive} together with the na\"ive method to compute the axes and automorphism groups for the algebras for $S_4$.  Our nuanced method is developed in Section \ref{eigenvalue-0} and we use it to compute larger examples.  In Section \ref{twins and Jordan axes}, twin axes and Jordan axes are discussed.  Then, in Sections \ref{decomposition} and \ref{partial}, we give our much more advanced component method and discuss how to extend isomorphisms.

The remainder of the paper contains a sequence of examples of increasingly large dimension 
that we can handle using this approach. In Section \ref{larger}, we tackle two algebras 
for the group $S_5$ of dimensions $46$ and $61$. Unlike the naive and nuanced algorithms, 
the more general approach involves a combination of computer calculation and human input. 
In particular, the final argument identifying $\Aut(A)$ uses group-theoretic tools analysing 
the possible structure of $\Aut(A)$, based on the results of earlier calculations.  In Section \ref{further}, we complete two algebras for the group $\Aut(L_3(2))\cong PGL(2,7)$ of dimensions $49$ and $57$. Finally, in Section \ref{largest}, we do the two largest examples, the algebras of dimension $121$ and $151$ for $S_6$. 

To summarise, we demonstrate in this later part of the paper a much more powerful method, 
which we believe allows us to handle all or at least the majority of algebras that are 
currently known. There are some patterns in our calculations and arguments that suggest that 
some of the human input can be automated. However, even after that, one would need to identify 
$\Aut(A)$ from suitable group theoretic results.  

\medskip
{\bf Acknowledgement:} This work was supported by the Mathematical Center in Akademgorodok 
under the agreements No. 075-15-2019-1675 and 075-15-2022-281 with the Ministry of Science and 
Higher Education of the Russian Federation.

\section{Axial algebras} \label{background}

In this section we review the basics of axial algebras needed in the remainder of the paper.

A \emph{fusion law} is a pair $(\cF,\star)$, where $\cF$ is a set and $\star:\cF\times\cF\to 2^{\cF}$ is a binary operation on $\cF$ taking values in the power set of $\cF$. For simplicity, we will just speak of the fusion law $\cF$, assuming the operation $\star$.

It is common to represent fusion laws by tables similar to the group multiplication table, where in each cell corresponding to a pair $\lm,\mu\in\cF$, we simply list all elements of $\lm\star\mu$. The examples that will appear throughout the paper are shown in Figure \ref{fig:fusion laws}. They are the fusion laws $\cJ(\eta)$ and $\cM(\al, \bt)$ of \emph{Jordan} and \emph{Monster type}, respectively.

Let now $A$ be a commutative non-associative algebra over a field $\F$. Suppose that we are given a fusion law $\cF\subseteq\F$. An \emph{axis} for the fusion law $\cF$ is a non-zero idempotent $a\in A$ such that the adjoint map $\ad_a:A\to A$, sending each $u\in A$ to $au$, is semi-simple with all eigenvalues contained in $\cF$. Furthermore, it is required that
$$A_\lm(a)A_\mu(a)\subseteq A_{\lm\star\mu}(a),$$
for all $\lm,\mu\in\cF$. Here we denote by $A_\lm(a)$ the $\lm$-eigenspace of $\ad_a$, that is, 
$$A_\lm(a)=\{u\in A\mid au=\lm u\},$$
and, for $\cH\subseteq\cF$, we set
$$A_{\cH}(a)=\bigoplus_{\nu\in\cH}A_\nu(a).$$

Since $\ad_a(a)=aa=a$, we always assume that $1\in\cF$. The axis is said to be \emph{primitive} when $A_1(a)$ is $1$-dimensional, \ie it is spanned by $a$.

We say that $A$ is a (primitive) axial algebra for $\cF$ if $A$ is generated by a set $X$ of primitive axes. The set $X$ may or may not consist of all axes available in $A$. We note that for a primitive axis $a\in A$, we have that $A_1(a)A_\lm(a)=aA_\lm(a)=A_\lm(a)$ when $\lm\neq 0$ and $A_1(a)A_\lm(a)=0$ when $\lm=0$. Because of this, we will only consider fusion laws where $1\star\lm=\{\lm\}$ for $\lm\neq 0$ and $1\star 0=\emptyset$, as we see for example in Figure \ref{fig:fusion laws}.

We further note that, since our algebras are commutative, we only consider symmetric fusion laws, that is, we assume that $\lm\star\mu=\mu\star\lm$ for all $\lm,\mu\in\cF$.

Let us give some example of axial algebras which will appear later.  Let $(G, D)$ be a $3$-transposition group, that is $D$ is a conjugacy class of involutions in $G$ and $|cd| \leq 3$ for all $c,d \in D$.  Additionally, we require that $G = \la D \ra$.  Let $\F$ be a field of characteristic not $2$ and $\eta \in \F$, $\eta \neq 0,1$.  The \emph{Matsuo algebra} $M = M_\eta(G,D)$ is defined as follows: it has $D$ as a basis and the algebra multiplication is defined by
\[
c \cdot d := \begin{cases}
c & \mbox{if } d = c \\
0 & \mbox{if } |cd| = 2 \\
\frac{\eta}{2}(c+d-e) & \mbox{if } |cd|=3, \mbox{where } e := c^d = d^c
\end{cases}
\]
The elements of $D$ are primitive axes of Jordan type $\eta$ and so $M$ is an algebra of Jordan type $\eta$.  Another example of algebras of Jordan type are Jordan algebras, which arise only for $\eta = \frac{1}{2}$.

The Griess algebra for the Monster sporadic simple group is an algebra of Monster type $(\frac{1}{4}, \frac{1}{32})$.  We will meet other examples with the same fusion law later in the paper.

While the algebras we consider are non-associative, some elements in them may associate. Namely, 
we say that $u,v\in A$ \emph{associate} if for all $w\in A$ we have that $u(wv)=(uw)v$. Equivalently, $\ad_u$ and $\ad_v$ commute.

The fusion law $\cF$ is said to be \emph{Seress} if $0\in\cF$ and $0\star\lm\subseteq\{\lm\}$. (That is, $0$ and $1$ have essentially the same behaviour in $\cF$.)

\begin{lemma}[Seress]
If $A$ is an axial algebra for a Seress fusion law, then every axis $a\in A$ associates with $A_1(a)\oplus A_0(a)$.
\end{lemma}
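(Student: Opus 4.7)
The plan is to unwind the definition: saying that $a$ associates with $u \in A_1(a) \oplus A_0(a)$ means $a(wu) = (aw)u$ for every $w \in A$, equivalently that $\ad_a$ and $\ad_u$ commute. By linearity in both $u$ and $w$, and using the fact that $\ad_a$ is semisimple with eigenspaces $A_\mu(a)$ covering $A$, it suffices to verify the identity $a(wu) = (aw)u$ when $w \in A_\mu(a)$ for a single $\mu \in \cF$ and when $u$ lies in either $A_1(a)$ or $A_0(a)$ separately.

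Fix $w \in A_\mu(a)$, so $aw = \mu w$, and consider the two cases for $u$. If $u \in A_1(a)$, then $wu \in A_\mu(a) A_1(a) \subseteq A_{\mu \star 1}(a)$. By the standing convention that $1 \star \mu = \{\mu\}$ for $\mu \neq 0$ and $1 \star 0 = \emptyset$, either $wu \in A_\mu(a)$ (when $\mu \neq 0$), giving $a(wu) = \mu(wu) = (aw)u$, or $wu = 0$ (when $\mu = 0$), giving both sides equal to $0$ since $aw = 0$ as well. If instead $u \in A_0(a)$, then $wu \in A_\mu(a) A_0(a) \subseteq A_{\mu \star 0}(a)$. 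Here the Seress hypothesis $0 \star \mu \subseteq \{\mu\}$ kicks in: when $\mu \neq 1$ we have $wu \in A_\mu(a)$ and hence $a(wu) = \mu(wu) = (aw)u$; when $\mu = 1$, the convention forces $wu \in A_{1 \star 0}(a) = 0$, while simultaneously $(aw)u = wu = 0$, so both sides again vanish.

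Collecting these four cases and extending linearly through the eigenspace decomposition $w = \sum_\mu w_\mu$ and the splitting $u = u_1 + u_0$ proves the identity for all $w \in A$ and all $u \in A_1(a) \oplus A_0(a)$. The only real content is bookkeeping: one must invoke both the convention on the special role of $1$ (to handle $\mu = 0$ paired with $u \in A_1(a)$) and the Seress assumption on $0$ (to handle $\mu = 1$ paired with $u \in A_0(a)$); without the latter, $A_0(a) A_1(a)$ could leak out of $A_1(a) \oplus A_0(a)$ and associativity would fail. This is the only subtle point in the argument; otherwise the proof is a direct case check against the fusion law.
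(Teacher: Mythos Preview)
Your proof is correct and is the standard direct argument. The paper does not actually give its own proof of the Seress Lemma; it simply cites \cite{KMS}. Your eigenspace-by-eigenspace verification is exactly the intended route, and the bookkeeping is accurate. One minor simplification: in the case $u\in A_0(a)$ you need not separate out $\mu=1$, since the Seress condition $0\star\mu\subseteq\{\mu\}$ already covers it (the convention $1\star 0=\emptyset$ is simply a sharper statement compatible with Seress); but treating it separately does no harm.
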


For a proof, see \eg  \cite{KMS}.

Often an axial algebra admits a (non-zero) bilinear form that associates with the algebra product:
$$(uv,w)=(u,vw),$$
for all $u,v,w\in A$. Such a form is usually called a \emph{Frobenius form} and the algebra $A$ admitting it is said to be \emph{Frobenius} or \emph{metrisable}. The latter terminology is used especially in the case of $\F=\R$ or $\Q$, where the form may be positive definite and hence provide a metric. In any case, we will often refer to $(u,u)$ as the (square) \emph{length} of $u$. Typically, we expect the axes in $A$ to be non-singular, that is, to have a non-zero length, and whenever possible, we scale the form so that axes have length $1$.

The structure theory for axial algebras was developed in \cite{KMS}. The \emph{radical} of $A$ is the unique largest ideal of $A$ that does not contain any generating axes from $X$. When $A$ is Frobenius and axes are not singular, the radical of $A$ coincides with the radical of the Frobenius form, and so it is easy to find. We can further define a graph on $X$, where edges are the pairs $a,b\in X$ such that $(a,b)\neq 0$. We say that $A$ is \emph{connected} when this graph is connected. A connected axial algebra has all its proper ideals contained in the radical.

Axial algebras are inherently related to groups. Note that the Monster fusion law in Figure \ref{fig:fusion laws} is $C_2$-graded. Namely, if we split $\cF$ as follows: $\cF=\cF_+\cup\cF_-$, where $\cF_+=\{1,0,\al\}$ and $\cF_-=\{\bt\}$ then we can see that $\cF_+\star\cF_+,\cF_-\star\cF_-\subseteq\cF_+$ and $\cF_+\star\cF_-\subseteq\cF_-$. This grading results in the axial algebra $A$ also being graded by $C_2$, and therefore, for an axis $a\in X$, the linear map $\tau_a:A\to A$ acting as identity on $A_{\cF_+}(a)$ and as minus identity on $A_{\cF_-}(a)$ is an automorphism of $A$, called the\emph{ Miyamoto involution} (or sometimes, the \emph{tau involution}). The \emph{Miyamoto group} of $A$ is the group $\Miy(A)=\la\tau_a\mid a\in X\ra\leq\Aut(A)$.

Note that if $\phi\in\Aut(A)$ and $a$ is an axis then $a^\phi$ is also an axis. Hence we can extend our generating set of axes $X$ to $\bar X=\{a^\phi\mid a\in X,\phi\in\Miy(A)\}$. We say that $X$ is \emph{closed} when $\bar X=X$ and note that $\bar{\bar X}=\bar X$. Furthermore, we note closing the generating set of axes does not affect any of the main constructs, such as the radical and Miyamoto group.

The advantage of closing the set of axes is that $\Miy(A)$ acts on $\bar X$. The concept of a closed set $X$ of axes together with the corresponding Miyamoto group and tau map from $X$ to $\Miy(A)$ was generalised in \cite{MS} into a new class of objects called \emph{axets}. For our purposes it suffices to state that every closed set of axes constitutes an axet.

Finally, it was shown in \cite{FMS} (generalising earlier work in \cite{S,Axial1,IPSS}) that every axial algebra of Monster type $(\frac{1}{4},\frac{1}{32})$ generated by two axes is isomorphic to one of the eight \emph{Norton-Sakuma algebras}. The names of these algebras, $2A$, $2\B$, $3\A$, $3\C$, $4\A$, $4\B$, $5\A$, and $6\A$, reflect the way they arise within the Griess algebra. The exact structure constants for these algebras can be found, for example, in Table 3 in \cite{IPSS}. We will not quote it here in full in one place, and instead mention the relevant information where it is required.

To every axial algebra of Monster type $(\frac{1}{4},\frac{1}{32})$ we can associate its \emph{shape}, which is the record of $2$-generated subalgebras arising in it. While not ideal, this record is a useful way of identifying axial algebras. Note that in order to make the shape record shorter, only the subalgebras that are not uniquely identified by the axet are included in it. 

\section{Finiteness}\label{Finiteness}

The first question we need to discuss is whether the automorphism groups we want to study are finite or infinite. The answer will have major implications for the methods we employ.

\begin{theorem} \label{dimension zero}
Suppose that $\F$ is a field of characteristic other than $2$ and $A$ is a finite-dimensional axial algebra over $\F$ with fusion law $\cF\subseteq\F$. If $\frac{1}{2}\notin\cF$ then the automorphism group $\Aut(A)$ is finite.
\end{theorem}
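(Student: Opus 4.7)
My plan is to show that the Lie algebra of $\Aut(A)$ — namely, the space $\mathrm{Der}(A)$ of derivations of $A$ — is trivial, and then invoke the standard correspondence between a linear algebraic group and its Lie algebra to deduce that $\Aut(A)$ is finite. Since $A$ is finite-dimensional, $\Aut(A)$ is a Zariski-closed subgroup of $GL(A)$, cut out by the polynomial equations $\phi(uv)=\phi(u)\phi(v)$ as $u,v$ range over a basis of $A$. The standard substitution $\phi = \Id + \epsilon D$ with $\epsilon^2 = 0$ identifies the tangent space at the identity with $\mathrm{Der}(A)$, so the whole proof reduces to showing $\mathrm{Der}(A)=0$.

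The key computation is very short. Let $D \in \mathrm{Der}(A)$ and let $a$ be any primitive axis in the generating set $X$ of $A$. Applying $D$ to the idempotent identity $a = a \cdot a$ and using commutativity gives $D(a) = 2 a D(a)$; since $\mathrm{char}\,\F \neq 2$, we may divide by $2$ to obtain
$$a D(a) = \tfrac{1}{2} D(a).$$
Hence $D(a) \in A_{1/2}(a)$. But the axial axiom forces every eigenvalue of $\ad_a$ to lie in $\cF$, and $\tfrac{1}{2} \notin \cF$ by hypothesis, so $A_{1/2}(a) = 0$ and thus $D(a) = 0$. Since $X$ generates $A$ as an algebra and a derivation is determined by its values on any algebra generating set (via iterated application of the Leibniz rule), it follows that $D \equiv 0$, and therefore $\mathrm{Der}(A) = 0$.

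The main subtlety, and the only place where I appeal to anything beyond a one-line calculation, is the passage from $\mathrm{Der}(A)=0$ to finiteness of $\Aut(A)$. Working over the algebraic closure $\bar{\F}$, derivations base-change to zero, so the reduced algebraic group underlying $\Aut(A)$ has trivial tangent space at the identity; by homogeneity of an algebraic group under its own translation action, every irreducible component is then zero-dimensional, forcing $\Aut(A)(\bar{\F})$ — and hence $\Aut(A)$ itself — to be finite. In positive characteristic one must take mild care to pass to the reduced scheme rather than work with possibly nonreduced structure, but once attention is restricted to $\bar{\F}$-points this is automatic. The rest of the argument is purely axial-algebraic and amounts to the observation that any infinitesimal automorphism would have to move a generating axis into its $\tfrac{1}{2}$-eigenspace, which the fusion law forbids.
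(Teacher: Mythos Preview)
Your proof is correct and follows essentially the same route as the paper: both arguments pass to the algebraic closure, identify the Lie algebra of $\Aut(A)$ with (a subspace of) $\mathrm{Der}(A)$, use the key computation $D(a)=2aD(a)$ on an axis to force $D(a)\in A_{1/2}(a)=0$, and then conclude $D=0$ because the axes generate $A$. The only cosmetic difference is that the paper phrases the last step as ``$\ker(d)$ is a subalgebra containing all axes'' rather than invoking the Leibniz rule on a generating set, but these are the same observation.
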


\begin{proof}
Without loss of generality, we can assume that $\F$ is algebraically closed. Since $A$ is finite-dimensional, $G:=\Aut(A)$ is an affine algebraic group over $\F$. Hence, $G$ is finite if and only if it has dimension zero. 

Recall that the Lie algebra $L$ corresponding to $G$ acts on $A$ by derivations (see, for example, Corollary in Section 10.7 of \cite{H}). Let $d\in L$ and take an arbitrary axis $x\in A$. Then $d(x)=d(x^2)=d(x)x+xd(x)=2xd(x)$. Since $\F$ is not of characteristic $2$, we deduce that $\ad_x(d(x))=xd(x)=\frac{1}{2}d(x)$, that is, $d(x)$ is an eigenvector of $\ad_x$ corresponding to $\frac{1}{2}$. Since every eigenvalue of $\ad_x$ is contained in $\cF$ and $\frac{1}{2}\notin\cF$, it follows that $\frac{1}{2}$ is not an eigenvalue, and so we must have $d(x)=0$.  Hence every axis of $A$ is contained in $\ker(d)$.

It remains to notice that $B:=\ker(d)$ is a subalgebra of $A$. Indeed, it is clearly a subspace. Furthermore, if $u,v\in B$ then $d(uv)=d(u)v+ud(v)=0+0=0$, which shows that $B$ is closed for the algebra product and so it is a subalgebra. Since $A$ is generated by its axes and since all axes of $A$ are contained in $B$, we conclude that $A=B$, that is, $d=0$.  Therefore, the Lie algebra $L = 0$, and so $G$ is zero-dimensional and hence finite.
\end{proof}

This result is applicable to a wide range of known axial algebras. For example, 
we have the following corollaries of our theorem.

\begin{corollary} \label{Matsuo}
Suppose that $(G,D)$ is a finite $3$-transposition group and let $M=M_\eta(G,D)$ be the corresponding Matsuo algebra, where $\eta\notin\{0,\frac{1}{2},1\}$. Then $M$ has a finite automorphism group.
\end{corollary}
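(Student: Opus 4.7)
The plan is to apply Theorem \ref{dimension zero} directly, so the task reduces to verifying its hypotheses for $M = M_\eta(G,D)$.

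First, I would observe that $M$ is finite-dimensional over $\F$. Indeed, the definition of the Matsuo algebra gives $D$ as a basis for $M$, and since $D$ is a conjugacy class in the finite group $G$, we have $\dim_\F M = |D| < \infty$. Next, as noted in Section \ref{background}, the elements of $D$ are primitive axes of Jordan type $\eta$, so $M$ is an axial algebra with fusion law $\cJ(\eta) \subseteq \F$. This fusion law has underlying set $\cF = \{1, 0, \eta\}$.

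Now I would check that $\frac{1}{2} \notin \cF$. Since $\F$ has characteristic different from $2$, certainly $\frac{1}{2} \neq 0$ and $\frac{1}{2} \neq 1$. The hypothesis $\eta \notin \{0, \frac{1}{2}, 1\}$ gives $\eta \neq \frac{1}{2}$. Hence none of the three elements of $\cF$ equals $\frac{1}{2}$, as required. Theorem \ref{dimension zero} then immediately yields that $\Aut(M)$ is finite.

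There is really no obstacle here: every hypothesis of Theorem \ref{dimension zero} is an immediate consequence of the setup, and the only content lies in recording that $M$ is a finite-dimensional axial algebra whose fusion law $\cJ(\eta)$ excludes $\frac{1}{2}$ precisely because $\eta$ does. The exclusion $\eta \in \{0,1\}$ in the hypothesis is a separate sanity condition on the Matsuo construction itself (to ensure the axes have the correct fusion law), not on the finiteness argument.
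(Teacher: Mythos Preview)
Your proof is correct and follows essentially the same approach as the paper's own proof: verify that $M$ is a finite-dimensional axial algebra with fusion law $\{1,0,\eta\}$ not containing $\frac{1}{2}$, and then invoke Theorem~\ref{dimension zero}. The paper cites \cite{Axial2} for the Jordan type assertion and is slightly terser, but the content is the same.
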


\begin{proof}
Indeed, according to \cite{Axial2}, since $\eta\notin\{0,1\}$, $M$ is a (finite-dimensional) algebra of Jordan type $\eta$, \ie the spectrum of $\ad_x$ for an axis $x\in M$ is contained in $\{1,0,\eta\}$. Since $\eta\neq\frac{1}{2}$, we conclude that $\frac{1}{2}\notin\{1,0,\eta\}$, and the claim follows from Theorem \ref{dimension zero}.
\end{proof}

Note that, since $\eta\neq\frac{1}{2}$, including in the set of axes of $M$ all primitive idempotents satisfying the fusion law of Jordan type $\eta$ leads to a potentially larger $3$-transposition group $(\hat G,\hat D)$. When $\hat D$ is indeed larger than $D$, the algebra $M$ must be isomorphic to the factor  $M_\eta(\hat G,\hat D)/I$ for a non-trivial ideal $I$ of $\hat M=M_\eta(\hat G,\hat D)$. In particular, the radical of $\hat M$ is non-zero, that is, $\eta$ must be critical for $\hat M$. (See \cite{KMS} for the concepts of radical and critical number). Such pairs $(G,D)$ and $(\hat G,\hat D)$ are called aligned, and they are currently being classified by Alharbi as part of his PhD project at the University of Birmingham. In particular, Alharbi found several series of aligned pairs, and hence examples of Matsuo algebras $M_\eta(G,D)$, whose automorphism groups exceed the automorphisms of $(G,D)$.

\medskip

Suppose again that $M=M_\eta(G,D)$ for a $3$-transposition group $(G,D)$. We will refer to the elements of $D$ as \emph{single axes} from $M$. We say that single axes $c$ and $d$ are \emph{orthogonal} if $cd=0$ in $M$. (This corresponds to non-collinearity in the Fischer space of $(G,D)$.) In \cite{j} (see also \cite{doubleMatsuo}), it was shown that the \emph{double axis} $x=c+d\in M$, for orthogonal $c$ and $d$, is an idempotent satisfying the fusion law of Monster type $(2\eta,\eta)$. Furthermore, for a flip (\ie automorphism of order $2$) $\sg$ of $(G,D)$, the \emph{flip subalgebra} $A(\sg)$ generated by all single and double axes contained in the fixed subalgebra $M_\sg=\{u\in M\mid u^\sg=u\}$ is a (primitive) algebra of Monster type $(2\eta,\eta)$.

\begin{corollary} \label{flip}
Suppose that $(G,D)$ is a \textup{(}finite\textup{)} $3$-transposition group and let $M=M_\eta(G,D)$ be the corresponding Matsuo algebra, where $\eta\notin\{0,\frac{1}{4},\frac{1}{2},1\}$. Then every subalgebra of $M$ generated by single and double axes has a finite automorphism group. In particular, for these values of $\eta$, all flip subalgebras have finite automorphism groups.
\end{corollary}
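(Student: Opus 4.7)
The plan is to reduce to Theorem~\ref{dimension zero} by identifying the ambient fusion law of any subalgebra generated by single and double axes. First, I would observe that in the Matsuo algebra $M=M_\eta(G,D)$ every single axis $c\in D$ is a primitive axis of Jordan type $\eta$, so its eigenvalues lie in $\{1,0,\eta\}$. By the result quoted above from \cite{j,doubleMatsuo}, every double axis $x=c+d$ (for orthogonal $c,d\in D$) is a primitive idempotent whose eigenvalues lie in $\{1,0,2\eta,\eta\}$ and satisfies the Monster fusion law $\cM(2\eta,\eta)$. Since $\cJ(\eta)$ embeds into $\cM(2\eta,\eta)$ (by letting the $2\eta$-eigenspace of a single axis be trivial), both types of generators are primitive axes for the common fusion law $\cM(2\eta,\eta)$.

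Consequently, any subalgebra $A\leq M$ generated by single and double axes is itself a primitive axial algebra with fusion law $\cF\subseteq\{1,0,2\eta,\eta\}$. Moreover, because $(G,D)$ is finite, $M$ is finite-dimensional and hence so is $A$. To apply Theorem~\ref{dimension zero} it only remains to check that $\frac{1}{2}\notin\cF$. This is precisely the point of the hypothesis $\eta\notin\{0,\frac{1}{4},\frac{1}{2},1\}$: the relation $\frac{1}{2}=\eta$ is ruled out by $\eta\neq\frac{1}{2}$, while $\frac{1}{2}=2\eta$ is ruled out by $\eta\neq\frac{1}{4}$, and the exclusions $\eta\neq 0,1$ are already built into the definition of $M_\eta(G,D)$. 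Therefore Theorem~\ref{dimension zero} yields that $\Aut(A)$ is finite.

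For the "in particular" clause, note that by the definition recalled just before the statement of the corollary, a flip subalgebra $A(\sg)$ is generated by the single and double axes contained in $M_\sg$, so it falls within the class of subalgebras covered by the first assertion; its automorphism group is therefore finite.

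I do not expect any genuine obstacle here: the only point requiring care is the verification that the subalgebra generated by single and double axes really is a primitive axial algebra for $\cM(2\eta,\eta)$, which is precisely the content of the cited results on single and double axes in Matsuo algebras. Once that is in place, the corollary is essentially a bookkeeping exercise translating the arithmetic conditions on $\eta$ into the exclusion $\frac{1}{2}\notin\cF$ needed by Theorem~\ref{dimension zero}.
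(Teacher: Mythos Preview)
Your proposal is correct and follows essentially the same approach as the paper: both argue that single and double axes obey the Monster fusion law $\cM(2\eta,\eta)$, verify that $\frac{1}{2}\notin\{1,0,2\eta,\eta\}$ under the stated restrictions on $\eta$, and then invoke Theorem~\ref{dimension zero}. Your version is simply more explicit about the embedding $\cJ(\eta)\hookrightarrow\cM(2\eta,\eta)$ and the finite-dimensionality, whereas the paper compresses this into two sentences.
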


\begin{proof}
Indeed, since $\eta\neq\frac{1}{4},\frac{1}{2}$, we have that $\frac{1}{2}\notin\{1,0,2\eta,\eta\}$, and so the claim follows from Theorem \ref{dimension zero}, as both single and double axes obey the fusion law of Monster type $(2\eta,\eta)$.
\end{proof}

Note that this corollary has an additional exception compared to Corollary \ref{Matsuo}, namely, $\eta=\frac{1}{4}$. This leads to the following very interesting question.

\begin{question} \label{single and double}
Do there exist subalgebras of Matsuo algebras $M_{\frac{1}{4}}(G,D)$, generated by single and double axes, that have infinite automorphism groups?
\end{question}

In other words, is $\eta=\frac{1}{4}$ a true exception in Corollary \ref{flip} or can it be removed?

For the final corollary of Theorem \ref{dimension zero}, let us mention the class of algebras central to this paper: algebras of Monster type $(\al,\bt)$. Within this class, for $\al=\frac{1}{4}$ and $\bt=\frac{1}{32}$, we find the Griess algebra for the Monster simple group and, more generally, Majorana algebras of Ivanov \cite{I}. 

\begin{corollary} \label{our main case}
Every algebra of Monster type $(\al,\bt)$, with $\al\neq\frac{1}{2}\neq\bt$ has a finite automorphism group.
\end{corollary}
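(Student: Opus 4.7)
The plan is to observe that this corollary follows almost immediately from Theorem \ref{dimension zero}, provided we can identify the fusion law $\cF$ explicitly and verify the hypothesis $\frac{1}{2} \notin \cF$. So the whole argument reduces to checking that the set of eigenvalues prescribed by the Monster type fusion law does not include $\frac{1}{2}$.

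Concretely, I would first recall from Figure \ref{fig:fusion laws} that the Monster fusion law $\cM(\al,\bt)$ has underlying set $\cF=\{1,0,\al,\bt\}\subseteq\F$, so by definition an algebra of Monster type $(\al,\bt)$ is an axial algebra for this particular $\cF$. Next, I would check membership of $\frac{1}{2}$ in $\cF$ element by element: since $\F$ has characteristic different from $2$, we have $1\neq\frac{1}{2}$ and $0\neq\frac{1}{2}$ automatically, while the hypothesis $\al\neq\frac{1}{2}\neq\bt$ rules out the remaining two possibilities. Hence $\frac{1}{2}\notin\cF$, and Theorem \ref{dimension zero} applies directly to give $|\Aut(A)|<\infty$.

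Two minor points worth flagging in the write-up. First, Theorem \ref{dimension zero} requires $A$ to be finite-dimensional; this assumption should be carried over explicitly to the corollary (or else stated as a standing convention, since the paper is concerned with finite-dimensional examples). Second, the theorem is stated over a field of characteristic not two, so the same restriction should be mentioned here — though this is already needed in order for the Monster fusion law to be sensible, since the axioms of axial algebras (and the definition of $\tau_a$) are typically set up in that setting.

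There is essentially no obstacle: once the eigenvalue set $\{1,0,\al,\bt\}$ is identified and the hypotheses are translated into $\frac{1}{2}\notin\cF$, Theorem \ref{dimension zero} does all the work. The only place one might slip is forgetting to rule out $1=\frac{1}{2}$ and $0=\frac{1}{2}$, which is why the characteristic hypothesis is needed.
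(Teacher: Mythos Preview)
Your proposal is correct and matches the paper's own treatment exactly: the paper simply states that the corollary is immediate from Theorem~\ref{dimension zero}, and your write-up spells out precisely the verification that $\frac{1}{2}\notin\{1,0,\al,\bt\}$ needed to invoke it. Your remarks about carrying over the finite-dimensionality and characteristic hypotheses are well taken and worth including.
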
 

In particular, the automorphism groups are finite for all finite-dimensional Majorana algebras, like say the Griess algebra, and in general for all the examples of axial algebras of Monster type $(\frac{1}{4},\frac{1}{32})$ that have been constructed.

This corollary is immediate from Theorem \ref{dimension zero}. In the second part of this paper, we endeavour to determine the exact automorphism groups for some of the known algebras in this class. As these automorphism groups are finite, we will develop computational tools using the computer algebra system MAGMA. 

In the remainder of this section, we develop an alternative method of showing finiteness of $\Aut(A)$, which could possibly be used when $\frac{1}{2}$ appears within a fusion law. As a trade-off, we need to impose rather severe restrictions on the fusion law $\cF$.

\begin{definition}
Let $B=\lla a,b\rra$ and $B'=\lla a',b'\rra$ be two $2$-generated algebras.  A \emph{pointed isomorphism} is an  isomorphism $\phi \colon B\to B'$ such that $\phi(a)=a'$ and $\phi(b)=b'$.
\end{definition}

Note that such an isomorphism is unique if it exists. Indeed, since the axes generate the 
algebra, the images of the axes identify the map.

Recall that a Frobenius form on an axial algebra is a non-zero bilinear form that associates with the algebra product:
$$(u,vw)=(uv,w),$$
for all $u,v,w\in A$. An element $u\in A$ is called singular if $(u,u)=0$.

Consider the following conditions on the fusion law $\cF$:
\begin{enumerate}
\item \label{cond1} up to pointed isomorphisms, there exist only finitely many $2$-generated 
primitive $\cF$-axial algebras $B=\lla a,b\rra$ admitting a Frobenius form such that 
$(a,a)=1=(b,b)$; and
\item \label{cond2} for all such $2$-generated algebras $B$, $\phi=(a,b)$ is not equal 
to $1$.
\end{enumerate}

\begin{theorem} \label{finite}
Suppose that the fusion law $\cF$ satisfies the above conditions \textup{\ref{cond1}} and \textup{\ref{cond2}}. Then every finite-dimensional primitive $\cF$-axial algebra $A$ admitting a Frobenius form, such that all generating axes are non-singular, has a finite automorphism group.
\end{theorem}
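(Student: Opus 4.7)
The plan is to realize $\Aut(A)$ as an affine algebraic group and show that its identity component $G^0$ is trivial, so that $\Aut(A)$ is zero-dimensional and hence finite. After replacing $\F$ by its algebraic closure and reducing to the case where $A$ is connected (a disconnected $A$ decomposes as a direct sum of connected components, and the theorem for each summand suffices), the Frobenius form on $A$ is unique up to scalar. Every $\sigma \in \Aut(A)$ then rescales the form by some $\mu(\sigma) \in \F^\times$, giving a morphism of algebraic groups $\mu \colon \Aut(A) \to \mathbb{G}_m$. The first step is to show $\mu$ is trivial on $G^0$. For a derivation $d \in \mathrm{Lie}(\Aut(A))$ and a non-singular axis $a$, differentiating $a = a^2$ gives $d(a) \in A_{1/2}(a)$; associativity of the Frobenius form with respect to distinct eigenvalues of $\ad_a$ forces $A_1(a) \perp A_{1/2}(a)$, so $(d(a), a) = 0$, and then differentiating $(\sigma(a), \sigma(a)) = \mu(\sigma)(a, a)$ at the identity yields $d\mu = 0$. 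Hence $\mu$ is trivial on $G^0$, and $G^0$ acts by isometries of the Frobenius form.

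Next, fix a generator $x \in X$ of length $c := (x,x) \neq 0$, and consider the regular map $f \colon G^0 \to \F$, $\sigma \mapsto (\sigma(x), x)$. Since $\sigma$ is an isometry, $\sigma(x)$ again has length $c$, and the subalgebra $B_\sigma := \lla \sigma(x), x \rra \subseteq A$ is either the one-dimensional $\F x$ (when $\sigma(x) = x$) or is a $2$-generated primitive $\cF$-axial algebra inheriting a Frobenius form. Equipping $B_\sigma$ with the rescaled form $c^{-1}(\cdot, \cdot)$ normalizes both generators to length $1$, so by condition (\ref{cond1}) $B_\sigma$ is pointed isomorphic to one of finitely many $2$-generated algebras $B_j$, and by condition (\ref{cond2}) the rescaled pairing $(\sigma(x), x)/c$ is not equal to $1$ whenever $\sigma(x) \neq x$. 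Consequently $f(G^0) \subseteq \{c\} \cup cF$ for some finite set $F \subset \F \setminus \{1\}$ depending only on $\cF$.

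Finally, $G^0$ is irreducible (as a connected affine algebraic group) and equals the union of the two closed subsets $f^{-1}(\{c\})$ and $f^{-1}(cF)$; since $f(\mathrm{id}) = c \notin cF$, the second subset is proper, so $f^{-1}(\{c\}) = G^0$, i.e., $\sigma(x) = x$ for every $\sigma \in G^0$. Running this argument for each generator $x \in X$ and using that $X$ generates $A$ forces $G^0 = 1$, and the theorem follows. The main technical obstacle I anticipate is reconciling the length-$1$ normalization of the axes required by condition (\ref{cond1}) with the fact that different generators in $X$ may have different intrinsic lengths; this is handled by rescaling the Frobenius form separately on each subalgebra $B_\sigma$, which in turn relies on the $\mu$-invariance of $G^0$ established in the first step.
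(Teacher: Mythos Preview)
Your argument is correct (at least over fields of characteristic zero) and follows a genuinely different route from the paper's.  The paper proceeds combinatorially: it fixes a length, assumes there are infinitely many primitive $\cF$-axes of that length, applies the Infinite Ramsey Theorem to extract an infinite subset on which all pairwise inner products take a common value $\phi$ (finitely many colours by condition~(a)), notes $\phi\neq 1$ by condition~(b), and then shows that the Gram matrix $(1-\phi)I+\phi J$ on any $n$-subset with $n>\dim A$ is nondegenerate, contradicting $\dim A<n$.  Finiteness of $\Aut(A)$ then follows because $\Aut(A)$ acts faithfully on the finite union of these length-classes.  Your approach instead treats $\Aut(A)$ as an affine algebraic group and kills the identity component directly: the regular map $\sigma\mapsto(\sigma(x),x)$ on the irreducible variety $G^0$ has finite image by~(a), and condition~(b) separates the value $c=(x,x)$ (attained only at $\sigma(x)=x$) from all other values, forcing the map to be constant.

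Each approach buys something.  The paper's argument is characteristic-free once one grants that $\Aut(A)$ preserves the length-classes of axes (a point the paper calls ``clear'' but which really needs justification), and it delivers the stronger intermediate fact that each length-class of axes is itself finite.  Your approach is more in the spirit of Theorem~\ref{dimension zero}, and you are more careful precisely where the paper is not: your Lie-algebra computation $(d(a),a)=0$ genuinely establishes that $G^0$ preserves the form.  The price is that the passage from $d\mu=0$ to $\mu|_{G^0}=1$ uses faithfulness of the Lie functor on connected groups, so your proof as written is restricted to characteristic zero; you should flag this, though in the paper's intended applications the ground field is $\Q$ or $\R$ anyway.
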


\begin{proof}
Let us first deal with the special case where all generating axes have length $1$ with respect to the Frobenius form. Let $X=X(A)$ be the axet from $A$ consisting of all primitive $\cF$-axes from $A$ having length $1$. It suffices to prove that $X$ is a finite set. Indeed, $X$ is clearly invariant under $\Aut(A)$ and, furthermore, $\Aut(A)$ acts faithfully on $X$, since $X$ generates $A$. Therefore, $\Aut(A)$ is isomorphic to a subgroup of the symmetric group of $X$, and so $\Aut(A)$ is finite when $X$ is.

Assume for a contradiction that $X$ is infinite. Consider the complete graph on $X$, where each edge $ab$ is ``coloured'' with the algebra $\lla a,b\rra$ viewed up to pointed isomorphisms. By \ref{cond1}, we have a finite number of colours in our infinite graph, so by the Infinite Ramsey's Theorem, there is an infinite subset $Y\subseteq X$ such that all subalgebras $\lla a,b\rra$, $a,b\in Y$, $a\neq b$ are isomorphic. In particular, $\phi=(a,b)$ is the same for all such $a$ and $b$. 

By \ref{cond2}, $\phi\neq 1$. Select $n$ such that $n>\dim(A)$ and $n\neq 1-\frac{1}{\phi}$ if $\phi\neq 0$. Let $Z\subset Y$ be of cardinality $n$ and consider the Gram matrix $T$ on the set $Z$. Clearly, $T=(1-\phi)I+\phi J$, where $I$ and $J$ are the identity and all-one matrices of size $n\times n$. It is easy to see that the eigenvalues of $J$ are $0$ (of multiplicity $n-1$) and $n$ (of multiplicity $1$). Correspondingly, $T$ has eigenvalues $1-\phi$ and $n\phi+(1-\phi)=(n-1)\phi+1$. By our choice, neither of these eigenvalues is zero, \ie $T$ is a non-degenerate matrix. However, this means that $Z$ spans a subspace of dimension $n$ in $A$, which is a contradiction since $\dim(A)<n$.

Now we turn to the general situation. Since $A$ is finite-dimensional, it is generated by a finite number of primitive axes. By our assumption, these axes have non-zero lengths $r_1,\ldots,r_t$. Let $X_i$ be the set of all primitive $\cF$-axes from $A$ of length $r_i$ and let $A_i=\lla X_i\rra$. Clearly, all $X_i$ and $A_i$ are invariant under $\Aut(A)$. Furthermore, $A_i$ satisfies the assumptions of the theorem and the Frobenius form on $A_i$ can be scaled so that all axes from $X_i$ have length $1$ in $A_i$. By the first part of our proof, $\Aut(A_i)$ is finite, that is, $\Aut(A)$ induces a finite group on $X_i$ and $A_i$. Consequently, $\Aut(A)$ also induces a finite group on $\bigcup_{i=1}^t X_i$. Since the latter contains a generating set of $A$, the action of $\Aut(A)$ on it is faithful, \ie $\Aut(A)$ is finite. 
\end{proof}

As you can see, the conditions \ref{cond1} and \ref{cond2} do not expressly prohibit $\frac{1}{2}\in\cF$. Hence we pose the following question.

\begin{question}
Are there fusion laws $\cF$ including $\frac{1}{2}$ that satisfy conditions \ref{cond1} and \ref{cond2} above?
\end{question}

In particular, we wonder whether Question \ref{single and double} can be answered using this 
technique or its extensions. Note that, for a given algebra $A$, the conditions \ref{cond1} 
and \ref{cond2} can be weakened by restricting them to only the $2$-generated subalgebras that 
actually do arise inside $A$.

In the following short section we provide an example of an algebra where $\frac{1}{2}$ is in the fusion law, but the automorphism group is still finite. This shows that Theorem \ref{dimension zero} is not the ultimate result, and its extensions may be useful.

\section{Example}\label{example}

In \cite{j}, a series $Q_k(\eta)$ of $k^2$-dimensional flip subalgebras was constructed inside $M_\eta(S_{2k},(1,2)^{S_{2k}})$. Let us focus on $\eta=\frac{1}{4}$ (not covered by Corollary \ref{flip}) and the smallest interesting case, $k=2$. In this case, $A:=Q_2(\frac{1}{4})$ is spanned by its basis of four axes: (a) two single axes, $s_1$ and $s_2$; and (b) two double axes, $d_1$ and $d_2$. The multiplication with respect to this basis is summarised in Table \ref{4-dim}, adapted from \cite{doubleMatsuo}. 
\begin{table}[ht]
\renewcommand{\arraystretch}{1.5}
\begin{center}
\begingroup
\setlength{\tabcolsep}{6pt}
\scalebox{.85}{
$\begin{tabu}[ht]{|c||c|c|c|c|}
\hline
&s_1&s_2&d_1&d_2\\
\hline\hline
s_1&s_1&0&\frac{1}{8}(2s_1+d_1-d_2)&\frac{1}{8}(2s_1+d_2-d_1)\\
\hline
s_2&0&s_2&\frac{1}{8}(2s_2+d_1-d_2)&\frac{1}{8}(2s_2+d_2-d_1)\\
\hline
d_1&\frac{1}{8}(2s_1+d_1-d_2)&\frac{1}{8}(2s_2+d_1-d_2)&d_1&
\frac{1}{4}(-s_1-s_2+d_1+d_2)\\
\hline
d_2&\frac{1}{8}(2s_1+d_2-d_1)&\frac{1}{8}(2s_2+d_2-d_1)&
\frac{1}{4}(-s_1-s_2+d_1+d_2)&d_2\\
\hline
\end{tabu}$}
\caption{The algebra $Q_2(\frac{1}{4})$}\label{4-dim}
\endgroup
\end{center}
\end{table}

Furthermore, $A$ inherits from its ambient Matsuo algebra $M_{\frac{1}{4}}(S_4,(1,2)^{S_4})$ a Frobenius form, whose Gram matrix with respect to the basis $\{s_1,s_2,d_1,d_2\}$ is
$$
\renewcommand{\arraystretch}{1.3}
\begin{pmatrix}
1&0&\frac{1}{4}&\frac{1}{4}\\
0&1&\frac{1}{4}&\frac{1}{4}\\
\frac{1}{4}&\frac{1}{4}&2&\frac{1}{2}\\
\frac{1}{4}&\frac{1}{4}&\frac{1}{2}&2
\end{pmatrix}.
$$

The determinant of this matrix, equal to $\frac{27}{8}$, is non-zero as long as 
the characteristic of $\F$ is not $3$. So we will additionally assume now that $\mathrm{char}(\F)\neq 3$. Then the radical of $A$ is zero, and so $A$ is simple. Finally, the algebra $A$ has an identity element $\one_A=\frac{2}{3}(s_1+s_2+d_1+d_2)$.

We note that the single axes $s_1$ and $s_2$ are of Jordan type $\frac{1}{4}$, while the double axes $d_1$ and $d_2$ are of Monster type $(\frac{1}{2},\frac{1}{4})$. While $\frac{1}{2}$ does not appear in the fusion law for $s_1$ and $s_2$, these two axes do not generate $A$ without $d_1$ and/or $d_2$, and so finiteness of $\Aut(A)$ does not follow from Theorem \ref{dimension zero}.
Yet finiteness holds here.

\begin{theorem}
The automorphism group $\Aut(A)$ of $A=Q_2(\frac{1}{4})$ is finite of order $4$.
\end{theorem}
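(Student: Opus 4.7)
My plan is to prove the theorem by exhibiting four explicit automorphisms and then arguing there can be no others.

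For the lower bound, inspection of Table~\ref{4-dim} shows that the linear involutions $\tau\colon s_1\leftrightarrow s_2,\ d_i\mapsto d_i$ and $\sg\colon s_i\mapsto s_i,\ d_1\leftrightarrow d_2$ preserve every product of basis elements. They commute, so $\la\tau,\sg\ra\cong C_2\times C_2$ sits inside $\Aut(A)$. One may also identify $\tau$ with the common Miyamoto involution $\tau_{d_1}=\tau_{d_2}$ and $\sg$ with the common sigma involution of the Jordan axes, by a direct computation of the $\tfrac{1}{4}$-eigenspaces of the adjoint maps of $s_1,s_2,d_1,d_2$.

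For the matching upper bound, the strategy is to show that $\{s_1,s_2,d_1,d_2\}$ is the complete set of primitive axes in $A$. Any $\phi\in\Aut(A)$ preserves fusion-law type and hence maps primitive Jordan $\tfrac{1}{4}$-axes to themselves as a set, and primitive proper Monster $(\tfrac{1}{2},\tfrac{1}{4})$-axes to themselves as a set. Since the four axes form a basis of $A$, $\phi$ is determined by its action on them, so $\Aut(A)$ embeds into $S_2\times S_2\cong C_2\times C_2$. To enumerate all primitive axes I would write $e=xs_1+ys_2+zd_1+wd_2$ and solve $e^2=e$ using Table~\ref{4-dim}, splitting into cases according to whether $x=y$ and whether $z=w$. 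Apart from the four generators, the discrete idempotents that arise (namely $0$, $\one_A$, $s_1+s_2$, $\one_A-(s_1+s_2)$, and the asymmetric idempotent $\tfrac{1}{3}(2s_1-s_2+2d_1+2d_2)$ together with its image under $\tau$) all have $1$-eigenspace of dimension at least $2$ and so are not primitive.

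The main obstacle is a one-parameter family of idempotents with $x=y$, $z+w=1-x$, and $zw=x(x-1)$. A short computation shows that $s_1-s_2$ is an eigenvector of $\ad_e$ with eigenvalue $(3x+1)/4$; requiring this value to lie in $\{1,0,\tfrac{1}{4},\tfrac{1}{2}\}$ confines $x$ to $\{1,-\tfrac{1}{3},0,\tfrac{1}{3}\}$. The first three collapse onto idempotents already handled (with $x=0$ recovering $d_1$ and $d_2$), leaving only the critical case $x=\tfrac{1}{3}$, where $\ad_e$ has spectrum $\{1,\tfrac{1}{2},\tfrac{1}{2},0\}$. Because this spectrum is contained in the Monster fusion law, $e$ cannot be dismissed on spectral grounds alone, so I would check the fusion rule $\tfrac{1}{2}\star\tfrac{1}{2}\subseteq\{1,0,\tfrac{1}{4}\}$ directly: computing the eigenvectors of $\ad_e$ explicitly, one finds that $(s_1-s_2)^2=s_1+s_2$ has a non-zero $\tfrac{1}{2}$-eigenvector component and therefore does not lie in $A_1(e)\oplus A_0(e)$ (which, as $A_{1/4}(e)=0$, is exactly what the fusion rule demands). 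This rules out the last candidate and completes the proof.
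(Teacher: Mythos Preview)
Your overall strategy is sound and genuinely different from the paper's, but there is one slip in the discrete enumeration. The idempotent $\one_A-(s_1+s_2)=-\tfrac{1}{3}(s_1+s_2)+\tfrac{2}{3}(d_1+d_2)$ \emph{is} primitive: its $1$-eigenspace equals $A_0(s_1+s_2)$, which is one-dimensional (spanned by the idempotent itself). Its spectrum on $A$ is $\{1,0,0,\tfrac{1}{2}\}$, so it is in fact a Jordan $\tfrac{1}{2}$-axis. This does not damage your argument, because an automorphism preserves the full spectrum of $\ad_e$, and $\{1,0,0,\tfrac{1}{2}\}$ matches neither the spectrum $\{1,0,0,\tfrac{1}{4}\}$ of the $s_i$ nor the spectrum $\{1,0,\tfrac{1}{2},\tfrac{1}{4}\}$ of the $d_i$. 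For the same reason, your fusion-law check at $x=\tfrac{1}{3}$ is correct but unnecessary: there the spectrum is $\{1,\tfrac{1}{2},\tfrac{1}{2},0\}$, which already differs from that of every generator. So the cleanest statement is that $\{s_1,s_2\}$ and $\{d_1,d_2\}$ are precisely the primitive idempotents with their respective spectra, and the embedding $\Aut(A)\hookrightarrow S_2\times S_2$ follows. Your case analysis is otherwise complete: the subcase $x\neq y$, $z\neq w$ forces $x+y=z+w=\tfrac{2}{3}$ and then $xy=\tfrac{1}{9}$, so $x=y=\tfrac{1}{3}$, a contradiction.

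By contrast, the paper never enumerates all idempotents. It shows $\{s_1,s_2\}$ is $\Aut(A)$-invariant by a Frobenius-form argument: any further Jordan $\tfrac{1}{4}$-axis $s$ of length $1$ must satisfy $(s,s_i)\in\{0,\tfrac{1}{8}\}$ (by the classification of $2$-generated algebras of Jordan type $\tfrac{1}{4}$) together with $(\one_A,s)=1$, yielding four one-parameter affine families in which one checks no idempotent lies. Then, rather than analysing $\{d_1,d_2\}$ directly, the paper observes that the stabiliser of $s_1$ fixes $s_1,s_2,\one_A$ and hence the line $\langle d_1-d_2\rangle^{\phantom{\perp}}=\langle s_1,s_2,\one_A\rangle^{\perp}$, and $(d_1-d_2)^2=\tfrac{3}{4}\one_A$ forces the scalar action there to square to $1$. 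Your route is more elementary and self-contained (no appeal to the $2$-generated classification), at the cost of a heavier brute-force idempotent computation; the paper's route avoids the full variety of idempotents by exploiting the Frobenius form and the orthogonal complement trick.
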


\begin{proof}
Let $G=\Aut(A)$. We note that $G$ preserves the Frobenius form on $A$, since the latter is unique up to a scalar. It is easy to see that $G$ contains automorphisms switching the single axes $s_1$ and $s_2$ and, similarly, switching the double axes $d_1$ and $d_2$. So $|G|\geq 4$.

We claim that $s_1^G=\{s_1,s_2\}$. Note that all axes $s\in s_1^G$ are of Jordan type $\frac{1}{4}$ and satisfy the length condition $(s,s)=1$. Suppose by contradiction that $s$ is such an axis and $s\notin\{s_1,s_2\}$. Then Theorem (1.1) from \cite{Axial2} implies that the subalgebra generated by $s$ and $s_i$ is isomorphic either to $2\B$ or $3\C(\frac{1}{4})$ and so $(s,s_i)\in\{0,\frac{1}{8}\}$. If $s=\al s_1+\bt s_2+\gm d_1+\dl d_2$ then these conditions result in $(s,s_1)=\al+\frac{1}{4}(\gm+\dl)\in\{0,\frac{1}{8}\}$ and $(s,s_2)=\bt+\frac{1}{4}(\gm+\dl)\in\{0,\frac{1}{8}\}$. Additionally, $1=(s,s)=(\one_A,s^2)=(\one_A,s)=\frac{2}{3}(\frac{3}{2}\al+\frac{3}{2}\bt+3\gm+3\dl)=\al+\bt+2\gm+2\dl$. Expressing $\gm+\dl$ from this and substituting it in the first two relations, we get $\gm+\dl=\frac{1}{2}(1-\al-\bt)$ and $\al+\frac{1}{4}(\frac{1}{2}(1-\al-\bt))=\frac{1}{8}+\frac{7}{8}\al-\frac{1}{8}\bt\in\{0,\frac{1}{8}\}$, that is, $\frac{7}{8}\al-\frac{1}{8}\bt\in\{-\frac{1}{8},0\}$. Symmetrically, the second relation gives us that $-\frac{1}{8}\al+\frac{7}{8}\bt\in\{-\frac{1}{8},0\}$.

This results in four possible $1$-parameter families of solutions:
\begin{enumerate}[label={\upshape (\alph*)}]
\item $s=-\frac{1}{6}s_1-\frac{1}{6}s_2+(\frac{2}{3}-\dl)d_1+\dl d_2$;
\item $s=-\frac{7}{48}s_1-\frac{1}{48}s_2+(\frac{7}{12}-\dl)d_1+\dl d_2$;
\item $s=-\frac{1}{48}s_1-\frac{7}{48}s_2+(\frac{7}{12}-\dl)d_1+\dl d_2$;
\item $s=(\frac{1}{2}-\dl)d_1+\dl d_2$.
\end{enumerate}

Let us consider these possibilities. Since $s$ is an idempotent, we must have $s^2-s=0$. In case (a), $s^2-s=(\frac{1}{2}\dl^2-\frac{1}{3}\dl+\frac{5}{36})(s_1+s_2)+(\frac{1}{2}\dl^2+\frac{1}{6}\dl-\frac{5}{18})d_1+(\frac{1}{2}\dl^2-\frac{5}{6}\dl+\frac{1}{18})d_2$. (We skip the calculation details.) So $\dl$ has to be the root of all three quadratic polynomials arising as coefficients in this expression. The linear combination of the three polynomials with coefficients $2$, $-1$, and $-1$ equals to the constant polynomial $\frac{1}{2}$, which means that the polynomials have no common roots, and so there are no idempotents in case (a).

Similarly, in case (b), $s^2-s=(\frac{1}{2}\dl^2-\frac{7}{24}\dl+\frac{287}{2304})s_1+(\frac{1}{2}\dl^2-\frac{7}{24}\dl+\frac{35}{2304})s_2+(\frac{1}{2}\dl^2+\frac{5}{24}\dl-\frac{77}{288})d_1+(\frac{1}{2}\dl^2-\frac{19}{24}\dl+\frac{7}{288})d_2$. Taking the sum of the first two coefficients minus the sum of the last two coefficients, we obtain the constant $\frac{49}{128}$. So the four polynomials can only have a common root when the field $\F$ is of characteristic $7$. Furthermore, in characteristic $7$, the polynomials reduce to a simple form yielding $\dl=0$. However, substituting this into the expression for $s$ and reducing modulo $7$, we see that $s=s_1$, which is a contradiction. Symmetrically, we obtain a contradiction in case (c).

Finally, consider case (d). Here $s^2-s=(\frac{1}{2}\dl^2-\frac{1}{4}\dl)(s_1+s_2)+(\frac{1}{2}\dl^2+\frac{1}{4}\dl-\frac{1}{4})d_1+(\frac{1}{2}\dl^2-\frac{3}{4}\dl)d_2$. Again, take twice the first polynomial minus the sum of the last two polynomials to obtain the constant $\frac{1}{4}$. So there are no idempotents in this final case.

We have proved that $G=\Aut(A)$ leaves $\{s_1,s_2\}$ invariant. Now we can finish the proof. Let $H:=G_{s_1}$, the stabiliser of $s_1$ in $G$. Then $H$ fixes $s_1$ and $s_2$. Furthermore, it fixes $\one_A$, and so it fixes $s=\one_A-s_1-s_2=\frac{1}{3}(-s_1-s_2+2d_1+2d_2)$. Hence $H$ should also leave invariant the complement $\la s_1,s_2,s\ra^\perp=\la d_1-d_2\ra$. Thus, for $h\in H$, we have that $(d_1-d_2)^h=\lm(d_1-d_2)$ for some $\lm\in\F$. Also, we must have that $((d_1-d_2)^2)^h=\lm^2(d_1-d_2)^2$. On the other hand, $(d_1-d_2)^2=d_1+d_2-2\cdot \frac{1}{4}(-s_1-s_2+d_1+d_2)=\frac{1}{2}(s_1+s_2+d_1+d_2)\in\la s_1,s_2,s\ra$. Hence, $\lm^2=1$, which means that $\lm=\pm 1$. Consequently, $|H|\leq 2$, and the claim of theorem follows from here.
\end{proof}

The algebra $A$ is just one of the smaller examples of flip subalgebras of Monster type $(\frac{1}{2},\frac{1}{4})$, but this example suggests that the answer to Question \ref{single and double} may be negative. 

We note that the complete list of $2$-generated algebras of Monster type $(\frac{1}{2},\frac{1}{4})$ can be derived from \cite{fms}, and while the entire list is infinite, it might be possible to eliminate the infinite tail from consideration, if we can show that such $2$-generated subalgebras do not appear inside Matsuo algebras. Thus, we speculate that it may be possible to derive an answer to Question \ref{single and double} from a suitable extension of Theorem \ref{finite}.

\section{The algebras}\label{Monster algebras}

In the second part of the paper we develop methods for computing automorphism groups of axial algebras where they are known to be finite. We focus on the class of finite-dimensional algebras of Monster type $(\frac{1}{4},\frac{1}{32})$. This is because, on the one hand, by Corollary \ref{our main case}, these algebras always have finite automorphism groups and, on the other hand, there are currently around $200$ non-trivial examples of such algebras computed using the MAGMA realisation \cite{ParAxlAlg} of the expansion algorithm \cite{axialconstruction}. 

These algebras are normally realised over the field of rational numbers $\F=\Q$, but in our calculations we will allow any algebraic extension including the algebraic closure, if necessary, so that we can be sure that the automorphism groups cannot increase further for larger fields. We do not consider in this part of the paper fields of positive characteristics. 

In the definition of an axial algebra there is no requirement that the algebra be unital or admit a Frobenius form. However, a vast majority of them, at least among the known examples, are both unital and Frobenius (metrisable). In this brief section we discuss simple reasons why this is really not at all surprising.

First of all, a lot of the known algebras of Monster type $(\frac{1}{4},\frac{1}{32})$ are in fact subalgebras of the Griess algebra. Even though that is not how are defined and constructed, in retrospect, we can often find for small groups that their algebras do indeed arise within the Griess algebra. Note that the Griess algebra admits a positive definite Frobenius form (the Griess algebra is normally defined over $\R$).  Hence every subalgebra of the Griess algebra also admits a (positive-definite) Frobenius form.

The Griess algebra also is unital, \ie it has an identity element. We show below that this means that all of its subalgebras are also unital. Throughout the remainder of this section, $A$ is an axial algebra admitting a Frobenius form.

\medskip
We start with a useful lemma.

\begin{lemma} \label{module}
If $B$ is a subalgebra of $A$ then 
$$B^\perp=\{u\in A\mid(u,v)=0\mbox{ for all }v\in B\}$$
is a $B$-module, that is, $B^\perp B\subseteq B^\perp$.
\end{lemma}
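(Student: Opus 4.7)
The plan is to prove this directly using the associativity property of the Frobenius form, which is essentially the only tool available and exactly the right tool. The key observation is that the condition $u \in B^\perp$ means $(u,v)=0$ for every $v \in B$, and the condition $B^\perp B \subseteq B^\perp$ means that for any $u \in B^\perp$ and $b \in B$, the product $ub$ is again orthogonal to all of $B$.

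First I would fix arbitrary elements $u \in B^\perp$ and $b \in B$, and aim to check that $ub \in B^\perp$. To do this, I would pick an arbitrary $v \in B$ and compute $(ub, v)$. Using the Frobenius property $(xy,z) = (x,yz)$, this rewrites as $(u, bv)$. Now the essential point is that $B$ is a subalgebra, so $bv \in B$, and hence $(u, bv) = 0$ because $u \in B^\perp$. Therefore $(ub, v) = 0$ for all $v \in B$, which gives $ub \in B^\perp$, as required.

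The hard part, such as it is, is simply to notice that this is a one-line consequence of the Frobenius identity together with the definition of a subalgebra; no further machinery (commutativity, the fusion law, primitivity, or properties of axes) is needed. I would write the proof as a short displayed chain of equalities $(ub,v) = (u,bv) = 0$, with a brief note that $bv \in B$ by closure of $B$ under the algebra product and that the first equality uses the Frobenius associativity of the form. Since the argument is symmetric in the two factors of the product $ub$ (and our algebras are commutative anyway), no separate treatment of $bu$ is necessary.
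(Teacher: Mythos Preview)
Your proof is correct and is essentially identical to the paper's own proof: both take $u\in B^\perp$, $b,v\in B$, and use the Frobenius associativity $(ub,v)=(u,bv)=0$ together with closure of $B$ under multiplication. Only the variable names differ.
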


\begin{proof}
Indeed, for $u\in B^\perp$ and $v,w\in B$, we have that $(uv,w)=(u,vw)=0$, since the form associates with the algebra product. Thus, $uv\in B^\perp$, as claimed.
\end{proof}

Let us now additionally assume that $A$ is unital, that is, it contains a multiplicative identity $\one$.

\begin{proposition} \label{unital}
If $B$ is a subalgebra of $A$ such that the Frobenius form is non-degenerate on $B$ then the orthogonal projection $\one_B$ of $\one$ onto $B$ is the multiplicative identity of $B$. In particular, $B$ is unital.
\end{proposition}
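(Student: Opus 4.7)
The plan is to decompose $A$ using the orthogonal complement of $B$ and then use Lemma \ref{module} to confine the error term.

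First I would observe that since the Frobenius form restricted to $B$ is non-degenerate, we have a direct sum decomposition $A = B \oplus B^\perp$. Indeed, non-degeneracy on $B$ immediately gives $B \cap B^\perp = 0$. For the sum to exhaust $A$, we note that for any $u \in A$, the linear functional $b \mapsto (u,b)$ on $B$ is, by non-degeneracy and finite-dimensionality of $B$, represented by a unique element $b_0 \in B$, and then $u - b_0 \in B^\perp$. This is exactly what it means for $\one_B$ to be the orthogonal projection of $\one$ onto $B$: we write $\one = \one_B + \one'$ with $\one_B \in B$ and $\one' \in B^\perp$.

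Next I would verify that $\one_B$ acts as identity on $B$. For any $v \in B$, use that $\one$ is the identity of $A$:
$$v = \one v = \one_B v + \one' v.$$
The first term $\one_B v$ lies in $B$ since $B$ is a subalgebra. The second term $\one' v$ lies in $B^\perp$ by Lemma \ref{module}, applied to $\one' \in B^\perp$ and $v \in B$. Rearranging, $v - \one_B v = \one' v$, where the left-hand side lies in $B$ and the right-hand side lies in $B^\perp$. Since $B \cap B^\perp = 0$, both sides vanish, giving $\one_B v = v$. Commutativity of $A$ then promotes this one-sided identity to a genuine two-sided identity on $B$, so $\one_B = \one_B^{}$ is the multiplicative identity of $B$.

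There is no real obstacle here; the only subtlety is the existence and uniqueness of the orthogonal projection, which follows from non-degeneracy of the form on $B$ together with finite-dimensionality of $B$ (implicit in our setting). The heart of the argument is a single application of Lemma \ref{module}, which forces the component of $\one$ outside $B$ to act trivially on $B$ by the module property of $B^\perp$.
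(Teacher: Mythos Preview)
Your proof is correct and follows essentially the same route as the paper: decompose $A=B\oplus B^\perp$, write $\one=\one_B+\one'$, and use Lemma~\ref{module} to force $\one' v\in B^\perp$ while $\one_B v\in B$, so the direct sum splits $v=\one_B v$. Your justification of the decomposition via the Riesz-type representation on $B$ is in fact slightly more careful than the paper's, which simply asserts $A=B^\perp\oplus B$ from $B\cap B^\perp=0$.
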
 

\begin{proof}
Since $B$ is non-degenerate with respect to the Frobenius form, we have that $B^\perp\cap B=0$, and so $A=B^\perp\oplus B$ as a vector space. In particular, $\one=u+v$ for some $u\in B^\perp$ and $v\in B$, and clearly, $v$ is the projection of $\one$ onto $B$, that is, $\one_B=v$.

Note that, for all $w\in B$, we have that $w=\one w=uw+\one_Bw$. Furthermore, it is clear that $\one_Bw\in B$ and, by Lemma \ref{module}, $uw\in B^\perp$. Consequently, $uw$ and $\one_Bw$ are the projections of $w$, respectively, to $B^\perp$ and $B$. Since $w\in B$, we conclude that $uw=0$ and $w\one_B=w$. This shows that $\one_B$ is indeed the identity of $B$ and $u=\one-\one_B$ is in the annihilator of $B$.
\end{proof}

Let us also note the interesting fact that the annihilator of $B$ is non-trivial if $B$ does not contain $\one$.

\begin{corollary}
All subalgebras of the Griess algebra are metrisable and unital.
\end{corollary}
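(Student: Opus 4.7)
The plan is to derive both assertions as inheritance properties from the ambient Griess algebra, exploiting the positive-definiteness of its Frobenius form. As recalled in the paragraph preceding the corollary, the Griess algebra $\mathcal{G}$ carries a positive-definite Frobenius form (when realised over $\R$), and $\mathcal{G}$ is unital with identity $\one$.

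First I would address metrisability. Let $B$ be a subalgebra of $\mathcal{G}$ and consider the restriction of the Frobenius form of $\mathcal{G}$ to $B\times B$. The Frobenius identity $(uv,w)=(u,vw)$ holds for all $u,v,w\in\mathcal{G}$, and in particular for all $u,v,w\in B$, so the restricted form still associates with the product. Moreover, since the form on $\mathcal{G}$ is positive definite, its restriction to any subspace is also positive definite, hence non-degenerate and \emph{a fortiori} non-zero. Thus $B$ inherits a bona fide Frobenius form and is metrisable.

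For unitality, the key point is that the hypotheses of Proposition \ref{unital} are now satisfied for the pair $(\mathcal{G},B)$: the ambient algebra $\mathcal{G}$ is unital, and we have just shown that the Frobenius form is non-degenerate on $B$. Applying the proposition, the orthogonal projection $\one_B$ of $\one$ onto $B$ is the multiplicative identity of $B$, so $B$ is unital.

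There is essentially no obstacle here, as the result reduces to a straightforward application of Proposition \ref{unital} once the non-degeneracy of the restricted form is recorded; the only subtlety worth flagging is that non-degeneracy on $B$ is automatic here because we work over $\R$ with a positive-definite form, whereas over a general field one would need a separate argument to rule out the restriction being degenerate.
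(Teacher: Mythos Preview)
Your proof is correct and follows essentially the same approach as the paper: restrict the positive-definite Frobenius form to obtain metrisability, then invoke Proposition~\ref{unital} (using that positive-definiteness guarantees non-degeneracy on $B$) to conclude unitality. Your version is slightly more explicit about why non-degeneracy holds, but the argument is the same.
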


\begin{proof}
Since the form on the Griess algebra is positive definite, it is non-zero on every subalgebra $B$, and clearly, the restriction of the form is a (positive-definite) Frobenius form on $B$. Also, the Griess algebra is unital, and so $B$ is unital by Proposition \ref{unital}.
\end{proof}

As we already mentioned, the majority of known algebras of Monster type $(\frac{1}{4},\frac{1}{32})$ are subalgebras of the Griess algebra, so this explains their unitality and metrisability. For the algebras that are not subalgebras of the Griess algebra, unitality and positive-definiteness of the Frobenius form can be checked directly. (See the tables in \cite{axialconstruction}.)

To summarise, all the algebras, whose automorphism groups we compute in this paper, are unital and admit a (positive-definite) Frobenius form. Furthermore, all their $\Q$-subalgebras, whether axial or not, are unital too. 

We now turn to the constructive computation of the automorphism group.

\section{Naive approach} \label{naive}

Let us suppose that it is known that $\Aut(A)$ is finite for an axial algebra $A$. How can $\Aut(A)$ be found explicitly? 

As the axes generate $A$, the automorphism group acts faithfully on the set of all axes, including the known ones and, possibly, new, unknown ones. Hence finding $\Aut(A)$ is essentially the same as finding all axes in $A$. On the surface, looking for axes is easy. The most simply-minded approach would be to select a basis $u_1,\ldots,u_n$ in $A$, write an arbitrary vector as a linear combination $u=\sum_{i=1}^n x^iu_i$, where $x^1,\ldots,x^n$ are independent indeterminates\footnote{Not to be confused with the powers of a single indeterminate $x$.}, and obtain a system of quadratic equations from the equality $u^2=u$. If $\gm_{ij}^k$ are the structure constants of $A$ with respect to the basis $u_1,\ldots,u_n$ (\ie we have that $u_iu_j=\sum_{k=1}^n\gm_{ij}^ku_k$ for all $i$ and $j$) then these quadratic equations are 
$$\sum_{i,j=1}^n\gm_{ij}^kx^ix^j-x^k=0,$$
for $k=1,2,\ldots,n$. Solving this system of quadratic equations, we find all idempotents in $A$ and then we can check them individually to find all primitive axes for the target fusion law $\cF$.

Of course, in reality this plan is workable only for very small algebras $A$. Even when the variety of idempotents is of dimensional zero, by B\'ezout's Theorem, the number of solutions of this system, counted with multiplicities, equals $2^n$, \ie it is exponential in the dimension of $A$. With the available solvers (we use MAGMA \cite{MAGMA}), only algebras of dimension of up to about $10$ can be handled using this simply-minded approach. In Table \ref{naive_results} below\footnote{The $\infty$ in the table means that the computation completes but takes an unreasonable amount of time.}, we list the algebras done in this naive way.
\begin{table}[!h] 
\renewcommand{\arraystretch}{2}
\centering 
\begin{tabular}{cccccc}\hline\hline
$G$&axet&$\mathrm{dim}$&shape&$\mathrm{Aut}(A)$&time (s)\\
\hline\hline
$S_4$ & 3+6 & 9 & 3C2A & $S_4$ & $\infty$\\ 
 & 6 & 6 & 3C2 & $S_4$ & 0.52\\ 

\hline\hline
\end{tabular}\caption{Naive algorithm}\label{naive_results}
\end{table}
Note that both these cases have already been completed by Castillo-Ramirez \cite{AssocSubs}, as part of the classification of associative subalgebras of these algebras.

We can substantially improve on this simply-minded approach by requiring that the idempotents have the prescribed length $r$. (In case of algebras of Monster type $(\frac{1}{4},\frac{1}{32})$ this must be $r=1$.) Adding the requirement that the length of idempotent be $r$ gives an additional quadratic equation, which significantly reduces the set of solutions. 

However, since our algebras $A$ are metrisable and unital, we can do even better.

\begin{lemma}
If $A$ unital and metrisable then $(u,u)=(\one,u)$ for every idempotent $u\in A$.
\end{lemma}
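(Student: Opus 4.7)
The plan is to prove the identity by a single short chain of equalities, using nothing beyond the three hypotheses at hand: Frobenius associativity $(a, bc) = (ab, c)$, idempotency $u = u^2$, and the identity property $\one u = u$ of the unit.

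Concretely, I would start from $(\one, u)$ and rewrite $u$ as $u \cdot u$ via idempotency, giving $(\one, u\cdot u)$. Applying the Frobenius identity with $a = \one$ and $b = c = u$ shifts one factor across the form to yield $(\one u, u)$, and the unital property collapses this to $(u, u)$. Symmetrically, one could instead start from $(u, u) = (u, u \one)$ and slide to obtain $(u^2, \one) = (u, \one)$; either direction works and gives the claim in three lines.

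I do not anticipate any obstacle: the argument is essentially one line and makes no use of the axial structure, the fusion law, primitivity of $u$, or the characteristic of $\F$. The same identity holds for any idempotent in any commutative unital algebra equipped with an associative bilinear form. The useful consequence for the naive search for axes is that once one imposes a fixed length $(u,u) = r$, the lemma provides a further \emph{linear} equation $(\one, u) = r$ in the coordinates of $u$, which should noticeably cut down the solution set of the quadratic idempotent system and extend the range of the naive algorithm slightly beyond dimension ten.
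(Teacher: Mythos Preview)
Your proof is correct and is essentially identical to the paper's: the paper writes the same three-step chain $(u,u)=(\one u,u)=(\one,uu)=(\one,u)$, just traversed in the opposite direction. Your remark about the resulting linear constraint $(\one,u)=r$ also matches exactly how the paper exploits the lemma.
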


\begin{proof}
Indeed, $(u,u)=(\one u,u)=(\one,uu)=(\one,u)$.
\end{proof}

(Note that we already used this observation in the proof of Theorem \ref{4-dim}.) This lemma means that, instead of a quadratic equation $(u,u)=r$, we can use a linear equation $(\one,u)=r$. Though it looks to reduce the dimension of the problem only by one, surprisingly, adding this linear relation substantially speeds up the calculation of all idempotents and thus significantly increases the algebra dimensions that can be handled using this approach.

In Table \ref{less naive}, we recorded some results obtained using the naive algorithm with 
length restriction (with $r=1$). For comparison, we also included the two algebras from Table 
\ref{naive_results}, which now complete significantly faster.

\begin{table}[!h]
\renewcommand{\arraystretch}{2} 
	\centering 
	\begin{tabular}{ccccccr}\hline\hline
		$G$&axet&$\mathrm{dim}$&shape&new&$\mathrm{Aut}(A)$&time (s)\\\hline\hline
		&$6$&13&3A2B&&$S_4$&9.96\\
		&6&6&3C2B&&$S_4$&0.01\\
		$S_4$&$3+6$&12&4A3C2B&&$S_4$&2.11\\
		&$3+6$&13&4B3A2A& &$S_4$&10.86\\
		&$3+6$&16&4B3A2B&3&$S_4$&1473.09\\
		&$3+6$&9&4B3C2A& &$S_4$&0.05\\
		&$3+6$&12&4B3C2B&3&$S_4$&1.72\\\hline 
		$S_5$&10&10&3C2B&&$S_5$&0.18\\ \hline 
		$S_6$&15&15&3C2B&&$S_6$&342.75\\ 
		\hline\hline		
	\end{tabular}
\caption{Naive algorithm with length}\label{less naive}
\end{table}

To summarise, for small algebras $A$ only, it is possible to find the entire variety of idempotents from $A$ (or idempotents of a given length $r$). Note that these methods can also be used to find all idempotents within a small subspace of a larger $A$.

\section{The $\mathbf{0}$-eigenvalue subalgebra}\label{eigenvalue-0}

In this section we discuss a method which can be used to enumerate all axes in a less naive way. While we will rely on some properties specific to the fusion law $\cM(\frac{1}{4},\frac{1}{32})$, these properties are hopefully general enough to allow the application of the same ideas to a wider class of axial algebras.

Let $A$ be an algebra of Monster type $(\frac{1}{4},\frac{1}{32})$ and let $X=X(A)$ be its axet, \ie the set of known axes invariant under the action of the known group of automorphisms, $G$. Typically, $G$ is the group that $A$ was constructed from. The issue is whether $X$ is the full set of axes from $A$ and, equivalently, whether $G$ is the full automorphism group of $A$. 

Select $a\in X$. Recall that $\cM(\frac{1}{4},\frac{1}{32})$ is a Seress fusion law, and in particular, as $0\star 0=\{0\}$, we have that $U=A_0(a)$ is a subalgebra of $A$.

As above, we would like to decide whether $X$ contains all primitive axes of Monster type $(\frac{1}{4},\frac{1}{32})$ from $A$, or whether there are further such axes $b$.  According to \cite{FMS}, $B=\lla a,b\rra$ is a Norton-Sakuma algebra, and it is unital.  Let $\one_B$ be the multiplicative identity of $B$. 

\begin{lemma}
The element $z=\one_B-a$ is an idempotent contained in $U$.
\end{lemma}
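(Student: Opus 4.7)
The argument should be a very short direct computation; the content of the lemma is essentially that the Norton--Sakuma subalgebra $B$ supplies a canonical new idempotent, obtained from its identity, that automatically lives in the $0$-eigenspace of $a$. The only input beyond elementary algebra is the fact cited just above the statement that $B = \lla a,b\rra$ is a Norton--Sakuma algebra, hence unital, so that $\one_B$ exists.

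For the idempotence of $z$, I would expand
\[
z^2 = (\one_B - a)^2 = \one_B^2 - 2\,\one_B a + a^2,
\]
and substitute $\one_B^2 = \one_B$ and $\one_B a = a$ (since $\one_B$ is the multiplicative identity of $B$ and $a \in B$) together with $a^2 = a$ (since $a$ is an axis). This collapses to $\one_B - 2a + a = \one_B - a = z$, so $z^2 = z$.

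To show $z \in U = A_0(a)$, I would compute the product $az$ directly:
\[
az = a(\one_B - a) = a\,\one_B - a^2 = a - a = 0.
\]
Since $B$ is a subalgebra of $A$, this product computed inside $B$ coincides with the one computed in $A$, so $z$ is annihilated by $\ad_a$ viewed as an element of $A$ and therefore lies in $A_0(a) = U$.

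There is no substantive obstacle. The whole argument is bookkeeping once one recalls that $B$ is unital. The only mild point to be careful about is that $\one_B$ is the identity of $B$ rather than of $A$, so even when $A$ itself happens to be unital one should not replace $\one_B$ by $\one_A$ in these calculations; the relations $\one_B^2 = \one_B$ and $\one_B a = a$ are being used only as statements about multiplication inside $B$.
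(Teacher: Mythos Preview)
Your proof is correct and matches the paper's own argument almost verbatim: both expand $(\one_B-a)^2$ using $\one_B^2=\one_B$, $\one_B a=a$, $a^2=a$ to get idempotence, and then compute $za=0$ to place $z$ in $A_0(a)=U$. Your additional remark distinguishing $\one_B$ from $\one_A$ is a helpful clarification but not a departure from the paper's approach.
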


\begin{proof}
Indeed, $z^2=(\one_B-a)^2=\one_B^2-2\one_B a+a^2=\one_B-a=z$, \ie $z$ is an idempotent. Moreover, $za=(\one_B-a)a=\one_B a-a^2=a-a=0$, and so $z\in A_0(a)=U$.
\end{proof}

This suggests the following possible two-step approach to our problem: 
\begin{enumerate}[label=(\arabic*)]
\item \label{step1} we first find all possible idempotents $z\in U$; and then
\item \label{step2} we find $B$ (and $b$) inside $A_1(a+z)=A_1(\one_B)$.
\end{enumerate}

Since $U=A_0(a)$ is smaller that $A$ (typically, $\dim(U)$ is between a third and a half of $\dim(A)$), step \ref{step1} can be done, say, as in Section \ref{naive}. For step \ref{step2}, we can, pragmatically, expect $A_1(a+z)$ to be equal or at least close to $B$ in all cases, where $a+z$ is $\one_B$ for some $B$. It is also likely that $A_1(a+z)$ is small for all remaining idempotents $z$, as annihilators (and $A_1(a+z)$ is simply the annihilator of $a+z$) tend to be small. So in all cases, it is likely that the naive method of finding the idempotents $b$ can be used at step \ref{step2} as well.

As we explained in Section \ref{naive}, an important improvement comes from the known length of $z$. The lengths of $\one_B$ for all Norton-Sakuma algebras $B$ are summarised in Table \ref{length of one}, where the values in the second row are taken from \cite{NortSakIdemps}.
\begin{table}[ht]
\centering
\renewcommand{\arraystretch}{2}
\begin{tabular}{ccccccccc}\hline\hline 
Type of $B$&$2\A$&$2\B$&$3\A$&$3\C$&$4\A$&$4\B$&$5\A$&$6\A$\\\hline\hline 
$(\one_B,\one_B)$&$\frac{12}{5}$&$2$&$\frac{116}{35}$&$\frac{32}{11}$&$4$&$\frac{19}{5}$&$\frac{32}{7}$&$\frac{51}{10}$\\\hline 
$(z,z)$&$\frac{7}{5}$&$1$&$\frac{81}{35}$&$\frac{21}{11}$&3&$\frac{14}{5}$&$\frac{25}{7}$&$\frac{41}{10}$\\\hline 
\end{tabular}
\caption{Identity length of the Norton-Sakuma algebras} \label{length of one}
\end{table}

We note the following.

\begin{lemma}
We have that $(z,z)=(\one_B,\one_B)-(a,a)=(\one_B,\one_B)-1$.
\end{lemma}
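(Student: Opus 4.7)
The plan is a direct computation using bilinearity of the Frobenius form, combined with the identity lemma from Section \ref{naive} applied inside the subalgebra $B$. First I would write $z=\one_B-a$ and expand
\[
(z,z) \;=\; (\one_B-a,\,\one_B-a) \;=\; (\one_B,\one_B) \;-\; 2(\one_B,a) \;+\; (a,a).
\]
Since we have normalised the Frobenius form on $A$ so that each axis has square length $1$, we have $(a,a)=1$. So the statement reduces to showing that $(\one_B,a)=1$.

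To establish $(\one_B,a)=1$, I would invoke the framework of Section \ref{Monster algebras}. The Norton-Sakuma algebra $B=\lla a,b\rra$ is unital by \cite{FMS}, and the Frobenius form of $A$ restricts to a Frobenius form on $B$ (the associativity identity $(uv,w)=(u,vw)$ is preserved under restriction). Thus $B$ is a unital metrisable algebra, and $a\in B$ is an idempotent. The lemma from Section \ref{naive} (that $(u,u)=(\one,u)$ for any idempotent $u$ in a unital metrisable algebra) applied inside $B$ then yields
\[
(\one_B,a) \;=\; (a,a) \;=\; 1.
\]
Substituting this back gives $(z,z)=(\one_B,\one_B)-2+1=(\one_B,\one_B)-1$, which is exactly the claim.

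There is no real obstacle here: the whole argument is a one-line bilinear expansion, whose only nontrivial input is the observation that $a$ is an idempotent inside the unital subalgebra $B$, so the ``length equals pairing with the identity'' identity is applicable. This also serves as a sanity check for Table \ref{length of one}, where the third row is obtained from the second by subtracting $1$.
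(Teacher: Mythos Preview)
Your proof is correct and essentially identical to the paper's: both expand $(z,z)=(\one_B,\one_B)-2(\one_B,a)+(a,a)$ and reduce to showing $(\one_B,a)=(a,a)$. The only cosmetic difference is that the paper redoes the one-line computation $(\one_B,a)=(\one_B,a^2)=(\one_B a,a)=(a,a)$ inline, whereas you cite the lemma from Section~\ref{naive} that encapsulates it.
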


\begin{proof}
Indeed, $(z,z)=(\one_B-a,\one_B-a)=(\one_B,\one_B)-2(\one_B,a)+(a,a)=(\one_B,\one_B)-2(a,a)+(a,a)=
(\one_B,\one_B)-(a,a)=(\one_B,\one_B)-1$, since $(\one_B,a)=(\one_B,a^2)=(\one_B 
a,a)=(a,a)$. 
\end{proof}

This explains the values in the third row of Table \ref{length of one}.

\medskip
In view of this, we only need idempotents $z\in U$, whose length is in the bottom row of Table \ref{length of one}. So we can can use the naive method with length restriction.

This method works really well with the algebras of dimension up to about $35$. The algebras we completed using this method are listed in Table \ref{NuancedResults}. Again, for comparison, we included two long cases from Table \ref{less naive}, for the group $G=S_4$ of dimension $16$ and for $G=S_6$ of dimension $15$. As you see, now they are quite quick.
\begin{table}[h!]
\renewcommand{\arraystretch}{2}
\centering
\caption{Results obtained from the nuanced algorithm}\label{NuancedResults}
\bigskip
\begin{tabular}{ccccccr}
\hline\hline 
$G$&axet&$\mathrm{dim}$&shape&new&$\mathrm{Aut}(A)$&time (s)\\
\hline\hline
$S_4$&$3+6$&23&4A3A2A&7&$C_2\times S_4$&4.07\\
&$3+6$&25&4A3A2B& &$S_4$&53.05\\
&$3+6$&16&4B3A2B&3&$S_4$&0.78\\
\hline 
$A_5$&15&26&3A2A& &$S_5$&1.53\\
&15&20&3C2A& &$S_5$&0.29\\
&15&21&3C2B& &$S_5$&1.21\\
\hline
$S_5$&10&10&3C2B&&$S_5$&0.07\\
&$10+15$&36&4B& &$S_5$&14912.95\\
\hline
$L_3(2)$&21&21&4B3C& &$\Aut(L_3(2))$&1.36\\
\hline
$S_6$&15&15&3C2B&&$S_6$&1.18\\
\hline 
$S_3\times S_3$&$3+3+9$&18&3A2A& &$S_3\wr C_2$&0.84\\
&$3+3+9$&25&3A2B&7&$C_2\times (S_3\wr C_2)$&87.20\\
\hline
$(S_4\times S_3)^{+}$&$3+18$&24&3C3C3C2A& &$C_3:S_3: S_4$&78.87\\
&$3+18$&27&3C3C3C2B&3&$C_3:S_3: S_4$&$\infty$\\ 
\hline 
$S_7$&21&21&3C2B& &$S_7$&2371.58\\ 
\hline\hline
\end{tabular}
\end{table}

In some rare instances, the dimension of the variety of idempotents in $U$ was positive (\ie we had infinitely many idempotents) even after we added the length relation. Then an additional relation was needed and it was developed as follows. 

Depending on the type of $B$, we know which eigenvalues $a$ has on $B$. Equivalently, we know the eigenvalues $\lm\in\{1,0,\frac{1}{4},\frac{1}{32}\}$ such that $B\cap A_\lm(a)\neq 0$. 

\begin{lemma}
If $a$ has eigenvalue $\lm$ on $B$ then $z$ has eigenvalue $1-\lm$ in its action on $A_\lm(a)$. Consequently, $\det(\ad_z|_W-(1-\lm)\Id)=0$, where $W=A_\lm(a)$, $\ad_z|_W$ is the restriction of the adjoint $\ad_z$ to $W$ and $\Id$ is the identity map on $W$.
\end{lemma}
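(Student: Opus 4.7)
The plan is to show two things: first that $W = A_\lm(a)$ is invariant under $\ad_z$ (so the restriction in the statement is well-defined), and second that $1 - \lm$ actually appears as an eigenvalue of $\ad_z$ on $W$ via an explicit eigenvector coming from $B$.

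For the invariance of $W$ under $\ad_z$, I would use the Seress property of $\cM(\frac{1}{4},\frac{1}{32})$: we have $z \in U = A_0(a)$, and in the fusion law $0 \star \mu \subseteq \{\mu\}$ for every $\mu \in \cF$, while $0 \star 1 = \emptyset$. Hence $A_0(a) \cdot A_\mu(a) \subseteq A_\mu(a)$ for $\mu \in \{0,\frac{1}{4},\frac{1}{32}\}$, and $A_0(a) \cdot A_1(a) = 0$. In particular $zW \subseteq W$ in every case. For the edge case $\lm = 1$, primitivity gives $A_1(a) = \la a\ra$ and $za = 0$, so $\ad_z|_W$ is the zero operator on a one-dimensional space and the determinant condition is the trivial identity $0 = 0$, matching $1 - \lm = 0$.

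For the main content, I would produce an explicit eigenvector. By hypothesis there is a non-zero $v \in B \cap A_\lm(a)$. Since $\one_B$ is the multiplicative identity of $B$ and $v \in B$, we have $\one_B v = v$, and $av = \lm v$ because $v \in A_\lm(a)$. Therefore
\[
zv = (\one_B - a) v = \one_B v - av = v - \lm v = (1-\lm) v.
\]
Thus $v$ is a non-zero element of $W$ satisfying $(\ad_z|_W - (1-\lm)\Id)(v) = 0$, so $1-\lm$ is an eigenvalue of $\ad_z|_W$ and the determinant vanishes.

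There is essentially no obstacle here beyond bookkeeping: the argument is just the observation that $\one_B - a$ acts as $1 - \lm$ on the $\lm$-eigenspace of $a$ inside $B$, together with the fusion-law check that confirms the restriction $\ad_z|_W$ is meaningful as an endomorphism of $W$. The only point requiring care is the $\lm = 1$ case, which must be handled by noting that $A_1(a)$ is spanned by $a$ itself and hence $\ad_z$ annihilates it.
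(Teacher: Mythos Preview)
Your proof is correct and follows essentially the same approach as the paper's: both verify invariance of $W=A_\lm(a)$ under $\ad_z$ via the Seress property $0\star\lm\subseteq\{\lm\}$, then exhibit a non-zero $v\in B\cap A_\lm(a)$ and compute $zv=(\one_B-a)v=(1-\lm)v$. Your separate treatment of the $\lm=1$ case is not strictly necessary (the inclusion $0\star 1=\emptyset\subseteq\{1\}$ already gives $zW\subseteq W$, and $a\in B$ supplies the eigenvector), but it does no harm.
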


\begin{proof}
First, we note that $0\star\lm\subseteq\{\lm\}$ as $\cM(\frac{1}{4},\frac{1}{32})$ is Seress. This means that $zA_\lm(a)\subseteq A_\lm(a)$, since $z\in U=A_0(\lm)$. Thus, $\ad_z$ leaves $W=A_\lm(a)$ invariant, and so we can indeed consider the restriction of $\ad_z$ to $W$.

Assuming that $a$ has eigenvalue $\lm$ on $B$, consider $0\neq w\in B\cap A_\lm(a)=B\cap W$. Then $zw=(\one_B-a)w=\one_B w-aw=w-\lm w=(1-\lm)w$.  Thus, as $w\in W$, the restriction of $\ad_z$ to $W$ indeed has eigenvalue $1-\lm$.
\end{proof}

The new relation $\det(\ad_z|_W-(1-\lm)\Id)=0$ is a polynomial relation of degree at most $\dim(W)$ in the indeterminates $x^1,\ldots,x^n$. Note that the relation is trivial for $\lm=1$, so it only makes sense to use $\lm\in\{0,\frac{1}{4},\frac{1}{32}\}$. When $\dim(W)$ is large, this additional relation is not good, as it significantly increases the time and memory demand. So it should not be used routinely, but only where there is no other way. In reality, these extra relations were only needed in the case where $B\cong 4\A$ (\ie $(z,z)=3$) and then, in all cases, the determinant relation coming from $\lm=\frac{1}{32}$ reduces the variety dimension to zero.

\section{Twins and Jordan axes} \label{twins and Jordan axes}

In this section we discuss some observations we can draw from the completed cases. 

\subsection{Twins}

First of all, we have cases where we find additional axes. However, in all of these cases the full automorphism group is either equal or only slightly bigger than the initial group $G$. What happens is that we get additional axes which produce the same Miyamoto involutions.

\begin{definition}\label{twin_defn}
	Let $A$ be an $\mathcal{F}$-axial algebra where $\mathcal{F}$ is a $C_2$-graded fusion law, and let $a,b$ be distinct axes. Then we say that $a$ and $b$ are \textit{twins} if $\tau_a=\tau_b$. 
\end{definition} 

Clearly, being twins is an equivalence relation, and so, at least in principle, we can also have three or more axes leading to the same Miyamoto involution. The next lemma gives some properties of twin axes in the case where $\mathcal{F}$ is the fusion law of Monster type $\mathcal{M}(\alpha,\beta)$. 

For a subspace $W\subseteq A$, we denote the annihilator of $W$ by $\Ann(W)$; that is,
$$\Ann(W)=\{u\in A\mid uw=0\mbox{ for all }w\in W\}.$$

\begin{lemma}\label{twin_props}
	Let $A$ be an axial algebra of Monster type $\mathcal{M}(\al,\bt)$. Then the following are equivalent:
	\begin{enumerate}[label={\upshape (\alph*)}]
	\item axes $a,b$, $a\neq b$, are twins;  
	\item the eigenspaces $A_\bt(a)$ and $A_\bt(b)$ are equal.
    \end{enumerate}
    Furthermore, these two imply
    \begin{enumerate}[resume]
	\item The element $b-a$ is in the annihilator of $A_\bt(a)$; equivalently, $b=a+u$, for some $u\in\Ann(A_\bt(a))$. 
	\end{enumerate} 
\end{lemma}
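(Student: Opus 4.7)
The plan is to unpack the definition of the Miyamoto involution via the $C_2$-grading on $\cM(\al,\bt)$ (with $\cF_+=\{1,0,\al\}$ and $\cF_-=\{\bt\}$), under which, for any axis $c$, the involution $\tau_c$ acts as $+1$ on $A_{\{1,0,\al\}}(c)$ and as $-1$ on $A_\bt(c)$. In particular, the $-1$-eigenspace of $\tau_c$ is precisely $A_\bt(c)$. The direction (a) $\Rightarrow$ (b) then drops out immediately: if $\tau_a=\tau_b$, the $-1$-eigenspaces must agree, giving $A_\bt(a)=A_\bt(b)$.

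For (b) $\Rightarrow$ (a), I need to show that two involutions with the same $-1$-eigenspace $V:=A_\bt(a)=A_\bt(b)$ also agree on $A$. Both $\tau_a$ and $\tau_b$ are $C_2$-grading automorphisms whose odd part is $V$, so the strategy is to argue that the complementary "even" part is determined by $V$ up to elements of $V\cap\Ann(V)$. Using the fusion-law constraints $\bt\star\lm\subseteq\{\bt\}$ for $\lm\in\cF_+$ together with $\bt\star\bt\subseteq\cF_+$, one checks that if $B_0$ and $B_0'$ are two grading-compatible complements to $V$ and $u\in B_0$ is written as $u'+v$ with $u'\in B_0'$ and $v\in V$, then $vz\in B_0'\cap V=0$ for every $z\in V$, forcing $v\in V\cap\Ann(V)$. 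For the primitive axial algebras of interest here this intersection is trivial, so $A_{\{1,0,\al\}}(a)=A_{\{1,0,\al\}}(b)$ and hence $\tau_a=\tau_b$.

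Finally, (c) follows from a direct computation in the shared eigenspace: for any $w\in V=A_\bt(a)=A_\bt(b)$ we have $aw=\bt w=bw$, and so $(b-a)w=0$. As $w$ ranges over $A_\bt(a)$ this gives $b-a\in\Ann(A_\bt(a))$, and setting $u:=b-a$ yields the stated form $b=a+u$ with $u\in\Ann(A_\bt(a))$.

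The main obstacle I foresee is the upgrade in (b) $\Rightarrow$ (a) from equality of $-1$-eigenspaces to equality of the Miyamoto involutions themselves: a priori the $+1$-eigenspaces $A_{\{1,0,\al\}}(a)$ and $A_{\{1,0,\al\}}(b)$ are merely two complements of the common subspace $V$ in $A$, and the grading argument above is what rules out the discrepancy. Everything else is a straightforward unpacking of definitions and a one-line computation on $V$.
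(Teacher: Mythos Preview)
Your treatment of (a)$\Rightarrow$(b) and of (c) is correct and matches the paper, which phrases the same observation via commutator spaces: $A_\bt(c)=[A,\tau_c]$ is exactly the $-1$-eigenspace of $\tau_c$, so equal Miyamoto involutions force equal $\bt$-eigenspaces, and the computation $(b-a)w=\bt w-\bt w=0$ gives (c).

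The gap is in (b)$\Rightarrow$(a). You correctly isolate the real issue---two linear involutions sharing a $-1$-eigenspace $V$ need not be equal---and your fusion-law argument correctly shows that the discrepancy between the two $+1$-eigenspaces lands in $V\cap\Ann(V)$. But the sentence ``for the primitive axial algebras of interest here this intersection is trivial'' is asserted, not proved: primitivity of the axes says nothing directly about self-annihilating elements of $A_\bt(a)$, and a nonzero $v\in V$ with $vV=0$ is not ruled out by the fusion law alone (indeed, $\ad_v$ would then be nilpotent of index~$2$, which is perfectly compatible with the grading). The paper's own proof is in fact \emph{briefer} than yours at this step---it simply writes ``$a$ and $b$ are twins if and only if $A_\bt(a)=[A,\tau_a]=[A,\tau_b]=A_\bt(b)$'' without further comment---so you have gone further in identifying where the content lies. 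One clean way to close the gap, available for all algebras actually treated in the paper since they carry a non-degenerate Frobenius form, is to note that eigenspaces of $\ad_a$ for distinct eigenvalues are mutually orthogonal; hence $A_{\{1,0,\al\}}(a)=A_\bt(a)^\perp=A_\bt(b)^\perp=A_{\{1,0,\al\}}(b)$, and so $\tau_a=\tau_b$. Without such a form, your claim about $V\cap\Ann(V)$ needs an independent argument.
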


\begin{proof}
For an axis $a\in A$, write $A:=A_+\oplus A_-$, where $A_+:=A_1(a)\oplus A_0(a)\oplus A_\al(a)$ and $A_-:=A_\bt(a)$. Then every $u\in A$ can be written as $u=u_++u_-$ for unique $u_+\in A_+$ and $u_-\in A_-$. Note that $u^{\tau_a}=u_+-u_-$ and so $[u,\tau_a]:=u^{\tau_a}-u=(u_+-u_-)-(u_++u_-)=-2u_-\in A_-$. Since $\F$ is not of characteristic $2$, we conclude that $A_\bt(a)=A_-=[A,\tau_a]$ is the commutator space. Now if $b\in A$ is a second axis then $a$ and $b$ are twins if and only if $A_\bt(a)=[A,\tau_a]=[A,\tau_b]=A_\bt(b)$. Thus, (a) and (b) are equivalent.

Next, if $W:=A_\bt(a)=A_\bt(b)$ and $w\in W$ then $(a-b)w=aw-bw=\bt w-\bt w=0$. Thus, $a-b$ annihilates this $w$ and hence also all of $W$. Hence, (b) (and so also (a)) implies (c). 
\end{proof}

We have already mentioned that annihilators tend to be small, especially for substantial subspaces $W$, such as $W=A_\bt(a)$. They only require basic linear algebra for their computation, as the conditions are linear. Hence property (c) gives us an efficient method to determine all axes $b$ that are twins with $a$. Namely, all such $b$ are contained in the coset $a+\Ann(A_\bt(a))$ within the low-dimension subspace $\la a,\Ann(A_\bt(a))\ra$. Hence we can use the naive method to find all such twin axes $b$. This works really well in the algebras that we tried; in fact, in all algebras we tried the dimension of the annihilator $\Ann(A_\bt(a))$ was no more than $10$, and in a large majority it was below $4$. So the calculation was quite quick.

Now recall that the set $X$ of known axes (the axet) is invariant under the action of the known group of automorphisms, $G$. Hence $X$ decomposes as a union of orbits of $G$.

\begin{lemma}
If axes $a$ and $b$ of $A$ are twins then so are $a^g$ and $b^g$ for all $g\in\Aut(A)$.
\end{lemma}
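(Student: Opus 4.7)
The plan is to exploit the fact that automorphisms of $A$ conjugate Miyamoto involutions, i.e.\ $\tau_{a^g} = g^{-1}\tau_a g$ for every $g \in \Aut(A)$ and every axis $a$. Granted this, the statement is immediate: if $\tau_a = \tau_b$ then
\[
\tau_{a^g} = g^{-1}\tau_a g = g^{-1}\tau_b g = \tau_{b^g},
\]
and moreover $a^g \neq b^g$ since $g$ is injective, so $a^g$ and $b^g$ are twins in the sense of Definition \ref{twin_defn}.

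The one point needing justification is the conjugation formula. First I would observe that any automorphism $g$ carries the $\lm$-eigenspace of $\ad_a$ onto the $\lm$-eigenspace of $\ad_{a^g}$: if $au = \lm u$, then applying $g$ gives $a^g \cdot g(u) = \lm\, g(u)$, and applying $g^{-1}$ yields the reverse containment. Consequently, $g$ intertwines the $C_2$-graded decomposition $A = A_{\cF_+}(a)\oplus A_{\cF_-}(a)$ with the corresponding grading relative to $a^g$. Since $\tau_a$ acts as $+1$ on $A_{\cF_+}(a)$ and as $-1$ on $A_{\cF_-}(a)$, and $\tau_{a^g}$ acts analogously with respect to $a^g$, comparing the two actions on an arbitrary $u \in A$ yields $g^{-1}\tau_a g = \tau_{a^g}$.

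I do not anticipate any genuine obstacle; this is essentially bookkeeping around the $\tau$-map. A slicker alternative avoiding $\tau$ altogether would be to invoke Lemma \ref{twin_props}: twins are characterised by the equality $A_\bt(a) = A_\bt(b)$, and the eigenspace-transport fact $g(A_\bt(a)) = A_\bt(a^g)$ immediately propagates this equality to $A_\bt(a^g) = A_\bt(b^g)$, whence $a^g$ and $b^g$ are twins.
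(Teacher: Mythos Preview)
Your argument is correct and is essentially the same as the paper's: both use the conjugation identity $(\tau_a)^g=\tau_{a^g}$ and deduce $\tau_{a^g}=(\tau_a)^g=(\tau_b)^g=\tau_{b^g}$. You simply supply more detail by justifying the eigenspace-transport step and noting the alternative via Lemma~\ref{twin_props}, neither of which the paper spells out.
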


\begin{proof}
Indeed, note that $(\tau_a)^g=\tau_{a^g}$ for an axis $a\in A$ and an automorphism $g\in\Aut(A)$. Consequently, if $a$ and $b$ are twins then $\tau_{a^g}=(\tau_a)^g=(\tau_b)^g=\tau_{b^g}$. Thus, $a^g$ and $b^g$ are also twins.
\end{proof}

In particular, this is true for all $g\in G$, and so we only need to compute twins for one axis from each orbit of $G$ on $X$. For the remaining axes twins can be found using the action by $G$.

The conclusion of this discussion is that we can find all twins from the start, prior to employing more complicated and time-consuming techniques.

Before we switch to the next topic, let us point out that the annihilator technique can be used to check whether a given involution $\tau\in\Aut(A)$ is the Miyamoto involution for an axis. Indeed, suppose that there is a (possibly unknown) axis $x$ such that $\tau_x=\tau$. If the fusion law for $x$ is such that $\tau_x$ negates a single eigenspace $A_\lm(x)$ (as is the case for the fusion law of Monster type), we see that $x-\lm \one_A$ must annihilate $[A,\tau]$, that is, $x\in\la\Ann([A,\tau]),\one_A\ra$. Thus, if the annihilator $\Ann([A,\tau])$ is small, so is the subspace $\la\Ann([A,\tau]),\one_A\ra$, and so we can use the brute force method to find all idempotents in this subspace and then check if any one of them is an axis satisfying the required fusion law.

This is also a very efficient method and it allows us to find axes for known involutions in $G$, when they exist.

\subsection{Jordan axes}

We also encountered a small number of cases, where we found additional axes for which the $\frac{1}{32}$-eigenspace was trivial. When the $\bt$-eigenspace for an axis $a$ of Monster type $(\al,\bt)$ is trivial, the Miyamoto involution $\tau_a$ is trivial. However, in this case we have another involution, $\sg_a$ (called the \emph{sigma involution}) that negates $A_{\al}(a)$. Indeed, this is because such axes obey the tighter fusion law of Jordan type $\al$. In our case $\al=\frac{1}{4}$, and so the sigma involution negates the $\frac{1}{4}$-eigenspace. We will call such axes \emph{Jordan axes}.

We found an effective method of finding all Jordan axes in an algebra $A$. It is based on the following lemma, specific for the case $(\al,\bt)=(\frac{1}{4},\frac{1}{32})$, where we know all $2$-generated algebras.

\begin{proposition}
If $A$ is an algebra of Monster type $(\frac{1}{4},\frac{1}{32})$ then all Jordan axes are contained in the fixed subalgebra $A_G$, where $G=\Miy(A)$ is the Miyamoto group of $A$.
\end{proposition}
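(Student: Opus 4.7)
The plan is to prove that $\tau_a$ fixes $j$ for every Jordan axis $j$ and every generating axis $a\in X$; since $G=\Miy(A)=\la\tau_a\mid a\in X\ra$, this will immediately give $j\in A_G$. As in the proof of Lemma \ref{twin_props}, $\tau_a(j)=j$ is equivalent to $j$ having no component in the $\bt$-eigenspace $A_\bt(a)$, \ie $j\in A_1(a)\oplus A_0(a)\oplus A_\al(a)$. The natural reduction is to the $2$-generated subalgebra $B:=\lla a,j\rra$. Both $a$ and $j$ are primitive $\cM(\frac{1}{4},\frac{1}{32})$-axes -- Jordan type being Monster type with a trivial $\bt$-eigenspace -- and so by the classification from \cite{FMS}, $B$ is isomorphic to one of the eight Norton-Sakuma algebras.

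The heart of the argument is to eliminate most of these eight possibilities. Because $j$ is Jordan in $A$, we have $B\cap A_\bt(j)=0$, so $j$ is also Jordan when viewed as an axis of $B$. From the structure constants of the Norton-Sakuma algebras (Table 3 of \cite{IPSS}), every primitive axis of $3\A$, $4\A$, $4\B$, $5\A$ or $6\A$ carries a non-trivial $\bt$-eigenspace within the subalgebra; equivalently, in each of these five cases the Miyamoto involution of every axis acts non-trivially on the subalgebra, because the dihedral group $\la\tau_a,\tau_j\ra$ embeds faithfully. Hence $B$ must be one of $2\A$, $2\B$ or $3\C$. In each surviving case $a$ itself has spectrum on $B$ contained in $\{1,0,\frac{1}{4}\}$: trivially for $2\B$ (where $aj=0$), by definition for $3\C$ (a Matsuo algebra of Jordan type $\frac{1}{4}$), and by a direct Peirce computation for $2\A$ on the three-dimensional basis of axes. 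Consequently $B\subseteq A_1(a)\oplus A_0(a)\oplus A_\al(a)$, whence $\tau_a(j)=j$.

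I expect the main obstacle to be the algebra-by-algebra verification that $3\A$, $4\A$, $4\B$, $5\A$, $6\A$ cannot arise. The cleanest rephrasing uses the observation (essentially present in the proof of Lemma \ref{twin_props}) that the $\bt$-eigenspace of an axis equals the commutator $[B,\tau_a]$; hence every axis of $B$ is Jordan in $B$ if and only if $\tau_a$ acts trivially on $B$ for every axis $a$, which holds precisely for the three Norton-Sakuma algebras whose internal Miyamoto group is trivial, namely $2\A$, $2\B$ and $3\C$. This collapses the case analysis into a single property of the eight Norton-Sakuma algebras, and the argument then depends essentially on the classification \cite{FMS} being available for the exponents $(\frac{1}{4},\frac{1}{32})$.
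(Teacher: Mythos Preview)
Your overall strategy---reduce to the $2$-generated subalgebra $B=\lla a,j\rra$, invoke the Norton--Sakuma classification from \cite{FMS}, and show $B$ lies in the positive part of the grading for $a$---is correct and is essentially the same reduction the paper uses. However, there is a factual slip in your case analysis: in the Norton--Sakuma algebra $3\C$ for Monster type $(\frac{1}{4},\frac{1}{32})$, the axes are of Jordan type $\frac{1}{32}$, \emph{not} $\frac{1}{4}$. Concretely, with $a_i\cdot a_j=\frac{1}{64}(a_i+a_j-a_k)$ one computes $a_0\cdot(a_1-a_2)=\frac{1}{32}(a_1-a_2)$, so each axis has a non-trivial $\bt$-eigenspace and $\tau_{a_i}$ acts non-trivially on $B$ (the internal Miyamoto group is $S_3$). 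Thus $3\C$ must be eliminated alongside $3\A$, $4\A$, $4\B$, $5\A$, $6\A$, and only $2\A$ and $2\B$ survive. With that correction your argument goes through verbatim: in both $2\A$ and $2\B$ the spectrum of $a$ on $B$ is contained in $\{1,0,\frac{1}{4}\}$, so $j\in B\subseteq A_1(a)\oplus A_0(a)\oplus A_\al(a)$ and $\tau_a(j)=j$.

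The paper's proof sidesteps the case analysis entirely by exploiting the \emph{symmetry} of the Norton--Sakuma algebras: for any such $B=\lla a,b\rra$ one has $|a^D|=|b^D|$ where $D=\la\tau_a,\tau_b\ra$. Taking $a=j$ Jordan gives $\tau_j=1$, hence $D=\la\tau_b\ra$ and $|b^D|=1$; symmetry then forces $|j^D|=1$, i.e.\ $\tau_b$ fixes $j$. This is shorter and avoids needing to know the spectra of individual Norton--Sakuma algebras, whereas your approach trades that for an explicit (and short, once $3\C$ is placed correctly) enumeration. Both arguments rest equally on the classification from \cite{FMS}.
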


\begin{proof}
It was shown in \cite{FMS}, that every $2$-generated algebra of Monster type $(\frac{1}{4},\frac{1}{32})$ is isomorphic to one of the Norton-Sakuma algebras. All these algebras are symmetric, and in particular, $|a^D|=|b^D|$, where $a$ and $b$ are two axes and $D=\la\tau_a,\tau_b\ra$. If $a$ is a Jordan axis then $\tau_a=1$, and so $|b^D|=1$. It follows that $|a^D|=1$, that is, the Jordan axis $a$ is fixed by $\tau_b$ for all axes $b$. Thus, $a$ is fixed by the entire Miyamoto group $G$.
\end{proof}

We call the fixed subalgebra $A_G$ the \emph{$\sg$-subalgebra} and denote it $\sg(A)$. In the examples that we tried, $\sg(A)$ is always quite small. The maximum dimension we found was just $7$, so the Jordan axes can be found easily, and this can be done in the very beginning, just like with the twins.

Let us also record the following related fact.

\begin{proposition}
If $a$ is a Jordan axis then $\sg_a$ centralises the Miyamoto group $G=\Miy(A)$. 
\end{proposition}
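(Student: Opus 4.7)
The plan is to show the stronger statement that $\sigma_a$ is centralised by every element of $\Aut(A)$ that fixes $a$, and then to invoke the previous proposition to conclude that in particular every element of $G = \Miy(A)$ centralises $\sigma_a$.

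First I would recall the definition of $\sigma_a$: since $a$ is a Jordan axis (of Jordan type $\al = \frac{1}{4}$), the adjoint $\ad_a$ is semi-simple with spectrum contained in $\{1,0,\al\}$, and $\sigma_a$ is the linear map acting as the identity on $A_1(a) \oplus A_0(a)$ and as minus the identity on $A_\al(a)$. The fact that $\sigma_a \in \Aut(A)$ is the standard $C_2$-grading construction applied to the Jordan fusion law $\cJ(\al)$, with positive part $\{1,0\}$ and negative part $\{\al\}$.

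Next I would establish the conjugation formula $g^{-1}\sigma_a g = \sigma_{a^g}$ for every $g \in \Aut(A)$. This is a routine check: for any $\lm \in \cF$ and any $g \in \Aut(A)$, one has $(A_\lm(a))^g = A_\lm(a^g)$, because $g$ intertwines $\ad_a$ with $\ad_{a^g}$. Hence the linear map $g^{-1}\sigma_a g$ acts as the identity on $A_1(a^g) \oplus A_0(a^g)$ and as minus the identity on $A_\al(a^g)$, which is precisely the definition of $\sigma_{a^g}$.

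Finally, I would apply the previous proposition: since $a$ is a Jordan axis, $a$ lies in the fixed subalgebra $A_G = \sg(A)$, so $a^g = a$ for every $g \in G$. Combined with the conjugation formula, this gives $g^{-1}\sigma_a g = \sigma_{a^g} = \sigma_a$ for all $g \in G$, so $\sigma_a$ centralises $G$. There is no real obstacle here; the only thing that needs care is verifying the conjugation formula, and that is formal once one notes that automorphisms permute the eigenspaces of axes according to their action on the axes themselves.
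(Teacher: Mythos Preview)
Your proof is correct and follows essentially the same approach as the paper: both invoke the previous proposition to get $a^g=a$ for all $g\in G$, then use the conjugation formula $\sigma_a^g=\sigma_{a^g}$ to conclude. The paper's proof is a two-line version that takes the conjugation formula as known, whereas you spell out why it holds; no substantive difference.
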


\begin{proof}
We have already seen that $a\in\sg(A)=A_G$. Therefore, $\sg_a^g=\sg_{a^g}=\sg_a$ for all $g\in G$, that is, $\sg_a$ centralises $G$.
\end{proof}

As we can expect, the Miyamoto group $G$ often constitutes the bulk of the full automorphism group $\Aut(A)$. Hence, typically, the elements $\sg_a$ for Jordan axes $a$ end up in the centre of $\Aut(A)$.

Also, Jordan axes, when they exist in $A$, lead to twins for other axes. We note that when $\sg_a\neq 1$, the fixed subalgebra $A_{\sg_a}\neq A$ and so, as $A$ is generated by axes, there will be many axes that are not fixed by $\sg_a$.

\begin{proposition}
If $a\in A$ is a Jordan axis with $\sg_a\neq 1$ then $b$ and $b^{\sg_a}$ are twins for all axes $b$ not contained in the fixed subalgebra $A_{\sg_a}$.
\end{proposition}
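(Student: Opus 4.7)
The plan is to combine the previous two facts in the section: first, that the Jordan axis $a$ lies in the fixed subalgebra $\sg(A)=A_G$ for $G=\Miy(A)$, so that the sigma involution $\sg_a$ centralises $G$; and second, the standard observation (used already in the twins subsection) that for any axis $c$ and any $g\in\Aut(A)$, one has $\tau_{c^g}=(\tau_c)^g$.

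Given these, the argument is essentially a two-line computation. First I would note that, since $a$ is a Jordan axis, the map $\sg_a$ is a well-defined automorphism of $A$, and hence $b^{\sg_a}$ is an axis satisfying the same fusion law as $b$. Next, because $\tau_b\in G=\Miy(A)$ and $\sg_a$ centralises $G$ by the preceding proposition, we have $(\tau_b)^{\sg_a}=\tau_b$. Combining this with $\tau_{b^{\sg_a}}=(\tau_b)^{\sg_a}$ yields $\tau_{b^{\sg_a}}=\tau_b$. Finally, the hypothesis $b\notin A_{\sg_a}$ means precisely that $b^{\sg_a}\neq b$, so $b$ and $b^{\sg_a}$ are distinct axes with the same Miyamoto involution, hence twins in the sense of Definition \ref{twin_defn}.

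There is no real obstacle here: the proof is a direct application of the earlier proposition that $\sg_a$ centralises $\Miy(A)$, together with the naturality of the tau map under automorphisms. The only subtle point to verify carefully is that the condition ``$b\notin A_{\sg_a}$'' is exactly what guarantees $b\neq b^{\sg_a}$, so that Definition \ref{twin_defn} (which insists the twin axes be distinct) genuinely applies; but this is immediate since $A_{\sg_a}=\{u\in A\mid u^{\sg_a}=u\}$ by definition of the fixed subalgebra.
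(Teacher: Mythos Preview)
Your proposal is correct and follows exactly the same approach as the paper: the paper's proof is the one-line computation $\tau_{b^{\sg_a}}=\tau_b^{\sg_a}=\tau_b$, justified by the fact that $\sg_a$ commutes with $\tau_b\in\Miy(A)$, which is precisely what you argue. Your write-up is simply a more expanded version, making explicit the use of the preceding proposition and the distinctness check required by the definition of twins.
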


\begin{proof}
Indeed, $\tau_{b^{\sg_a}}=\tau_b^{\sg_a}=\tau_b$, since $\sg_a$ and $\tau_b\in G=\Miy(A)$ commute.
\end{proof}

So it is no surprise that in all the cases in our tables in which the algebra contains a Jordan axis, it also contains twins. The following question seems interesting.

\begin{question}
Are there algebras of Monster type containing more that one Jordan axis?
\end{question}

So far we have not seen such examples, although it is plausible that they may exist. We note that, when $\al\neq\frac{1}{2}$, the Jordan axes generate a Matsuo subalgebra (or a factor of a Matsuo algebra) inside $A$. Furthermore, the $3$-transposition group generated by the sigma involutions would act on $A$ by permuting classes of twins.

Can it be that $\sg_a=1$ for a Jordan element $a$? This would mean that both $\al$- and $\bt$-eigenspaces of $\ad_a$ are trivial and such an axis would obey the associative fusion law involving just $\{1,0\}$. In particular, the Seress Lemma implies that all such axes $a$ are in the centre\footnote{Recall that the \emph{centre} of a non-associative algebra $A$ consists of all elements that commute and associate with all elements of $A$. Furthermore, the commutativity condition is moot for axial $A$, because they are commutative.} of $A$. 

Let us now see more in detail how a Jordan element behaves with respect to all other axes from $A$ when $(\al,\bt)=(\frac{1}{4},\frac{1}{32})$.

\begin{proposition}
Suppose that $A$ is of Monster type $(\frac{1}{4},\frac{1}{32})$ and $a\in A$ is a Jordan axis. Then, for an axis $b\in A$, $b\neq a$, we have that $\lla a,b\rra\cong 2\B$ if $b\in A_{\sg_a}$, and $\lla a,b\rra\cong 2\A$ if $b\notin A_{\sg_a}$.
\end{proposition}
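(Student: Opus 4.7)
The plan rests on two ingredients. First, since $a$ is a Jordan axis, its $\frac{1}{32}$-eigenspace is trivial and therefore $\tau_a=1$. Second, by the theorem from \cite{FMS} quoted in Section \ref{background}, the two-generated subalgebra $B:=\lla a,b\rra$ is isomorphic to one of the eight Norton-Sakuma algebras. So the first step is to determine which of these types are compatible with $\tau_a=1$.

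The dihedral group $D:=\la\tau_a,\tau_b\ra$ generated by the tau involutions of the two generators of $B$ coincides with $\la\tau_b\ra$ and so has order at most $2$. Inspecting the eight Norton-Sakuma algebras (\eg via Table 3 in \cite{IPSS}), only $2\A$ and $2\B$ have this property; for each of $3\A$, $3\C$, $4\A$, $4\B$, $5\A$, $6\A$ the generating axes have a nontrivial $\frac{1}{32}$-eigenspace and $D$ has order at least $3$. It therefore remains to show that the dichotomy between $2\A$ and $2\B$ is detected by whether or not $\sg_a$ fixes $b$.

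For this, I would exploit the Jordan hypothesis to decompose $b$ with respect to $\ad_a$: since $A_{\frac{1}{32}}(a)=0$, we have $b=b_1+b_0+b_{\frac{1}{4}}$ with $b_\lm\in A_\lm(a)$, and by primitivity $b_1=\mu a$ for some scalar $\mu\in\F$. The involution $\sg_a$ fixes $A_1(a)\oplus A_0(a)$ pointwise and negates $A_{\frac{1}{4}}(a)$, so $b\in A_{\sg_a}$ if and only if $b_{\frac{1}{4}}=0$. A direct computation gives $ab=\mu a+\frac{1}{4}b_{\frac{1}{4}}$, so the condition $b_{\frac{1}{4}}=0$ is equivalent to $ab\in\F a$. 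In type $2\B$, $ab=0\in\F a$, forcing $b\in A_{\sg_a}$ (in fact $b\in A_0(a)$); in type $2\A$, the standard product formula $ab=\frac{1}{8}(a+b-c)$ involves the third axis $c\notin\F a+\F b$, so $ab\notin\F a$ and $b\notin A_{\sg_a}$. Combining the two steps proves the claim.

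The only mildly non-routine step is the Norton-Sakuma table lookup in the first part; it is a finite case check and I do not anticipate a genuine obstacle. Everything else is linear algebra driven by the eigenspace decomposition of a Jordan axis.
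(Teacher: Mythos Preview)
Your proof is correct and follows essentially the same route as the paper: reduce to $2\A$ or $2\B$ via the Norton--Sakuma classification using that $\tau_a=1$, then distinguish the two cases by whether $\sg_a$ fixes $b$. The paper simply asserts the second step, whereas you spell it out via the eigenspace decomposition $b=\mu a+b_0+b_{\frac{1}{4}}$ and the criterion $b_{\frac{1}{4}}=0\iff ab\in\F a$; this is a clean way to make the paper's assertion explicit, and your use of $\tau_a=1$ alone (rather than also invoking that $\tau_b$ fixes $a$, as the paper does) is slightly more self-contained.
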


\begin{proof}
Indeed, $2\A$ and $2\B$ are the only Norton-Sakuma algebras where $\tau_a$ fixes $b$ and, symmetrically, $\tau_b$ fixes $a$. Furthermore, $\sg_a$ fixes $b$ in $2\B$ and it does not fix $b$ in $2\A$.
\end{proof}

In particular, $a$ annihilates all axes without twins. This gives potentially additional strong conditions for finding Jordan elements if we meet significantly bigger $\sg(A)$ in future examples.

\section{Decompositions} \label{decomposition}

As we already mentioned, the method involving the $0$-eigenspace subalgebra works well for algebras up to dimension $36$. In particular, it was sufficient for our initial goal: to find the automorphism groups of the algebras for $S_4$. However, our ambition grew as well, and so we want and need to do even larger algebras. Luckily, the $0$-eigenspace method allows a natural generalisation.

Suppose that $A$ is an $\cF$-axial algebra for a $C_2$-graded fusion law $\cF$, and suppose we have a set of axes $Y:=\{a_1,a_2,\ldots,a_k\}\subseteq A$ such that each $\tau_i:=\tau_{a_i}$ fixes all axes $a_j \in Y$. Then the group $E:=\la\tau_1,\tau_2,\ldots,\tau_k\ra$ is an elementary abelian $2$-group. For a tuple $(\lm_1,\lm_2,\ldots,\lm_k)\in\cF^k$, define
$$A_{(\lm_1,\lm_2,\ldots,\lm_k)}(Y):=\bigcap_{i=1}^k A_{\lm_i}(a_i).$$
Let $K=G_{a_1,a_2,\ldots,a_k}$ be the joint stabiliser of the axes from $Y$ in the known group $G$ acting on $A$. Then $K$ centralises each $\tau_i$ and so it centralises $E$. Also, $E\leq K$.

We note the following.

\begin{proposition} \label{modules}
Each subspace $A_{(\lm_1,\lm_2,\ldots,\lm_k)}(Y)$ is invariant under $K$. Furthermore, if $\cF$ is 
Seress then
\begin{enumerate}
\item $U:=A_{(0,0,\ldots,0)}(Y)$ is a subalgebra; and 
\item every $W:=A_{(\lm_1,\lm_2,\ldots,\lm_k)}(Y)$ is a $U$-module; that is, $UW\subseteq W$.
\end{enumerate} 
\end{proposition}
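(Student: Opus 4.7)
The plan is to dispatch the three claims in order, since they all follow directly from the axial fusion rule applied individually to each $a_i$, together with either the hypothesis that $K$ stabilises each $a_i$ or the Seress property of $\cF$.

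First I would prove $K$-invariance of each component. By definition $K$ fixes every $a_i \in Y$, so for $g \in K$ and $u \in A_{\lm_i}(a_i)$ we have
\[
a_i \cdot g(u) = g(a_i) \cdot g(u) = g(a_i u) = g(\lm_i u) = \lm_i g(u),
\]
showing that $g$ preserves each single-axis eigenspace $A_{\lm_i}(a_i)$. Taking intersections over $i = 1, \ldots, k$ gives invariance of $A_{(\lm_1,\ldots,\lm_k)}(Y)$ under $K$.

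Next, for parts (1) and (2), the key observation is that the Seress condition $0 \star \lm \subseteq \{\lm\}$ applied to the axial fusion law $A_0(a_i) A_{\lm}(a_i) \subseteq A_{0 \star \lm}(a_i)$ yields, for every axis $a_i$ and every $\lm \in \cF$, the inclusion $A_0(a_i) A_{\lm}(a_i) \subseteq A_{\lm}(a_i)$. So given $u \in U$ and $w \in A_{(\lm_1,\ldots,\lm_k)}(Y)$, we have for each $i$ that $u \in A_0(a_i)$ and $w \in A_{\lm_i}(a_i)$, whence $uw \in A_{\lm_i}(a_i)$. Intersecting over $i$ gives $uw \in A_{(\lm_1,\ldots,\lm_k)}(Y)$, which proves the module property (2). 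Specialising to $\lm_i = 0$ for all $i$ gives $UU \subseteq U$, so $U$ is a subalgebra, proving (1).

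There is no real obstacle: the argument is a routine application of the definitions once one observes that the intersection of eigenspaces behaves well under both group actions (which commute with multiplication) and Seress fusion rules (which act componentwise on each factor of the intersection). The only mild care needed is to note that the single-axis fusion rule already gives us everything, since the definition $A_{(\lm_1,\ldots,\lm_k)}(Y) = \bigcap_{i=1}^k A_{\lm_i}(a_i)$ means a containment in each $A_{\lm_i}(a_i)$ separately is enough.
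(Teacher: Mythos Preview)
Your proof is correct and follows essentially the same approach as the paper: both use that $K$ fixes each $a_i$ to get invariance of each single-axis eigenspace and hence of their intersection, and both use the Seress condition $0\star\lm\subseteq\{\lm\}$ componentwise to obtain $UW\subseteq W$. The only cosmetic difference is that the paper proves (a) directly via ``an intersection of subalgebras is a subalgebra'' and then (b) separately, whereas you prove (b) first and obtain (a) by specialising $\lm_i=0$; this is a minor reordering, not a different argument.
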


\begin{proof}
Since $K$ fixes every $a_j$, it leaves $A_{\lm_j}(a_j)$ invariant. Hence also $W=A_{(\lm_1,\lm_2,\ldots,\lm_k)}(Y)=\bigcap_{j=1}^k A_{\lm_j}(a_j)$ is invariant under $K$, proving the first claim.

Furthermore, in a Seress fusion law, we have that $0\star 0=\{0\}$, which means that every $A_0(a_i)$ is a subalgebra. Hence $U=\bigcap_{i=1}^kA_0(a_i)$ is also a subalgebra, since the intersection of subalgebras is a subalgebra.  

As $\cF$ is Seress,  $A_0(a_j)A_{\lm_j}(a_j)\subseteq A_{0\star\lm_j}(a_j)\subseteq A_{\lm_j}(a_j)$. Thus, $UW\subseteq A_0(a_j)A_{\lm_j}(a_j)\subseteq A_{\lm_j}(a_j)$ for all $j$. Hence $UW\subseteq\bigcap_{j=1}^kA_{\lm_j}(a_j)=W$.
\end{proof}

It is easy to see that the subspaces $A_{(\lm_1,\lm_2,\ldots,\lm_k)}(Y)$ form a direct sum decomposition within the algebra $A$. In one important case, this is a decomposition of the entire algebra $A$. 

\begin{proposition}
Suppose that $\cF$ is Seress and $\lla a_i,a_j\rra\cong 2\B$ for all $i\neq j$. \textup{(}Equivalently, $a_ia_j=0$.\textup{)} Then
$$A=\bigoplus_{(\lm_1,\lm_2,\ldots,\lm_k)\in\cF^k}A_{(\lm_1,\lm_2,\ldots,\lm_k)}(Y).$$
\end{proposition}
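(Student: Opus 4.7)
The plan is to show that the adjoint maps $\ad_{a_1},\ldots,\ad_{a_k}$ pairwise commute, and then invoke simultaneous diagonalisability of commuting semisimple operators.

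First, I would observe that the hypothesis $\lla a_i,a_j\rra\cong 2\B$ (equivalently $a_ia_j=0$) means that $a_j\in A_0(a_i)$ for all $i\neq j$. Since the fusion law $\cF$ is Seress, the Seress lemma quoted earlier in the excerpt applies: every axis $a_i$ associates with $A_1(a_i)\oplus A_0(a_i)$. In particular, $a_i$ associates with $a_j$, meaning that for every $w\in A$ we have $a_i(wa_j)=(a_iw)a_j$. Since $A$ is commutative, this is precisely the statement that $\ad_{a_i}$ and $\ad_{a_j}$ commute as linear endomorphisms of $A$.

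Next, each $\ad_{a_i}$ is semisimple with spectrum contained in $\cF$, by the definition of an axis. A family of pairwise commuting diagonalisable operators admits a simultaneous eigenspace decomposition, and the joint eigenspaces are precisely the intersections of the individual eigenspaces. Hence
\[
A \;=\; \bigoplus_{(\lm_1,\ldots,\lm_k)\in\cF^k}\;\bigcap_{i=1}^k A_{\lm_i}(a_i)\;=\;\bigoplus_{(\lm_1,\ldots,\lm_k)\in\cF^k} A_{(\lm_1,\ldots,\lm_k)}(Y),
\]
which is the desired decomposition.

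The only real point requiring care is the step from $a_ia_j=0$ to commutation of adjoints; this is not automatic in a non-associative setting, and it is precisely where the Seress property of $\cF$ is used via the Seress lemma. Once commutation is in hand, the remainder is a standard linear algebra observation about simultaneously diagonalisable commuting operators, together with the fact that each $\ad_{a_i}$ has all its eigenvalues in $\cF$ by the axis axiom.
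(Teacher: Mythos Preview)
Your proof is correct, and it takes a somewhat different route from the paper's. The paper proceeds by induction on $k$: assuming the decomposition for $Y'=\{a_1,\ldots,a_{k-1}\}$, it observes that $a_k\in A_{(0,\ldots,0)}(Y')$ and then uses the earlier Proposition (that each joint eigenspace is a $U$-module under the Seress hypothesis) to conclude that $\ad_{a_k}$ preserves every summand $A_{(\lm_1,\ldots,\lm_{k-1})}(Y')$; finally, it notes that the minimal polynomial of $\ad_{a_k}$ is multiplicity-free, so the restriction to each summand is again semisimple, yielding the refined decomposition.

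Your argument instead invokes the Seress lemma directly to get that $\ad_{a_i}$ and $\ad_{a_j}$ commute (since $a_j\in A_0(a_i)$), and then appeals once to simultaneous diagonalisability of commuting semisimple operators. This is more concise and conceptually cleaner: it isolates the single structural fact (commutation of adjoints) that makes everything work, whereas the paper's induction essentially re-proves simultaneous diagonalisation by hand via the module property. The paper's approach has the minor advantage of keeping the argument self-contained with respect to the module proposition it has just established, and of making explicit why semisimplicity survives restriction; but both proofs rest on the same Seress hypothesis at the same point.
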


\begin{proof}
We prove this by induction on $k=|Y|$. If $k=1$ then this is true since axes are semi-simple. Suppose $k\geq 2$ and the claim is true whenever the number of axes is less than $k$. Let $Y'=\{a_1,a_2,\ldots,a_{k-1}\}$. By the inductive hypothesis, we have that
$$A=\bigoplus_{(\lm_1,\lm_2,\ldots,\lm_{k-1})\in\cF^{k-1}}A_{(\lm_1,\lm_2,\ldots,\lm_{k-1})}(Y').$$
As $a_ia_k=0$ for all $i=1,\ldots,k-1$, we have that $a_k\in A_{(0,0,\ldots,0)}(Y')$. By Proposition~\ref{modules}~(b), we have that $a_kA_{(\lm_1,\lm_2,\ldots,\lm_{k-1})}(Y')\subseteq A_{(\lm_1,\lm_2,\ldots,\lm_{k-1})}(Y')$. That is, $\ad_{a_k}$ acts on $A_{(\lm_1,\lm_2,\ldots,\lm_{k-1})}(Y')$. 

Now recall that $\ad_{a_k}$ is semi-simple on $A$, which means that its minimal polynomial is multiplicity-free. So the minimal polynomial of $\ad_{a_k}$ acting on $W':=A_{(\lm_1,\lm_2,\ldots,\lm_{k-1})}(Y')$ is also multiplicity-free, \ie $\ad_{a_k}$ is also semi-simple on $W'$. It follows that 
$$W'=\bigoplus_{\lm_k\in\cF}(W'\cap A_{\lm_k}(a_k))=\bigoplus_{\lm_k\in\cF}
A_{(\lm_1,\lm_2,\ldots,\lm_k)}(Y).$$
Clearly, this implies the claim for $k$ axes, and so the proposition follows by induction.
\end{proof}

This suggests the following approach, where we continue to assume that $\cF$ is Seress and axes in $Y$ are pairwise orthogonal. Suppose that, instead of proving that $G=\Aut(A)$, we just show that $K=G_{a_1,a_2,\ldots,a_k}$ coincides with the full joint stabiliser $\hat K:=\Aut(A)_{a_1,a_2,\ldots,a_k}$ of the axes $a_i\in Y$. The hope is that this statement identifies a significant subgroup of $\Aut(A)$ and this allows us then to deduce that $\Aut(A)=G$ by group-theoretic methods.

This could be organised as follows. First, we study the much smaller subalgebra $U=A_{(0,0,\ldots,0)}(Y)$ and find its full automorphism group. We note that $U$ may contain some axes, but in general it does not have to be an axial algebra, just like in Section \ref{eigenvalue-0}, $U=A_0(a)$ does not have to be axial.

Secondly, we try to see which automorphisms of $U$ extend to the other pieces $W$ of the direct sum decomposition of $A$. In particular, when we take the identity automorphism of $U$, this allows us to decide whether $\hat K:=\Aut(A)_{a_1,a_2,\ldots,a_k}$ acts faithfully on $U$.

So now we have to develop methods for extending automorphisms of $U$ to direct summands $W$. Recall that by Proposition \ref{modules}, each $W$ is a module for $U$. The following observation does not require a proof.

\begin{proposition} \label{extension}
Let $\phi$ be an automorphism of $U=A_{(0,0,\ldots,0)}(Y)$ and $W=A_{(\lm_1,\lm_2,\ldots,\lm_k)}(Y)$. If $\hat\phi$ is an extension of $\phi$ to the entire $A$, fixing all $a_i\in Y$, then $\psi:=\hat\phi|_W$ satisfies
$$u^\phi w^\psi=(uw)^\psi$$
for all $u\in U$ and $w\in W$.
\end{proposition}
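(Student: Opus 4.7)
The plan is to unpack the definitions and exploit the fact that $\hat\phi$ is, by hypothesis, an algebra automorphism of $A$ that restricts to the given maps on the two summands $U$ and $W$. The identity to be established is purely multiplicative and does not require any deeper use of the fusion law beyond what is already encoded in Proposition~\ref{modules}.

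First, I would check that both sides of the proposed equation lie in the same ambient space so the assertion makes sense. Since $\cF$ is assumed Seress in the setup of this section, Proposition~\ref{modules}(b) gives $UW \subseteq W$, so the product $uw$ lies in $W$. Consequently $(uw)^\psi$ is defined and equals $\hat\phi(uw)$ by the definition $\psi = \hat\phi|_W$.

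Next, I would apply multiplicativity of the algebra automorphism $\hat\phi$: since $\hat\phi \in \Aut(A)$, we have $\hat\phi(uw) = \hat\phi(u)\,\hat\phi(w)$. Because $\hat\phi$ is assumed to extend $\phi$, its restriction to $U$ is $\phi$, so $\hat\phi(u) = u^\phi$; and by the definition $\psi := \hat\phi|_W$, we have $\hat\phi(w) = w^\psi$. Chaining these identities gives
\[
(uw)^\psi \;=\; \hat\phi(uw) \;=\; \hat\phi(u)\,\hat\phi(w) \;=\; u^\phi w^\psi,
\]
which is exactly the claim.

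There is no substantive obstacle: the whole argument amounts to noticing that $uw$ stays inside $W$ (so that applying $\psi$ is meaningful) and then invoking the homomorphism property of $\hat\phi$ on the product. This is why the authors flag the proposition as not requiring a proof; the content is bookkeeping about how the restriction of a global automorphism to a module-summand interacts with the restriction to the acting subalgebra. The real work in the overall strategy lies not in this proposition but in the converse direction developed afterwards, where one starts from a pair $(\phi,\psi)$ satisfying such a compatibility and tries to decide whether it arises from a genuine extension $\hat\phi \in \Aut(A)$.
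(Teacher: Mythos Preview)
Your proposal is correct and matches the paper's treatment: the authors explicitly state that this observation ``does not require a proof,'' and your unpacking --- that $uw\in W$ by Proposition~\ref{modules}(b) and then $(uw)^\psi=\hat\phi(uw)=\hat\phi(u)\hat\phi(w)=u^\phi w^\psi$ by multiplicativity of $\hat\phi$ --- is exactly the bookkeeping they have in mind.
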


We note that $\psi$ is a linear transformation of $W$ and the condition in this proposition is linear in both $u$ and $w$, so it can be checked for bases of $U$ and $W$.

Furthermore, setting $l:=\dim(U)$ and $m=\dim(W)$, we can treat the $m^2$ entries of the matrix of $\psi$ as indeterminates. Then the above conditions turn into a system of $lm^2$ linear equations, which can be solved to give us possible extensions of $\phi$ to this particular summand $W$.


Let us now see how this works for concrete algebras $A$.

\section{Larger examples} \label{larger}

\subsection{The $\mathbf{46}$-dimensional algebra for $\mathbf{A_5}$ of shape $\mathbf{3A2B}$}

In this case, the known axet in $A$ consists of $15$ axes and the known automorphism group of $A$ is $G\cong S_5$ containing $G_0=\Miy(A)\cong A_5$ as a subgroup of index two. 

\begin{computation}\label{prelim A5 3A2A}
Using the methods from Section $\ref{twins and Jordan axes}$, we verify the following:
\begin{enumerate}
\item $A$ contains no Jordan axes;
\item $A$ contains no twins, so the $\tau$ map is a bijection between the $15$ known axes and $15$ involutions in $G_0$;
\item there are no axes in $A$ corresponding to the $10$ involutions in $G\setminus G_0$.
\end{enumerate}
\end{computation}

Select a Sylow $2$-subgroup $E\cong 2^2$ of $G_0$. Let $\tau_1$, $\tau_2$ and $\tau_3$ be the involutions from $E$. By (b) above, there exist unique axes $a_1$, $a_2$, $a_3$, such that $\tau_i=\tau_{a_i}$ for all $i$. Let $Y=\{a_1,a_2,a_3\}$. Note that $K=C_G(E)=E$. We aim to prove that also $\hat K=E$, where $\hat K=\Aut(A)_{a_1,a_2,a_3}$ is the joint stabiliser of $a_1$, $a_2$, and $a_3$ in $\Aut(A)$.

We let $N=N_G(E)$ be the set-wise stabiliser of $Y$ in $G$. Note that $N$ induces $N/E\cong S_3$ on the set $Y$.

\begin{computation}
The decomposition of $A$ corresponding to $Y$ is as follows:
\begin{enumerate}
\item $U:=A_{(0,0,0)}(Y)$ is of dimension $7$;
\item the remaining non-zero summands $W=A_{(\lm_1,\lm_2,\lm_3)}(Y)$ are:
\begin{enumerate}
\item $A_{(1,0,0)}(Y)=\la a_1\ra$, $A_{(0,1,0)}(Y)=\la a_2\ra$, and $A_{(0,0,1)}(Y)=\la a_3\ra$;
\item $A_{\left( \frac{1}{4}, 0, \frac{1}{4} \right)}(Y)$, $A_{\left( \frac{1}{4}, \frac{1}{4}, 0 \right)}(Y)$, and $A_{\left( 0, \frac{1}{4}, \frac{1}{4} \right)}(Y)$, each of dimension $1$;
\item $A_{\left( \frac{1}{4}, 0, 0 \right)}(Y)$, $A_{\left( 0, \frac{1}{4}, 0\right)}(Y)$, and $A_{\left( 0, 0, \frac{1}{4} \right)}(Y)$, each of dimension $2$;
\item $A_{\left(\frac{1}{4}, \frac{1}{32}, \frac{1}{32}\right)}(Y)$, $A_{\left(\frac{1}{32}, \frac{1}{32}, \frac{1}{4}\right)}(Y)$, and  $A_{\left(\frac{1}{32}, \frac{1}{4}, \frac{1}{32}\right)}(Y)$ of dimension $3$;
\item $A_{\left(\frac{1}{32}, \frac{1}{32}, 0\right)}(Y)$, $A_{\left(0, \frac{1}{32}, \frac{1}{32}\right)}(Y)$, and $A_{\left(\frac{1}{32}, 0, \frac{1}{32}\right)}(Y)$ of dimension $6$.
\end{enumerate}
\end{enumerate}
These summands are listed in triples, as these are the orbits under the action of $N$. 
\end{computation}

Next we need to find $\Aut(U)$ and we employ the method from Section \ref{naive} for this. 

\begin{computation}
We have that
\begin{enumerate}
\item $U$ contains exactly three idempotents of length $2$, $v_1$, $v_2$, and $v_3$;
\item each $v_i$ is a primitive axis of Monster type $(\frac{4}{11},\frac{1}{11})$ in $U$;
\item $V:=\lla v_1,v_2,v_3\rra$ is of dimension $3$ isomorphic to $3\C(\frac{4}{11})$.
\end{enumerate}
\end{computation}

Since the $v_i$ are of Monster type in $U$ (they have a more complicated fusion law in the whole of $A$), each $v_i$ induces a Miyamoto involution on $U$, and this gives us a group $H:=S_3$ acting on $V$ and $U$. Clearly, this means that also $\Aut(U)$ induces $S_3$ on $V$, but it can, in principle, induce a larger group on $U$, since $U>V$. So we need further calculations, namely, we need to see whether $\Aut(U)$ contains non-identity elements fixing all three idempotents $v_i$. 

\begin{computation}\label{autU A5 3A2A}
We further compute that:
\begin{enumerate}
\item the identity $\one_U$ of $U$ has the (square) length $11$ (\ie $(\one_U,\one_U)=11$);
\item the identity $\one_V$ of $V$ has length $\frac{11}{2}$;
\item  $U$ contains exactly eight idempotents of length $\frac{11}{2}$: $\one_V$, $\one_U-\one_V$, and six further idempotents $u_i$; 
\item each $u=u_i$ is uniquely identified by the triple of values $(u,v_j)$, $j=1,2,3$; for three of them these values include $2$ and two $1$s, and for the other three, they include $0$ and two $1$s; and
\item the eight idempotents of length $\frac{11}{2}$ generate $U$.
\end{enumerate}
\end{computation}

Suppose $\phi\in\Aut(U)$ fixes $v_1$, $v_2$, and $v_3$. Clearly, $\phi$ fixes $\one_V$ and $\one_U-\one_V$. Furthermore, in view of Computation \ref{autU A5 3A2A} (d), $\phi$ fixes all $u_i$, as it preserves the Frobenius form on $U$. Now, by Computation \ref{autU A5 3A2A} (e), $\phi=1$, and so $\Aut(U)=H\cong S_3$.

The next step is to see whether the elements of $H$ can be extended to the entire $A$. We note that we are only interested in the extensions fixing the axes $a_i\in Y$, and so each such automorphism should leave every summand of our decomposition invariant. 

We will focus on the $3$-dimensional summands $W_1=A_{\left(\frac{1}{4},\frac{1}{32},\frac{1}{32}\right)}(Y)$, $W_2=A_{\left(\frac{1}{32},\frac{1}{4},\frac{1}{32}\right)}(Y)$, and $W_3=A_{\left(\frac{1}{32},\frac{1}{32},\frac{1}{4}\right)}(Y)$. This is because of the following fact we verified computationally.

\begin{computation}\label{extensions A5 3A2A}
The following hold:
\begin{enumerate}
\item $\lla W_1,W_2,W_3\rra=A$;
\item the identity automorphism of $U$ admits a $1$-dimensional space of extensions to each $W_i$;
\item for randomly chosen elements $w_i\in W_i$, $i=1$, $2$, and $3$, and $u\in U$, we have
\begin{enumerate}
\item $(w_i^2,u)\neq 0$; and 
\item $(w_1w_2,w_3)\neq 0$.
\end{enumerate}
\end{enumerate}
\end{computation}

In part (b), we utilised the method from Proposition \ref{extension}. This calculation yields the following.

\begin{lemma} \label{A5kernel}
The joint stabiliser $\hat K$ of $a_1$, $a_2$ and $a_3$ acts on $U$ with kernel $E$.
\end{lemma}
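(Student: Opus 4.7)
The plan is as follows. Since each $\tau_i$ fixes every $a_j \in Y$, we have $E \leq \hat{K}$; moreover, $\tau_i$ acts trivially on $A_0(a_i) \supseteq U$ because $0$ lies in the positive part of the $C_2$-grading, so $E$ is contained in the kernel of the $\hat K$-action on $U$. For the reverse inclusion, fix $\phi \in \hat{K}$ acting trivially on $U$. Since $\phi$ fixes each $a_i$, it commutes with every $\ad_{a_i}$ and hence preserves each joint eigenspace $A_{(\lm_1,\lm_2,\lm_3)}(Y)$. By Proposition \ref{extension}, $\phi|_{W_i}$ commutes with the $U$-action on $W_i$; by Computation \ref{extensions A5 3A2A}(b) the space of such maps is $1$-dimensional, and as $\Id_{W_i}$ clearly lies in it we obtain $\phi|_{W_i} = \lm_i \Id_{W_i}$ for some scalar $\lm_i \in \F$.

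Next, I would use multiplicativity of $\phi$ to pin down the $\lm_i$. Fix $w_i \in W_i$ for which the conditions of Computation \ref{extensions A5 3A2A}(c) hold, and note that the joint eigenspaces of the $\ad_{a_i}$ are pairwise orthogonal with respect to the Frobenius form (as eigenspaces for distinct eigenvalues of commuting symmetric operators), so the condition $(w_i^2, u) \neq 0$ from (c)(i) forces the $U$-component $u_i$ of $w_i^2$ to be nonzero. Since $\phi$ preserves the direct-sum decomposition and is the identity on $U$, the $U$-component of $\phi(w_i^2)$ is $u_i$; since $\phi(w_i^2) = \lm_i^2 w_i^2$ has $U$-component $\lm_i^2 u_i$, we get $\lm_i^2 = 1$. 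Similarly, the Monster rules $\al\star\bt = \{\bt\}$ and $\bt\star\bt \subseteq \{1,0,\al\}$ show that $w_1 w_2$ can have a $W_3$-component, and (c)(ii) forces it to be nonzero; comparing $W_3$-components of $\phi(w_1) \phi(w_2) = \lm_1 \lm_2 w_1 w_2$ with those of $\phi(w_1 w_2)$ (which equal $\lm_3$ times the $W_3$-component of $w_1 w_2$) gives $\lm_1 \lm_2 = \lm_3$.

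Finally, a direct inspection of the grading shows that the four elements $1, \tau_1, \tau_2, \tau_3 \in E$ act on $(W_1, W_2, W_3)$ by the sign patterns $(+,+,+)$, $(+,-,-)$, $(-,+,-)$, $(-,-,+)$, which are exactly the four triples with $\lm_i = \pm 1$ and $\lm_1 \lm_2 \lm_3 = 1$. Hence some $\tau \in E$ coincides with $\phi$ on every $W_i$ and also on $U$; since $\la U, W_1, W_2, W_3 \ra = A$ by Computation \ref{extensions A5 3A2A}(a), we conclude $\phi = \tau \in E$. The main obstacle is the first step: ensuring that $\phi|_{W_i}$ is a \emph{scalar} rests entirely on the $1$-dimensionality in Computation \ref{extensions A5 3A2A}(b); were the space of $U$-equivariant extensions larger, the scalars $\lm_i$ would be replaced by more general $U$-endomorphisms, and the clean matching with the four elements of $E$ in the final step would need substantially more work.
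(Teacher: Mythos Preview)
Your proof is correct and follows essentially the same approach as the paper: reduce to scalar action on each $W_i$ via Computation~\ref{extensions A5 3A2A}(b), use (c)(i) to get $\lm_i^2=1$, use (c)(ii) to get $\lm_1\lm_2\lm_3=1$, and then match the four admissible sign patterns with the elements of $E$ using (a). Two small remarks: first, the paper derives $\lm_i^2=1$ and $\lm_1\lm_2\lm_3=1$ directly from invariance of the Frobenius form under $\phi$ (e.g.\ $(w_i^2,u)=((w_i^\phi)^2,u^\phi)=\lm_i^2(w_i^2,u)$), which is slightly quicker than your component-comparison route, though yours is equally valid and arguably more self-contained; second, in your final sentence you should write $\lla W_1,W_2,W_3\rra=A$ (subalgebra generation, as in Computation~\ref{extensions A5 3A2A}(a)) rather than the linear span $\la U,W_1,W_2,W_3\ra$, which is only $16$-dimensional---the conclusion $\phi=\tau$ then follows because two algebra automorphisms agreeing on a generating set agree everywhere.
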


\begin{proof}
Let $\phi\in\hat K$ be acting as identity on $U$. First of all, by Computation \ref{extensions A5 3A2A} (a), if $\phi$ is identity on the union of $W_1$, $W_2$, and $W_3$ then $\phi=1$. Also, by (b), $\phi$ acts as a scalar, say $\mu_i$, on each $W_i$.

Next we use the facts in (c). We note that $(w_i^2,u)=((w_i^\phi)^2,u^\phi)=((\mu_iw_i)^2,u)=\mu_i^2(w_i^2,u)$. Since $(w_i^2,u)\neq 0$ by (c)(i), we conclude that $\mu_i^2=1$, that is, $\mu_i=\pm 1$. Furthermore, $(w_1w_2,w_3)=(w_1^\phi w_2^\phi,w_3^\phi)=(\mu_1 w_1\mu_2 w_2,\mu_3 w_3)=\mu_1\mu_2\mu_3(w_1w_2,w_3)$. Since $(w_1w_2,w_3)\neq 0$ by (c)(ii), we have that $\mu_1\mu_2\mu_3=1$. 

Thus, only the following triples of values $(\mu_1,\mu_2,\mu_3)$ are possible: $$\{(1,1,1),(1,-1,-1),(-1,1,-1),(-1,-1,1)\}.$$ 
Manifestly, the first of these is realised by the identity automorphism and the latter three are realised by the Miyamoto involutions $\tau_1$, $\tau_2$, and $\tau_3$, respectively. Thus, the kernel of $\hat K$ acting on $U$ coincides with $E$. 
\end{proof}

It remains to see that none of the non-identity automorphisms of $U$ extend to $A$ while fixing $a_1$, $a_2$, and $a_3$. This requires additional computational checks. Recall that $N\cong S_4$ is the normaliser of $E$ in $G$. Since $N$ stabilises $Y=\{a_1,a_2,a_3\}$ as a set, it leaves $U=A_{\left(0,0,0\right)}(Y)$ invariant, while permuting the other components of our decomposition.

\begin{computation}\label {induced_xn N A5 3A2A}
~
\begin{enumerate}
\item $N$ induces on $U$ the full group $H=\Aut(U)\cong S_3$;
\item an involution from $H$ does not have any extensions to two of the components $W_i$;
\item an element of order $3$ from $H$ does not have extensions to either of the components $W_i$.
\end{enumerate}
\end{computation}

Now we can identify $\hat K$.

\begin{lemma}
We have that $\hat K=K=E$.
\end{lemma}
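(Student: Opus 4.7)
The plan is to pull together the two pieces of information we already have in hand: Lemma \ref{A5kernel}, which identifies the kernel of the action of $\hat K$ on $U$ as exactly $E$, and Computation \ref{induced_xn N A5 3A2A}, which constrains which automorphisms of $U$ could possibly survive when one attempts to propagate them to the three-dimensional components $W_i$. The inclusions $E\leq K\leq\hat K$ are immediate from the definitions, so the whole proof reduces to showing $\hat K\leq E$.

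First I would take an arbitrary $\phi\in\hat K$. Since $\phi$ fixes each $a_i$ pointwise, it preserves every eigenspace $A_{\lm_i}(a_i)$, and hence stabilises each joint eigenspace $A_{(\lm_1,\lm_2,\lm_3)}(Y)$ as a set. In particular, $\phi$ restricts to an automorphism $\bar\phi$ of $U=A_{(0,0,0)}(Y)$, and to linear maps $\phi|_{W_i}\colon W_i\to W_i$ on each of the three $3$-dimensional components $W_i=A_{(\frac{1}{4},\frac{1}{32},\frac{1}{32})}(Y)$, $A_{(\frac{1}{32},\frac{1}{4},\frac{1}{32})}(Y)$, $A_{(\frac{1}{32},\frac{1}{32},\frac{1}{4})}(Y)$. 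By Proposition \ref{extension}, each $\phi|_{W_i}$ is an extension of $\bar\phi$ to the $U$-module $W_i$ in the precise sense investigated computationally.

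Next I would show $\bar\phi=1$. We have $\bar\phi\in\Aut(U)\cong S_3$. If $\bar\phi$ has order $3$, then by Computation \ref{induced_xn N A5 3A2A}(c) it admits no extension to any $W_i$, contradicting the existence of $\phi|_{W_1}$. If $\bar\phi$ is an involution, then by Computation \ref{induced_xn N A5 3A2A}(b) it fails to extend to two of the three $W_i$, again contradicting the existence of the required $\phi|_{W_i}$ on the offending component. Hence $\bar\phi$ must be trivial. Applying Lemma \ref{A5kernel} then forces $\phi\in E$, giving $\hat K\leq E$ and therefore $\hat K=K=E$.

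The only subtle point, and the place where I would be careful, is to make sure the non-extendability in Computation \ref{induced_xn N A5 3A2A} is being invoked for the correct target: we need the extensions to map each $W_i$ into itself, not merely to the orbit $\{W_1,W_2,W_3\}$. This is exactly the setting of our hypothesis, because $\phi\in\hat K$ fixes each $a_i$ individually (rather than setwise, as is the case for elements of $N\setminus E$, which instead permute the $W_i$). Once that distinction is made explicit, the argument is essentially a bookkeeping combination of the two prior results.
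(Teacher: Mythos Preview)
Your proof follows essentially the same route as the paper's. One small point to tighten: Computation~\ref{induced_xn N A5 3A2A}(b) and (c) each report on a \emph{single} element of $H$ that was tested, not on every involution or every element of order~$3$. The paper therefore invokes part~(a)---that $N$ induces the full $H\cong S_3$ on $U$---to transport the non-extendability of the tested involution to all three involutions by conjugacy (since $N$ normalises $\hat K$ and permutes the $W_i$, failure of extension is preserved under $N$-conjugation). Your sentence ``If $\bar\phi$ is an involution, then by Computation~\ref{induced_xn N A5 3A2A}(b) it fails to extend\ldots'' reads~(b) as though it applied to an arbitrary involution; inserting the conjugacy step makes the argument watertight.
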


\begin{proof}
By Computation \ref{induced_xn N A5 3A2A} (b), the involution $\tau\in H$ we tried does not have a required extension to two of the components $W_i$, which means that $\hat K$ contains no elements inducing $\tau$ on $U$. Since, by (a), all involutions from $H$ are conjugate under the action of $N$, none of them is induced by $\hat K$.

Similarly, by (c), an element of order $3$ cannot be extended to an element of $\hat K$. Thus, $\hat K$ induces on $U$ the trivial group; \ie $\hat K$ is fully in the kernel when acting on $U$, and so, by Lemma \ref{A5kernel}, $\hat K=E$.
\end{proof}

The above computation-based argument allowed us to identify the full joint stabiliser $\hat K=\Aut(A)_{a_1,a_2,a_3}$. Now we use finite group theory arguments to deduce that $\Aut(A)=G\cong S_5$.

First of all, by Corollary \ref{our main case}, $\hat G=\Aut(A)$ is a finite group. We first show that the soluble radical of $\hat G$ is trivial, \ie $\hat G$ contains no abelian normal subgroups. By contradiction, suppose that $Q$ is an abelian minimal normal subgroup of $\hat G$. Then it is an elementary abelian $p$-subgroup for some prime $p$. It can be viewed as a vector space over $\F_p$ and a $\hat G$-module.

Recall the notation $G_0\cong A_5\geq E$ for the subgroup of $G$ of index $2$. Let $R$ be a minimal non-trivial subgroup of $Q$ invariant under $G_0$. Then $R$ is irreducible as a $G_0$-module. 

\begin{lemma} \label{no centraliser}
We have that $C_R(E)=1$.
\end{lemma}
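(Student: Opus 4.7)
The strategy is to translate the centraliser condition $x \in C_R(E)$ into the stabiliser condition $x \in \hat K = E$, and then use that $R \leq Q$ is abelian to derive a contradiction in the remaining case.

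First I would take $x \in C_R(E)$ and apply the standard identity $(\tau_a)^x = \tau_{a^x}$ to each $a_i \in Y$, obtaining $\tau_{a_i^x} = \tau_i^x = \tau_i$.  Computation \ref{prelim A5 3A2A}(b) says $A$ has no twins, so the $\tau$-map is injective on the $15$ known axes; hence $\tau_{a_i^x} = \tau_{a_i}$ forces $a_i^x = a_i$ for $i = 1, 2, 3$. Therefore $x \in \hat K$, and since we have just shown $\hat K = E$, we conclude $C_R(E) \leq R \cap E$.

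Next I would split on the prime $p$ for which $Q$ is elementary abelian. If $p$ is odd, then $R$ is a $p$-group and so $R \cap E = 1$ for arithmetic reasons, and we are done. For $p = 2$, suppose for contradiction $R \cap E \neq 1$; any non-identity element of $E \cong 2^2$ is one of the three Miyamoto involutions, so some $\tau_i \in R \leq Q$. As $Q$ is normal in $\hat G$, the entire $\hat G$-conjugacy class of $\tau_i$ lies in $Q$, and in particular so does its $G_0$-conjugacy class.  But $G_0 \cong A_5$ has a single class of $15$ involutions, which by Computation \ref{prelim A5 3A2A}(b) are precisely the Miyamoto involutions in $G_0$; these generate $G_0 = \Miy(A)$ by definition.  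Hence $G_0 \leq Q$, contradicting the abelianness of $Q$.

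The main delicate point is the $p = 2$ subcase: without the single-conjugacy-class structure of the involutions in $A_5$ and the identification of those involutions with Miyamoto involutions, the bare containment $C_R(E) \leq R \cap E$ would not suffice.  The $p$-odd case, by contrast, is essentially free.
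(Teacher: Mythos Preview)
Your proof is correct, and the first step --- showing $C_R(E)\leq R\cap E$ by passing through $C_{\hat G}(E)\leq\hat K=E$ via the no-twins computation --- is exactly what the paper does (the paper states it more tersely as ``$E=C_{\hat G}(E)\geq C_R(E)$'').

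Where you diverge is in the finish: you split on the prime $p$, handling $p$ odd by an order argument and $p=2$ by tracing the $G_0$-conjugacy class of a $\tau_i$ inside the normal subgroup $Q$. The paper instead argues uniformly: from $C_R(E)\leq R\cap E\leq G_0\cap Q$ one has a normal subgroup $G_0\cap Q\trianglelefteq G_0$ of the non-abelian simple group $G_0\cong A_5$; if it were nontrivial it would equal $G_0$, forcing $G_0\leq Q$, which is absurd since $Q$ is abelian. Your $p=2$ argument is effectively a hands-on reproof of this simplicity step for the particular case at hand, and your $p$-odd case is subsumed by it as well. So the ``delicate point'' you flag is not actually delicate: a direct appeal to simplicity of $G_0$ dispatches all primes at once, with no need for the conjugacy-class analysis.
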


\begin{proof} 
Indeed, $E=C_{\hat G}(E)\geq C_R(E)$. If the latter is non-trivial, we have that $G_0\cap Q\neq 1$, which is clearly a contradiction, since $G_0\cap Q\unlhd G_0$ and $G_0\cong A_5$ is a non-abelian simple group. 
\end{proof}

In particular, this implies the following.

\begin{corollary}
We have that $p\neq 2$.
\end{corollary}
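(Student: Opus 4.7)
The plan is to derive a contradiction from the assumption $p = 2$ using the standard fact that a non-trivial $p$-group acting on a non-trivial $p$-group must fix a non-identity element, combined with the just-proved Lemma \ref{no centraliser}.

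First I would suppose for contradiction that $p = 2$. Then $R$ is a non-trivial elementary abelian $2$-group, which we view as an $\F_2$-vector space. The subgroup $E \leq G_0$ is also a $2$-group (in fact $E \cong 2^2$), and it acts on $R$ by conjugation because $R \leq Q \unlhd \hat G$ and $E \leq \hat G$.

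Next I would invoke the classical fixed-point theorem: any $p$-group acting on a non-trivial finite $\F_p$-vector space has a non-zero fixed subspace. This follows from the orbit equation, since every non-trivial orbit has size divisible by $p$, so $|R| \equiv |C_R(E)| \pmod{p}$, forcing $|C_R(E)| \geq p$ when $|R|$ is a non-trivial power of $p$. Applied to $p = 2$ and $R \neq 1$, this gives $C_R(E) \neq 1$.

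This directly contradicts Lemma \ref{no centraliser}, which asserts $C_R(E) = 1$. Hence the assumption $p = 2$ is untenable, and $p \neq 2$. There is no real obstacle here — the corollary is a short group-theoretic consequence of the preceding lemma, and its purpose is essentially to set up the next step of the argument (where coprime action of $E$ on $R$ can be exploited, e.g.\ to decompose $R$ into eigenspaces for the involutions $\tau_i$ and ultimately to rule out the existence of an abelian minimal normal subgroup $Q$ of $\hat G$).
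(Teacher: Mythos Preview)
Your proof is correct and follows exactly the same approach as the paper: if $p=2$ then both $R$ and $E$ are $2$-groups, forcing $C_R(E)\neq 1$, which contradicts the preceding lemma. The paper states this in a single sentence without spelling out the fixed-point argument, but the content is identical.
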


\begin{proof}
Indeed, if $p=2$ then $C_R(E)$ cannot be trivial, as both $R$ and $E$ are $2$-groups.
\end{proof}

So $p$ is an odd prime. In fact, we can say a lot more than that. Recall that Miyamoto 
involutions in the automorphism groups of algebras of Monster type 
$(\frac{1}{4},\frac{1}{32})$ form a class of $6$-transpositions. (See \eg  Corollary 2.10 from 
\cite{3-gen}.)

\begin{lemma} \label{6-trans}
We have that $p\in\{3,5\}$.
\end{lemma}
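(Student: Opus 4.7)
The plan is to exploit the 6-transposition property of the Miyamoto involutions applied to conjugation of an element of $E$ by an element of $R$. Since $R \unlhd \hat G$ and $\tau \in E$ is a Miyamoto involution, for any $r \in R$ the conjugate $\tau^r = r^{-1}\tau r$ is also a Miyamoto involution (it lies in the same $\hat G$-class). Hence by the cited 6-transposition property, the product $\tau \cdot \tau^r$ has order at most $6$ in $\hat G$.

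Next I would compute this product inside the normal subgroup $R$. Using $\tau^2 = 1$, one has
\[
\tau \cdot \tau^r \;=\; \tau r^{-1} \tau r \;=\; (r^{-1})^{\tau}\, r \;=\; (r^\tau)^{-1}\, r \;\in\; R,
\]
where the last inclusion holds because $R$ is normal in $\hat G$. Since $R$ is an elementary abelian $p$-group, every non-trivial element of $R$ has order exactly $p$. Combined with the bound from the previous step, $(r^\tau)^{-1} r$ has order equal to $1$ or $p$, and the order is at most $6$; so either $(r^\tau)^{-1} r = 1$ or $p \leq 6$.

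Finally, I would rule out the possibility that $(r^\tau)^{-1} r = 1$ for all $r \in R$ and all $\tau \in E$. If this held, then every $\tau \in E$ would centralise $R$, giving $R \leq C_R(E)$, which contradicts Lemma \ref{no centraliser} because $R \neq 1$ by construction. Therefore there exist $\tau \in E$ and $r \in R$ with $(r^\tau)^{-1} r \neq 1$, and for such a pair we deduce $p \leq 6$. Combined with the earlier corollary that $p$ is odd, this forces $p \in \{3, 5\}$. The only subtle point, which I would just verify in one line, is that the specific involutions $\tau_1, \tau_2, \tau_3$ spanning $E$ are genuine Miyamoto involutions (namely $\tau_{a_i}$), so that the 6-transposition hypothesis applies to them in $\hat G = \Aut(A)$.
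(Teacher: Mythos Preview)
Your proof is correct and follows essentially the same approach as the paper: both exploit that each non-identity element of $E$ is a Miyamoto involution $\tau_{a_i}$, compute the product $\tau\tau^r$ as an element of $R$, and invoke the $6$-transposition property together with $C_R(E)=1$ to bound $p$. The only cosmetic difference is that the paper chooses $r$ specifically in the $-1$-eigenspace of $e$ on $R$ (available since $p\neq 2$ and $e$ acts non-trivially), so that $ee^r=r^2$ has order exactly $p$, whereas you argue by contradiction over all $r$ and $\tau$; the content is the same.
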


\begin{proof}
Take $1\neq e\in E$. We note that $e=\tau_i=\tau_{a_i}$ for some $i$, so $e$ is a Miyamoto involution. It follows from Lemma \ref{no centraliser} that $e$ cannot act trivially on $R$ (as all non-identity elements of $E$ are conjugate in $G$). In particular, since $p\neq 2$, there must be an element $1\neq r\in R$ inverted by $e$. Then $|ee^r|=|er^{-1}er|=|(r^{-1})^er|=|r^2|=p$, and since $e$ belongs to a class of $6$-transpositions, we must have that $p\leq 6$.
\end{proof}

We now consider separately the cases of $p=3$ and $p=5$ and achieve a contradiction in both of them using the known modular character tables of $A_5$. First, we need the following observation which applies in both cases.

\begin{lemma} \label{mult of three}
Let $n=\dim(R)$. Then $n=3k$ for some $k\in\N$. Furthermore, the eigenvalues of each $1\neq e\in E$ on $R$ are $1$ (with multiplicity $k$) and $-1$ (with multiplicity $2k$).
\end{lemma}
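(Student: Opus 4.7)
The plan is to study the restriction $R|_E$ of the $G_0$-module $R$ to the Sylow $2$-subgroup $E=\{1,\tau_1,\tau_2,\tau_3\}$ and decompose it into character spaces. Since $E$ is elementary abelian of exponent $2$ and $p$ is odd, every irreducible $\F_p E$-module is one-dimensional with values in $\{\pm 1\}\subseteq\F_p$. There are exactly four such characters $\chi_0,\chi_1,\chi_2,\chi_3$, where $\chi_0$ is trivial and $\chi_i$, for $i\in\{1,2,3\}$, takes value $+1$ on $\tau_i$ and $-1$ on the other two non-identity elements. As $|E|$ is coprime to $p$, Maschke's theorem yields a decomposition $R|_E\cong\bigoplus_{i=0}^{3} n_i\chi_i$, defined over $\F_p$, for some non-negative integers $n_i$.

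From Lemma \ref{no centraliser} we have $C_R(E)=1$, and since the $\chi_0$-isotypic component of $R|_E$ is precisely the fixed subspace of $E$, this immediately gives $n_0=0$.

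To show $n_1=n_2=n_3$, I would exploit the action of $N_0:=N_{G_0}(E)\cong A_4$ on $R$, which is well-defined since $N_0\leq G_0$. The quotient $N_0/E\cong C_3$ acts as a $3$-cycle on the non-identity elements of $E$, hence permutes the characters $\chi_1,\chi_2,\chi_3$ cyclically. Thus, for any $g\in N_0$ realising a non-trivial permutation of the $\chi_i$, conjugation by $g$ maps the $\chi_i$-isotypic component isomorphically onto the $\chi_{g(i)}$-isotypic component, forcing $n_1=n_2=n_3=:k$. Therefore $n=\dim R=3k$, as claimed.

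Finally, each $e\in E\setminus\{1\}$ acts on the $\chi_i$-isotypic component as the scalar $\chi_i(e)$. By construction, exactly one of $\chi_1(e),\chi_2(e),\chi_3(e)$ equals $+1$ and the other two equal $-1$; hence $e$ has eigenvalue $+1$ on $R$ with multiplicity $k$ and eigenvalue $-1$ with multiplicity $2k$. The only minor subtlety is ensuring that the decomposition of $R|_E$ is realised over $\F_p$ rather than over an algebraic closure, but this is immediate since the characters are $\pm 1$-valued and the primitive central idempotents $\frac{1}{|E|}\sum_{e\in E}\chi_i(e)e$ lie in $\F_p E$.
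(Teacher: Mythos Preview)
Your proof is correct and follows essentially the same approach as the paper: both decompose $R$ into simultaneous eigenspaces for $E$, use $C_R(E)=1$ to kill the trivial component, and then invoke conjugacy of the three involutions to force equal multiplicities. The only cosmetic difference is that the paper phrases this as an elementary eigenvalue count (two $-1$'s and one $+1$ per basis vector, then equal distribution by conjugacy in $G_0$), whereas you use the language of isotypic components permuted by $N_{G_0}(E)/E$.
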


\begin{proof}
Clearly, since $e$ has order $2$ and $p$ is odd, we have that $e$ has eigenvalues $1$ and $-1$ on $R$. So we just need to determine multiplicities.

Since $E$ is abelian, $R$ admits a basis $\{r_1,r_2,\ldots,r_n\}$ with respect to which all $e\in E$ are diagonal. By Lemma \ref{no centraliser}, since the product of the three involutions from $E$ is one, we must have for each $i$ that two involutions $e\in E$ invert $r_i$ and one involution centralises $r_i$. Thus, the total number of $-1$s in the matrices of the three involutions is $2n$ and the total number of $1$s is $n$. Since the three involutions are conjugate in $G_0$, they have the same eigenvalue multiplicities, and so the multiplicity $k$ of the eigenvalue $1$ for each $e$ is $\frac{n}{3}$ and the multiplicity of $-1$ for each $e$ is $\frac{2n}{3}=2k$.
\end{proof}

\begin{lemma} \label{not 3}
We have that $p\neq 3$.
\end{lemma}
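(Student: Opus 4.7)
The plan is to apply mod-$3$ representation theory of $A_5$ together with Lemma \ref{mult of three} to pin down the structure of $R$, and then obtain a contradiction from its embedding in $\hat G$.

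First, I will classify the irreducible $\F_3 A_5$-modules. Since $A_5$ has four $3$-regular classes (the identity, the involutions, and the two classes of $5$-cycles), there are four absolutely irreducible $\overline{\F_3} A_5$-modules. The decomposition matrix in characteristic $3$ shows that the ordinary $5$-dimensional character decomposes as $1 + 4$, while the two ordinary $3$-dimensional characters lie in blocks of defect zero, so the absolutely irreducible dimensions are $1$, $3$, $3$, and $4$. The two $3$-dimensional modules have character values in $\Q(\sqrt{5})$; since $5$ is not a square modulo $3$, they are Galois conjugate over $\F_3$ and do not descend individually, but combine into a single irreducible $\F_3 A_5$-module of dimension $6$. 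Hence the irreducible $\F_3 A_5$-modules have dimensions $1$, $4$, and $6$, and the divisibility condition of Lemma \ref{mult of three} forces $\dim R = 6$ with $k = 2$.

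Next, I must rule out the existence of such an $R$. Since $S_5$ has no normal $3$-subgroup, $R \cap G = 1$ and $R \rtimes G$ embeds in $\hat G$, so $|\hat G| \geq 3^6 \cdot 120$. My strategy is to exploit that $R$ acts by algebra automorphisms on $A$, preserving the Frobenius form, the identity $\one_A$, and the set of axes. For any $1 \neq r \in R$, the eigenvalues of $r$ on $A \otimes \overline{\Q}$ are cube roots of unity with the non-real eigenspaces of equal dimension and complex-conjugate, and the fixed subalgebra $A^{\la r\ra}$ is a proper $C_{\hat G}(r)$-invariant subalgebra. Combining the resulting trace and dimension information on $A$ as an $R G_0$-module with the decomposition $R = C_R(e_1) \oplus C_R(e_2) \oplus C_R(e_3)$ from Lemma \ref{mult of three} should yield the required contradiction, most likely by direct computation against the known $46$-dimensional model of $A$.

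The main obstacle I foresee is this final step: the dimension and trace constraints coming from $E$ alone are already compatible with the $6$-dimensional $\F_3 A_5$-module, so the contradiction genuinely needs input from the algebra structure. A promising route is to analyse $R$-orbits on the $15$ known Miyamoto involutions (equivalently, the known axes): since $C_R(E) = 1$, no nontrivial element of $R$ fixes all of $\{\tau_{a_1}, \tau_{a_2}, \tau_{a_3}\}$, so the orbits of $R$ on this triple, and on the full set of axes, would give combinatorial restrictions that ought to be incompatible with $|R| = 729$. Failing that, the non-existence can likely be verified in MAGMA by a direct search in $\Aut(A)$ using the tools developed earlier in the paper.
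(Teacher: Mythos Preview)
Your representation-theoretic setup is careful---indeed more careful than the paper, which lists the absolutely irreducible dimensions $1,3,3,4$ and takes $\dim R=3$, whereas you correctly observe that over $\F_3$ the two $3$-dimensional modules fuse to a single irreducible of dimension $6$, forcing $k=2$. However, your proof then stops: you describe several possible routes (trace/dimension constraints on $A$, orbit analysis on the $15$ involutions, a MAGMA search) but do not carry any of them out, and you yourself flag that the representation-theoretic constraints alone do not yield a contradiction. As it stands, this is a plan, not a proof.

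The missing idea is that no input from the algebra $A$ is needed beyond the $6$-transposition property of the Miyamoto involutions, which you have already used in Lemma \ref{6-trans}. Pick $d\in G_0$ of order $5$ inverted by some $e\in E$. Since the primitive $5$th roots of unity lie in $\F_{81}$, the subspace $[R,d]$ has $\F_3$-dimension a multiple of $4$, so $\dim C_R(d)=2$ and $\dim[R,d]=4$. As $e$ inverts $d$, it permutes the $\zeta$- and $\zeta^{-1}$-eigenspaces of $d$ on $[R,d]\otimes\overline{\F_3}$, giving $e$ eigenvalues $+1,+1,-1,-1$ there; since $e$ has eigenvalue $+1$ with total multiplicity $k=2$ on $R$, it must act as $-1$ on all of $C_R(d)$. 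Now take $1\neq r\in C_R(d)$: then $e$ inverts both $d$ and $r$, hence inverts $t:=dr$ of order $15$, and $|ee^t|=|t^2|=15$, contradicting that $e$ is a $6$-transposition. This is exactly the mechanism the paper uses (phrased there for the $3$-dimensional absolutely irreducible constituent), and it closes the argument cleanly without any computation in $A$.
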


\begin{proof}
In characteristic $3$, the group $A_5$ has irreducible modules in dimension $1$, $3$, $3$, and $4$. By the preceding lemma, since $R$ is irreducible, $n=\dim(R)=3$ and $k=1$. Let $d\in G_0$ be an element of order $5$ inverted by $e\in E$. Note that $3^3-1=27-1=26$ is not a multiple of $5$. Hence $d$ fixes a non-zero vector $r\in R$. It follows that $R=C_R(d)\oplus[R,d]$, where $\dim(C_R(d))=1$ and $\dim([R,d])=2$. Clearly, $e$ leaves both $C_R(d)$ and $[R,d]$ invariant. Since $e$ inverts $d$, it cannot act on $[R,d]$ as a scalar, so $e$ has both eigenvalues $1$ and $-1$ on $[R,d]$. This implies, by the above lemma, that $e$ must have eigenvalue $-1$ on $C_R(d)$. However, this means that $e$ inverts $t:=dr$, which is an element of order $15$, and we obtain that $|ee^t|=|(t^{-1})^et|=|t^2|=15$, which is a contradiction, since $e$ is a $6$-transposition. 
\end{proof}

Similarly, we also rule out the second case.

\begin{lemma}
We have that $p\neq 5$.
\end{lemma}

\begin{proof}
In characteristic $5$, $A_5$ has irreducible modules of dimension $1$, $3$, and $5$. Hence 
again $n=\dim(R)=3$ and $k=1$. Let now $d\in G_0$ be an element of order $3$ that is inverted 
by $e\in E$. Again $R=C_R(d)\oplus[R,d]$ with $\dim(C_R(d))=1$ and $\dim([R,d])=2$. We 
similarly deduce that $e$ has eigenvalues $1$ and $-1$ within $[R,d]$, and so it must invert 
$1\neq r\in C_R(d)$. Thus, $e$ inverts $t=dr$ of order $15$, which again, as in the proof of 
Lemma \ref{not 3}, contradicts the fact that $e$ is a $6$-transposition.
\end{proof}

We have achieved our goal. Since we obtained a contradiction in all cases, our assumption that $Q$ was abelian cannot hold. 

\begin{corollary}
Every minimal normal subgroup $Q$ of $\hat G$ is non-abelian.
\end{corollary}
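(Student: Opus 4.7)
The plan is to argue by contradiction and harvest the conclusion directly from the chain of lemmas that the preceding subsection was designed to deliver. Suppose for contradiction that $Q$ is an abelian minimal normal subgroup of $\hat G$; then $Q$ is elementary abelian of prime exponent $p$, and we may take $R\leq Q$ to be a minimal non-trivial $G_0$-invariant subgroup, which is then irreducible as an $\F_p G_0$-module. The task reduces to ruling out $p$ one prime at a time, exactly as the authors have set up.

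First, $p$ cannot be $2$: a non-trivial $2$-group acted on by the $2$-group $E$ would have a non-zero fixed vector, contradicting Lemma \ref{no centraliser}. Next, Lemma \ref{6-trans}, via the $6$-transposition property of Miyamoto involutions in algebras of Monster type $(\tfrac14,\tfrac{1}{32})$, squeezes $p$ into $\{3,5\}$, since an involution $e\in E$ must invert some $1\neq r\in R$ and then $|ee^r|=|r^2|=p\leq 6$. Finally, Lemmas \ref{not 3} and its $p=5$ counterpart dispose of the remaining primes by combining the modular character tables of $A_5$ with the eigenvalue bookkeeping of Lemma \ref{mult of three}: in each case $R$ is forced to be three-dimensional, and one locates an element of order $15$ inverted by $e$, again contradicting the $6$-transposition bound.

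Since no prime $p$ survives, no abelian minimal normal subgroup $Q$ can exist, and therefore every minimal normal subgroup of $\hat G$ is non-abelian. The only real obstacle lives in the preceding lemmas — specifically the modular representation-theoretic case analysis at $p=3$ and $p=5$; once those are in hand, the corollary is a pure bookkeeping consequence and admits essentially a one-line proof.
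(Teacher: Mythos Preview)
Your proposal is correct and mirrors the paper's own argument exactly: the corollary is simply the concluding sentence of the preceding chain of lemmas (Lemma~\ref{no centraliser} through the $p=5$ elimination), and the paper itself treats it as such, writing just before the statement that ``since we obtained a contradiction in all cases, our assumption that $Q$ was abelian cannot hold.'' No further work is needed.
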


Since $Q$ is a minimal normal subgroup, we must have that $Q=L\times L\times\cdots\times L$ for a non-abelian simple group $L$. In fact, we will shortly see that $Q=L$ is simple.

Let $S\cong D_8$ be a Sylow $2$-subgroup of $G$ containing $E$.

\begin{lemma}
We have that $S$ is a Sylow $2$-subgroup of $\hat G$.
\end{lemma}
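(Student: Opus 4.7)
My plan is to show that any Sylow $2$-subgroup $T$ of $\hat G$ containing $E$ (we may assume this by Sylow conjugation) has order exactly $8 = |S|$. The crucial input is the identity $C_{\hat G}(E) = E$, which follows from Lemma~\ref{A5kernel} combined with the no-twins property of Computation~\ref{prelim A5 3A2A}(b): any element centralising each $\tau_i = \tau_{a_i}$ must fix $a_i$ (uniqueness of $a_i$ given $\tau_i$), hence lies in $\hat K = E$.

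Given this, the first reduction is formal. Since $Z(T) \le C_{\hat G}(T) \le C_{\hat G}(E) = E \cong 2^2$, the centre $Z(T)$ is a nontrivial subgroup of $E$. If $Z(T) = E$, then $T \le C_{\hat G}(E) = E$, forcing $|T| = 4$, which is impossible as $|S| = 8 \le |T|$. So $|Z(T)| = 2$, with $Z(T) = \langle \tau_j \rangle$ for some $j$. Next I would pull $S$ into $T$ by normalizer ascent: we have $N_{\hat G}(E) = N_G(E) \cong S_4$ of order $24$, because $|N_{\hat G}(E)/C_{\hat G}(E)| \le |\Aut(E)| = 6$ and $N_G(E)$ already saturates this bound. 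Thus $N_T(E)$ is a $2$-subgroup of $S_4$, so $|N_T(E)| \le 8$, while the $p$-group fact that a proper subgroup is strictly contained in its normalizer gives $E \lneq N_T(E)$. Hence $|N_T(E)| = 8$, making $N_T(E)$ a Sylow $2$-subgroup of $N_{\hat G}(E)$ and therefore conjugate to $S$ inside $N_{\hat G}(E)$; after replacing $T$ by such a conjugate (preserving both the containment of $E$ and the Sylow status) we may assume $S = N_T(E) \le T$.

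The final step, which is the main obstacle, is to conclude $T = S$. Suppose for contradiction that $|T| > 8$; then $S \lneq N_T(S)$, so $|N_T(S)| \ge 16$. An upper bound comes from the fact that $C_{N_T(S)}(S) \le C_{\hat G}(E) \cap S \le E$ and hence $C_{N_T(S)}(S) \le C_S(S) = Z(S)$, giving an embedding of $N_T(S)/Z(S)$ into $\Aut(S) = \Aut(D_8)$ which has order $8$. This forces $|N_T(S)| \le 16$, so equality holds. One now has to rule out the existence of such a $2$-group of order $16$ inside $\hat G$ containing and normalizing $S$: its centre is $Z(S) = \langle \tau_j \rangle$, so it lies inside $C_{\hat G}(\tau_j)$, which fixes $a_j$ by the no-twins argument. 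Completing the contradiction appears to require either additional computational input about $\hat G_{a_j}$ (for instance, pinning down its action on the eigenspace decomposition of $a_j$), or a careful case analysis of the possible isomorphism types ($D_{16}$, $SD_{16}$, $Q_{16}$ are the candidates with centre of order $2$) combined with the absence of abelian minimal normal subgroups of $\hat G$ established earlier -- this is where the argument moves from purely formal Sylow bookkeeping into delicate finite group theory interwoven with the specifics of the axial algebra.
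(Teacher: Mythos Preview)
Your argument is incomplete, as you yourself acknowledge in the last paragraph, but the missing ingredient is far simpler than the ``delicate finite group theory'' you anticipate: it is Computation~\ref{prelim A5 3A2A}(c), which you never use. That computation records that none of the ten involutions in $G\setminus G_0$ is the tau involution of an axis. Now $S\cong D_8$ contains five involutions: the three in $E\subset G_0$, which \emph{are} tau involutions by (b), and two in $S\setminus E\subset G\setminus G_0$, which are \emph{not}, by (c). Since the property of being a tau involution is invariant under all of $\hat G=\Aut(A)$, any $t\in N_T(S)$ must send the set $E\setminus\{1\}$ to itself and hence normalise $E$. This gives $N_T(S)\le N_T(E)=S$ at once, so $N_T(S)=S$ and therefore $T=S$. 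No case analysis of order-$16$ groups is required.

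The paper's proof is built around precisely this observation, organised more directly: it takes a Sylow $2$-subgroup $\hat S\supseteq S$ of $\hat G$, picks $t\in N_{\hat S}(S)$, and uses (c) to deduce that $t$ normalises $E$. (The printed line ``$t\in N_{\hat G}(E)=\Aut(A)_{a_1,a_2,a_3}=E$'' is a slip: what holds is $C_{\hat G}(E)=\hat K=E$, whence $|N_{\hat G}(E)|\le 4\cdot|\Aut(E)|=24$ and so $N_{\hat G}(E)=N\cong S_4$, exactly the computation you carried out; then $t$, being a $2$-element of $N$ normalising the Sylow $2$-subgroup $S$ of $N$, must lie in $S$.) Your determinations of $C_{\hat G}(E)=E$ and $N_{\hat G}(E)=N$ are exactly the right preparatory steps; the detour through $Z(T)$ and the $\Aut(D_8)$ bound is correct but unnecessary once (c) is invoked.
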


\begin{proof}
Let $\hat S$ be a Sylow $2$-subgroup of $\hat G$ containing $S$. If $S<\hat S$ then also $S<N_{\hat S}(S)$. Let $t\in N_{\hat S}(S)$. By the Computation \ref{prelim A5 3A2A} (c), there are no axes corresponding to the involutions in $S\setminus E$. In particular, the involutions from $E$ cannot be conjugate to the involutions from $S\setminus E$. It follows that $t$ must normalise $E$, that is, $t\in N_{\hat G}(E)=\Aut(A)_{a_1,a_2,a_3}=E$. This is a contradiction and it shows that $S=\hat S$. 
\end{proof}

It is well known that no non-abelian simple group can have a Sylow $2$-subgroup of order less that $4$. Since the $2$-part of $|\hat G|$ is $2^3$ by the lemma we have just proved, we conclude that the following must be true.

\begin{corollary}
We have that $Q=L$ is a simple group. Furthermore, $G_0\leq Q$.
\end{corollary}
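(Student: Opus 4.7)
The plan is to exploit the very tight constraint on the order of a Sylow $2$-subgroup of $\hat G$ established in the preceding lemma: $|\hat G|_2=|S|=8$.

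First I would show that $Q=L$ is simple. Since $Q$ is a minimal normal subgroup, we may write $Q=L_1\times L_2\times\cdots\times L_k$, where the $L_i$ are pairwise isomorphic copies of a non-abelian simple group $L$, permuted transitively by $\hat G$. Any non-abelian simple group has a Sylow $2$-subgroup of order at least $4$ (by the Feit--Thompson theorem, or more elementarily by the classical fact that groups of odd order and dihedral-Sylow-$2$-subgroup analyses exclude smaller possibilities); consequently $|Q|_2\geq 4^k=2^{2k}$. Combined with $|Q|_2\leq|\hat G|_2=8$, this forces $2k\leq 3$, hence $k=1$, giving $Q=L$.

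Next I would prove that $G_0\leq Q$ by examining $G_0\cap Q$. Since $Q\unlhd\hat G$, the intersection $G_0\cap Q$ is a normal subgroup of $G_0\cong A_5$. As $A_5$ is simple, either $G_0\cap Q=1$ or $G_0\cap Q=G_0$. In the first case, the product $G_0 Q$ satisfies $|G_0 Q|=|G_0|\cdot|Q|$, so its $2$-part is $|G_0|_2\cdot|Q|_2\geq 4\cdot 4=16$, which contradicts $|\hat G|_2=8$ since $G_0 Q\leq\hat G$. Therefore $G_0\cap Q=G_0$, i.e., $G_0\leq Q$, as claimed.

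The main (and essentially only) obstacle here is the input that any non-abelian simple group has Sylow $2$-subgroup of order at least $4$; everything else is a direct divisibility argument using the bound $|\hat G|_2=8$ from the previous lemma together with simplicity of $G_0\cong A_5$. No deeper classification or character-theoretic input is needed at this step: the non-abelian case rules out diagonal embeddings of $G_0$ into a direct power of $L$ and pins down that $G_0$ actually sits inside $Q$, setting up the identification of $\hat G$ that will follow.
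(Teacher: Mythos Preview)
Your proposal is correct and follows essentially the same approach as the paper. For $Q=L$, both you and the paper use that a non-abelian simple group has Sylow $2$-subgroup of order at least $4$ together with $|\hat G|_2=8$; for $G_0\leq Q$, the paper argues inside the fixed Sylow $2$-subgroup $S$ that $Q\cap S$ and $E$ (each of order $\geq 4$ in $|S|=8$) must meet nontrivially, while you phrase the same count globally via $|G_0Q|_2\geq 16$ --- these are two presentations of the identical divisibility argument.
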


\begin{proof}
The first claim is now clear. Since $|S|=2^3$, $|E|=2^2$, and the order of $Q\cap S$ is at least $2^2$, we must have that $Q\cap E\neq 1$. However, so $G_0\cap Q\neq 1$, and so the claim follows, since $G_0\cong A_5$ is simple.
\end{proof}

We can now prove the ultimate result.

\begin{proposition} \label{S5}
The automorphism group of $A$ is $H\cong S_5$.
\end{proposition}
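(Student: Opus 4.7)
The plan is to derive $\hat G = \Aut(A) \cong S_5$ from the structural information already built up: $\hat G$ is finite with trivial soluble radical, every minimal normal subgroup $Q$ is non-abelian simple with $G_0 \cong A_5 \leq Q$, the Sylow $2$-subgroup $S \cong D_8$ of $\hat G$ forces $Q \cap S$ to be a Sylow $2$-subgroup of $Q$ of order dividing $8$, and $\hat K = C_{\hat G}(E) = E$. The overall strategy is: first show that $Q$ is the \emph{unique} minimal normal subgroup of $\hat G$ and that $Q = G_0$; then $C_{\hat G}(Q) = 1$, so conjugation embeds $\hat G \hookrightarrow \Aut(Q) = \Aut(A_5) \cong S_5$, and combined with $G = S_5 \leq \hat G$ this gives equality.

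Uniqueness of $Q$ is immediate: two distinct non-abelian simple minimal normal subgroups $Q, Q'$ would commute and give $|QQ'|_2 \geq 4 \cdot 4 = 16 > |\hat G|_2 = 8$, a contradiction. Consequently $C_{\hat G}(Q)$ contains no minimal normal subgroup, and since $C_{\hat G}(Q) \cap Q = Z(Q) = 1$, we obtain $C_{\hat G}(Q) = 1$. The crux is then to rule out $Q \supsetneq G_0$. Since $Q \cap S \leq D_8$ contains $V_4 \cong E$, and Burnside's normal $p$-complement theorem rules out $C_4$, we have $Q \cap S \in \{V_4, D_8\}$. Invoking Walter's theorem in the abelian case and the Gorenstein--Walter theorem in the dihedral case, filtered by the requirement $A_5 \leq Q$, reduces the candidates to $A_5$, $A_6$, $A_7$, and $L_2(q)$ for finitely many small odd prime powers $q$ (\eg $q = 11, 25$).

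Each non-$A_5$ candidate is then to be eliminated by one of two uniform tools. First, $E \leq G_0 \leq Q$ together with $C_Q(E) \leq C_{\hat G}(E) = E$ forces $C_Q(E) = E$; the case $Q = A_7$ fails here, since $|C_{A_7}(V_4)| = 12$. Second, in each of the remaining cases such as $A_6$, $L_2(11)$, and $L_2(25)$, the subgroup $G_0 \cong A_5$ is self-normalising and maximal in $Q$, with two $Q$-classes that fuse under $\Aut(Q)$, so $N_{\Aut(Q)}(G_0) = G_0$. But $G = S_5 \leq N_{\hat G}(G_0)$ injects into $N_{\Aut(Q)}(G_0)$, giving $S_5 \leq A_5$, a contradiction.

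The main obstacle is this case-by-case analysis: even with the Sylow $2$-restriction narrowing the list substantially, each $L_2(q)$ candidate requires specific information about its $A_5$-subgroups and their fusion under $\Aut(Q)$. A complementary route for a given candidate is the Frattini argument $\hat G = Q \cdot N_{\hat G}(E)$, which uses the already-established $N_{\hat G}(E) = N \cong S_4$ to pin down $|\hat G|$ exactly; one then checks directly that no group of that order containing $Q$ admits $G = S_5$ as a subgroup (for instance, $\mathrm{PGL}_2(11)$ has no $S_5$-subgroup, eliminating $Q = L_2(11)$).
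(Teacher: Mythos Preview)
Your architecture is reasonable, but two steps fail. First, the claim that Walter and Gorenstein--Walter together with $A_5\leq Q$ leave only ``finitely many small'' prime powers $q$ is unjustified: with Sylow $2$-subgroup $V_4$ one has $Q\cong L_2(q)$ for $q\equiv 3,5\pmod 8$, and $A_5\leq L_2(q)$ whenever additionally $q\equiv\pm 1\pmod 5$, an infinite list ($q=11,19,29,\ldots$); the dihedral case is similar. Second, your normaliser argument is incorrect for two of the three cases you name. In $Q=A_6$ one has $N_{\Aut(A_6)}(A_5)\cong S_5$, not $A_5$ (a point stabiliser $S_5$ in $S_6\leq\Aut(A_6)$ already normalises $A_5$); in general, fusing two $Q$-classes under $\Aut(Q)$ yields $|N_{\Aut(Q)}(A_5)|=|A_5|\cdot|\mathrm{Out}(Q)|/2$, which exceeds $|A_5|$ whenever $|\mathrm{Out}(Q)|>2$. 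In $Q=L_2(25)$ the subfield copy $L_2(5)\cong A_5$ is not even self-normalising: $N_{L_2(25)}(A_5)=PGL_2(5)\cong S_5$. So for these $Q$ the intended contradiction $S_5\hookrightarrow A_5$ never materialises, and your Frattini alternative is only sketched for individual candidates, not for the whole (infinite) list.

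The paper proceeds quite differently and avoids both problems. Thompson's Transfer Theorem, applied via the fact (Computation~\ref{prelim A5 3A2A}(c)) that involutions in $S\setminus E$ are not tau involutions and hence not fused into $E$, produces an index-$2$ subgroup of $\hat G$ containing $E$; this forces $E$ itself to be a full Sylow $2$-subgroup of $Q$, eliminating the dihedral case outright. The $6$-transposition property of Miyamoto involutions then gives $\frac{q+1}{2}\leq 6$, so $q\leq 11$, leaving only $q\in\{5,11\}$. The case $Q\cong L_2(11)$ is ruled out by an axial-algebra argument you do not invoke: an $L_2(11)$-axet contains pairs of involutions generating $D_{12}$, forcing $6\A$ subalgebras and hence $2\A$ for commuting involutions, whereas in $A$ the axes for the involutions of $E$ generate $2\B$.
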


\begin{proof}
It follows from the lemmas above that $\hat G=\Aut(A)$ contains a unique minimal normal subgroup $Q=L$, which is a non-abelian simple group. Recall the notation $S\cong D_8$ for a Sylow $2$-subgroup of $G\cong S_5$ containing $E$. By Computation \ref{prelim A5 3A2A} (c), the involutions in $S\setminus E$ do not correspond to any axes, and in particular, they are not conjugate in $\hat G$ to the elements of $E$. By Thompson's Transfer Theorem (see \eg Theorem 12.1.1 in \cite{KS}), $\hat G$ has an index $2$ subgroup $\hat G_0$, containing $E$, but not $S$. Clearly, $Q$, being simple, is contained in $\hat G_0$. It follows that $E$ is the Sylow $2$-subgroup of $Q$, that is, $Q$ is a simple group with an elementary abelian Sylow $2$-subgroup of order $4$. It follows from \cite{W} that $Q\cong L_2(q)$ for $q\equiv 3,5\mbox{ mod }8$. It remains to bound the value of $q$. Note that an involution from $L_2(q)$, for odd prime power $q$, inverts tori of size $\frac{q-1}{2}$ and $\frac{q+1}{2}$. Since the involutions from $E$ are $6$-transpositions, it follows that $\frac{q+1}{2}\leq 6$, that is, $q\leq 11$. Since $q\equiv 3,5\mbox{ mod }8$, we have that $q=5$, or $11$. Thus, we just need to rule out the case $q=11$. 

Note that all involutions in $L_2(11)$ are conjugate. If $Q\cong L_2(11)$ then all involutions in $Q$ are tau involutions. Furthermore, since we do not have twins for the initial axes, we cannot have them for any axes. Therefore, the axet for $\hat G$ must be isomorphic to the axet of all involutions from $Q\cong L_2(11)$. However, the latter axet has pairs of involutions generating a subgroup $D_{12}$, which means that the corresponding pairs of axes generate the Norton-Sakuma algebra $6\A$. Within this subalgebra, we see that pairs of axes corresponding to commuting involutions (there is a single orbit of such pairs in $L_2(11)$) generate $2\A$. However, in our algebra $A$, the axes corresponding to two involutions of $E$ are orthogonal, that is, they generate $2\B$. This is a contradiction, since we have a single orbit on such pairs.

Thus, $q\neq 11$, and so $q=5$, and this means that $Q\cong L_2(5)\cong A_5$. We conclude that $Q=G_0$. Finally, since $Q$ is the only minimal normal subgroup of $\hat G$, we must have that $Q=F^*(\hat G)$ (the generalised Fitting subgroup), and this means that $C_{\hat G}(Q)=Z(Q)=1$. Thus, $\hat G$ is isomorphic to a subgroup of $\Aut(Q)\cong\Aut(A_5)\cong S_5$. This finally yields the desired result that $\hat G=G\cong S_5$.   
\end{proof}

\subsection{The $\mathbf{61}$-dimensional algebra for $\mathbf{S_5}$ of shape $\mathbf{4A}$}

This algebra $A$ is initially constructed from an axet with $10+15$ axes (two orbits) with the Miyamoto group $G_0:=Miy(A)\cong S_5$. 

\begin{computation}
~
\begin{enumerate}\label{S5 4A prelim}
\item $A$ contains a unique Jordan axis $d$; the corresponding sigma involution $\sg_d$ fixes the axes in the orbit $15$ and produces twins for the orbit $10$;
\item $A$ contains no further twins for the known axes; 
\item in the group $G=\la\sg_d\ra\times G_0\cong 2\times S_5$, no further further involution corresponds to an axis of $A$.
\end{enumerate}
\end{computation}

Hence we start with the group $G\cong 2\times S_5$ defined above and axet with $1+10+10+15$ axes. We aim to prove that $G=\Aut(A)$ and, correspondingly, the $36$ axes that we know are all the axes that $A$ contains.

From our discussion, the orbit $15$ on axes is bijectively mapped by the tau map onto the $15$ involutions in the subgroup $A_5$ of $G_0$, while each of the twin orbits $10$ is mapped bijectively onto the second class of involutions from $G_0$.

We apply the decomposition method with $Y=\{d\}$. 

\begin{computation}
~
\begin{enumerate}
\setcounter{enumi}{3}
\item $U=A_{(0)}(Y)=A_0(d)$ is of dimension $46$;
\item the only further summands in the decomposition are $A_{(1)}(Y)=A_1(d)=\la d\ra$ and $A_{\left(\frac{1}{4}\right)}(Y)=A_{\frac{1}{4}}(d)$ of dimension $14$.
\end{enumerate}
\end{computation}

It is easy to see that $U=A_{(0)}(Y)=A_0(d)$ is the algebra of type $3\A2\B$ generated by our orbit $15$, and it is exactly the algebra whose full automorphism group we found in the preceding subsection. Namely, $\Aut(U)\cong S_5$, and in particular, $G$ induces on $U$ its full automorphism group. Note that $G$ fixes $d$ and so it acts on $U$ and each summand $W$ of the decomposition of $A$. 

Let $\hat G=\Aut(A)$. Clearly, $\hat G$ fixes $d$, as it is the only axis of Jordan type in $A$. 
Hence $\hat G$ acts trivially on $\la d\ra$ and it also acts on the remaining two parts of the decomposition, $U=A_1(d)$ and $A_{\frac{1}{4}}(d)$. We have already stated that $G$ induces on $U$ its full automorphism group. Hence, in order to show that $\hat G=G$, we need to establish that $\hat G$ and $G$ have the same kernel acting on $U$. For $G$, this kernel coincides with the group $\la\sg_d\ra$ of order $2$. 

\begin{computation}\label{extension S5 4A}
~
\begin{enumerate}
\item The identity automorphism of $U$ has a $1$-dimensional space of extensions to $W=A_{\frac{1}{4}}(d)$; namely, they act on $W$ by scalars; and
\item a random element of $W$ does not square to zero. 
\end{enumerate}
\end{computation} 

For Computation \ref{extension S5 4A} (a), we used the method from Proposition \ref{extension} (see also the discussion after the proposition). Part (b) is just a direct calculation. 

Let $\phi\in\hat G$ be acting trivially on $U$. Then, by (a), $\phi$ multiplies every element of $W$ by a certain scalar $\lm$. Let $w\in W$ be the element from (b). Then we know that $w^2\neq 0$ and, by the fusion law for the Jordan axis $d$, we have that $w^2\in A_1(d)\oplus A_1(d)=\la d\ra\oplus U$. Hence $\phi$ fixes $w^2$. On the other hand, $(w^2)^\phi=(w^\phi)^2=(\lm w)^2=\lm^2 w^2$. Since $w^2\neq 0$, it follows that $\lm^2=1$, that is, $\lm=\pm 1$.

The value $\lm=1$ corresponds to $\phi$ equal to the identity automorphism of $A$, while $\lm=-1$ corresponds to $\phi=\sg_d$. Thus, there are exactly two extensions of the identity automorphism of $U$ to the entire $A$. Both of these extensions are elements of $G$. Therefore, we conclude that $\Aut(A)=\hat G=G$.

We have now obtained the main result of this subsection. 

\begin{proposition}
The full automorphism group of the $61$-dimensional algebra $A$ of shape $4\A$ coincides with $G\cong 2\times S_5$. Furthermore, $A$ contains exactly $1+10+10+15$ axes.
\end{proposition}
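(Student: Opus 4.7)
The plan is to exploit the fact that $d$, being the unique Jordan axis of $A$, must be fixed by every element of $\hat G := \Aut(A)$. Consequently, $\hat G$ preserves the eigenspace decomposition $A = \la d\ra \oplus U \oplus W$, where $U := A_0(d)$ is $46$-dimensional and $W := A_{1/4}(d)$ is $14$-dimensional. Since $G \cong 2 \times S_5$ is already contained in $\hat G$, my goal reduces to proving $\hat G = G$; the axet count then follows at once, because Computation \ref{S5 4A prelim} has already checked every involution of $G$ for a corresponding axis, and $d$ is the only Jordan axis in $A$.

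First I would identify $U$ with the $46$-dimensional algebra for $A_5$ of shape $3\A2\B$ treated in the previous subsection, whose full automorphism group is $S_5$. The complement factor $G_0 \leq G$ maps into $\Aut(U)$ and surjects by that previous result; meanwhile $\sg_d$, which negates $W$ and fixes $\la d\ra \oplus U$ pointwise by definition, lies in the kernel of the $G$-action on $U$. Thus $G$ induces the whole of $\Aut(U)$ with kernel exactly $\la\sg_d\ra$, and the problem reduces to showing that the kernel of the $\hat G$-action on $U$ has order at most $2$.

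The key computational step is to determine the extensions of $\Id_U$ to $W$. Using the method of Proposition \ref{extension} (treating the entries of the matrix of such an extension as indeterminates in a linear system), I would verify that the extension space is one-dimensional, so every such extension acts on $W$ by some scalar $\lm$. A second constraint then comes from the fusion law: for any $w \in W$ the product $w^2$ lies in $A_1(d) \oplus A_0(d) = \la d\ra \oplus U$ and is therefore pointwise fixed by any $\phi$ in the kernel. Picking a $w \in W$ with $w^2 \neq 0$ (verified by direct calculation) and comparing $\phi(w^2) = w^2$ with $\phi(w^2) = \lm^2 w^2$ forces $\lm = \pm 1$; the two values correspond precisely to $\Id$ and $\sg_d$, so the kernel is $\la\sg_d\ra$ and $\hat G = G$.

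The main obstacle is therefore the linear-algebra computation that cuts the extension space from the a priori $\End(W)$ down to scalars; everything else is either an invocation of the preceding subsection (identifying $\Aut(U)$), a use of the Jordan fusion law (controlling $w^2$), or routine bookkeeping. Unlike the $46$-dimensional case, no group-theoretic bootstrap via simple-group recognition is required here, because the unique Jordan axis already produces an invariant one-dimensional summand that any automorphism must fix pointwise, killing the possibility of new outer symmetries.
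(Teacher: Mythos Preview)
Your proposal is correct and follows essentially the same route as the paper: fix the unique Jordan axis $d$, decompose $A=\la d\ra\oplus U\oplus W$, identify $U$ with the $46$-dimensional $A_5$-algebra whose automorphism group is $S_5$, compute that the extensions of $\Id_U$ to $W$ are scalars, and use $w^2\in\la d\ra\oplus U$ with $w^2\neq 0$ to force $\lm=\pm 1$. The paper carries out exactly these steps (Computations \ref{S5 4A prelim} and \ref{extension S5 4A}), and your closing remark that no simple-group recognition is needed here is precisely the point.
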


\section{Partial decomposition} \label{partial}

Towards the end of Section \ref{decomposition} and Section \ref{larger}, we focussed on the case where the selected axes were pairwise orthogonal, \ie they annihilated each other. In this case we obtained a decomposition of the entire algebra as a direct sum of joint eigenspaces. When we allow non-orthogonal axes, \ie when $\lla a_i,a_j\rra\not\cong 2\B$ for at least one pair of selected axes, the decomposition into joint eigenspaces is only partial. Let $$A^\circ=\bigoplus_{(\lm_1,\lm_2,\ldots,\lm_k)\in\cF^k}A_{(\lm_1,\lm_2,\ldots,\lm_k)}(Y).$$

We can additionally introduce $A^\sharp:=(A^\circ)^\perp$. When the Frobenius form is non-degenerate on $A^\circ$ (for example, when it is positive definite on $A$), we will have that $A=A^\circ\oplus A^\sharp$, that is, this new piece completes the decomposition of $A$. We note that it is also a module for $U=A_{(0,0,\ldots,0)}(Y)$.

\begin{proposition}
The subspace $W=A^\sharp$ satisfies $UW\subset W$; that is $W$ is a $U$-module.
\end{proposition}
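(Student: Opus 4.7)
The plan is to reduce the statement to the fact we already know from Proposition \ref{modules}, by using the Frobenius form as a duality. Specifically, since $W=A^\sharp$ is defined as the orthogonal complement of $A^\circ$, membership in $W$ is characterised by a vanishing condition against all of $A^\circ$, and the associativity of the Frobenius form transports multiplication by $u\in U$ between the two sides of the inner product.

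In more detail, I would take arbitrary $u\in U$ and $w\in W$ and argue that $uw\in W$ by showing $(uw,v)=0$ for every $v\in A^\circ$. The Frobenius property gives $(uw,v)=(w,uv)$, so the claim reduces to proving $uv\in A^\circ$. Writing $v$ as a finite sum of components $v_{(\lm_1,\ldots,\lm_k)}\in A_{(\lm_1,\ldots,\lm_k)}(Y)$ in the direct sum decomposition of $A^\circ$, Proposition \ref{modules}(b) applied to each component gives $u\,v_{(\lm_1,\ldots,\lm_k)}\in A_{(\lm_1,\ldots,\lm_k)}(Y)\subseteq A^\circ$. Summing, $uv\in A^\circ$, and then $(w,uv)=0$ because $w\in (A^\circ)^\perp$. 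This is essentially the same trick used in Lemma \ref{module}, now applied to the subspace $A^\circ$ rather than to a subalgebra.

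I do not expect a genuine obstacle here, since everything follows formally once one invokes Proposition \ref{modules} componentwise and the defining property of the Frobenius form. The only thing to be careful about is that Proposition \ref{modules}(b) requires $\cF$ to be Seress, which is a standing assumption in this part of the paper (and is needed anyway to make $U$ a subalgebra, hence to speak of a $U$-module structure on $W$). One should also note that the argument does not require the non-degeneracy of the Frobenius form on $A^\circ$ (that hypothesis is only used to get the complementary splitting $A=A^\circ\oplus A^\sharp$ in the preceding discussion); the $U$-invariance of $A^\sharp$ holds purely from the orthogonality definition and the associativity of the form.
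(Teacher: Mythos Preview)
Your proof is correct and follows essentially the same approach as the paper: use the associativity of the Frobenius form to rewrite $(uw,v)=(w,uv)$ and then invoke Proposition~\ref{modules}(b) componentwise to conclude $uv\in A^\circ$. The paper's version is simply a more compressed rendering of the same argument.
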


\begin{proof}
Indeed, for $w\in W$, $u\in U$ and $v\in A^\circ$, we have $(uw,v)=(wu,v)=(w,uv)=0$, since $uv\in A^\circ$, as each component of $A^\circ$ is a module for $U$. Hence $uw\in(A^\circ)^\perp=A^\sharp=W$.
\end{proof}

Hence we can treat $W=A^\sharp$ just like any other component of our decomposition.

\section{Further examples} \label{further}

\subsection{The $\mathbf{49}$-dimensional algebra for $\mathbf{L_3(2)}$ of shape $\mathbf{4B3A}$}

The algebra $A$ was constructed from an axet $X$ with $21$ axes and the automorphism group $G\cong \Aut(L_3(2))$, which contains $G_0=Miy(A)\cong L_2(7)\cong L_3(2)$ as an index two subgroup.

\begin{computation}\label{prelim L3_2 49} 
~
\begin{enumerate}
\item $A$ has no Jordan axes;
\item $A$ contains no twins for axes from $X$, and hence the tau map is a bijection between $X$ and the set of $21$ involutions from $G_0$;
\item there are no axes in $A$ corresponding to the $28$ involutions in $G\setminus G_0$.
\end{enumerate}
\end{computation}

We choose as $Y=\{a_1,a_2,a_3\}$ a triple of axes corresponding to involutions in a subgroup $E\cong 2^2$ in $G_0$. Looking at the shape, the three axes $a_i$ generate a $3$-dimensional subalgebra $V$ isomorphic to the Norton-Sakuma algebra $2\A$. In particular, in this case the axes $a_i$ do not associate with each other, and so we should not expect a complete decomposition of $A$ into joint eigenspaces.

Indeed, we have the following.

\begin{computation}
The joint eigenspaces in $A$ are as follows:
\begin{enumerate}
\item \(U:=A_{(0,0,0)}(Y)\) is of dimension $10$;
\item the remaining non-zero summands $W=A_{(\lm_1,\lm_2,\lm_3)}(Y)$, are:
\begin{enumerate}
\item \(A_{\frac{1}{4},\left(\frac{1}{32},\frac{1}{32}\right)}(Y)\),
 \(A_{\left(\frac{1}{32},\frac{1}{4},\frac{1}{32}\right)}(Y)\), and
 \(A_{\left(\frac{1}{32},\frac{1}{32},\frac{1}{4}\right)}(Y)\), of dimension $4$;
\item \(A_{\left(0,\frac{1}{32},\frac{1}{32}\right)}(Y)\), \(A_{\left(\frac{1}{32},0,\frac{1}{32}\right)}(Y)\), and \(A_{\left(\frac{1}{32},\frac{1}{32},0\right)}(Y)\), of dimension $6$.
\end{enumerate} 
\end{enumerate}  
\end{computation}

Hence, 
$$A^\circ=\bigoplus_{(\lm_1,\lm_2,\lm_3)\in \left\{1,0,\frac{1}{4},\frac{1}{32}\right\}^3}A_{(\lm_1,\lm_2,\lm_3)}(Y)$$ 
is of dimension $40$. Adding $V$, we obtain $V\oplus A^\circ$ of dimension $43$, and so it is 
still not the entire algebra $A$. Since in this algebra $A$, as in almost all known examples, the Frobenius form is positive definite, we have that $A=A^\circ\oplus A^\sharp$, as introduced above. We note that $V\subseteq A^\sharp$, as $V$ is orthogonal to $A^\circ$. We can further decompose $A^\sharp$ as $V\oplus R$, where $R$ is the orthogonal complement of $V$ in $A^\sharp$. Note that 
$R$ is also a $U$-module.

\begin{lemma}
The orthogonal complement $R$ of $V$ within $A^\sharp$ is a $U$-module, that is, it satisfies $UR\subseteq R$.
\end{lemma}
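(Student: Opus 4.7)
The plan is to mimic the previous lemma's proof, with $R$ in place of $A^\sharp$ and $V$ playing the role previously played by $A^\circ$. Fix $u\in U$ and $r\in R$; I want to conclude $ur\in R=A^\sharp\cap V^\perp$. Since $r\in A^\sharp$ and that subspace is already known to be a $U$-module, the inclusion $ur\in A^\sharp$ is automatic, so everything reduces to showing $(ur,v)=0$ for every $v\in V$. By the Frobenius identity, $(ur,v)=(r,uv)$, so it suffices to prove $uv\in V$ (or even just $uv\in V+A^\circ$), because $r$ is orthogonal to both $V$ and $A^\circ$.

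To see that $uv\in V$, I will exploit a specific feature of $V\cong 2\A$. The Norton-Sakuma algebra $2\A$ is three-dimensional and contains exactly three primitive axes, which are linearly independent (they form a basis of the algebra). Hence the three distinct primitive axes $a_1,a_2,a_3$ of $V$ are precisely the three axes of $2\A$, and so they span $V$ as a vector space. Since $u\in U\subseteq A_0(a_i)$ gives $ua_i=0$ for each $i$, linearity yields $uv=0$ for all $v\in V$; in particular $UV=0\subseteq V$. Substituting back, $(ur,v)=0$ and $ur\in A^\sharp\cap V^\perp=R$, as required.

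The only mildly delicate step is the observation that $a_1,a_2,a_3$ span $V$ as a vector space, which depends on the specific structure of $2\A$. For a generalizable argument applicable when $V$ is strictly larger than the linear span of $Y$, the natural substitute is Seress's lemma: each $a_i$ associates with $u\in A_0(a_i)$, giving $u(wa_i)=(uw)a_i$ for every $w$, and a short induction on the depth of a monomial in the $a_i$'s would then yield $uv=0$ for every $v\in V$. Ensuring that the induction closes, i.e.\ that every monomial can be presented with an axis as an outer factor (which is possible using commutativity), is the step I would expect to be the main obstacle in such a generalization; in the present lemma it does not arise.
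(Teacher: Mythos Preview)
Your proof is correct and follows essentially the same route as the paper's: both use that $A^\sharp$ is already a $U$-module, that $ua_i=0$ for $u\in U$, that the three axes span $V$ linearly, and the Frobenius identity to conclude $ur\perp V$. The paper computes $(ur,a_i)=(r,ua_i)=0$ directly for each generator and then passes to the span, while you first observe $UV=0$ and then apply the form; this is only a cosmetic reordering. Your closing remarks about a Seress-based induction for more general $V$ are sensible but, as you note, unnecessary here.
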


\begin{proof}
Let $r\in R$ and $u\in U$. Then $(ur,a_i)=(ru,a_i)=(r,ua_i)=(r,0)=0$. So $ur\in A^\sharp$ (since we already know that $A^\sharp$ is a $U$-module) is in the orthogonal complement of $\la a_1,a_2,a_3\ra=V$. In other words, $ur\in R$.
\end{proof}

Thus, we can work with the following complete decomposition:
$$A=R\oplus V\oplus A^\circ=R\oplus V\oplus\left(\bigoplus_{(\lm_1,\lm_2,\lm_3)\in \left\{1,0,\frac{1}{4},\frac{1}{32}\right\}^3}A_{(\lm_1,\lm_2,\lm_3)}(Y)\right),$$
where $U=A_{(0,0,0)}(Y)$ is one of the summands in the last term.

We will start by discussing the full automorphism group of $U$. The entire variety of idempotents in $U$ is $1$-dimensional, and hence difficult to analyse. We start by noting that $N=(G_0)_Y=N_{G_0}(E)\cong S_4$ acts on $U$ and induces on it the group $\bar N=N/E\cong S_3$. (Indeed, the elements of $E$ clearly act as identity on $U$.)

We first look at the fixed subalgebra $F$ of $\bar N$ in $U$. 

\begin{computation}\label{autU S_3 L3_2 49}
~
\begin{enumerate}
\item The subalgebra $F$ has dimension $4$ and it contains exactly $12$ idempotents over the algebraic closure $\bar\Q$;
\item only one of these idempotents has length $\frac{34}{5}$; call this idempotent $d$;
\item $\ad_d$ is semi-simple on $U$ with eigenvalues $1$, $0$, $\frac{9}{10}$, and $\frac{1}{2}$ on $U$, with multiplicities $1$, $4$, $2$, and $3$, respectively;
\item $d$ satisfies a ``nearly Monster'' fusion law on $U$, that is Seress and $C_2$-graded, with $U_{\frac{1}{2}}(d)$ being the negative part of $U$.
\end{enumerate}
\end{computation}

This means that $U$ has an additional automorphism $\tau_d$, which centralises $\bar N$. So now we have a subgroup isomorphic to $2\times S_3$ of $\Aut(U)$. We aim to show that this is in fact the entire group $\Aut(U)$. Note that the variety of idempotents of length $\frac{34}{5}$ in $U$ is still of dimension $1$ and so we cannot be sure at this point that $d$ is invariant under $\Aut(U)$.

Let $T$ be the $0$-eigenspace of $\ad_d$. Since the fusion law for $d$ on $U$ is Seress, $T$ is a subalgebra of dimension $4$.

\begin{computation}
~
\begin{enumerate}
\item $T$ contains exactly $16$ idempotents over $\bar\Q$.
\end{enumerate}
\end{computation}

Among these idempotents we find three of length $\frac{7}{5}$ and we now focus our attention on this length. It turns out that the variety of idempotents of length $\frac{7}{5}$ in $U$ is $0$-dimensional.

\begin{computation}\label{autU L3_2 49}
~
~
\begin{enumerate}
\item $U$ contains exactly three idempotents $u_1$, $u_2$, and $u_3$ of length $\frac{7}{5}$;
\item they generate the subalgebra $T$;
\item $\ad_{u_i}$ is semi-simple on $U$ with eigenvalues $1$, $0$, $\frac{3}{10}$, and $\frac{1}{20}$, with multiplicities $1$, $4$, $2$, and $3$, respectively;
\item $u_i$ satisfies on $U$ a ``nearly Monster'' fusion law that is Seress and $C_2$-graded, with $U_{\frac{1}{20}}(u_i)$ being the negative part;
\item the subgroup $H=\la\tau_{u_1},\tau_{u_2},\tau_{u_3}\ra\leq\Aut(U)$ is isomorphic to $S_3$ and it permutes the $u_i$ transitively.
\end{enumerate}
\end{computation}

According to Computation \ref{autU L3_2 49} (a), $\Aut(U)$ acts on the set $\{u_1,u_2,u_3\}$ and, furthermore, it induces on it the full symmetric group $S_3$, since the known subgroup $H\leq\Aut(U)$ already does so. (Also, see (e).) Hence, we just need to identify the kernel of the action, \ie the triple stabiliser $\Aut(U)_{u_1,u_2,u_3}$. This is done via an additional calculation.

We note that $d$ happens to be equal to $\one_U-\one_T$. Clearly, this now means that the entire $\Aut(U)$ fixes $d$.

\begin{computation}\label{kern autU L3_2 49}
~
\begin{enumerate}
\item the identity automorphism of $T=U_0(d)$ has a $1$-dimensional space of extensions to $U_{\frac{1}{2}}(d)$;
\item the square of a random element from $U_{\frac{1}{2}}(d)$ has a non-zero projection to $U_0(d)$;
\item $U=\lla U_{\frac{1}{2}}(d)\rra$.
\end{enumerate}
\end{computation}

Based on the above computational results, we can now identify $\Aut(U)$. We let $\hat H:=\la\tau_d,\bar H\ra\cong 2\times S_3$.

\begin{proposition}
We have that $\Aut(U)=\hat H\cong 2\times S_3$.
\end{proposition}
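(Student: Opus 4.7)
The plan is to exhibit $\Aut(U)$ as $\la \tau_d\ra \times H$ by combining a permutation action on the three length-$\tfrac{7}{5}$ idempotents with a kernel analysis built on the eigenspace decomposition of $d$. First I would observe that, by Computation~\ref{autU L3_2 49}(a), the set $\{u_1, u_2, u_3\}$ is precisely the collection of idempotents in $U$ of length $\tfrac{7}{5}$. Since the Frobenius form on $U$ is unique up to scalar, every automorphism preserves the form (up to scalar) and hence this three-element set, giving a homomorphism $\rho\colon \Aut(U) \to \mathrm{Sym}\{u_1, u_2, u_3\} \cong S_3$. This $\rho$ is already surjective because $H \leq \Aut(U)$ realises the full $S_3$ by part~(e). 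So $\Aut(U)/\ker\rho \cong S_3$ and the remaining task is to pin down $K := \ker\rho = \Aut(U)_{u_1,u_2,u_3}$.

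For the kernel, take $\phi \in K$. Since $\phi$ fixes each $u_i$ and, by (b), the $u_i$ generate $T = U_0(d)$, $\phi$ fixes $T$ pointwise. As an algebra automorphism $\phi$ fixes $\one_U$, and pointwise fixing $T$ implies it fixes $\one_T$, so $\phi$ fixes $d = \one_U - \one_T$. Hence $\phi$ preserves the eigenspace decomposition of $\ad_d$ on $U$; in particular $\phi$ restricts to a linear automorphism of $W := U_{\frac{1}{2}}(d)$ that is module-compatible (in the sense of Proposition~\ref{extension}) with $\phi|_T = \Id_T$. By Computation~\ref{kern autU L3_2 49}(a), such extensions form a $1$-dimensional space, so $\phi|_W = \lm \Id_W$ for some scalar $\lm$. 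For the element $w \in W$ of part~(b), whose square $w^2$ has non-zero projection onto $T$, we have on the one hand $(w^\phi)^2 = \lm^2 w^2$ and on the other hand $\phi$ fixes the $T$-component of $w^2$; this forces $\lm^2 = 1$, i.e.\ $\lm = \pm 1$. Finally, by (c), $U = \lla W\rra$, so $\phi$ is uniquely determined by $\lm$: the two solutions are $\Id_U$ and the involution that acts as $-1$ on $W$ and $+1$ on $U_1(d) \oplus U_0(d) \oplus U_{\frac{9}{10}}(d)$, which is precisely $\tau_d$. Therefore $K = \la \tau_d\ra$ has order $2$.

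To conclude, $u_i \in T = U_0(d)$ implies $\tau_d$ fixes each $u_i$, confirming $\tau_d \in K$. Moreover $H$ permutes the $u_i$, hence preserves $T$ set-wise and fixes $\one_T$ and $\one_U$, so $H$ fixes $d$ and thereby centralises $\tau_d$. Combining this with the split short exact sequence from~$\rho$, we obtain $\Aut(U) = K \times H = \la \tau_d\ra \times H = \hat H \cong 2 \times S_3$. The crux of the argument is the kernel computation in the middle paragraph; its correctness rests on the joint strength of (a)--(c), and the subtlety to watch is that the scalar-extension statement~(a) together with the non-vanishing projection in~(b) must interact to produce exactly the binary ambiguity $\lm = \pm 1$, rather than a larger family that would inflate $K$ beyond $\la \tau_d\ra$.
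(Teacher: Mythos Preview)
Your proof is correct and follows essentially the same route as the paper: reduce to the point-wise stabiliser of $\{u_1,u_2,u_3\}$ (which is permuted by $\Aut(U)$ since these are all the idempotents of length $\tfrac{7}{5}$), deduce that such $\phi$ fixes $T$ and hence $d=\one_U-\one_T$, and then use Computation~\ref{kern autU L3_2 49}~(a)--(c) to force $\phi\in\la\tau_d\ra$. Your version is slightly more explicit than the paper's in setting up the homomorphism $\rho$ and in verifying the direct-product splitting, but the substance is the same.
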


\begin{proof}
Suppose $\phi\in\Aut(U)$ fixes all three $u_i$. Then, by Computation \ref{autU L3_2 49} (b), $\phi$ acts trivially on $T=U_0(d)=\lla u_1,u_2,u_3\rra$. Clearly, $\phi$ also fixes $d$.

According to Computation \ref{kern autU L3_2 49} (a), $\phi$ acts on $W=U_{\frac{1}{2}}(d)$ as a scalar $\lm$. Then $(w^2)^\phi=\lm^2w^2$ for each $w\in W$, and so, by (b), we must have $\lm^2=1$, that is, $\lm=\pm 1$. By (c), the value of $\lm$ identifies $\phi$, and so we see that $\lm=1$ means that $\phi=1$ and $\lm=-1$ means that $\phi=\tau_d$. In either case, $\phi\in\hat H$, which means that $\Aut(U)=\hat H$.
\end{proof}

Now that $\Aut(U)$ is known, we can proceed with the determination of $\Aut(A)$. Our next goal is the subgroup $\Aut(A)_{a_1,a_2,a_3}$. Namely, we want to prove that $\Aut(A)_{a_1,a_2,a_3}=G_{a_1,a_2,a_3}=E$. We consider an element $\phi\in\Aut(A)_{a_1,a_2,a_3}$. Then $\phi$ fixes the three axes $a_i$ and so, on the one hand, it acts trivially on $V=\la a_1,a_2,a_3\ra$ and, on the other hand, it leaves every joint eigenspace $A_{(\lm_1,\lm_2,\lm_3)}(Y)$ invariant, including $U=A_{(0,0,0)}(Y)$. Hence we will start with an automorphism of $U$ and see which ones can be extended to other summands and to the whole $A$. 

We let $W_1=A_{(0,\frac{1}{32},\frac{1}{32})}(Y)$, $W_2=A_{(\frac{1}{32},0,\frac{1}{32})}(Y)$, and $W_3=A_{(\frac{1}{32},\frac{1}{32},0)}(Y)$. Then we have the following.

\begin{computation}\label{L3_2 49 extension U to A}
~
\begin{enumerate}
\item The identity automorphism of $U$ has a $1$-dimensional space of extensions to $W_i$;
\item for randomly selected elements $w_i\in W_i$ and $u\in U$, we have that
\begin{enumerate}
\item $(w_i^2,u)\neq 0$;
\item $(w_1w_2,w_3)\neq 0$;
\end{enumerate}
\item $\lla W_1,W_2,W_3\rra=A$; and finally,
\item every non-identity automorphism of $U$ does not extend to at least one $W_i$.
\end{enumerate}
\end{computation}

Based on these, we now identify $\Aut(A)_{a_1,a_2,a_3}$.

\begin{proposition} \label{kernel}
We have that $\Aut(A)_{a_1,a_2,a_3}=E$.
\end{proposition}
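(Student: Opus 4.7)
The plan is to run essentially the same argument as was used in the proof of Lemma~10.3 for the $46$-dimensional $A_5$ algebra, now fed by Computation~\ref{L3_2 49 extension U to A}. Let $\phi\in\Aut(A)_{a_1,a_2,a_3}$ be arbitrary. Since $\phi$ fixes each $a_i$, it fixes every $\ad_{a_i}$-eigenspace set-wise, and hence preserves every joint eigenspace $A_{(\lm_1,\lm_2,\lm_3)}(Y)$; in particular $\phi$ restricts to an automorphism $\bar\phi$ of $U=A_{(0,0,0)}(Y)$. The first step is to show $\bar\phi = \Id$: by Computation~\ref{L3_2 49 extension U to A}\,(d), no non-identity element of $\Aut(U)$ admits an extension to at least one of the components $W_1,W_2,W_3$, but $\phi$ itself furnishes such an extension, forcing $\bar\phi=\Id$.

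Next I would pin down the action of $\phi$ on the three blocks $W_i$. By Computation~\ref{L3_2 49 extension U to A}\,(a), the identity of $U$ extends to $W_i$ only up to a scalar, so $\phi|_{W_i}=\mu_i\Id_{W_i}$ for some $\mu_i\in\F$. The Frobenius form is $\Aut(A)$-invariant and $U$ is pointwise fixed, so for any $w_i\in W_i$ and $u\in U$ one has
\[
(w_i^2,u)=((w_i^\phi)^2,u^\phi)=\mu_i^2(w_i^2,u).
\]
Part~(b)(i) of Computation~\ref{L3_2 49 extension U to A} gives a choice of $w_i$ and $u$ with $(w_i^2,u)\neq 0$, hence $\mu_i^2=1$ and $\mu_i=\pm1$. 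Similarly, applying $\phi$ to the triple product $(w_1w_2,w_3)$ yields $\mu_1\mu_2\mu_3=1$ after using (b)(ii). Therefore $(\mu_1,\mu_2,\mu_3)$ lies in the set
\[
\{(1,1,1),\,(1,-1,-1),\,(-1,1,-1),\,(-1,-1,1)\}.
\]

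Finally, Computation~\ref{L3_2 49 extension U to A}\,(c) tells us $\lla W_1,W_2,W_3\rra=A$, so $\phi$ is entirely determined by its scalars $\mu_i$ on the $W_i$ (together with its trivial action on $U$). Since the four sign patterns above are exactly realised by $1,\tau_{a_1},\tau_{a_2},\tau_{a_3}$ respectively (each $\tau_{a_i}$ negates precisely the $\frac{1}{32}$-eigenspaces of $a_j$ with $j\neq i$ among the $W_j$, while acting trivially on $U$), each allowed $(\mu_1,\mu_2,\mu_3)$ corresponds to a unique element of $E$. Hence $\phi\in E$, giving $\Aut(A)_{a_1,a_2,a_3}\leq E$; the reverse inclusion is clear since $E$ is visibly contained in $G_{a_1,a_2,a_3}\leq\Aut(A)_{a_1,a_2,a_3}$, and the proposition follows. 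I do not anticipate any serious obstacle beyond trusting the listed MAGMA computations; the delicate point is verifying that the four sign patterns really do correspond to the four distinct elements of $E$, which is a finite check using the known action of $\tau_{a_i}$ on the $C_2$-graded decomposition.
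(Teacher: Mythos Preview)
Your proof is correct and follows essentially the same approach as the paper's: use part~(d) to force $\phi|_U=\Id$, use part~(a) to get scalars $\mu_i$ on each $W_i$, use (b)(i) and (b)(ii) to obtain $\mu_i=\pm1$ and $\mu_1\mu_2\mu_3=1$, and use (c) to conclude that the four admissible sign patterns are exactly those realised by the elements of $E$. The only minor imprecision is your parenthetical description of the action of $\tau_{a_i}$: what actually happens is that $\tau_{a_i}$ negates the $\frac{1}{32}$-eigenspace of $a_i$ (not of $a_j$), and since $W_j\subseteq A_{\frac{1}{32}}(a_i)$ precisely when $j\neq i$, this gives $\tau_{a_i}$ the sign pattern with $-1$ in positions $j\neq i$---which is exactly what you need.
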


\begin{proof}
Let $\hat E:=\Aut(A)_{a_1,a_2,a_3}$. If $\phi\in\hat E$ then, by Computation \ref{L3_2 49 extension U to A} (d) above, $\phi$ restricts to $U$ as the identity automorphism, since the restriction must clearly be extendable to every joint eigenspace, including all $W_i$. So we now assume that $\phi$ fixes $U$ element-wise. By (a), $\phi$ acts on each $W_i$ as a scalar $\mu_i$. Furthermore, by (b)(i), $0\neq (w_i^2,u)=((w_i^\phi)^2,u^\phi)=(\mu_i^2w_i^2,u)=\mu_i^2(w_i^2,u)$, which yields $\mu_i^2=1$. That is, $\mu_i=\pm 1$.

From (b)(ii), $0\neq (w_1w_2,w_3)=(w_1^\phi w_2^\phi,w_3^\phi)=(\mu_1\mu_2 w_1w_2,\mu_3 w_3)=\mu_1\mu_2\mu_3(w_1w_2,w_3)$, and so $\mu_1\mu_2\mu_3=1$. By (c), the values $\mu_1$, $\mu_2$, and $\mu_3$ identify $\phi$ on the entire $A$. Thus, we have no more that four different elements in $\hat E$ corresponding to the triples $(\mu_1,\mu_2,\mu_3)=(1,1,1)$, $(1,-1,-1)$, $(-1,1,-1)$, and $(-1,-1,1)$. We have four such elements in $E$, and so $\hat E=E$.
\end{proof}

This also yields the set-wise stabiliser $\hat N:=\Aut(A)_Y$.

\begin{corollary} \label{normaliser of E}
We have that $\hat N=N\cong S_4$.
\end{corollary}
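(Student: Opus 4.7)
The plan is to combine Proposition \ref{kernel}, which identifies the pointwise stabiliser of $Y$ in $\Aut(A)$, with the transitive action of the known group $N$ on $Y$.

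First I would observe that, by definition, $\hat N = \Aut(A)_Y$ permutes the three-element set $Y=\{a_1,a_2,a_3\}$. This gives a natural homomorphism $\pi \colon \hat N \to \Sym(Y) \cong S_3$ whose kernel is exactly the pointwise stabiliser $\Aut(A)_{a_1,a_2,a_3}$. By Proposition \ref{kernel}, this kernel equals $E$, so we have the short exact sequence
\[
1 \longrightarrow E \longrightarrow \hat N \xrightarrow{\ \pi\ } \mathrm{Im}(\pi) \leq S_3 \longrightarrow 1.
\]

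Next I would bound $\mathrm{Im}(\pi)$ from below. Since $N \leq G \leq \Aut(A)$ stabilises $Y$ setwise, we have $N \leq \hat N$, and it was already noted that $N$ induces $N/E \cong S_3$ on $Y$ (this is essentially the structure $N = N_{G_0}(E) \cong S_4$ used throughout this subsection). Hence $\mathrm{Im}(\pi)$ contains the full $S_3$, so $\mathrm{Im}(\pi) = S_3$.

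Combining the two observations, $|\hat N| = |E| \cdot |\mathrm{Im}(\pi)| = 4 \cdot 6 = 24 = |N|$. Together with the inclusion $N \leq \hat N$, this forces $\hat N = N \cong S_4$. There is really no obstacle here: the entire content of the corollary has been extracted by Proposition \ref{kernel}, and what remains is simply the first isomorphism theorem applied to the action of $\hat N$ on $Y$.
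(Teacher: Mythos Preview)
Your proof is correct. It is the same sandwich argument as the paper's---Proposition~\ref{kernel} pins down the kernel, and the known group $N$ already realises the full image---but you run it through the action of $\hat N$ on the three-element set $Y$, whereas the paper phrases it via the action on the subalgebra $U$. Your version is actually cleaner: the target $\Sym(Y)\cong S_3$ matches $N/E$ on the nose, while the paper's sentence ``$N$ induces the full automorphism group $\hat H\cong 2\times S_3$ on $U$'' sits awkwardly against the earlier observation that $N/E\cong S_3$; your route sidesteps that tension entirely.
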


\begin{proof}
According to Proposition \ref{kernel}, $\hat N$ and $N$ have the same kernel, $E$, in their 
action on $U$. Since $N$ induces the full automorphism group $\hat H\cong 2\times S_3$ on $U$, 
we must have that $\hat N=N$. 
\end{proof}

From here we switch entirely to the group theoretic arguments. We follow the same strategy as in the proof of Proposition \ref{S5} and split the proof into a series of lemmas. Let $Q$ be a minimal normal subgroup of $\hat G:=\Aut(A)$. 

We first assume that $Q$ is an elementary abelian $p$-group for some prime $p$. We can view $Q$ as an $\F_p$-module for $\Aut(A)$. Consider a subgroup $R\leq Q$ invariant under $G_0\cong L_3(2)$ and minimal subject to this condition. Then $R$ is an irreducible $G_0$-module. Note that $C_R(E)=1$. Indeed, by computation (b), $C_{\hat G}(E)=\Aut(A)_{a_1,a_2,a_3}$ and, by Proposition \ref{kernel}, we have that $C_{\hat G}(E)=E$. Since $R\cap E\leq Q\cap G_0=1$, as $G_0$ is a simple group, we do indeed conclude that $C_R(E)=1$.

\begin{lemma}
If $Q$ is an elementary abelian $p$-group then $p\in\{3,5\}$.
\end{lemma}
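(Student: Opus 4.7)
The plan is to follow the template established in the proof of Lemma~\ref{6-trans} for the $A_5$ case, exploiting the two key ingredients already in place: the no-centraliser identity $C_R(E)=1$, and the fact that Miyamoto involutions in $\Aut(A)$ form a class of $6$-transpositions (Corollary 2.10 from \cite{3-gen}).

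First I would rule out $p=2$: if $p=2$, then $R$ is a non-trivial elementary abelian $2$-group acted on by the $2$-group $E\cong 2^2$, and standard fixed-point theory forces $C_R(E)\neq 1$, contradicting what has been established. So $p$ is odd. Next I would produce an involution $e\in E$ acting non-trivially on $R$. Since $N=N_G(E)\cong S_4$ preserves $R$ (because $N\leq G_0$ and $R$ is $G_0$-invariant) and its quotient $N/E\cong S_3$ acts transitively on the three involutions of $E$, if one of them centralised $R$ then all three would, giving $C_R(E)=R\neq 1$. So some $e\in E$ acts non-trivially on $R$; as $p$ is odd, $e$ is diagonalisable with eigenvalues $\pm 1$ on $R$ and its $(-1)$-eigenspace is non-zero, so there exists $1\neq r\in R$ inverted by $e$.

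The proof then concludes with the same short commutator calculation as in the $A_5$ case: $e\cdot e^r = e\,r^{-1}e\,r = (r^{-1})^e\cdot r = r\cdot r = r^2$, and since $p$ is odd and $r$ has order $p$, so does $r^2$, giving $|e\cdot e^r|=p$. As $r\in\hat G=\Aut(A)$, both $e=\tau_{a_i}$ and $e^r=\tau_{a_i^r}$ are Miyamoto involutions in $\hat G$, so the $6$-transposition property forces $p\leq 6$. Combined with $p$ being odd, this yields $p\in\{3,5\}$. I do not anticipate a genuine obstacle: every ingredient has been set up earlier in the excerpt, and the argument is structurally identical to the one already given in the $A_5$ setting.
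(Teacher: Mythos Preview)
Your proposal is correct and follows essentially the same approach as the paper. The paper's proof is even terser: it rules out $p=2$ via $C_R(E)=1$ and then simply states that ``the argument from Lemma~\ref{6-trans} works without change,'' which is exactly the commutator computation you spell out.
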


\begin{proof}
First of all, $p\neq 2$, since $C_R(E)=1$. Now the argument from Lemma \ref{6-trans} works without change.
\end{proof}

Next we consider these two values of $p$ individually. Notice that we can use the property from Lemma \ref{mult of three}, as its proof also applies without change.

\begin{lemma}
We have that $p\neq 5$.
\end{lemma}

\begin{proof}
Since $5$ does not divide the order of $G_0\cong L_3(2)$, the Brauer character table of $G_0$ modulo $5$ is the same as its complex character table. By Lemma \ref{mult of three}, the dimension of $R$ has to be a multiple of $3$, say, $3k$, and additionally, the value of the character on the class of involutions must be $-k$. This only leaves the two $3$-dimensional modules, dual to each other, as candidates for $R$.

So suppose that $R$ has dimension $3$. Take involutions $x,y\in G_0$, such that 
$xy$ has order $3$. Then $x$ and $y$ have a unique common $1$-space in $R$ that they both invert. Say, this is $\la r\ra$. Then, clearly, $t:=xy$ must leave $\la r\ra$ invariant, and this means that $t$ commutes with $r$, because an element of order $3$ cannot act non-trivially on a group of order $5$. Consider $y':=y^{r^3}$. Then $xy'=xr^{-3}yr^3=xyr^3r^3=tr$. Since the element $tr$ is of order $15$, we get a contradiction, since $x$, $y$ and $y'$ are all in a $6$-transposition class.
\end{proof}

\begin{lemma}
We have that $p\neq 3$.
\end{lemma}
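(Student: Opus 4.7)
My plan is to mirror the strategy of Lemma~\ref{not 3}: first use the $3$-modular character theory of $G_0\cong L_3(2)\cong L_2(7)$ to pin down $\dim R$, and then exhibit two Miyamoto involutions whose product has order exceeding $6$, contradicting the fact that the Miyamoto class consists of $6$-transpositions. The irreducible $\bar{\F_3}[G_0]$-modules have dimensions $1$, $3$, $3$, and $7$; Lemma~\ref{mult of three} forces $\dim R = 3k$, which eliminates the trivial and $7$-dimensional candidates. Since the Brauer character values of the two Galois-conjugate $3$-dimensional modules involve $\sqrt{-7}\in\F_9\setminus\F_3$, neither is realisable over $\F_3$, and together they fuse into a single irreducible $6$-dimensional $\F_3[G_0]$-module. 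Hence $\dim_{\F_3}R = 6$ and $k=2$.

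Next, I would choose involutions $x,y\in G_0$ with $t:=xy$ of order $4$, which exist inside the Sylow $D_8$ of $L_3(2)$. The ordinary character value of an order-$4$ element on a $3$-dimensional representation of $L_2(7)$ is $1$, so $t$ has a $1$-eigenvector on each constituent and $C_R(t)$ has $\F_3$-dimension $2$. By Lemma~\ref{mult of three}, each of $x$ and $y$ has $(-1)$-eigenspace of dimension $4$ on $R$, so $\dim(V_-(x)\cap V_-(y))\geq 4+4-6=2$; pick a nonzero $r$ in this intersection. Then $tr = xyr = x(-r) = r$, so $t$ commutes with $r$. Form $y':=y^r = r^{-1}yr\in \hat G$; using $xr = r^{-1}x$ (because $x$ inverts $r$) and $tr = rt$, a direct computation in the semidirect structure gives
\[
xy' = xr^{-1}yr = rxyr = rtr = tr^{2},
\]
which has order $\mathrm{lcm}(4,3) = 12$ since $t$ and $r^2$ commute and have coprime orders. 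But $x$ and $y'$ are both Miyamoto involutions ($y'$ is $\hat G$-conjugate to $y$ via $r\in R\leq\hat G$), and the Miyamoto class is one of $6$-transpositions, forcing $|xy'|\leq 6$; this contradicts $|xy'|=12$.

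The main technical hurdle is the character-theoretic step establishing $\dim_{\F_3}R = 6$, since it requires knowing the decomposition matrix (or at least the failure of $\F_3$-realisability) of the $3$-dimensional Brauer characters of $L_2(7)$. Once that is in place, the remainder is a clean adaptation of the conjugation trick in Lemma~\ref{not 3}, with an element of order $4$ in $G_0$ playing the role of the order-$5$ element used in the $A_5$ case.
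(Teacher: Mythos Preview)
Your approach is essentially the paper's: restrict $R$ via the $3$-modular representation theory of $G_0\cong L_3(2)$ using Lemma~\ref{mult of three}, then pick involutions $x,y$ with $|xy|=4$ and conjugate to produce a product of order $12$, contradicting the $6$-transposition property. The conjugation trick with $y':=y^r$ (the paper uses $y^{r^2}$) is cosmetically different but yields the same contradiction.

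There is, however, a genuine gap in your elimination step. The irreducible $\bar{\F_3}[L_3(2)]$-modules have dimensions $1$, $3$, $3$, $6$, and $7$ (five $3$-regular classes, not four): the ordinary $6$-dimensional character lies in a block of defect $0$ and so remains irreducible modulo $3$. Since $6=3\cdot 2$, your stated criterion ``$3\mid\dim R$'' does \emph{not} eliminate this module. You need the second conclusion of Lemma~\ref{mult of three}, namely that the Brauer character value on involutions is $-k$; the $6$-dimensional has value $+2$ on the involution class (inherited from the ordinary character), not $-2$, and so is excluded. The paper explicitly invokes this trace condition. Once you add that check, the only survivors are indeed the two absolutely irreducible $3$-dimensionals, your $\F_3$-realisability argument goes through, and the rest of your proof is correct.

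A minor point: your sentence ``$tr=xyr=x(-r)=r$'' mixes additive and multiplicative notation; it would be cleaner to say that since $x$ and $y$ both invert $r$, the product $t=xy$ centralises $r$.
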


\begin{proof}
The Brauer character table of $G_0$ modulo $3$ is available, say in GAP. By Lemma \ref{mult of three}, the dimension of $R$ has to be a multiple of $3$, say, $3k$, and additionally, the value of the character on the class of involutions must be $-k$. This only leaves the two $3$-dimensional modules, dual to each other, as candidates for $R$. (Both of these modules are only realisable over $\F_9$, so we must have that $R\cong 3^6$.)

Take involutions $x,y\in G_0$, such that $xy$ has order $4$. Then $x$ and $y$ have a common $1$-space in $R$ that they both invert, since the $-1$-eigenspace of both $x$ and $y$ have dimension more than half of $\dim(R)$. Say, this is $\la r\ra$. Then, clearly, $t:=xy$ must commute with $r$. Consider $y':=y^{r^2}$. Then $xy'=xr^{-2}yr^2=xyr^2r^2=tr$. Since the element $tr$ is of order $12$, we get a contradiction, since $x$, $y$ and $y'$ are all in a $6$-transposition class.
\end{proof}

The remaining part of the proof is also similar to the case of the $46$-dimensional algebra for $A_5$. First of all, since we ruled out all possible primes $p$, $Q$ cannot be abelian, and so $Q=L\times L\times\cdots\times $ is a direct sum of several copies of a non-abelian simple group $L$.

Let $S$ be a Sylow $2$-subgroup of $G$ and $S_0=S\cap G_0\cong D_8$ be a Sylow $2$-subgroup of $G_0$. Without loss of generalisation, we may assume that $E\leq S_0$.

\begin{lemma}
We have that $S$ is a Sylow $2$-subgroup of $\hat G=\Aut(A)$.
\end{lemma}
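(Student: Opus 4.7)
Follow the strategy of the analogous step in the proof of Proposition \ref{S5}. Let $\hat S$ be a Sylow $2$-subgroup of $\hat G := \Aut(A)$ containing $S$, and suppose for contradiction that $S < \hat S$. Since $\hat S$ is a $2$-group, the normaliser grows strictly, so $S < N_{\hat S}(S)$, and we may pick $t \in N_{\hat S}(S) \setminus S$.

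Because $t$ normalises $S$, conjugation by $t$ permutes the involutions of $S$. Computation \ref{prelim L3_2 49}(b,c) tells us that the involutions of $S$ which correspond to axes of $A$ are precisely those lying in $S_0 := S \cap G_0$: every involution of $G_0$ is a tau involution, while no involution of $G \setminus G_0$ corresponds to any axis. Since $t \in \Aut(A)$ sends tau involutions to tau involutions, and since $t$ normalises $S \leq G$ the image of an involution from $S$ again lies in $G \cap S = S$, that image must in fact lie in $S_0$. Thus $t$ permutes the involutions of $S_0$ among themselves, and as these generate $S_0 \cong D_8$, we obtain $t \in N_{\hat G}(S_0)$. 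Upgrading this to $t \in N_{\hat G}(E)$ gives $t \in \hat N$: the setwise stabiliser of $Y = \{a_1,a_2,a_3\}$ in $\hat G$ coincides with $N_{\hat G}(E)$ in the absence of twins (since then $E = \langle \tau_{a_1}, \tau_{a_2}, \tau_{a_3}\rangle$ is recovered from $Y$), and $\hat N = N$ by Corollary \ref{normaliser of E}. Since $N \leq G$, we deduce $t \in G \cap \hat S$. But $G \cap \hat S$ is a $2$-subgroup of $G$ containing the Sylow $2$-subgroup $S$, and therefore equals $S$, contradicting $t \notin S$.

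The main obstacle is the jump from $t \in N_{\hat G}(S_0)$ to $t \in N_{\hat G}(E)$. The subtlety is that $S_0 \cong D_8$ contains two Klein four-subgroups, $E$ and another, say $E'$, both normal in $S_0$ and both consisting (apart from the identity) of tau involutions from $G_0$; a priori $t$ could swap $E$ with $E'$. To rule this out, I would try to distinguish $E$ from $E'$ algebraically using the information already computed: the Norton--Sakuma type of the $2$-generated subalgebras spanned by pairs of the associated axes under shape $4B3A$, the dimensions of the joint eigenspace components relative to the two triples, or the action of the known normaliser $N$ on these data. Should a direct algebraic distinction prove elusive, a fallback is to pass to $t^2 \in \hat S$, which automatically stabilises both $E$ and $E'$, and then to leverage the resulting constraints to force the original $t$ into $S$.
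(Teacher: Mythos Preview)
Your setup follows the paper's argument closely, and you correctly isolate the crux: after showing that $t$ normalises $S_0\cong D_8$, one must pass to $t$ normalising $E$, while $S_0$ contains a second Klein four-subgroup $E'$ that $t$ might swap with $E$. However, you do not actually resolve this obstacle; you only sketch speculative approaches, so the argument as written is incomplete.

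More importantly, your first proposed fix---distinguishing $E$ from $E'$ by $\Aut(A)$-invariant algebraic data---cannot succeed. The two Klein fours $E$ and $E'$ lie in different $G_0$-conjugacy classes, but the outer automorphism of $L_3(2)$ present in $G\cong\Aut(L_3(2))$ fuses them, and this fusion is realised already inside $N_G(S_0)=S$. Hence $E$ and $E'$ are conjugate in $S\leq\Aut(A)$, so every $\Aut(A)$-invariant property agrees on them. The paper turns this very fact to its advantage: since some $s\in S$ carries $E$ to $E'$, one replaces $t$ by $ts$ (still in $N_{\hat S}(S)\setminus S$) and may therefore assume $t$ normalises $E$. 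Then $t\in N_{\hat G}(E)=\hat N=N\leq G$ by Corollary~\ref{normaliser of E}; but $t$ is a $2$-element of $G$ normalising the self-normalising Sylow $2$-subgroup $S$, forcing $t\in S$, the desired contradiction.
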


\begin{proof}
Let $\hat S$ be a Sylow $2$-subgroup of $\hat G$ containing $S$ and assume by contradiction that $S<\hat S$. Then $S<N_{\hat S}(S)$. Let $t\in N_{\hat S}(S)$. By Computation \ref{prelim L3_2 49} (c), there are no axes corresponding to the involutions in $S\setminus S_0$, which means that $S_0$ is invariant under conjugation by $t$. We note that $S_0\cong D_8$ contains exactly two Klein four-subgroups, $E$ and a second one, $E'$. Furthermore, $E$ and $E'$ are conjugate in $S$. Hence, correcting $t$ with a factor from $S$ if necessary, we may assume that $t$ normalises $E$. However, this means that $t$ lies in the set-wise stabiliser of $Y$, which by Corollary \ref{normaliser of E} coincides with $N$, that is, it is contained in $G_0$. This is a contradiction with the choice of $t$. Thus, $\hat S=S$.
\end{proof}

Again, no non-abelian simple group can have a Sylow $2$-subgroup of order less that $4$. If the number of factors $L$ in $Q$ is not one, then since $|S|=2^4$, we must have that $Q\cong L\times L$ contains $S$. However, this would mean that $S$ is a direct product of two (isomorphic) groups of order $4$, which would imply that $S$ is abelian, which is, clearly, not the case. Thus, we have the following.

\begin{corollary}
We have that $Q=L$ is a simple group and $G_0\leq Q$.
\end{corollary}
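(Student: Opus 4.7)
The plan is to split the statement into the two claims ``$Q=L$'' and ``$G_0\leq Q$'' and handle each by a short Sylow-theoretic argument, continuing the style already used for Proposition~\ref{S5}.

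First I would finish off the proof that $Q=L$. The preceding paragraph already observes that $|S|=2^4$, that $Q$ is a direct product of copies of a non-abelian simple group $L$, and that the Sylow $2$-subgroup of any non-abelian simple group has order at least $4$. So if $Q\cong L^k$ with $k\geq 2$, the Sylow $2$-subgroup of $Q$ would have order at least $16$ and hence coincide with $S$. But this Sylow $2$-subgroup is a direct product of two Sylow $2$-subgroups of $L$; each of them has order $4$ and is therefore abelian, which would force $S$ itself to be abelian. This contradicts $S_0\cong D_8\leq S$, so $k=1$ and $Q=L$.

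Next I would show $G_0\leq Q$. Since $Q\unlhd \hat G$, the intersection $Q\cap G_0$ is normal in $G_0\cong L_3(2)$; by simplicity it is either trivial or the whole of $G_0$, so it suffices to rule out $Q\cap G_0=1$. Assume it is trivial. Because $Q\unlhd\hat G$, the subgroup $Q\cap S$ is a Sylow $2$-subgroup of $Q$ and therefore has order at least $4$. On the other hand, $Q\cap S_0\leq Q\cap G_0=1$, so $Q\cap S$ meets $S_0\cong D_8$ trivially inside $S$, which gives
\[
|Q\cap S|\cdot|S_0|\leq |S|,
\]
i.e.\ $|Q\cap S|\leq 16/8=2$, the desired contradiction.

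Neither step looks hard. The only small friction point is making sure that the elementary bound ``a non-abelian simple group has Sylow $2$-subgroup of order at least $4$'' is genuinely all that is needed, and that the intersection argument correctly exploits the fact that $Q\cap S_0$ lies in the simple subgroup $G_0$; both points are the same ingredients that appeared in the analogous corollary for the $46$-dimensional $A_5$-algebra.
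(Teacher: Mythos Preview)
Your proposal is correct and essentially identical to the paper's proof. The paper dispatches the first claim in the paragraph preceding the corollary by the same abelian-Sylow contradiction you give, and for the second claim it argues directly that $|S\cap Q|\geq 4$ together with $[S:S_0]=2$ forces $S_0\cap Q\neq 1$, hence $G_0\cap Q\neq 1$; your version is simply the contrapositive of this same counting argument.
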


\begin{proof}
We just need to show the second claim. Since $S\cap Q$ is a Sylow $2$-subgroup of $Q$, we must have that $|S\cap Q|\geq 4$. On the other hand, $S_0$ has index $2$ in $S$. Hence $S_0\cap Q\neq 1$. It now follows that $G_0\cap Q\geq S_0\cap Q\neq 1$, and since $G_0\cap Q$ is normal in $G_0$, we deduce from simplicity of $G_0$ that $G_0=G_0\cap Q$, \ie $G_0\leq Q$
\end{proof}

We can now prove our final statement.

\begin{proposition}
We have that $\Aut(A)=G\cong \Aut(L_3(2))$.
\end{proposition}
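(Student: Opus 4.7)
The plan mirrors the argument already executed for Proposition~\ref{S5}. Its goal is to identify the unique minimal normal subgroup $Q$ of $\hat G := \Aut(A)$ with $G_0 \cong L_3(2)$. Once this is done, $F^*(\hat G) = Q$, hence $C_{\hat G}(Q) = Z(Q) = 1$ and $\hat G$ embeds into $\Aut(Q) \cong \Aut(L_3(2)) = G$; combined with $G \leq \hat G$, this yields $\hat G = G$.

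The first step is to apply Thompson's Transfer to the subgroup $S_0 \cong D_8$ of index two in $S$. The hypothesis is verified because involutions in $S \setminus S_0$ are not $\hat G$-conjugate to involutions in $S_0$ (by Computation~\ref{prelim L3_2 49}(c) only involutions in $G_0$ are tau involutions, and the set of tau involutions is $\hat G$-invariant), while the non-involution elements of $S \setminus S_0$ have order $8$ whereas $S_0$ has exponent $4$. The theorem produces a normal subgroup $\hat G_0 \trianglelefteq \hat G$ of index $2$ with $\hat G_0 \cap S = S_0$, so $Q \leq \hat G_0$ and the Sylow $2$-subgroup of $Q$ embeds in $S_0 \cong D_8$. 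Combined with $G_0 \leq Q$ and $|G_0|_2 = 8$, this forces the Sylow $2$-subgroup of $Q$ to be exactly $D_8$.

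By the Gorenstein--Walter theorem, $Q$ is either isomorphic to $A_7$ or to $L_2(q)$ for an odd prime power $q$ with the $2$-part of $q^2 - 1$ equal to $16$. In the $L_2(q)$ branch, the $6$-transposition property on involutions in $E$ gives $(q+1)/2 \leq 6$, so $q \leq 11$; combined with the Sylow condition, this leaves $q \in \{7, 9\}$, and since $168 = |L_3(2)| \nmid 360 = |L_2(9)|$, we must have $q = 7$, giving $Q \cong G_0$. The remaining possibility is $Q \cong A_7$.

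Ruling out $Q \cong A_7$ is the main obstacle. The key observation is that $G \cong PGL_2(7)$ has Sylow $2$-subgroup isomorphic to $D_{16}$, arising from the non-split torus of order $q+1 = 8$, and hence $G$ contains elements of order $8$. If $Q \cong A_7$, then, as $Q$ is the unique minimal normal subgroup of $\hat G$, we have $F^*(\hat G) = Q$, whence $C_{\hat G}(Q) = Z(Q) = 1$ and $\hat G$ embeds into $\Aut(Q) \cong S_7$. But $S_7$ contains no element of order $8$: the maximum element order is $\mathrm{lcm}(4,3) = 12$, and an element of order $8$ would require an $8$-cycle on $7$ letters. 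Since $G \leq \hat G$ supplies elements of order $8$, this contradicts the embedding $\hat G \hookrightarrow S_7$. Therefore $Q = G_0 \cong L_3(2)$, and the argument concludes as described in the first paragraph.
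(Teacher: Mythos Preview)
Your proof is correct and follows the same overall structure as the paper: Thompson's Transfer to pin the Sylow $2$-subgroup of $Q$ as $D_8$, then Gorenstein--Walter, the $6$-transposition bound on $q$, and finally elimination of $A_7$ via the embedding $\hat G \hookrightarrow \Aut(Q)\cong S_7$. The one substantive difference is how you dispose of $A_7$: the paper observes that $G\cong\Aut(L_3(2))$ has no transitive action on seven points (the graph automorphism of $L_3(2)$ swaps the two classes of index-$7$ subgroups), whereas you note that $G\cong PGL_2(7)$ has Sylow $2$-subgroup $D_{16}$ and hence elements of order $8$, which $S_7$ cannot accommodate. Both arguments work; yours is perhaps more elementary. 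One small omission: you invoke ``the unique minimal normal subgroup $Q$'' without justifying uniqueness, which the paper proves as its first step (any second minimal normal $Q'$ would also contain $G_0$, forcing $Q=Q'$); you need this for $C_{\hat G}(Q)=1$.
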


\begin{proof}
First of all, $Q$ is the unique minimal normal subgroup of $\hat G$. Indeed, if there was another such subgroup $Q'$ then we would have that both $Q$ and $Q'$ are non-abelian simple groups and, at the same time, $G_0\leq Q\cap Q'$. Clearly, this implies that $Q=Q'$.

It follows that $G_{\hat G}(Q)=1$ and so $\hat G$ is an almost simple group. Recall that the involutions from $S_0$, being tau involutions of axes, are not conjugate to involutions from $S\setminus S_0$, which do not correspond to any axes by (c). By Thompson's Transfer Theorem (see \eg \cite{KS}), $\hat G$ has an index $2$ subgroup, which implies that $S_0$ is a Sylow $2$-subgroup of $Q$, since $S\leq G_0\leq Q$. Hence $Q$ is a non-abelian simple group with a dihedral Sylow $2$-subgroup $S\cong D_8$.

By \cite{GW}, $L\cong L_2(q)$, $q\equiv 7,9\mod 16$ or $L\cong A_7$. Let us consider these possibilities in turn. First suppose that $Q\cong L_2(q)$ for $q\equiv 7,9\mod 16$. Then, first of all, all involutions in $Q$ are conjugate, so they are all tau involution of axes, and hence they must form a class of $6$-transpositions. On the other hand, $L_2(q)$ contains a dihedral subgroup of order $q+1$, which means that $\frac{q+1}{2}\leq 6$, \ie $q\leq 11$. Clearly, this forces $q=7$ and $Q=G_0\cong L_2(7)$, since $Q$ contains $G_0$.

It remains to eliminate the case $Q\cong A_7$. In this case, $\Aut(Q)\cong S_7$, which means that $\hat G$ can only be isomorphic to $S_7$, since $Q$ has index at least $2$ in $\hat G$. However, $S_7$ does not contain a subgroup isomorphic to $G\cong \Aut(L_3(2))$, as the latter does not have a transitive action on seven points. This contradiction completes the proof.
\end{proof}

\subsection{The $\mathbf{57}$-dimensional algebra of shape $\mathbf{4A3C}$ for $\mathbf{L_3(2)}$}

We now give another example of an algebra whose decomposition relative to a subgroup $2^2$ is the entire algebra.

In this subsection, the algebra $A$ under consideration was constructed from an axet $X$ with 
$|X|=21$ for the group $G=\Aut(L_3(2))$. The Miyamoto group $G_0=\Miy(G)\cong L_3(2)$ is a 
subgroup of $G$ of index $2$. We begin by computing some basic data about the algebra $A$.

\begin{computation} \label{no twins 57}
~
\begin{enumerate}
\item $A$ has no Jordan axes;
\item axes from $X$ have no twins, and thus, the tau map is a bijection from $X$ onto the set 
of all $21$ involutions in $G_0$;
\item the $28$ involutions in $G\setminus G_0$ are not tau involutions of axes from $A$.
\end{enumerate}
\end{computation}

Based on this, we intend to show that $G$ is the full automorphism group of $A$ and, 
correspondingly, $X$ contains all axes from $A$.

For our set $Y$, we select three axes $a_1$, $a_2$ and $a_3$ corresponding to the involutions 
in a subgroup $E\cong 2^2$ of $G_0$. Set $N=N_G(E)\cong S_4$ to be the set-wise stabiliser of 
$Y$ in $G$, and $K=C_G(E)=E$. Then $N$ induces the group $N/E\cong S_3$ on the set $Y$.  
Because of the algebra inclusion $2\B\hookrightarrow 4\A$, we see that the axes in 
$Y=\{a_1,a_2,a_3\}$ are pairwise orthogonal, so in this case we get a complete decomposition 
of $A$ into a sum of joint eigenspaces. Indeed, we have the following.

\begin{computation}
The joint eigenspace decomposition of $A$ with respect to $Y$ is as follows:
\begin{enumerate}
\item $U=A_{(0,0,0)}(Y)$ is of dimension $9$;
\item the remainder of the non-zero summands $W=A_{(\lm_1,\lm_2,\lm_3)}(Y)$ are:
\begin{enumerate}
\item $A_{(1,0,0)}(Y)=\la a_1\ra$, $A_{(0,1,0)}(Y)=\la a_2\ra$, and $A_{(0,0,1)}(Y)=\la 
a_3\ra$;
\item $A_{\left(0,\frac{1}{4},\frac{1}{4}\right)}(Y)$, 
$A_{\left(\frac{1}{4},0,\frac{1}{4}\right)}(Y)$, and 
$A_{\left(\frac{1}{4},\frac{1}{4},0\right)}(Y)$, of dimension $1$;
\item $A_{\left(\frac{1}{4},0,0\right)}(Y)$, $A_{\left(0,\frac{1}{4},0\right)}(Y)$, and 
$A_{\left(0,0,\frac{1}{4}\right)}(Y)$, of dimension $2$;
\item $A_{\left(\frac{1}{4},\frac{1}{32},\frac{1}{32}\right)}(Y)$, 
$A_{\left(\frac{1}{32},\frac{1}{4},\frac{1}{32}\right)}(Y)$, and 
$A_{\left(\frac{1}{32},\frac{1}{32},\frac{1}{4}\right)}(Y)$, of dimension $2$;
\item $A_{\left(0,\frac{1}{32},\frac{1}{32}\right)}(Y)$, 
$A_{\left(\frac{1}{32},0,\frac{1}{32}\right)}(Y)$, and 
$A_{\left(\frac{1}{32},\frac{1}{32},0\right)}(Y)$, of dimension $10$.
\end{enumerate}
\end{enumerate}
\end{computation}

We now turn to the determination of $\Aut(U)$. We need to find good idempotents in $U$ that 
we can work with. Let $T$ be the fixed subalgebra of $N$ in $U$.

\begin{computation}
The subalgebra $T$ has dimension $3$ and it contains exactly eight idempotents of lengths 
$0$, $15$, $\frac{135}{16}$, $\frac{105}{16}$, $\frac{75}{8}$, $\frac{45}{8}$, 
$\frac{360}{37}$, and $\frac{195}{37}$.
\end{computation}

We will focus on the idempotent $d\in T$ of length $\frac{45}{8}$.

\begin{computation}
~
\begin{enumerate}
\item The algebra $U$ contains exactly four idempotents of length $\frac{45}{8}$; they are $d$ 
and three further idempotents $u_1$, $u_2$, and $u_3$;
\item the $1$-eigenspace of $d$ has dimension $3$, and the $1$-eigenspace of each $u_i$ is of 
dimension $2$.
\end{enumerate}
\end{computation}

It follows from this that $d$ and the triple $\{u_1,u_2,u_3\}$ are invariant under the full 
group $\Aut(U)$.

\begin{computation}
$U=\lla u_1,u_2,u_3\rra$.
\end{computation}

We are ready to identify $\Aut(U)$.

\begin{proposition} \label{Aut(U) 57}
We have that $\Aut(U)\cong S_3$.
\end{proposition}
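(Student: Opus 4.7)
The plan is to prove the two bounds $|\Aut(U)| \geq 6$ and $|\Aut(U)| \leq 6$ separately, using the computational data assembled just above.

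For the lower bound, I would observe that the normaliser $N = N_G(E) \cong S_4$ acts on $U = A_{(0,0,0)}(Y)$; since $E$ itself acts trivially on its own joint $0$-eigenspace, this action factors through the quotient $N/E \cong S_3$, giving a homomorphism $N/E \to \Aut(U)$. To see this map is an embedding, the key observation is that if any $u_i$ were fixed by $N/E$, it would lie in the $N$-fixed subalgebra $T$; but the recorded list shows that $T$ contains only one idempotent of length $\tfrac{45}{8}$, namely $d$, so no $u_i$ lies in $T$. Hence the $N/E$-action on the three-element set $\{u_1, u_2, u_3\}$ has no fixed points, and any action of a group on a three-element set with no fixed points must be transitive; the induced action is therefore the regular action of $S_3$, which is faithful. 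Consequently $N/E \cong S_3 \hookrightarrow \Aut(U)$.

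For the upper bound, I would use the fact that any $\phi \in \Aut(U)$ preserves the inherited Frobenius form (essentially unique up to scalar on the connected, positive-definite algebras we work with) and hence preserves the lengths of idempotents; it also preserves the characteristic polynomial, and so the eigenspace dimensions, of the adjoint map $\ad_e$ for each idempotent $e$. Since $d$ is the unique length-$\tfrac{45}{8}$ idempotent of $U$ whose $1$-eigenspace is three-dimensional, $\phi$ must fix $d$ and permute $\{u_1, u_2, u_3\}$ setwise. Restriction to this set yields a homomorphism $\rho$ from $\Aut(U)$ to the symmetric group on $\{u_1, u_2, u_3\}$, which is isomorphic to $S_3$. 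Because $U = \lla u_1, u_2, u_3 \rra$, any automorphism is determined by its values on this generating set, so $\rho$ is injective, giving $|\Aut(U)| \leq 6$.

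Combining the two bounds yields $\Aut(U) \cong S_3$, as claimed. The one substantive step requiring genuine verification is the absence of $N$-fixed points among the $u_i$, but this is immediate from the recorded enumeration of idempotents in $T$; the remainder is an assembly of facts already established in the computations, together with the standard observations that automorphisms preserve both length and adjoint eigenspace dimensions and that a generating set pins down an automorphism.
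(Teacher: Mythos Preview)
Your proof is correct and essentially matches the paper's: both use that $\{u_1,u_2,u_3\}$ is an $\Aut(U)$-invariant generating set (via length and $1$-eigenspace dimension) to get $\Aut(U)\hookrightarrow S_3$, and that $N$ induces $S_3$ on $U$ for the reverse inclusion. Your lower-bound argument is more careful than the paper's brief assertion, explicitly checking faithfulness via the action on $\{u_1,u_2,u_3\}$; one minor terminological slip is that the transitive $S_3$-action on three points is the \emph{natural} action, not the regular one (which is on six points).
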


\begin{proof}
We have seen that the set $R:=\{u_1,u_2,u_3\}$ generates $U$ and is invariant under $\Aut(U)$. 
Consequently, $\Aut(U)$ acts on $R$ faithfully, which means that $\Aut(U)$ is isomorphic to a 
subgroup of $S_3$. On the other hand, we have seen earlier that $N$ induces on $U$ the full 
$S_3$. So the claim holds.
\end{proof}

We note that $U_1(d)$ contains three idempotents of length $2$ which satisfy a fusion law of 
Monster type $\left(\frac{4}{15},\frac{1}{15}\right)$ on $U$. Thus, we get three tau 
involutions of $U$ which generate $\Aut(U)$. In fact, these idempotents are the totality of 
idempotents of length $2$ in $U$.

We now focus on the $2$-dimensional summands 
$W_1=A_{\left(\frac{1}{4},\frac{1}{32},\frac{1}{32}\right)}(Y)$, 
$W_2=A_{\left(\frac{1}{32},\frac{1}{4},\frac{1}{32}\right)}(Y)$, and 
$W_3=A_{\left(\frac{1}{32},\frac{1}{32},\frac{1}{4}\right)}(Y)$ because of the following 
computational facts.

\begin{computation} \label{kernel 57}
The following hold:
\begin{enumerate}
\item $\lla W_1,W_2,W_3\rra=U$;
\item the identity automorphism on $U$ has a $1$-dimensional space of extensions to each $W_i$;
\item for randomly chosen $0\ne w_i\in W_i$, $i=1$, $2$, and $3$, and $0\ne u\in U$, we have 
\begin{enumerate}
\item $(w_i^2,u)\ne 0$; and furthermore,
\item $((w_1w_2)(w_1w_3),w_1)\ne 0$.
\end{enumerate}
\end{enumerate}
\end{computation}

This computation yields the following result. First we introduce some notation. Let $\hat N$ 
be the set-wise stabiliser of $Y=\{a_1,a_2,a_3\}$ in $\Aut(A)$ and $\hat{K}\unlhd\hat N$ be 
the joint stabiliser of the three axes from $Y$. Then the elements of $\hat K$ act on each 
component of the decomposition of $A$ with respect to $Y$. In particular, they act on $U$ and 
on the components $W_i$.

\begin{lemma} \label{hat K}
The group $\hat{K}$ acts on $U$ with kernel $E$.
\end{lemma}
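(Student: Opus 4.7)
The plan is to mimic the argument used in Proposition~\ref{kernel} for the $49$-dimensional algebra: take $\phi \in \hat{K}$ with $\phi|_U = \mathrm{Id}_U$, use Computation~\ref{kernel 57}(b) to force $\phi$ to act by a scalar on each $W_i$, use the Frobenius form identities in (c) to reduce to at most four possible triples of scalars, and then match these triples with the four elements of $E$. The containment $E \subseteq \ker$ is immediate from the grading: since $U = A_{(0,0,0)}(Y)$ lies in the $(+,+,+)$ component of the triple $C_2$-grading induced by $\tau_1, \tau_2, \tau_3$, every element of $E$ fixes $U$ pointwise.

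For the reverse inclusion, suppose $\phi \in \hat{K}$ acts trivially on $U$. By Computation~\ref{kernel 57}(b), $\phi|_{W_i} = \mu_i \cdot \mathrm{Id}_{W_i}$ for some scalar $\mu_i \in \F$. Preservation of the Frobenius form together with (c)(i) gives
\[
(w_i^2, u) = ((w_i^\phi)^2, u^\phi) = \mu_i^2 (w_i^2, u) \neq 0,
\]
so $\mu_i = \pm 1$; and together with (c)(ii) it gives
\[
((w_1w_2)(w_1w_3), w_1) = \mu_1^3 \mu_2 \mu_3 \cdot ((w_1w_2)(w_1w_3), w_1),
\]
so $\mu_1 \mu_2 \mu_3 = 1$ (using $\mu_1^3 = \mu_1$). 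Hence $(\mu_1, \mu_2, \mu_3)$ lies in $\{(1,1,1),(1,-1,-1),(-1,1,-1),(-1,-1,1)\}$, and each of these four triples is realised by a unique element of $E$: the identity gives $(1,1,1)$, and $\tau_j$ gives the triple with $+1$ in the $j$th slot (since $a_j$ acts on $W_j$ with eigenvalue $\tfrac{1}{4}$, in the even graded part) and $-1$ in the other two slots (since $a_j$ acts on $W_i$ for $i \neq j$ with eigenvalue $\tfrac{1}{32}$, in the odd graded part).

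The final step is to show that $\phi$ then coincides globally with the unique $e \in E$ realising its triple of scalars. The main obstacle is extending control of $\phi$ from $U$, from $\la a_1, a_2, a_3 \ra$ (on which $\phi$ is trivial since $\phi \in \hat{K}$), and from the three $W_i$ to every other summand of the joint eigenspace decomposition of $A$ --- the three $1$- and three $2$-dimensional components, and the three $10$-dimensional components. This is exactly what Computation~\ref{kernel 57}(a) is designed to supply: repeated algebra products of elements from $W_1, W_2, W_3$ (together with the data on $U$ and the fixed axes) reach every remaining summand via the Monster fusion law acting coordinate-wise, so the scalars $\mu_i$ determine $\phi$ on every component of the decomposition. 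This forces $\phi e^{-1} = \mathrm{Id}_A$, yielding $\phi \in E$ and completing the proof.
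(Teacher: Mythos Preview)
Your proof is correct and follows essentially the same route as the paper's: you use Computation~\ref{kernel 57}(b) to force scalar action on each $W_i$, part (c)(i) to get $\mu_i=\pm 1$, part (c)(ii) (with the longer product $((w_1w_2)(w_1w_3),w_1)$) to get $\mu_1\mu_2\mu_3=1$, and part (a) to conclude that the triple of scalars determines $\phi$ globally, matching each of the four admissible triples with an element of $E$. Your exposition is slightly more explicit than the paper's in spelling out which $\tau_j$ realises which sign pattern and in noting the easy containment $E\leq\ker$, but the argument is the same.
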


\begin{proof}
Let $\phi\in\hat K$ and $\phi|_U=1$. By Computation \ref{kernel 57} (b), $\phi$ acts as a 
scalar $\mu_i$ on each $W_i$. We note that, by Computation \ref{kernel 57} (a), any 
automorphism of $A$ is completely determined by its action on the components $W_i$. By 
part (c)(i) of the same computation, we have that $0\ne(w_i^2,u)=((w_i^2)^\phi,u^\phi)
=((w_i^\phi)^2,u)=((\mu_iw_i)^2,u)=(\mu_i^2w_i^2,u)=\mu_i^2(w_i,u)$. Thus, $\mu_i^2=1$ and 
so $\mu_i=\pm 1$ for each $i$. Now, by part (c)(ii), we have
\begin{eqnarray*}
((w_1w_2)(w_1w_3),w_1)&=&((w_1w_2)^\phi(w_1w_3)^\phi,w_1^\phi)=\mu_1((w_1^\phi w_2^\phi)(w_1^\phi w_3^\phi), w_1)\\
	&=&\mu_1((\mu_1w_1\mu_2w_2)(\mu_1w_1\mu_3w_3),w_3)\\
	&=&\mu_1^2\mu_1\mu_2\mu_3((w_1w_2)(w_1w_3),w_1)\\
	&=& \mu_1\mu_2\mu_3((w_1w_2)(w_1w_3),w_1),
\end{eqnarray*}
since $\mu_1^2=1$. It follows that $\mu_1\mu_2\mu_3=1$. These two constraints imply that 
exactly two of the three $\mu_i$ can be negative for an automorphism $\phi$. Thus, 
$(\mu_1,\mu_2,\mu_3)$ lies in
$$\{(1,1,1),(1,-1,-1),(-1,1,-1),(-1,-1,1)\}.$$

These tuples correspond to the identity automorphism, $\tau_1$, $\tau_2$, and $\tau_3$, 
respectively, where $\tau_i=\tau_{a_i}$. Thus, $\phi\in E$ in all cases.
\end{proof}

Note that $(w_1w_2,w_3)$ is zero for random $w_i\in W_i$. This is why we used a longer product 
in Computation \ref{kernel 57} (c)(ii).

In order to determine the subgroups $\hat K$ and $\hat N$ of $\Aut(A)$ we will need an 
additional computation. Let $T_i$ be the subspace of $U$ spanned by the projections to $U$ of 
all products of elements of $W_i$, $i=1,2,3$.

\begin{computation}
The subspaces $T_1$, $T_2$ and $T_3$ of $U$ have dimension $3$ and any two of them intersect 
trivially.
\end{computation}

The important part of this statement is that the subspaces $T_i$ are not the same. This gives 
us the following.

\begin{lemma}\label{cent E is 1}
$\hat K$ acts trivially on $U$; in particular, $\hat K=E$.
\end{lemma}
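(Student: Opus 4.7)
The plan is to show that the image of $\hat K$ inside $\Aut(U)\cong S_3$ is trivial; once this is established, Lemma \ref{hat K} will immediately give $\hat K\leq E$, and the reverse inclusion $E\leq\hat K$ holds because each $\tau_i$ fixes every $a_j\in Y$ by construction.

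First I would observe that any $\phi\in\hat K$ fixes every $a_i$ pointwise and therefore preserves every joint eigenspace $A_{(\lm_1,\lm_2,\lm_3)}(Y)$ setwise; in particular $\phi$ preserves each $W_i$. Since $T_i$ is by definition the span of the projections to $U$ of products of elements of $W_i$, the restriction $\bar\phi:=\phi|_U\in\Aut(U)$ stabilises each of the three subspaces $T_1,T_2,T_3$ setwise.

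The core of the argument is then to show that no non-identity element of $\Aut(U)$ can stabilise all three $T_i$ simultaneously. For this, I would lift an arbitrary $\psi\in\Aut(U)$ to some $\phi\in N=N_G(E)$ with $\phi|_U=\psi$, which is possible because $N$ was already shown to induce the full $\Aut(U)$ on $U$. Such a $\phi$ permutes $\{a_1,a_2,a_3\}$, hence $\{W_1,W_2,W_3\}$, and therefore induces a permutation of $\{T_1,T_2,T_3\}$. If $\psi$ fixes each $T_i$ setwise, then since the three $T_i$ are pairwise distinct by the preceding computation, $\phi$ must fix each $W_i$ setwise and consequently fix each $a_i$. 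Thus $\phi\in K=E$, and since $E$ acts trivially on $U$ we deduce $\psi=\phi|_U=1$.

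Combining the two observations, every $\phi\in\hat K$ acts trivially on $U$, so $\hat K\leq E$ by Lemma \ref{hat K}, giving $\hat K=E$. There is no real obstacle in this argument: everything hinges on the computational input that the $T_i$ are genuinely three distinct subspaces of $U$, which was just verified. The only conceptual point worth stating carefully is that the $S_3$-action on $\{T_1,T_2,T_3\}$ induced from $N/E$ coincides with the action induced by $\Aut(U)$, so that ``fixing each $T_i$ as an element of $\Aut(U)$'' really does force the lift $\phi\in N$ back into $K=E$.
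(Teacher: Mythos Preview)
Your argument is correct and rests on the same computational input as the paper's proof (the three subspaces $T_i\subset U$ are pairwise distinct), but the packaging is slightly different. The paper works with $\hat N$ rather than $\hat K$: it lets $R$ be the kernel of $\hat N$ acting on $U$, observes that any element of $R$ fixes each $T_i$ and hence (by distinctness) each $W_i$ and each $a_i$, so $R\leq\hat K$; then a size count using $\hat N/\hat K\cong S_3$ and $\hat N/R\hookrightarrow\Aut(U)\cong S_3$ forces $R=\hat K$. You instead lift the restriction $\bar\phi\in\Aut(U)$ back to $N$ and argue there. Both routes are short and use exactly the same bijection $a_i\leftrightarrow W_i\leftrightarrow T_i$; the paper's counting avoids the lifting step, while your version makes the role of the known subgroup $N$ more explicit. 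The point you flag at the end---that the $N/E$-action and the $\Aut(U)$-action on $\{T_1,T_2,T_3\}$ agree---is automatic, since $T_i\subset U$ and the action of $\phi\in N$ on $T_i$ is literally via $\phi|_U$.
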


\begin{proof}
Clearly $\hat N/\hat K\cong S_3$. Let $R$ be the kernel of $\hat N$ acting on $U$. (We note that the entire $\hat N$ preserves $U$ and acts on it.) By the preceding computation, every element of $R$ distinguishes components $W_i$, since it fixes the corresponding projection 
subspaces $T_i$. Consequently, it also distinguishes the axes $a_i$, \ie $R\leq \hat K$. 
On the other hand, $\hat N/R$ is isomorphic to a subgroup of $\Aut(U)\cong S_3$. This shows 
that, in fact, $R=\hat K$. 

Finally, the kernel of $\hat K$ acting on $U$ is simply $\hat K\cap R=\hat K$, and so Lemma 
\ref{hat K} gives us that $\hat K=E$.
\end{proof}

Finally, we can identify $\hat N$.

\begin{corollary} \label{N 57}
$\hat N=N\cong S_4$.
\end{corollary}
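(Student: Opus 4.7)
The plan is to combine Lemma \ref{cent E is 1} with the obvious containment $N\leq\hat N$ and a simple order count.

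First I would observe that $N\leq\hat N$, because $N\leq G\leq\Aut(A)$ and $N$ stabilises $Y$ set-wise by construction. Since $\hat N$ acts on $Y$, we have a homomorphism $\hat N\to\Sym(Y)\cong S_3$ whose kernel is precisely $\hat K$, so $\hat N/\hat K$ embeds into $S_3$.

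Next, I would use that the image of $\hat N/\hat K$ in $S_3$ already contains the image of $N/K$, which we know equals the full $S_3$ (since $N/E\cong S_3$ acts faithfully and transitively on the three axes of $Y$). Hence $\hat N/\hat K\cong S_3$ and $|\hat N|=6|\hat K|$.

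Finally, by Lemma \ref{cent E is 1}, $\hat K=E$, so $|\hat N|=6\cdot 4=24=|N|$. Combined with $N\leq\hat N$, this forces $\hat N=N\cong S_4$. There is no real obstacle: all the hard work was already done in establishing $\hat K=E$, and the remainder is a three-line order argument.
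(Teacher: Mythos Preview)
Your proof is correct and follows essentially the same approach as the paper: both use $N\leq\hat N$, that $\hat N$ acts on $Y$ with kernel $\hat K=E$ (Lemma \ref{cent E is 1}), and that $N$ already induces the full $S_3$ on $Y$, concluding by an order comparison. The paper phrases the final step as ``same image, same kernel'' rather than your explicit count $|\hat N|=6\cdot 4=24=|N|$, but the logic is identical.
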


\begin{proof}
Indeed, $N\leq\hat N$. Also, $N$ induces the same group $S_3$ on $Y$ and it has the same 
kernel $E$ in this action.
\end{proof}

We now use group theoretic arguments to show that the full automorphism group ${\hat 
G}:=\Aut(A)$ is isomorphic to $\Aut(L_3(2))\cong PGL(2,7)$. As in the previous cases, we 
will split the proof into a series of lemmas. First, we note that ${\hat G}$ is a finite 
group by Corollary \ref{our main case}. 

Let $1\neq Q$ be a minimal normal subgroup of $\hat G$. We  begin by showing that the soluble 
radical of ${\hat G}$ is trivial. For a contradiction, we will be assuming in the next several 
lemmas that $Q$ is an elementary abelian $p$-group for some prime $p$. Then $Q$ can be 
regarded as a vector space over $\F_p$, and also, as a $\hat{G}$-module. Let $R$ be 
a smallest non-trivial subgroup of $Q$ invariant under $G_0=L_3(2)$. Then $R$ is an 
irreducible $G_0$-module. 

\begin{lemma}\label{centraliser E in R}
We have $C_R(E)=1$.
\end{lemma}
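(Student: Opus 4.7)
The argument will parallel the analogous step in the preceding two automorphism-group calculations. The two ingredients I need are the identification $\hat K = E$ from Lemma~\ref{cent E is 1} together with the absence of twins established in Computation~\ref{no twins 57}, and the simplicity of $G_0 \cong L_3(2)$.

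First I will show that $C_{\hat G}(E) = E$. If $g \in \hat G$ centralises $E$, then for each generator $\tau_i = \tau_{a_i}$ of $E$ we have $\tau_{a_i^g} = \tau_i^g = \tau_i = \tau_{a_i}$, so $a_i$ and $a_i^g$ are twins. By Computation~\ref{no twins 57}, no axis in $X$ has a twin, so $a_i^g = a_i$ for $i = 1,2,3$, which means $g \in \hat K$. Lemma~\ref{cent E is 1} then gives $g \in E$, and the reverse inclusion is trivial.

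Next I will argue $Q \cap G_0 = 1$. The intersection $Q \cap G_0$ is normal in $G_0$, since $Q \trianglelefteq \hat G$. As $G_0 \cong L_3(2)$ is non-abelian simple, this intersection is either trivial or the whole of $G_0$. The latter would force $G_0 \leq Q$, contradicting the fact that $Q$ is abelian while $G_0$ is not. Hence $Q \cap G_0 = 1$.

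Putting the pieces together, $C_R(E) \leq R \cap C_{\hat G}(E) = R \cap E \leq Q \cap G_0 = 1$, so $C_R(E) = 1$. There is no real obstacle here beyond invoking the earlier results correctly; the whole content of the lemma has already been packaged into Lemma~\ref{cent E is 1} and Computation~\ref{no twins 57}, and the rest is a two-line group-theoretic chase exactly as in the previous sections.
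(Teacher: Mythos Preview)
Your proof is correct and follows essentially the same route as the paper: identify $C_{\hat G}(E)=E$, then use simplicity of $G_0$ to kill the intersection with $Q$ (equivalently $R$). You are in fact more careful than the paper in justifying $C_{\hat G}(E)=\hat K$ via the no-twins computation, a step the paper absorbs into its citation of Lemma~\ref{cent E is 1}; and you use $Q\cap G_0$ where the paper uses $R\cap G_0$, but either works for the same reason.
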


\begin{proof}
By Lemma \ref{cent E is 1}, $C_{\hat{G}}(E)={\hat G}_{a_1,a_2,a_3}=\hat K=E$, and since 
$C_R(E)=R\cap C_{\hat{G}}(E)$, we have that $C_R(E)=R\cap E\leq R\cap G_0\unlhd G_0$. 
Simplicity of $G_0$ then gives that $C_R(E)=1$.
\end{proof}

In particular, $R$ cannot be the trivial $G_0$-module. 

\begin{lemma}
If $Q$ is an elementary abelian $p$-subgroup of ${\hat G}$, then $p\in\{3,5\}$.
\end{lemma}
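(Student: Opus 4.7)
The plan is to repeat verbatim the argument used in Lemma \ref{6-trans} of the $46$-dimensional $A_5$ case, which was already observed to carry over without change to the $49$-dimensional $L_3(2)$ analogue. First I would rule out $p = 2$: if $Q$ were a non-trivial elementary abelian $2$-group, then $R$ would be a non-trivial $2$-group on which the $2$-group $E$ acts, forcing $C_R(E) \neq 1$ and contradicting Lemma \ref{centraliser E in R}. Hence $p$ is odd.

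Next I would exploit the $6$-transposition property of Miyamoto involutions in axial algebras of Monster type $(\frac{1}{4},\frac{1}{32})$ (Corollary 2.10 of \cite{3-gen}). Pick any $1 \neq e \in E$. Since the three involutions of $E$ are the tau involutions $\tau_{a_1}, \tau_{a_2}, \tau_{a_3}$ and are all conjugate inside $G_0 \cong L_3(2)$ (as $L_3(2)$ has a single class of involutions), Lemma \ref{centraliser E in R} forces $e$ to act non-trivially on $R$ — otherwise all three involutions of $E$ would centralise $R$, giving $C_R(E) = R \neq 1$. Because $p$ is odd and $e$ has order $2$, the $\pm 1$-eigenspace decomposition of $\ad_e$ on $R$ shows that $e$ inverts some $1 \neq r \in R$. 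A direct computation then yields
\[
|e e^r| \;=\; |e\cdot r^{-1} e r| \;=\; |(r^{-1})^e r| \;=\; |r \cdot r| \;=\; |r^2| \;=\; p,
\]
since $p$ is odd so $r^2 \neq 1$. The $6$-transposition condition therefore forces $p \leq 6$, and combined with $p$ being an odd prime this gives $p \in \{3,5\}$.

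There is essentially no obstacle: the only two points to verify are that $(r^{-1})^e = r$ holds precisely because $e$ inverts $r$, and that the three non-identity elements of $E$ are conjugate in $G_0$, which is immediate from the fact that $L_3(2)$ has a unique conjugacy class of involutions. Both are identical to the corresponding steps in the $A_5$ and $49$-dimensional $L_3(2)$ proofs, so no new computational input is needed.
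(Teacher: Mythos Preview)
Your proposal is correct and follows exactly the paper's approach: the paper's proof for this lemma simply rules out $p=2$ via $C_R(E)\neq 1$ and then says ``the arguments from Lemma~\ref{6-trans}, utilising the $6$-transposition property of tau involutions, apply without change''---which is precisely what you have written out in full. The only cosmetic difference is that the paper leaves the details to the earlier reference while you spell them out explicitly.
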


\begin{proof}
If $p=2$, then $C_R(E)\ne 1$, since $E$ is a $2$-group. This contradicts Lemma 
\ref{centraliser E in R}, so $p\ne 2$. Again the arguments from \ref{6-trans}, utilising the 
$6$-transposition property of tau involutions, apply without change, so $p\in \{3,5\}$ as 
claimed.
\end{proof} 

We now proceed to eliminate the cases $p=3$ and $p=5$ individually.

\begin{lemma}
$p\ne 5$.
\end{lemma}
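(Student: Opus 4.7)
The plan is to mirror very closely the corresponding $p\ne 5$ argument that was used in the preceding subsection for the $49$-dimensional algebra, since the Miyamoto group $G_0\cong L_3(2)$ is the same in both cases and the two essential ingredients carry over unchanged: the tau involutions in $E$ form part of a class of $6$-transpositions in $\hat G$, and $C_R(E)=1$ by Lemma \ref{centraliser E in R}. The aim is to produce, inside the semidirect product $G_0\ltimes R$, an element of order $15$ that is the product of two involutions from the $6$-transposition class.

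First I would import the multiplicity statement of Lemma \ref{mult of three}: its proof only uses that $E\cong 2^2$ acts on $R$ with $C_R(E)=1$ and that the three involutions of $E$ are conjugate in $G_0$, all of which still hold. Consequently $n:=\dim R$ is a multiple of $3$, say $n=3k$, and each non-identity element of $E$ acts on $R$ with eigenvalue $+1$ of multiplicity $k$ and eigenvalue $-1$ of multiplicity $2k$. Next, since $5\nmid |L_3(2)|=168$, the mod $5$ Brauer character table of $G_0$ agrees with the ordinary character table; the ordinary irreducible degrees of $L_3(2)$ are $1,3,3,6,7,8$. The multiplicity constraint forces the value of the character of $R$ on the class of involutions to equal $-k$, which together with $n=3k$ rules out every irreducible $G_0$-module except the two $3$-dimensional ones (dual to each other). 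Hence $\dim R=3$ and $k=1$.

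Now pick involutions $x,y\in G_0$ with $xy$ of order $3$ (they exist, as $L_3(2)$ contains $S_4$). The $-1$-eigenspaces of $x$ and of $y$ on $R$ both have dimension $2$, so their intersection contains a non-zero vector $r$; we may write $R$ additively while treating $r$ as a generator of a subgroup of order $5$. Then $t:=xy$ normalises $\la r\ra$, and because an element of order $3$ cannot act non-trivially on a cyclic group of order $5$, $t$ centralises $r$. Form $y':=y^{r^3}$: since $x$ inverts $r$, $xr^{-3}=r^{3}x$, so $xy'=xr^{-3}yr^{3}=r^{3}(xy)r^{3}=r^{3}tr^{3}=tr^{6}=tr$, using $[t,r]=1$ and $r^{5}=1$. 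As $t$ and $r$ commute with coprime orders $3$ and $5$, the element $tr$ has order $15$. But $y'$ is conjugate to $y$ inside $\hat G$, hence is still a tau involution, so $\{x,y'\}$ lies in the $6$-transposition class, forcing $|xy'|\le 6$, a contradiction.

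The main step to verify carefully is the reduction to $\dim R=3$: one must be sure that the multiplicity lemma applies in exactly the same form here (which it does, because only the action of $E$ matters), and that no $G_0$-module of higher dimension can have character value $-k$ on involutions compatible with $n=3k$. Given the very small dimension of $R$, the construction of a common $-1$-eigenvector for $x$ and $y$ is forced for dimensional reasons, so once $\dim R=3$ is established the rest of the argument is purely computational group theory inside $G_0\ltimes R$ and is essentially a transcription of the earlier proof.
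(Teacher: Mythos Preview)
Your proof is correct and follows the same route as the paper: reduce via Lemma~\ref{mult of three} and the (ordinary $=$ $5$-modular) character table of $L_3(2)$ to the $3$-dimensional module, then inside $G_0\ltimes R$ produce an element of order $15$ as a product of two tau involutions.

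One subtlety the paper makes explicit that you gloss over: the two $3$-dimensional absolutely irreducible $L_3(2)$-modules are \emph{not} realisable over $\F_5$ itself (the Frobenius $\zeta\mapsto\zeta^5$ swaps the two characters on elements of order~$7$), so an irreducible $\F_5G_0$-module $R$ with these constraints is not literally $3$-dimensional over $\F_5$. The paper handles this by passing to the splitting field and viewing $R$ as $3$-dimensional there. This does not actually damage your argument: Lemma~\ref{mult of three} still gives $-1$-eigenspaces of dimension $2k>\tfrac{3k}{2}$ for $x$ and $y$ whatever $k$ is, so a common inverted vector $r\in R$ exists over $\F_5$, and the remainder of your computation (that $t$ centralises $r$, and $xy^{r^3}=tr$ has order $15$) goes through unchanged. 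Your conjugation calculation $xy'=tr$ is a harmless variant of the paper's phrasing that $x$ inverts $tr$.
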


\begin{proof}
Since $5$ is relatively prime to $|G_0|=168$, the $5$-modular character table of $G_0$ 
coincides with its ordinary character table. By Lemma \ref{mult of three}, the dimension of 
$R$ is of the form $3k$, $k \in \N$. In addition, the value of the character afforded by 
$R$ must be $-k$ on the class of involutions of $G_0$. This leaves us with the two mutually 
dual $3$-dimensional modules as candidates for $R$. 

Both of these modules can only be realised over the extension $\F_{5^6}$ of $\F_5$, so in this 
case $R$ would be of dimension $18$ and $F:=C_{\End(R)}(G_0)\cong\F_{5^6}$. So we can consider 
$R$ as the $FG_0$-module of dimension $3$. Select involutions $x$ and $y$ of $G_0$ such that 
$t:=xy$ is of order $3$. Since both $x$ and $y$ have $-1$-eigenspaces of dimension 
$2>\frac{3}{2}$, they have a $1$-dimensional space they both invert, say $\la r\ra$. Clearly, 
$t$ centralises $r$, and since they have coprime orders, $tr$ has order $15$. On the other 
hand, $x$ inverts $tr$ and so it cannot be a $6$-transposition; a contradiction.
\end{proof}

\begin{lemma}
We have $p\ne 3$.
\end{lemma}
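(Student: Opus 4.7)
The plan is to mimic, almost verbatim, the argument that eliminated $p = 3$ in the $49$-dimensional $L_3(2)$-algebra of the previous subsection: the Miyamoto group $G_0 \cong L_3(2)$, the $6$-transposition property of tau involutions, and Lemma~\ref{mult of three} carry over unchanged, and Computation~\ref{no twins 57} guarantees that every involution of $G_0$ is again a tau involution.

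First I would pin down $R$ via $3$-modular representation theory of $L_3(2)$. Lemma~\ref{mult of three} forces $\dim_{\F_3} R = 3k$ and requires the Brauer character of $R$ to take value $-k$ on the class of involutions of $G_0$. Inspecting the $3$-modular character table of $L_3(2)$ (available in GAP) rules out every irreducible except the two mutually dual $3$-dimensional modules, each realisable only over $\F_9$. Hence $R \cong 3^6$, with each involution of $G_0$ acting with $\F_3$-eigenvalue multiplicities $2$ for $+1$ and $4$ for $-1$.

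Next I would produce a common inverted vector. Pick involutions $x, y \in G_0$ with $t := xy$ of order $4$; these exist because a Sylow $2$-subgroup of $L_3(2)$ is dihedral of order $8$. The $(-1)$-eigenspaces of $x$ and $y$ on $R$ each have $\F_3$-dimension $4 > 3 = \dim(R)/2$, so they share a nonzero vector $r$. Since both $x$ and $y$ invert $r$, the product $t$ fixes $r$, so $t$ and $r$ commute.

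Finally, setting $y' := y^{r^2}$ and working in the semidirect product $R \rtimes G_0$, I use $r^{-2} y = y r^2$ (from $y$ inverting $r$) together with $r^3 = 1$ to compute
\[
xy' = x r^{-2} y r^2 = x y r^2 r^2 = t r^4 = t r.
\]
Since $t$ and $r$ commute and have coprime orders $4$ and $3$, this yields $|xy'| = 12$. But $x$ and $y'$ are both tau involutions ($y' = y^{r^2}$ is conjugate to $y$ in $\hat G$), so $xy'$ must lie in a $6$-transposition product class, contradicting order $12$. The main obstacle is merely the semidirect-product bookkeeping and confirming the Brauer-character count; conceptually nothing new is required beyond the analogous lemma in the $49$-dimensional case.
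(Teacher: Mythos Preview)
Your proposal is correct and follows essentially the same route as the paper's own proof: both use Lemma~\ref{mult of three} and the $3$-modular character table of $L_3(2)$ to force $R$ onto the pair of dual $3$-dimensional modules, then pick involutions $x,y$ with $|xy|=4$, find a common $(-1)$-eigenvector $r$, and compute $xy^{r^2}=tr$ of order $12$ to violate the $6$-transposition property. Your write-up is slightly more explicit about the $\F_9$-realisability (hence $\dim_{\F_3} R = 6$) and about why $y'$ remains a tau involution, but the argument is identical in substance.
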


\begin{proof}
As in the preceding lemma, $R$ has dimension $n=3k$, and the character value of the class of 
involutions must be $-k$. From the $3$-modular character table of $G_0=L_3(2)$, we are left 
with two possibilities for $R$, the two mutually dual $3$-dimensional modules. Choose 
involutions $x$ and $y$ in $G_0$ so that $t:=xy$ is of order $4$. Then $x$ and $y$ have a 
common $1$-dimensional subspace $\la r\ra\leq R$ they both invert. Then $t$ commutes with $r$. 
Let $y'=y^{r^2}$, and consider $xy'=xr^{-2}yr^2=xr^{-2}xxyr^2=r^2tr^2=tr^2r^2=tr$. Clearly, 
$tr$ is of order $12$, contradicting the fact $x$ and $y'$ are $6$-transpositions. Thus, 
$p\ne 3$.
\end{proof}

We have established that $Q$ cannot be elementary abelian, and so it must be a direct product 
of several copies of a non-abelian simple group $L$. We want to show that $Q=L$ is simple. For 
this we look at the Sylow $2$-subgroup $\hat S$ of $\hat G$. We can assume that $\hat S$ 
contains a Sylow $2$-subgroup $S$ of $G$. Let $S_0=S\cap G_0$.

\begin{lemma}
$\hat S=S$.
\end{lemma}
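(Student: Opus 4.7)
The plan follows the pattern of the analogous argument from the $49$-dimensional case. Suppose for a contradiction that $S < \hat S$. Since in any finite $p$-group a proper subgroup is strictly contained in its normaliser, we have $S < N_{\hat S}(S)$, so we may choose $t \in N_{\hat S}(S)\setminus S$. Note that $t$ is a $2$-element of $\hat G$ normalising $S$.

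The first step is to show that $t$ normalises $S_0 = S \cap G_0$. Conjugation by $t$ permutes the involutions of $S$. By Computation \ref{no twins 57} (b), the tau map is a bijection from $X$ onto the $21$ involutions of $G_0$; in particular, every non-identity element of $S_0$ is a tau involution of some axis in $X$. By part (c) of the same computation, no involution in $G\setminus G_0$, and hence none in $S\setminus S_0$, arises as a tau involution of any axis of $A$. Since $t \in \Aut(A)$ must send tau involutions to tau involutions, conjugation by $t$ preserves $S_0\setminus\{1\}$ and therefore normalises $S_0$.

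Next, I would reduce to the case $t \in N_{\hat G}(E)$. The group $S_0 \cong D_8$ contains exactly two Klein four subgroups, $E$ and $E'$, and these are interchanged by conjugation by an element of $S$: here I would use that a Sylow $2$-subgroup of $G \cong \Aut(L_3(2)) \cong PGL(2,7)$ is dihedral of order $16$, so a generator of the cyclic index-two subgroup of $S$ swaps $E$ and $E'$. Replacing $t$ by $ts$ for a suitable $s \in S$, we may assume $t$ normalises $E$. Then $t$ permutes $\{\tau_1,\tau_2,\tau_3\}$, and by the bijectivity of the tau map also permutes $Y=\{a_1,a_2,a_3\}$, so $t$ lies in the setwise stabiliser $\hat N$ of $Y$. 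Corollary \ref{N 57} then yields $t \in \hat N = N \leq G$.

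The final step is a standard Sylow argument: $t \in G$ has $2$-power order and normalises $S$, so $S\langle t\rangle$ is a $2$-subgroup of $G$ containing $S$. Since $S$ is a Sylow $2$-subgroup of $G$, we must have $S\langle t\rangle = S$ and hence $t \in S$, contradicting the choice $t\notin S$. The main point that requires any real care is the structural claim that $E$ and $E'$ are $S$-conjugate, which rests on pinning down the isomorphism type $D_{16}$ of $S$; once this is in place, the remaining steps are essentially transcribed from the $49$-dimensional argument.
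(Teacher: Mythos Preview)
Your proof is correct and follows essentially the same approach as the paper. You transplant the element-correction argument from the $49$-dimensional case (pick $t\in N_{\hat S}(S)\setminus S$, show $t$ normalises $S_0$, adjust by an element of $S$ to normalise $E$, land in $\hat N=N\leq G$, contradict Sylow), whereas the paper's own proof for the $57$-dimensional case uses the equivalent index-counting variant: it sets $T=N_{\hat S}(S)$, observes $[T:N_T(E)]\leq 2$ since $S_0$ has only two Klein fours, and then bounds $|N_T(E)|\leq |S_0|$ via $N_T(E)\leq N_{\hat G}(E)=N\leq G_0$. The paper's version sidesteps your explicit identification $S\cong D_{16}$ (it never needs $E$ and $E'$ to be $S$-conjugate, only that there are at most two of them), but your justification of that point is correct and the two arguments are interchangeable.
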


\begin{proof}
Let $T=N_{\hat S}(S)$. It suffices to show that $T=S$. Every element of $T$ acts on $S$. By 
Computation \ref{no twins 57}, the involutions from $S\setminus S_0$ are not fused into $S_0$, 
which means that $T$ normalises $S_0$. The latter contains exactly two Klein four-groups, $E$ 
and the second one, say, $E'$. This means that $N_T(E)$ has index at most two in $T$. However, 
$N_{\hat G}(E)=\hat N=N\leq G_0$ by Corollary \ref{N 57}. This shows that $N_T(E)\leq T\cap 
G_0=S_0$. Since $S_0$ has index two in $S$, we now conclude that $T=S$, and so the claim of 
the lemma holds.
\end{proof}

\begin{lemma}
The group $\hat G$ has an index two subgroup $\hat G_0$ containing $G_0$.
\end{lemma}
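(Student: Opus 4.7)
The plan is to apply Thompson's Transfer Theorem, in the same spirit as in the earlier two proofs (Proposition \ref{S5} and the proof for the $49$-dimensional case), using the fact we have already established that $\hat S = S$.

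First, I would set up the transfer. Since $[G:G_0]=2$, the subgroup $S_0 = S \cap G_0$ is a maximal subgroup of $S$ of index $2$. Pick any involution $t \in S \setminus S_0$; such involutions exist because $S$ strictly contains $S_0$ and has order $2^4$. The key observation is the following non-fusion claim: $t$ is not $\hat G$-conjugate to any involution of $S_0$. Indeed, $\hat G$ permutes the set of axes $X$, and hence permutes the corresponding tau involutions. By Computation \ref{no twins 57}(b), every involution of $S_0 \leq G_0$ is the tau involution of some axis in $X$, while by Computation \ref{no twins 57}(c), the involutions of $S \setminus S_0 \subseteq G \setminus G_0$ are not tau involutions of axes at all. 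So if $t$ were $\hat G$-conjugate to some $t' \in S_0$, then $t = (t')^g$ would equal $\tau_{a^g}$ for the axis $a$ with $\tau_a = t'$, contradicting (c).

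With this non-fusion in hand, Thompson's Transfer Theorem (see, e.g., Theorem 12.1.1 in \cite{KS}) immediately yields a subgroup $\hat G_0 \leq \hat G$ of index $2$ with $t \notin \hat G_0$. It remains to check that $G_0 \leq \hat G_0$. Consider the quotient map $\pi \colon \hat G \to \hat G / \hat G_0 \cong C_2$ and restrict it to the subgroup $G_0 \cong L_3(2)$. The restriction is a homomorphism from a non-abelian simple group to $C_2$, so its kernel is a normal subgroup of $G_0$, and by simplicity must be all of $G_0$. Hence $G_0 \leq \ker(\pi) = \hat G_0$, as required.

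The main (in fact only) subtle step is the fusion argument, and it is already handled by Computation \ref{no twins 57}; no additional machine calculation or group-theoretic input is needed. Everything else is a direct invocation of Thompson's transfer and the simplicity of $L_3(2)$.
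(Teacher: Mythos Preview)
Your proof is correct and follows essentially the same route as the paper: use the non-fusion of involutions in $S\setminus S_0$ into $S_0$ (from Computation \ref{no twins 57}) together with the already-established fact $\hat S=S$ to apply Thompson's Transfer Theorem, and then invoke the simplicity of $G_0\cong L_3(2)$ to conclude $G_0\leq\hat G_0$. The only difference is cosmetic: the paper phrases Thompson's transfer as producing $\hat G_0$ containing $S_0$, while you phrase it as excluding $t$, and you spell out the simplicity step via the quotient map rather than via $G_0\cap\hat G_0\unlhd G_0$.
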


\begin{proof}
Again, we refer to the computational fact that the involutions from $S\setminus S_0$ are not 
fused into $S_0$. By Thompson's Transfer Theorem, $\hat G$ has an index $2$ subgroup $\hat 
G_0$ containing $S_0$. Since $S_0\leq G_0$ and $G_0$ is simple, we conclude that $G_0$ must be 
fully contained in $\hat G_0$. 
\end{proof}

\begin{corollary}
$Q=L$ is simple.
\end{corollary}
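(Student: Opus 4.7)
The plan is to mirror almost verbatim the Sylow argument used for the analogous corollary in the 49-dimensional $L_3(2)$ case. Since the previous lemmas have eliminated every candidate prime $p$ for an elementary abelian minimal normal subgroup, $Q$ must be non-abelian, and the structure theory of minimal normal subgroups forces $Q \cong L^n$ for some non-abelian simple group $L$ and some $n \geq 1$. My goal is to show $n=1$, and this will be a purely arithmetic consequence of the Sylow 2-orders we already have in hand.

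First I would invoke the previous lemma $\hat S = S$, noting $|S| = |\Aut(L_3(2))|_2 = 16$, together with the standard fact that any non-abelian simple group has Sylow 2-subgroup of order at least $4$, so $|L|_2 \geq 4$. A Sylow 2-subgroup of $Q = L^n$ is the direct product of Sylow 2-subgroups of the factors, hence has order $|L|_2^n$, and it embeds into $S$. This gives $|L|_2^n \leq 16$, which combined with $|L|_2 \geq 4$ leaves only $n = 1$ or $n = 2$, and the latter only if $|L|_2 = 4$.

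To rule out $n = 2$, I would argue that in that case the Sylow 2-subgroup of $Q$ has order exactly $16 = |S|$, so $S$ is a Sylow 2-subgroup of $Q$ and in particular $S \leq Q$. Then $S$ would itself be a direct product of two groups of order $4$; since every group of order $4$ is abelian, $S$ would be abelian. This contradicts $S \geq S_0 \cong D_8$. Hence $n = 1$ and $Q = L$ is simple, as claimed.

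The step is essentially routine and presents no real obstacle; it is an exact copy of the corresponding argument in the 49-dimensional case. I would additionally record, for use in the following proposition, that $G_0 \leq Q$: since $S \cap Q$ is a Sylow 2-subgroup of $Q$ of order at least $4$ while $S_0$ has index $2$ in $S$, we obtain $S_0 \cap Q \neq 1$, hence $G_0 \cap Q \neq 1$, and simplicity of $G_0 \cong L_3(2)$ forces $G_0 \leq Q$.
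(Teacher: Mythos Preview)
Your argument is correct and is indeed the same one used in the $49$-dimensional case. The paper, however, takes a slightly shorter route here: it invokes the lemma proved immediately before this corollary, namely that $\hat G$ has an index-$2$ subgroup $\hat G_0$ containing $G_0$. Since $Q$ is a direct product of non-abelian simple groups it has no subgroup of index $2$, so $Q\leq\hat G_0$; then $S_0\cong D_8$ is already Sylow in $\hat G_0$, giving $|Q|_2\leq 8$. Combined with $|L|_2\geq 4$ this forces $n=1$ at once, bypassing the ``$S$ would be abelian'' step entirely. Your version avoids using that lemma at the cost of one extra structural observation about $S$; both are perfectly valid, and your remark that $G_0\leq Q$ matches the paper's subsequent paragraph.
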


\begin{proof}
Clearly, $Q\leq\hat G_0$. In particular, as $S_0$ must clearly be Sylow in $\hat G_0$, the 
$2$-part of $|Q|$ is at most $2^3$. Since a non-abelian simple group has Sylow $2$-subgroups 
of order at least $4$, we are forced to conclude that $Q$ is simple.
\end{proof}

We note that $S_0\cap Q\neq 1$ and so $G_0\cap Q\neq 1$. This yields that $G_0$, being simple, 
is fully contained in $Q$. This also implies that $Q$ is the only minimal normal subgroup of 
$\hat G$, which means that $C_{\hat G}(Q)=1$ and so $\hat G$ is isomorphic to a subgroup of 
$\Aut(Q)$.

We now use the classification of finite simple groups with a dihedral Sylow subgroup to 
identify $\hat G$. 

\begin{theorem}
$Aut(A)=G\cong\Aut(L_3(2))$.
\end{theorem}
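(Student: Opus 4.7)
The plan is to mirror the endgame of the $49$-dimensional case. By the lemmas preceding the theorem, we are already reduced to the situation where $\hat G := \Aut(A)$ has a unique minimal normal subgroup $Q$ which is a non-abelian finite simple group containing $G_0 \cong L_3(2)$, whose Sylow $2$-subgroup is $S_0 \cong D_8$; moreover $\hat G$ embeds into $\Aut(Q)$. The first step is to invoke the Gorenstein--Walter classification \cite{GW}: a non-abelian finite simple group with a dihedral Sylow $2$-subgroup is isomorphic to $L_2(q)$ for some odd prime power $q \equiv 7, 9 \pmod{16}$, or to the alternating group $A_7$.

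To rule out $Q \cong A_7$, I would observe that in that case $\hat G$ would embed into $\Aut(A_7) \cong S_7$; but $G \cong \Aut(L_3(2)) \cong PGL(2,7)$ is already contained in $\hat G$, and $PGL(2,7)$ has no faithful permutation representation of degree less than $8$ (its minimal faithful action is on the $8$ points of the projective line over $\F_7$). This contradiction eliminates the $A_7$ case.

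For $Q \cong L_2(q)$, the argument parallels the one already given for the $49$-dimensional algebra: all involutions of $Q$ are conjugate for odd $q$, so via $E \leq G_0 \leq Q$ every involution of $Q$ lies in the class of tau involutions of axes of $A$ and hence belongs to a class of $6$-transpositions in $\hat G$. Inside $L_2(q)$ one finds a dihedral subgroup of order $q+1$, arising as the normaliser of a non-split torus, which provides a pair of involutions whose product has order $(q+1)/2$. The $6$-transposition property then forces $(q+1)/2 \leq 6$, i.e.\ $q \leq 11$, and combined with $q \equiv 7, 9 \pmod{16}$ this leaves only $q \in \{7, 9\}$. Since $L_2(9) \cong A_6$ has order $360$, not divisible by $|G_0| = 168$, the case $q = 9$ is incompatible with $G_0 \leq Q$, leaving $q = 7$ and hence $Q = L_2(7) = G_0$.

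At that point the chain $G \leq \hat G \leq \Aut(Q) = \Aut(L_3(2)) \cong G$ collapses to $\hat G = G$, completing the proof. The only mild obstacle is checking that the $6$-transposition bound really pins $q$ down together with the congruence condition; this is essentially an arithmetic case check identical to the one carried out in the $49$-dimensional case. No new idea beyond Gorenstein--Walter and the $6$-transposition property is needed.
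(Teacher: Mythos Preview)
Your proposal is correct and follows essentially the same route as the paper's own proof: invoke Gorenstein--Walter to reduce to $Q\cong A_7$ or $Q\cong L_2(q)$ with $q\equiv 7,9\pmod{16}$, eliminate $A_7$ via the non-embeddability of $G\cong PGL(2,7)$ in $S_7$, and then use the $6$-transposition bound on the dihedral subgroup of order $q+1$ to force $q\leq 11$. The only cosmetic differences are in the final case elimination: the paper disposes of both $q=9$ and $q=11$ by observing that neither $L_2(9)$ nor $L_2(11)$ contains a copy of $L_3(2)$, whereas you use the congruence $q\equiv 7,9\pmod{16}$ to drop $q=11$ and the order-divisibility $168\nmid 360$ to drop $q=9$; both routes are valid and equally short.
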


\begin{proof}
By \cite{GW}, $Q\cong L_2(q)$ for $q\equiv 7,9\mod 16$ or $Q\cong A_7$. First suppose that 
$Q\cong A_7$. Then $\hat G\cong S_7$. However, this gives a contradiction since such a group 
$\hat G$ cannot contain a subgroup $G\cong\Aut(L_3(2))$, since the latter does not have a 
transitive action on seven points. 

Now suppose that $Q\cong L_2(q)$. Since $L_2(q)$, for odd $q$, has a single class of 
involutions, we must have that this class is a $6$-transposition class, which shows that 
$q+1\leq 12$ (as $L_2(q)$ contains a dihedral subgroup of order $q+1$). Clearly, $L_2(11)$ and 
$L_2(9)$ contain no subgroup $L_3(2)\cong L_2(7)$. Thus, $Q\cong L_2(7)\cong L_3(2)$, that is, 
$Q=G_0$. This clearly yields that $\hat G=G$.
\end{proof}

\section{Two largest algebras} \label{largest}

In this section we determine the automorphism groups of two algebras for the group $S_6$.

\subsection{The $\mathbf{121}$-dimensional algebra for $\mathbf{A_6}$ of shape $\mathbf{4A3A3A}$}\label{A6_121}

The first algebra $A$ arises for the action of $G=S_6$ on the axet of $45$ even involutions 
(double $2$-cycles).\footnote{This algebra was not computed by the expansion algorithm, like 
others, but instead it was found as a subalgebra of a larger algebra, that we discuss in the 
next subsection. In particular, it is not known whether this algebra is the unique algebra of 
its shape.} Therefore, the Miyamoto group in this case is $G_0=A_6$. As usual, we first check 
for Jordan elements, twins and axes for the additional involutions from $G$.

We note that even involutions form a single conjugacy class in both $A_6$ and $S_6$, and 
$S_6$ has two additional conjugacy classes of involutions in $S_6\setminus A_6$, namely, 
$2$-cycles and triple $2$-cycles. 

\begin{computation} \label{G}
~
\begin{enumerate}
\setcounter{enumi}{0}
\item $A$ has no Jordan axes;
\item the $45$ known axes do not have twins;
\item involutions in the two conjugacy classes contained in $G\setminus G_0$ do not correspond to axes; and
\item the outer automorphisms of $S_6$ induce automorphisms of $A$, so that the known automorphism group of $A$ is now $G^\circ\cong\Aut(A_6)$.
\end{enumerate}
\end{computation}

Inside $G^\circ$, the two classes of involutions from $S_6\setminus A_6$ merge into a single conjugacy class. However, we also find an additional class of $36$ involutions in $G^\circ\setminus G$.

\begin{computation} \label{G circ}
Involutions in $G^\circ\setminus G$ do not correspond to axes in $A$.
\end{computation}

Hence our task is now clear: we aim to prove that $\Aut(A)=G^\circ\cong\Aut(A_6)$ and that $A$ 
contains exactly $45$ axes of Monster type $(\frac{1}{4},\frac{1}{32})$.

Turning to the decomposition of $A$, we select $Y=\{a_1,a_2,a_3\}$ to consist of three axes 
corresponding to three involutions in a subgroup $E\cong 2^2$ of $G_0\cong A_6$. Since $4\A$ in 
the shape contains subalgebras $2\B$, the three axes $a_1$, $a_2$, and $a_3$ are pairwise 
annihilating, and so we are in the easier case, where we immediately obtain a complete 
decomposition of $A$.

\begin{computation}
The joint eigenspace decomposition of $A$ corresponding to $Y$ is as follows:
\begin{enumerate}
\item $U:=A_{(0,0,0)}(Y)$ is of dimension $19$;
\item the remaining (non-zero) summands $A_{(\lm_1,\lm_2,\lm_3)}(Y)$ in the decomposition of 
$A$ are:
\begin{enumerate}
\item $A_{(1,0,0)}(Y)=\la a_1\ra$, $A_{(0,1,0)}(Y)=\la a_2\ra$, and $A_{(0,0,1)}(Y)=\la a_3\ra$;
\item $A_{\left(0,\frac{1}{4},\frac{1}{4}\right)}(Y)$, 
$A_{\left(\frac{1}{4},0,\frac{1}{4}\right)}(Y)$, and 
$A_{\left(\frac{1}{4},\frac{1}{4},0\right)}(Y)$ of dimension $2$;
\item $A_{\left(0,0,\frac{1}{4}\right)}(Y)$, $A_{\left(0,\frac{1}{4},0\right)}(Y)$, and 
$A_{\left(\frac{1}{4},0,0\right)}(Y)$, of dimension $5$;
\item $A_{\left(\frac{1}{4},\frac{1}{32},\frac{1}{32}\right)}(Y)$, 
$A_{\left(\frac{1}{32},\frac{1}{4},\frac{1}{32}\right)}(Y)$, and 
$A_{\left(\frac{1}{32},\frac{1}{32},\frac{1}{4}\right)}(Y)$, of dimension $6$;
\item $A_{\left(0,\frac{1}{32},\frac{1}{32}\right)}(Y)$, 
$A_{\left(\frac{1}{32},0,\frac{1}{32}\right)}(Y)$, and 
$A_{\left(\frac{1}{32},\frac{1}{32},0\right)}(Y)$, of dimension $20$.
\end{enumerate}
\end{enumerate}
\end{computation}

As usual, we start by finding $\Aut(U)$, and since it is quite big, we want to decompose it 
further. First, let us find at least some interesting idempotents in $U$. We start with the 
$2$-dimensional components $W_1=A_{\left(0,\frac{1}{4},\frac{1}{4}\right)}(Y)$, 
$W_2=A_{\left(\frac{1}{4},0,\frac{1}{4}\right)}(Y)$, and 
$W_3=A_{\left(\frac{1}{4},\frac{1}{4},0\right)}(Y)$. For each $i$, let $U_i$ be the subalgebra 
generated by the projections onto $U$ of all products $ww'$, $w,w'\in W_i$. Clearly, it 
suffices to take $w,w'$ in a basis of $W_i$. 

\begin{computation}
~
\begin{enumerate}
\item $U_i$ has dimension $4$; and
\item the identity $z_i$ of $U_i$ has length $4$.
\end{enumerate}
\end{computation}

Next we do a global calculation in $U$ of all idempotents of length $4$.

\begin{computation}
The idempotents $z_1$, $z_2$ and $z_3$ are the only idempotents in $U$ of length $4$.
\end{computation}

Clearly, the set-wise stabiliser of $Y$ coincides with the normaliser $N=N_{G^\circ}(E)\cong 
2\times S_4$, and it induces the full permutation group $S_3$ on $Y$. Consequently, it also 
induces $S_3$ on the set of components $\{W_1,W_2,W_3\}$, on the set of projection subalgebras 
$\{U_1,U_2,U_3\}$, and finally, on the set of the identity elements, $\{z_1,z_2,z_3\}$. So, 
in order to find the full automorphism group of $U$, we just need to find the joint stabiliser 
$K$ of $z_1$, $z_2$ and $z_3$ in $\Aut(U)$.

Let $V$ be the subalgebra of $U$ generated by $z_1$, $z_2$, and $z_3$.

\begin{computation}
~
\begin{enumerate}
\item $V$ is of dimension $4$;
\item for $i\in\{1,2,3\}$, $z_i$ is not primitive in $V$; namely, $V_1(z_i)$ is of dimension 
$2$;
\item each $V_1(z_i)$ contains exactly four idempotents, $0$, $z_i$, and two idempotents of 
length $2$, $u$ and $u_i$;
\item $uu_i=0$ and $u+u_i=z_i$.
\end{enumerate}
\end{computation}

Note that the idempotent $u$ is common for all $z_i$. In particular, it is fixed by the entire 
$\Aut(U)$. We now focus on this idempotent $u$ and decompose $U$ with respect to it.

\begin{computation} \label{u}
~
\begin{enumerate}
\item The idempotent $u$ has spectrum $1,0,\frac{1}{2},\frac{1}{8}$ on $U$, with 
multiplicities $1$, $10$, $3$ and $5$, respectively;
\item $u$ satisfies the fusion law of ``almost" Monster type $\left(\frac{1}{2},
\frac{1}{8}\right)$ on $U$.
\end{enumerate}
\end{computation}

Let $W$ be the $5$-dimensional eigenspace $U_{\frac{1}{8}}(u)$. Since $K$ fixes $u$, it 
leaves $W$ invariant. Furthermore, the idempotents $u_i$ are in $U_0(u)$ and hence they also 
act on $W$, as $0\star\frac{1}{8}=\{\frac{1}{8}\}$ in the fusion law for $u$.

\begin{computation}
In its action on $W$, the idempotent $u_i$ has eigenvalues $\frac{25}{168}$, 
$\frac{1}{168}$, $\frac{3}{56}$, and $\frac{7}{24}$ with multiplicities $1$, 
$2$, $1$, and $1$, respectively.
\end{computation} 

Let us focus on the $1$-dimensional eigenspaces $T_i=W_{\frac{7}{24}}(u_i)$. 

\begin{computation}
The Frobenius form is non-zero on $T_i$.
\end{computation}

After scaling random elements from $T_i$, we end up with $t_i\in T_i$ such that $(t_i,t_i)=21$. Note that such $t_i$ are unique up to a factor $\pm 1$.

\begin{computation} \label{ti}
~
\begin{enumerate}
\item $(t_i,t_j)=\pm\frac{15}{8}$ for $i\neq j$.
\item $\lla t_1,t_2,t_3\rra=U$.
\end{enumerate}
\end{computation}

Consider $\phi\in K$. Since $\phi$ fixes the idempotents $u_i$, it leaves all $T_i$ invariant. 
Furthermore, since $T_i$ is $1$-dimensional, $\phi$ acts on $T_i$ as a scalar $\mu_i$.

\begin{lemma} \label{K}
$K$ has order $2$.
\end{lemma}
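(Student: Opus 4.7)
The plan is to combine two directions: bound $|K|$ from above using the Frobenius-form relations in Computation \ref{ti}, and produce a non-trivial element of $K$ from the grading identified in Computation \ref{u}.

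First I would record that any $\phi\in\Aut(U)$ preserves the Frobenius form, since $U$ is unital with $(\one_U,\one_U)\neq 0$ (which forces the scalar factor to be one) and, in any case, the form is unique up to scalar. Given $\phi\in K$, we already know $\phi$ fixes the $u_i$, hence fixes each $T_i=W_{\frac{7}{24}}(u_i)$ setwise, so $\phi(t_i)=\mu_i t_i$ for scalars $\mu_i$. Applying $\phi$ to the length relation $(t_i,t_i)=21\neq 0$ gives $\mu_i^2=1$, so $\mu_i=\pm 1$. Applying $\phi$ to $(t_i,t_j)=\pm\tfrac{15}{8}\neq 0$ for $i\neq j$ gives $\mu_i\mu_j=1$, so $\mu_1=\mu_2=\mu_3=:\mu$. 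Since $U=\lla t_1,t_2,t_3\rra$ by Computation \ref{ti}(b), the value of $\mu$ determines $\phi$ on all of $U$: it acts as $\mu^n$ on any product of $n$ of the generators $t_i$. Thus $|K|\leq 2$.

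For the matching lower bound, I would invoke Computation \ref{u}(b): the idempotent $u$ satisfies the $C_2$-graded ``almost Monster'' fusion law $\cM(\tfrac{1}{2},\tfrac{1}{8})$ on $U$, with negative part $U_{\frac{1}{8}}(u)=W$. This produces a tau-type involution $\tau_u\in\Aut(U)$ acting as $-1$ on $W$ and as $+1$ on $U_1(u)\oplus U_0(u)\oplus U_{\frac{1}{2}}(u)$. Since $T_i\subseteq W$, $\tau_u$ realises the value $\mu=-1$; moreover $\tau_u$ fixes $u$ and every $u_i\in U_0(u)$, and therefore fixes each $z_i=u+u_i$. Hence $\tau_u\in K$ and $\tau_u\neq 1$, giving $|K|\geq 2$ and finally $|K|=2$.

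The main subtlety is the first step, making sure $\phi$ actually preserves the form on the nose rather than up to a scalar, because that is what converts the two non-degeneracy statements of Computation \ref{ti}(a) into the clean equations $\mu_i^2=1$ and $\mu_i\mu_j=1$. Once that is in place the rest is bookkeeping: the algebraic obstructions from the pairwise inner products of the $t_i$ pin down $\mu$ to a single sign, generation of $U$ by the $t_i$ upgrades this to rigidity on all of $U$, and the $C_2$-grading coming from $u$ supplies the one non-trivial automorphism that saturates the bound.
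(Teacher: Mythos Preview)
Your proof is correct and follows essentially the same route as the paper: use the non-zero values $(t_i,t_i)$ and $(t_i,t_j)$ to force $\mu_i=\pm 1$ and $\mu_1=\mu_2=\mu_3$, then invoke $U=\lla t_1,t_2,t_3\rra$ for $|K|\leq 2$, and exhibit $\tau_u$ for $|K|\geq 2$. Your additional care in justifying that $\phi$ preserves the form exactly and that $\tau_u$ really fixes each $z_i=u+u_i$ is welcome but not a departure from the paper's argument.
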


\begin{proof}
We have that $21=(t_i,t_i)=(t_i^\phi,t_i^\phi)=(\mu_1 t_i,\mu_i t_i)=\mu_i^2(t_i,t_i)=
\mu_i^2 21$. This implies that $\mu_i=\pm 1$. Similarly, for $i\neq j$, we have from part (a) 
of Computation \ref{ti} that $0\ne (t_i,t_j)=(t_i^\phi,t_j^\phi)=(\mu_it_i,\mu_j t_j)=
\mu_i\mu_j(t_i,t_j)$. Hence $\mu_i\mu_j=1$. This means that $\mu_i$ and $\mu_j$ must 
have the same sign. Since this holds for any pair $i$ and $j$, all $\mu_i$ are equal. This 
shows, in view of part (b) of Computation \ref{ti}, that $K$ has order at most $2$. 

On the other hand, since $u$ is of ``almost'' Monster type on $U$ (see Computation \ref{u} 
(b)), the Miyamoto involution $\tau_u$ is contained in $K$, and so $|K|=2$.
\end{proof}

\begin{corollary} \label{AutU}
$\Aut(U)\cong 2\times S_3$.
\end{corollary}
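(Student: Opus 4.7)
The plan is to combine the two facts already in hand: the whole group $\Aut(U)$ acts on the three-element set $\{z_1,z_2,z_3\}$, and the pointwise stabiliser $K$ of this set has order exactly $2$ by Lemma \ref{K}. The action gives a homomorphism $\pi\colon \Aut(U)\to \mathrm{Sym}\{z_1,z_2,z_3\}\cong S_3$ with kernel $K$, so $|\Aut(U)|\le 12$. On the other hand, the normaliser $N=N_{G^\circ}(E)\cong 2\times S_4$ acts on $U$ and induces $S_3$ on $\{z_1,z_2,z_3\}$, as noted just before the computation of $V$. Hence $\pi$ is surjective and $|\Aut(U)|=2\cdot 6=12$.

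To upgrade this to the direct product structure, the key point is that the idempotent $u$ is canonically distinguished: it lies in $V_1(z_i)$ for every $i$ as the unique length-$2$ idempotent common to all three of these eigenspaces (by the last part of the computation of $V$). Consequently $u$ is fixed by the entire group $\Aut(U)$, and since Computation \ref{u}(b) states that $u$ satisfies a $C_2$-graded ``almost Monster'' fusion law on $U$, the associated Miyamoto involution $\tau_u$ is an automorphism of $U$ that is centralised by $\Aut(U)$. Moreover $\tau_u$ fixes each $z_i$ (because $z_i$ lies in the $+$-part of the grading, being built from $u$ and $u_i$), so $\tau_u\in K$ and, as $|K|=2$, in fact $K=\langle\tau_u\rangle\le Z(\Aut(U))$.

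We now have a central extension $1\to K\to \Aut(U)\xrightarrow{\pi} S_3\to 1$. Since the Schur multiplier of $S_3$ is trivial, the extension splits, and since $K$ is central the splitting realises $\Aut(U)$ as a direct product $K\times S_3\cong 2\times S_3$. (Alternatively, one can simply note that $N$ itself provides an $S_3$-subgroup mapping isomorphically onto $\pi(\Aut(U))$, namely any order-$6$ subgroup of $N$ lifting $S_3$ and avoiding $\tau_u$; combined with the central $\langle\tau_u\rangle$ this exhibits the direct product explicitly.) The main obstacle in this argument is conceptual rather than technical: one must be careful that the centrality of $\tau_u$ really follows from $u$ being $\Aut(U)$-invariant, which in turn rests on the canonical characterisation of $u$ inside $V$; all the remaining ingredients are already in the preceding computations.
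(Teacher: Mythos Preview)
Your overall shape is right, and your alternative argument at the end is exactly what the paper does: $N$ provides an $S_3$ subgroup of $\Aut(U)$ that maps isomorphically onto the quotient, so the extension splits; and since $\Aut(C_2)=1$, any split extension of $C_2$ by $S_3$ is the direct product. That is already a complete proof.

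However, your principal argument contains a genuine error. You write ``Since the Schur multiplier of $S_3$ is trivial, the extension splits''. This is false as stated: triviality of the Schur multiplier $H_2(S_3,\mathbb{Z})$ does \emph{not} imply $H^2(S_3,C_2)=0$. In fact $H^2(S_3,C_2)\cong C_2$ (restriction to a Sylow $2$-subgroup gives an isomorphism with $H^2(C_2,C_2)\cong C_2$), and the dicyclic group $\mathrm{Dic}_3$ of order $12$ is a non-split central extension of $S_3$ by $C_2$. So the Schur multiplier step does not, by itself, rule out $\Aut(U)\cong\mathrm{Dic}_3$; you really do need the explicit complement coming from $N$.

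A smaller point: your careful verification that $\tau_u$ is central is correct but unnecessary. Once you know $K$ is normal of order $2$, it is automatically central because $\Aut(C_2)$ is trivial. The paper simply uses this implicitly.
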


\begin{proof}
We know that $K\cong 2$ and $\Aut(U)/K\cong S_3$. On the other hand, $N$ induces on $U$ a 
subgroup $S_3$, which means that $\Aut(U)$ is a split extension $2\times S_3$.
\end{proof}

Now we assume that an automorphism $\phi$ acts trivially on $U$ and we want to see how 
$\phi$ can extend on the entire $A$. We focus on the components $W_1=A_{\left(\frac{1}{4},\frac{1}{32},\frac{1}{32}\right)}(Y)$, $W_2=A_{\left(\frac{1}{32},\frac{1}{4},\frac{1}{32}\right)}(Y)$ and $W_3=A_{\left(\frac{1}{32},\frac{1}{32},\frac{1}{4}\right)}(Y)$. 

\begin{computation}
~
\begin{enumerate}
\item the identity automorphism on $U$ has $1$-dimensional spaces of extensions on each 
$W_i$; and
\item $\lla W_1,W_2,W_3\rra =A$.
\end{enumerate}
\end{computation}  

It follows from here that $\phi$ acts as a scalar $\nu_i$ on $W_i$. Select random $w_i\in W_i$ and $u\in U$. 

\begin{computation} \label{extensions_19_to_121}
~
\begin{enumerate}
\item $(w_i^2,u)\ne 0$; and
\item $(w_1w_2,w_3)\ne 0$; and
\end{enumerate}
\end{computation}

This leads to the following result. Let $\hat N=N_{\Aut(G)}(E)$ be the set-wise stabiliser 
of $Y$ in $\Aut(A)$.

\begin{lemma}
The kernel $\hat K$ of $\hat N$ acting on $U$ is of order at most $4$.
\end{lemma}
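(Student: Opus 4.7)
The plan is to bound $|\hat K|$ by analysing how its elements act on the joint eigenspace components. Given $\phi\in\hat K$, my first task will be to show that $\phi$ actually fixes each axis $a_i$ individually, not merely the set $Y$. Since $\phi\in\hat N$ normalises $E$, it induces a permutation $\sigma\in S_3$ of $\{a_1,a_2,a_3\}$, and correspondingly permutes the components $A_{(0,\frac{1}{4},\frac{1}{4})}(Y)$, $A_{(\frac{1}{4},0,\frac{1}{4})}(Y)$, $A_{(\frac{1}{4},\frac{1}{4},0)}(Y)$, hence also the projection subalgebras $U_1,U_2,U_3$ and their identities $z_1,z_2,z_3$. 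Since $\phi$ acts trivially on $U$, it fixes each $z_i$; because the three $z_i$ are pairwise distinct idempotents of $U$, $\sigma$ must be the identity.

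Once each $a_i$ is fixed, $\phi$ preserves each of the six-dimensional summands $W_1=A_{(\frac{1}{4},\frac{1}{32},\frac{1}{32})}(Y)$, $W_2=A_{(\frac{1}{32},\frac{1}{4},\frac{1}{32})}(Y)$, $W_3=A_{(\frac{1}{32},\frac{1}{32},\frac{1}{4})}(Y)$. By the computation that $\Id_U$ extends to each $W_i$ in only a one-dimensional family, $\phi$ will necessarily act on $W_i$ as a scalar $\nu_i$.

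The final step will be to extract constraints on the $\nu_i$ from the inner product data in Computation \ref{extensions_19_to_121}. Applying $\phi$-invariance of the Frobenius form to $(w_i^2,u)\neq 0$ gives $\nu_i^2(w_i^2,u)=(w_i^2,u)$ and hence $\nu_i=\pm 1$, while applying it to $(w_1w_2,w_3)\neq 0$ gives $\nu_1\nu_2\nu_3=1$. This leaves at most four admissible triples, namely $(1,1,1)$, $(1,-1,-1)$, $(-1,1,-1)$, $(-1,-1,1)$. Since $\lla W_1,W_2,W_3\rra=A$, the triple $(\nu_1,\nu_2,\nu_3)$ together with $\phi|_U=\Id_U$ uniquely determines $\phi$ on all of $A$, giving $|\hat K|\leq 4$ as required.

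The only slightly delicate point is the very first step: establishing triviality of $\sigma$ by tying the permutation of the $W_i$ to the permutation of the fixed idempotents $z_i\in U$. Everything thereafter is a direct repetition of the scalar-plus-Frobenius-form template already deployed in Lemma \ref{A5kernel} and Proposition \ref{kernel}.
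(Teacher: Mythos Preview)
Your proof is correct and follows essentially the same approach as the paper: reduce to scalars $\nu_i$ on the $W_i$, then use the Frobenius-form identities from Computation~\ref{extensions_19_to_121} to force $\nu_i=\pm 1$ and $\nu_1\nu_2\nu_3=1$, with $\lla W_1,W_2,W_3\rra=A$ ensuring these scalars determine $\phi$.

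The one genuine addition you make is the preliminary step showing that $\phi$ must fix each $a_i$ individually (not just the set $Y$), by tracking the permutation of the distinct idempotents $z_i\in U$. The paper's own proof simply writes ``It follows from here that $\phi$ acts as a scalar $\nu_i$ on $W_i$'' without justifying why $\phi$ preserves each $W_i$ rather than permuting them; your argument via the $z_i$ (which are set up earlier in the section precisely as $\Aut(U)$-invariants tied to the components) makes this explicit and closes a small gap in the exposition.
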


\begin{proof}
By Computation \ref{extensions_19_to_121} (a), we have that $0\neq (w_i^2,u)=
((w_i^2)^\phi,u^\phi)=((w_i^\phi)^2,u)=((\nu_i w_i)^2,u)=\nu_i^2(w_i^2,u)$, which gives us 
that $\nu_i^2=1$, and so $\nu_i=\pm 1$. Similarly, by Computation \ref{extensions_19_to_121} 
(b), $0\neq (w_1w_2,w_3)=((w_1w_2)^\phi,w_3^\phi)=(\nu_1w_1\nu_2w_2,\nu_3w_3)=
\nu_1\nu_2\nu_3(w_1w_2,w_3)$, and so $\nu_1\nu_2\nu_3=1$. This implies the claim.
\end{proof}

\begin{corollary}
We have that $\hat K=E$ and $\hat N=N$.
\end{corollary}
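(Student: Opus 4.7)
The plan is to bound $|\hat N|$ from above by combining the bound $|\hat K|\le 4$ just proved with the known order of $\Aut(U)$, and to exhibit enough elements in $\hat K$ to force $\hat K=E$. Then equality $\hat N=N$ will fall out by comparing orders with the already known subgroup $N\le\hat N$.

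First I would verify that $E\le\hat K$. Each Miyamoto involution $\tau_{a_i}$ belongs to $\Aut(A)$, and since the axes in $Y$ are pairwise orthogonal (their pairwise subalgebras are of Sakuma type $2\B$ because $4\A$ contains $2\B$), we have $a_j\in A_0(a_i)$ whenever $j\neq i$. As $A_0(a_i)$ lies in the $+1$-eigenspace of $\tau_{a_i}$, the involution $\tau_{a_i}$ fixes every element of $Y$ and hence lies in $\hat N$. Moreover $U=A_{(0,0,0)}(Y)\subseteq A_0(a_i)$, so $\tau_{a_i}$ acts as the identity on $U$, placing it in $\hat K$. Consequently $E=\langle\tau_{a_1},\tau_{a_2},\tau_{a_3}\rangle\le\hat K$, and combined with the preceding lemma's bound $|\hat K|\le 4$ this forces $\hat K=E$.

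Next, I would invoke Corollary \ref{AutU}, which gives $\Aut(U)\cong 2\times S_3$, so $|\Aut(U)|=12$. Since $\hat K$ is by definition the kernel of the action of $\hat N$ on $U$, the quotient $\hat N/\hat K$ embeds into $\Aut(U)$, and therefore $|\hat N|\le 4\cdot 12=48$. On the other hand $N\le\hat N$ is immediate, since $N=N_{G^\circ}(E)$ lies in $\Aut(A)$ and stabilises $Y$ setwise. As $N\cong 2\times S_4$ already has order $48$, the inclusion $N\le\hat N$ must be an equality, giving $\hat N=N$.

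The real difficulty is not in this corollary but in the ingredients that feed into it: the bound $|\hat K|\le 4$ from the preceding lemma, which rested on the non-vanishing facts of Computation \ref{extensions_19_to_121} on the three summands $W_i$; and the explicit identification $\Aut(U)\cong 2\times S_3$ of Corollary \ref{AutU}, which itself required singling out the triple $\{z_1,z_2,z_3\}$ of length-$4$ idempotents and then controlling the residual kernel via the decomposition of $U$ with respect to $u$. Once those computational facts are in hand, the present statement is essentially a matching of orders, and I do not anticipate any further obstacle.
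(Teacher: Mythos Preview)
Your proof is correct and follows essentially the same route as the paper: verify $E\le\hat K$ (the paper just says ``clearly $E$ acts trivially on $U$''; you spell out why via $U\subseteq A_0(a_i)$), combine with $|\hat K|\le 4$ to get $\hat K=E$, and then use $N\le\hat N$ together with $\Aut(U)\cong 2\times S_3$ to conclude $\hat N=N$ by matching orders. The only cosmetic difference is that the paper phrases the last step as ``$N/E$ already realises the full $\Aut(U)$'' rather than an explicit count $48\le|\hat N|\le 4\cdot 12$, but these are the same argument.
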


\begin{proof}
Clearly $E$ acts trivially on $U$, and so $E\leq\hat K$. On the other hand, $|E|=4$ and 
$|\hat K|\leq 4$, so $\hat K=E$. This means that $N$, which is contained in $\hat N$, 
induces on $U$ the group $N/E\cong 2\times S_3$, which is isomorphic to the full automorphism 
group of $U$ by Corollary \ref{AutU}. Hence $\hat N=N$.
\end{proof}

We will now use group theoretic arguments to prove that the full automorphism group 
of $A$ is isomorphic to $\Aut(A_6)$.

\begin{theorem} \label{main 13}
$\Aut(A)=G^\circ\cong\Aut(A_6)$.
\end{theorem}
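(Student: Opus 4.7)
The plan is to imitate the structure of the proofs of Proposition~\ref{S5} and the analogous results of Section~\ref{further}, reducing to a minimal normal subgroup analysis. By Corollary~\ref{our main case}, $\hat G := \Aut(A)$ is finite, and since we have already established $\hat K = E = C_{\hat G}(E)$ and $\hat N = N$, much of the rigidity needed is in place. I would take a minimal normal subgroup $Q$ of $\hat G$ and first rule out the possibility that $Q$ is an elementary abelian $p$-group.

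For the abelian case, if $p = 2$ then the $2$-group $Q$ satisfies $C_Q(E) \neq 1$, forcing $Q \cap E \neq 1$, which contradicts $Q \cap G_0 \unlhd G_0 \cong A_6$ being either trivial or all of $G_0$ (the latter being excluded since $Q$ is abelian and $G_0$ is not). For odd $p$, let $R \leq Q$ be an irreducible $G_0$-submodule; since $C_R(E) \leq R \cap C_{\hat G}(E) = R \cap E = 1$, the $6$-transposition argument of Lemma~\ref{6-trans} again limits $p$ to $\{3, 5\}$. Lemma~\ref{mult of three} carries over verbatim, forcing $\dim R = 3k$ with eigenvalue $-1$ of multiplicity $2k$ on each involution of $E$. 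Consulting the Brauer character tables of $A_6$ modulo $3$ and modulo $5$, I would identify the short list of compatible irreducible modules and in each case derive a contradiction by exhibiting a product $xy'$ with $x \in E \setminus \{1\}$ and $y' = y^{r^k}$ for suitable $y \in E$ and $r \in R$ whose order exceeds $6$.

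The nonabelian case is cleaner. If $Q$ is nonabelian and $Q \cap G_0 = 1$, then $[Q, G_0] \leq Q \cap G_0 = 1$, so $Q$ centralises $E$ and thus $Q \leq C_{\hat G}(E) = E$, contradicting non-abelianness. Hence $G_0 \leq Q$. The same argument applied to any second minimal normal subgroup $Q'$ yields $G_0 \leq Q \cap Q' = 1$, a contradiction, so $Q$ is the unique minimal normal subgroup; thus $F^*(\hat G) = Q$ and $\hat G \hookrightarrow \Aut(Q)$. Writing $Q = L^n$ for a nonabelian simple $L$, if $n \geq 2$ then $E \leq G_0 \leq Q$ cannot lie in a single factor $L_j$ (else the remaining $L_i$'s would centralise $E$ and embed into $E$), so $E$ has nontrivial projection to at least two factors; a short analysis of how the simple group $G_0$ can diagonally embed into $L^n$ with $E$ spread across factors, combined with the Sylow $2$-bound forced by $D_8 = \mathrm{Syl}_2(G_0)$ embedding into $\mathrm{Syl}_2(L)^n$, rules out $n \geq 2$ and leaves $Q$ simple.

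Finally, I would identify the simple group $Q$ containing $G_0 \cong A_6$ subject to $G^\circ \leq \hat G \leq \Aut(Q)$. Since $G^\circ \cong P\Gamma L_2(9)$ has no faithful permutation action of degree less than $10$, the alternating groups $A_n$ with $n \neq 6$ and $n \leq 9$ are excluded immediately. The remaining candidate simple overgroups of $A_6$ (further $A_n$, various $L_2(q)$, $L_3(4)$, $M_{10}$ through $M_{12}$, and so on) are then to be eliminated by applying Thompson's Transfer Theorem to a Sylow $2$-subgroup of $\hat G$, using Computations~\ref{G}(c) and~\ref{G circ} (which show that no involution outside $G_0$ is a tau involution and hence cannot be fused in $\hat G$ to an even involution), together with the $6$-transposition constraint on the $G_0$-class of involutions inside $Q$. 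This classification step is the part I expect to be the main obstacle: unlike in Section~\ref{further}, where Gorenstein--Walter gave a very short list of candidates with dihedral Sylow $2$-subgroup, here the Sylow $2$-subgroup of $\hat G$ has order at least $32$ and the list of simple overgroups of $A_6$ is longer, so eliminating each case uniformly will require the most care. Once $Q = G_0$ is established, the embedding $\hat G \leq \Aut(Q) = G^\circ$ combines with $G^\circ \leq \hat G$ to yield the desired conclusion $\hat G = G^\circ$.
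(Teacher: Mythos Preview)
Your overall architecture is right, and the abelian case is essentially fine (modulo the minor slip that $C_{\hat G}(E)=E$ is false here---$N=N_{G^\circ}(E)\cong 2\times S_4$ gives $C_{\hat G}(E)\cong 2^3$---but since $\hat N=N\leq G^\circ$ you can instead argue $C_R(E)\leq Q\cap G^\circ$, which is a soluble normal subgroup of $\Aut(A_6)$ and hence trivial).

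There are, however, two genuine gaps in the nonabelian part.

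First, the step ``$Q\cap G_0=1$ implies $[Q,G_0]\leq Q\cap G_0=1$'' is not valid: you have $[Q,G_0]\leq Q$ because $Q$ is normal, but $[Q,G_0]\leq G_0$ would require $Q$ to normalise $G_0$, which is not known. So your route to $G_0\leq Q$ does not work as written. The paper obtains $G_0\leq Q$ only \emph{after} controlling the Sylow $2$-structure.

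Second, and more seriously, you misjudge the endgame. You write that Gorenstein--Walter is unavailable because ``the Sylow $2$-subgroup of $\hat G$ has order at least $32$'', and therefore propose to enumerate simple overgroups of $A_6$. The paper avoids this entirely. The key missing step is to prove that a Sylow $2$-subgroup $S^\circ$ of $G^\circ$ is already Sylow in $\hat G$: one takes $T\in\mathrm{Syl}_2(\hat G)$ with $T\geq S^\circ$, observes that $N_T(S^\circ)$ must normalise $S_0=S^\circ\cap G_0\cong D_8$ (since involutions outside $S_0$ are not tau involutions by Computations~\ref{G}(c) and~\ref{G circ}, hence not fused into $S_0$), and then uses $\hat N=N$ to force $T=S^\circ$. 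With $|S^\circ|=32$ and the same non-fusion information, Thompson's Transfer gives a normal subgroup of index $4$ in $\hat G$; this forces the $2$-part of $|Q|$ to be exactly $8$, so $Q$ has dihedral Sylow $2$-subgroup $D_8$, and Gorenstein--Walter applies exactly as in Section~\ref{further}. The $6$-transposition bound then leaves only $L_2(7)$, $L_2(9)\cong A_6$, and $A_7$, all of which are eliminated in a line or two. No enumeration of simple overgroups of $A_6$ is needed.
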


We prove this in a series of lemma. As before, our first goal is to show that the soluble radical of $\hat G=\Aut(A)$ is trivial. We let $Q$ be a minimal normal subgroup of $\hat G$.
Assume by contradiction that $Q$ is abelian, namely, it is elementary abelian $p$-group for 
some prime $p$. We regard $Q$ as a $\hat G$-module over $\F_p$.

\begin{lemma} \label{not 2 again}
We have that $p\neq 2$.
\end{lemma}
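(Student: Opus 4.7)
The plan is to mimic the arguments already used in Lemma \ref{6-trans} and the corresponding step in the $49$-dimensional $L_3(2)$ case. The crucial ingredient is the computationally established identity $C_{\hat G}(E) = \hat K = E$, which forces $Q$ to intersect $E$ trivially.

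First I would take $R$ to be a minimal non-trivial $G_0$-invariant subgroup of $Q$, so that $R$ is an irreducible $\F_p G_0$-module. I claim $C_R(E) = 1$. Indeed, $C_R(E) = R \cap C_{\hat G}(E) = R \cap E$, and since $R$ is normal in $\hat G$, the intersection $R \cap G_0$ is a normal subgroup of $G_0 \cong A_6$. As $R$ is abelian while $G_0$ is non-abelian simple, we must have $R \cap G_0 = 1$, and in particular $R \cap E = 1$. Hence $C_R(E) = 1$.

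Now suppose for contradiction that $p = 2$. Then $R$ is a non-trivial $2$-group on which the $2$-group $E$ acts. By a standard fact about $p$-groups acting on $p$-groups (namely, the fixed-point subgroup is non-trivial whenever a $p$-group acts on a non-trivial $p$-group), we conclude $C_R(E) \neq 1$. This contradicts the previous paragraph, and so $p \neq 2$.

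The argument is essentially routine once $\hat K = E$ has been established (which was the main technical work already completed above), so I do not anticipate any real obstacle. The only subtlety is ensuring that $R \cap G_0 = 1$, but this follows immediately from the simplicity of $A_6$ together with the abelianness of $Q$, as in the earlier analogous lemmas.
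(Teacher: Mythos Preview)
Your overall strategy is the same as the paper's, but there is a genuine slip in the key step. In this section, $\hat K$ is defined as the kernel of the action of $\hat N$ on $U$, \emph{not} as the pointwise stabiliser $C_{\hat G}(E)=\Aut(A)_{a_1,a_2,a_3}$. These two groups differ here: since $\hat N=N=N_{G^\circ}(E)\cong 2\times S_4$ (with $E$ the normal Klein four in the $S_4$ factor), we have $C_{\hat G}(E)=C_N(E)=2\times E$, of order $8$, not $4$. So your equality $C_R(E)=R\cap E$ is wrong; one only gets $C_R(E)=R\cap(2\times E)$. Your subsequent argument then establishes $R\cap G_0=1$, but the extra central involution of $N$ lies in $G^\circ\setminus G_0$, so $R\cap E=1$ does not suffice to conclude $C_R(E)=1$. (Also, $R$ is not normal in $\hat G$, only $G_0$-invariant; this is a minor slip since $G_0$-invariance is all you use.)

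The fix is easy and is precisely what the paper does: work with $G^\circ$ rather than $G_0$. Since $Q$ is abelian and normal in $\hat G$, the intersection $Q\cap G^\circ$ is an abelian normal subgroup of $G^\circ\cong\Aut(A_6)$, which has trivial soluble radical; hence $Q\cap G^\circ=1$ and in particular $C_Q(E)\leq Q\cap N\leq Q\cap G^\circ=1$. But if $p=2$ then the $2$-group $E$ acting on the non-trivial $2$-group $Q$ forces $C_Q(E)\neq 1$, a contradiction. The paper runs this directly on $Q$ without introducing $R$, which is slightly cleaner.
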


\begin{proof}
Since $E$ is a $2$-group, it has a non-trivial centraliser in $Q$. However, $C_Q(E)\leq N_{\hat G}(E)=\hat N=N$. Consequently, $Q$ intersects $N\leq G^\circ$ non-trivially. This is a contradiction since $G^\circ\cong\Aut(A_6)$ has no non-trivial soluble normal subgroups.
\end{proof}

\begin{lemma}
We have that $p=3$ or $5$.
\end{lemma}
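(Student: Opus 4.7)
The plan is to follow the template established by Lemma \ref{6-trans} and its analogues used earlier for the $46$-dimensional algebra for $A_5$ and the two $L_3(2)$ examples. Let $R$ be a minimal non-trivial $G_0$-invariant subgroup of $Q$; then $R$ is an irreducible $\F_p G_0$-module, and the whole point will be to use the $6$-transposition property of Miyamoto involutions (Corollary 2.10 of \cite{3-gen}) to bound $p$.

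The first key step is to verify that $C_R(E)=1$. I would start from $\hat K=E$, which was established in the previous section, and upgrade this to $C_{\hat G}(E)=E$ using Computation \ref{G}(b)–(c): any $\phi\in\hat G$ centralising each $\tau_i=\tau_{a_i}$ sends $a_i$ to an axis with the same tau-involution, which by absence of twins for the known axes must equal $a_i$, so $\phi\in\hat K=E$. Then
$$C_R(E)\leq R\cap C_{\hat G}(E)=R\cap E\leq Q\cap G_0\unlhd G_0,$$
and simplicity of $G_0\cong A_6$ forces $C_R(E)=1$.

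Next, the preceding lemma already gives $p\neq 2$, so $p$ is odd. The three involutions in $E$ are all $G_0$-conjugate (any two double transpositions are conjugate in $A_6$), so either all of them act trivially on $R$ or none of them do; the first option is ruled out by $C_R(E)=1$. Pick any $e\in E\setminus\{1\}$: since $p$ is odd, $e$ acts semi-simply on $R$ with eigenvalues $\pm 1$, and the $-1$-eigenspace is non-trivial, so there exists $1\neq r\in R$ inverted by $e$. A direct computation, exactly as in Lemma \ref{6-trans}, yields
$$|ee^r|=|er^{-1}er|=|r^2|=p.$$

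The conclusion is then the $6$-transposition argument: $e=\tau_{a_i}$ is a Miyamoto involution in an algebra of Monster type $(\tfrac{1}{4},\tfrac{1}{32})$ and hence belongs to a $6$-transposition class, so $|ee^r|\leq 6$, forcing $p\leq 6$. Combined with $p$ odd, this gives $p\in\{3,5\}$. There is no real obstacle here: the argument is essentially verbatim from Lemma \ref{6-trans}, and the only genuinely new ingredient is the centraliser computation $C_{\hat G}(E)=E$ in the first paragraph, which is immediate from the no-twins data already in Computation \ref{G}. The substantive work will come in the subsequent two lemmas, where each of $p=3$ and $p=5$ must be ruled out by a tailored character-theoretic / $6$-transposition argument using the $3$- and $5$-modular character tables of $A_6$.
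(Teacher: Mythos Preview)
Your overall strategy is correct and matches the paper's: show that some non-identity $e\in E$ acts non-trivially on the abelian normal subgroup, pick $r$ inverted by $e$, and use the $6$-transposition property to get $p\le 6$.

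There is, however, a genuine slip in your centraliser computation. In this section $\hat K$ is \emph{not} the joint stabiliser $\Aut(A)_{a_1,a_2,a_3}$; it is defined as the kernel of $\hat N$ acting on $U$. Your chain ``$\phi$ centralises $E$ $\Rightarrow$ $\phi$ fixes each $a_i$ $\Rightarrow$ $\phi\in\hat K=E$'' fails at the last step: fixing the $a_i$ does not force $\phi$ to act trivially on $U$. Concretely, $N=N_{G^\circ}(E)\cong 2\times S_4$, and the central involution in $N$ (coming from $G^\circ\setminus G_0$) centralises $E$, fixes each $a_i$ by the no-twins computation, yet does not lie in $E$. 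So in fact $C_{\hat G}(E)\cong 2^3$, not $E$.

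This does not sink the argument. What you actually need is only $C_{\hat G}(E)\leq\hat N=N\leq G^\circ$, which is immediate from the Corollary $\hat N=N$. Then
\[
C_R(E)\leq R\cap G^\circ\leq Q\cap G^\circ\unlhd G^\circ,
\]
and since $Q\cap G^\circ$ is abelian while $G^\circ\cong\Aut(A_6)$ has trivial soluble radical, $C_R(E)=1$ as you wanted. The paper takes exactly this route (and works directly with $Q$ rather than passing to an irreducible $G_0$-submodule $R$, which is unnecessary for the present lemma though useful for the two that follow).
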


\begin{proof}
Select $1\neq e\in E$ and note that $e$ cannot act on $Q$ as identity. Indeed, as all 
non-identity elements of $E$ are conjugate in $N$, we would then have that the entire $E$ acts 
trivially on $Q$, which would mean that $Q\leq C_{\hat G}(E)\leq N$, clearly a contradiction. 
So $e$ must have a non-trivial $-1$-eigenspace in $Q$, and taking $r$ to be a non-trivial 
element of this eigenspace, we conclude that $|ee^r|=|r^2|=p$. On the other hand, $e$, being a 
Miyamoto involution, belongs to a $6$-transposition class. Hence the conclusion of the lemma 
holds, since $p\neq 2$ by Lemma \ref{not 2 again}.
\end{proof}

Let $R$ be an irreducible $G_0$-submodule in $Q$. As in the earlier cases, we can deduce that 
the dimension of $R$ can only be $3k$ for some $k$ and the value of the character on $1\neq 
e\in E$ should be $-k$. The modular character tables of $G_0\cong A_6$ are well-known and are
available, say, in GAP.

\begin{lemma}
We have that $p\neq 5$.
\end{lemma}

\begin{proof}
The irreducible degrees of $A_6$ modulo $5$ are $1$, $5$, $8$, and $10$. None of these is a 
multiple of $3$.
\end{proof}

\begin{lemma}
Also, $p\neq 3$.
\end{lemma}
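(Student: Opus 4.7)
The plan is to follow the template used to rule out $p = 5$, now replacing the ordinary character data of $A_6$ with its $3$-modular Brauer character table, whose irreducible degrees are $1, 3, 3, 4, 9$. Let $R \leq Q$ be an irreducible $G_0$-submodule. Since $E \cong 2^2$ has $C_R(E) = 1$ and its three non-identity elements are permuted transitively by $N_{G_0}(E) \cong A_4$, the proof of Lemma \ref{mult of three} applies here without change and forces $\dim R = 3k$ for some $k \geq 1$, with every involution of $E$ (and hence, since all involutions of $A_6$ are conjugate, every involution of $G_0$) acting on $R$ with $1$-eigenspace of dimension $k$ and $(-1)$-eigenspace of dimension $2k$. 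The trivial module is excluded by $C_R(E) = 1$ and $4$ is not a multiple of $3$, so $\dim R \in \{3, 9\}$. As in the $p = 3$ case for $L_3(2)$ treated in Section \ref{further}, the two $3$-dimensional modules are absolutely irreducible only over $\F_9$, but this extension of scalars causes no difficulty in what follows.

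Next we choose involutions $x, y \in G_0$ such that $t := xy$ has order $4$; for instance, $x = (1\,2)(3\,4)$ and $y = (1\,3)(5\,6)$ give $xy = (1\,4\,3\,2)(5\,6)$. Since $\dim[R,x] = \dim[R,y] = 2k > \tfrac{1}{2} \dim R$, the subspace $[R,x] \cap [R,y]$ is non-zero and we may select $0 \neq r$ inverted by both $x$ and $y$. Then $r^t = (r^x)^y = (r^{-1})^y = r$, so $t$ centralises $r$, whence $tr$ has order $\mathrm{lcm}(4,3) = 12$.

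Setting $y' := y^{r^2} = r^{-2} y r^2$ and using $xr = r^{-1}x$ (so $xr^{-2} = r^2 x$), we compute
$$xy' = xr^{-2} y r^2 = r^2 (xy) r^2 = r^2 t r^2 = tr^4 = tr,$$
where the last equality uses $r^3 = 1$ and that $t$ commutes with $r$. However, $x$ and $y'$ are both conjugate (in $\hat G$) to Miyamoto involutions and hence form a pair of $6$-transpositions, so $|xy'| \leq 6$; this contradicts $|tr| = 12$, proving $p \neq 3$. The main subtlety beyond routine bookkeeping is ensuring that the common inverted element $r$ can be found inside $R$ over $\F_3$ rather than only over $\F_9$; this is handled identically to the analogous step in Section \ref{further}, since the Galois-conjugate summands over $\F_9$ together supply a non-trivial intersection $[R,x] \cap [R,y]$ already over $\F_3$.
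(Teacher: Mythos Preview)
Your proof is correct, but it follows a different route from the paper. The paper first uses the $3$-modular character table of $A_6$ to pin down $k=1$ (the $9$-dimensional Steinberg module has Brauer character value $1$, not $-3$, on involutions, so it is excluded), and then takes an element $t$ of order~$5$ inverted by some $e\in E$; arguing via the decomposition $R=C_R(t)\oplus[R,t]$ with $\dim C_R(t)=1$, it shows that $e$ must invert a non-trivial $r\in C_R(t)$, whence $e$ inverts $rt$ of order $15$, contradicting the $6$-transposition property. This mirrors the $A_5$ argument of Section~\ref{larger}. You instead take involutions $x,y$ with $|xy|=4$ and use the dimension count $\dim[R,x]+\dim[R,y]>\dim R$ to find a common inverted vector, exactly as in the $L_3(2)$ case of Section~\ref{further}. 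Your approach has the virtue of not needing $k=1$, so you never have to inspect the character value on the $9$-dimensional module; the paper's approach avoids the field-of-definition discussion altogether.

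Two small remarks. First, your claim that $N_{G_0}(E)\cong A_4$ is not quite right here: for $G_0=A_6$ (as opposed to $A_5$) one has $N_{G_0}(E)\cong S_4$. This is harmless, since all you actually use is that the three involutions of $E$ are conjugate in $G_0$, which holds because $A_6$ has a single class of involutions. Second, the closing paragraph about $\F_9$ versus $\F_3$ is unnecessary: the inequality $\dim[R,x]+\dim[R,y]>\dim R$ is a statement about $\F_3$-dimensions and holds regardless of the splitting field of the absolutely irreducible constituents, so a non-zero common $(-1)$-eigenvector already exists over $\F_3$ without further argument.
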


\begin{proof}
Recall that $Q$ must have dimension $3k$ and the character value on $e$ must be $-k$. Looking 
at the character table of $A_6$ modulo $3$, we see that we must have $k=1$. Let $t\in G_0$ be 
an element of order $5$ inverted by $e$. Then $R$ decomposes with respect to $\la t\ra$ as 
$C_R(t)\oplus [R,t]$, where the first summand is $1$-dimensional. As $e$ cannot act on $[R,t]$ 
as a scalar, we have that $e$ has eigenvalues $1$ and $-1$ in $[R,t]$, and hence it has the 
eigenvalue $-1$ on $C_R(t)$. Select $r\in C_R(t)$, $r\neq 1$. Then $|rt|=15$ and $e$ inverts 
$rt$, which implies that $|ee^{rt}|=|(rt^{-1})^ert|=|(rt)^2|=15$, which is a contradiction, 
since $e$ is a $6$-transposition.
\end{proof}

We now have a contradiction, which shows that the minimal normal subgroup $Q$ of $\hat G$ 
cannot be abelian. So it is a direct product of isomorphic non-abelian simple groups. Let us 
now focus on a Sylow $2$-subgroup of $\hat G$.

Let $S$ be a Sylow $2$-subgroup of $N$ and $S^\circ$ be a Sylow $2$-subgroup of $G^\circ$ 
containing $S$. Let also $S_0=S\cap G_0$. Then $S_0$ is normal in $S^\circ$ and 
$[S^\circ:S_0]=4$. Let $T$ be a Sylow $2$-subgroup of $\hat G$ containing $S^\circ$.

\begin{lemma}
We have that $T=S^\circ\leq G^\circ$.
\end{lemma}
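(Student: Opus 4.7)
The plan is to derive a contradiction from the assumption $T > S^\circ$. Since $T$ is a finite $2$-group, if $S^\circ$ were a proper subgroup then $N_T(S^\circ) > S^\circ$, so we may pick $t \in N_T(S^\circ) \setminus S^\circ$. First I would argue that $t$ normalises the smaller subgroup $S_0$. Indeed, conjugation by any automorphism of $A$ permutes axes, hence permutes Miyamoto involutions. By Computations~\ref{G}(c) and~\ref{G circ}, no involution in $G^\circ \setminus G_0$ is a Miyamoto involution, whereas every involution in $G_0$ (being even) is; since $S^\circ \cap G_0 = S_0$ (because $S_0$ is already a Sylow $2$-subgroup of $G_0$, forced by $|S_0| = 8$ being the full $2$-part of $|A_6|$), the Miyamoto involutions in $S^\circ$ are exactly the involutions of $S_0$. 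As $S_0 \cong D_8$ is generated by its involutions, $t$ normalises $S_0$.

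Next I would exploit the structure of $D_8$: it contains exactly two Klein four-subgroups, $E$ and some $E'$, so $t$ either fixes $E$ or swaps the pair $\{E, E'\}$. The plan is to reduce to the first case by multiplying $t$ by a suitable element of $S^\circ$. Note that $S^\circ$ normalises $S_0$ (as $S_0 \unlhd S^\circ$ from the preamble to the lemma), hence also permutes $\{E, E'\}$. Any element of $S^\circ$ fixing $E$ lies in $N \cap S^\circ$, a $2$-subgroup of $N$ containing the Sylow $2$-subgroup $S$; maximality of $S$ among the $2$-subgroups of $N$ then forces $N \cap S^\circ = S$. On the other hand, the numerical facts $[S^\circ : S_0] = 4$ and $[S : S_0] = 2$ yield $S^\circ \supsetneq S$, so there is some $s \in S^\circ \setminus S$, and such an $s$ must swap $E$ with $E'$. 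If $t$ swaps $E$ and $E'$, replacing $t$ by $ts^{-1} \in N_T(S^\circ) \setminus S^\circ$ brings us back to the case in which $t$ fixes $E$.

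Finally, once $t$ normalises $E$, the identification $\hat N = N$ established in Section~\ref{A6_121} places $t$ inside $N$. Then $t$ lies in the $2$-subgroup $N \cap T$ of $N$, which contains the Sylow $2$-subgroup $S$, so $N \cap T = S$ and hence $t \in S \leq S^\circ$, contradicting the choice of $t$. The final inclusion $S^\circ \leq G^\circ$ holds by the very definition of $S^\circ$. The main obstacle is the bookkeeping guaranteeing an element of $S^\circ$ that genuinely swaps the two Klein four-subgroups of $S_0$; everything else reduces to standard Sylow normaliser arguments combined with the computational input that involutions outside $G_0$ do not correspond to axes.
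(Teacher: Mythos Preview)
Your proof is correct and follows essentially the same approach as the paper's: both pass to $N_T(S^\circ)$, use the computational facts to show this normalises $S_0$, and then use that $S_0\cong D_8$ has exactly two Klein four-subgroups together with $\hat N=N$ to force everything back into $S^\circ$. The only cosmetic difference is that the paper argues by index counting (showing $[N_T(S^\circ):N_{N_T(S^\circ)}(E)]\leq 2$ and $N_{N_T(S^\circ)}(E)=S$, hence $|N_T(S^\circ)|\leq 2|S|=|S^\circ|$), whereas you pick a specific element $t$ and multiply it into $N_{\hat G}(E)=N$ by an explicit $s\in S^\circ\setminus S$; these are equivalent packagings of the same idea.
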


\begin{proof} 
According to Computations \ref{G} and \ref{G circ}, the involutions in $S_0$ are tau 
involutions of axes, while none of the involutions in $S^\circ\setminus S_0$ is a tau 
involution. In particular, the involutions from $S^\circ\setminus S_0$ are not fused 
into $S_0$. 

Let $T_0=N_T(S^\circ)$. Since the involutions from $S^\circ\setminus S_0$ 
are not fused into $S_0$, we conclude that $T_0$ normalises $S_0$ (as $S_0\cong D_8$ is 
generated by its involutions). The group $S_0$ contains exactly two elementary abelian 
subgroups of order $4$, $E$ and a second one, $E'$. Hence $N_{T_0}(E)$ has index at most $2$ 
in $T_0$. On the other hand, $N_{T_0}(E)=N\cap T_0=S$, since $S\leq S^\circ\leq T_0$ and $S$ 
is a Sylow $2$-subgroup of $N$. We conclude that $S$ has index at most $2$ in $T_0$. However, 
$S$ has index $2$ in $S^\circ$ and so $S^\circ\leq T_0$. This gives us that $T_0=S^\circ$. 
Finally, since $T_0=N_T(S^\circ)=S^\circ$, we obtain that $T=S^\circ$.
\end{proof}

\begin{lemma}
We have that $G_0\leq Q$.
\end{lemma}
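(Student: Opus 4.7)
The plan is to adapt the final group-theoretic step used previously (e.g.\ for the $57$-dimensional algebra for $L_3(2)$). Since $G_0 = A_6$ is simple and $Q \cap G_0 \lhd G_0$, it suffices to exhibit a non-identity element of $Q$ lying in $G_0$. My approach is to locate $Q$ inside a subgroup of $\hat G$ whose Sylow $2$-subgroup is $S_0$, and then use that the Sylow $2$-subgroup of a non-abelian simple group is non-trivial.

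The crucial fusion ingredient is the following. By Computation \ref{G}(b), the involutions in $S_0$ are precisely the Miyamoto involutions of axes in $S^\circ$, while Computations \ref{G}(c) and \ref{G circ} show that no involution in $S^\circ \setminus S_0$ is a Miyamoto involution of an axis. Since $\hat G = \Aut(A)$ permutes the set of tau involutions, no involution in $S^\circ \setminus S_0$ is $\hat G$-conjugate to an involution in $S_0$; in other words, $S_0$ is strongly closed in $S^\circ$ at the level of involutions.

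Using this, I would apply Thompson's Transfer Theorem twice along the chain $S_0 < S < S^\circ$ of subgroups of index $2$. An involution $t \in S^\circ \setminus S$ (arising from $G^\circ \setminus G$) is non-tau and hence admits no $\hat G$-conjugate in $S_0$; combined with a separate case analysis of the three distinct $\hat G$-classes of non-tau involutions in $S^\circ \setminus S_0$, this yields an index-$2$ normal subgroup $\hat G_1 \lhd \hat G$ containing $S$. Working now inside $\hat G_1$ with Sylow $2$-subgroup $S$, a second Thompson transfer applied to an involution in $S \setminus S_0$ (again non-tau, hence not fused into the tau involutions of $S_0$) produces $\hat G_0 \lhd \hat G_1$ of index $2$ whose Sylow $2$-subgroup is $S_0$. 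Since $G_0$ is simple and $S_0 \leq G_0$, the containment $G_0 \leq \hat G_0$ follows.

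Now $Q$ is perfect as a direct product of non-abelian simple groups, so it has no subgroup of index $2$ and must lie in any index-$2$ subgroup of $\hat G$; iterating, $Q \leq \hat G_0$. The Sylow $2$-subgroup of $Q$ is then $Q \cap S_0$, of order at most $|S_0| = 8$. Since each simple direct factor of $Q$ contributes a Sylow $2$-subgroup of order at least $4$, $Q$ must be simple and $Q \cap S_0 \neq 1$. Hence $Q \cap G_0 \supseteq Q \cap S_0 \neq 1$, and simplicity of $G_0 = A_6$ forces $Q \cap G_0 = G_0$, i.e.\ $G_0 \leq Q$. The main obstacle I anticipate is discharging the hypotheses of the two Thompson transfers, and specifically controlling $\hat G$-fusion among the non-tau involutions of $S^\circ \setminus S_0$, which is not settled by tau-invariance alone and requires the finer analysis of the involution classes of $\Aut(A_6)$.
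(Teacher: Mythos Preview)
Your approach via iterated Thompson Transfer is different from the paper's, and the gap you flag is real and not easily repaired. You want an involution $t\in S^\circ\setminus S$ with no $\hat G$-conjugate in $S$; the tau/non-tau dichotomy only prevents fusion into $S_0$, not into $S\setminus S_0$. Appealing to ``the involution classes of $\Aut(A_6)$'' does not help, because $\Aut(A_6)=G^\circ$ is only the \emph{known} group: $\hat G$-fusion among non-tau involutions is precisely what you do not yet control, since $\hat G$ is the unknown object you are trying to identify. So the first transfer step is genuinely unjustified at this stage.

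The paper avoids this entirely with a short direct argument. Assume $G_0\not\leq Q$; then $Q\cap G_0=1$ by simplicity of $G_0$, so $Q\cap S_0=1$. Since $S^\circ$ is Sylow in $\hat G$ and $Q\trianglelefteq\hat G$, the group $S^\circ\cap Q$ is Sylow in $Q$ and meets $S_0$ trivially, hence has order at most $[S^\circ:S_0]=4$; non-solubility of $Q$ forces equality. Now $S_0$ and $S^\circ\cap Q$ are both normal in $S^\circ$, intersect trivially, and their product has order $8\cdot 4=32=|S^\circ|$, so $S^\circ=S_0\times(S^\circ\cap Q)$. But then $E\trianglelefteq S_0$ is centralised by the complementary factor, giving $E\trianglelefteq S^\circ$, contradicting $N_{\hat G}(E)=\hat N=N$ (whose Sylow $2$-subgroup $S$ has index $2$ in $S^\circ$). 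This uses only the already-established fact $\hat N=N$ and elementary Sylow theory, and never needs to resolve $\hat G$-fusion among non-tau involutions.
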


\begin{proof}
Suppose by contradiction that $G_0\not\leq Q$. Since $G_0\cong A_6$ is simple, we must then have that $Q\cap G_0=1$. On the other hand, the Sylow $2$-subgroup $S_0$ of $G_0$ has index $4$ in $S^\circ$. This shows that the Sylow $2$-subgroup of $Q$ is of order at most $4$. Since 
it is non-soluble, we have that $Q$ has a Sylow $2$-subgroup of order exactly $4$. Now we have 
a contradiction as follows: $S^\circ\cap Q$ is of order $4$ and normal in $S^\circ$ and 
also $S_0$ is normal in $S^\circ$. Since $S_0\cap Q=1$, we have that $S^\circ\cap Q$ 
centralises $S_0$. Also, $|(S^\circ\cap Q)S_0|=4|S_0|=|S^\circ|$, \ie $(S^\circ\cap 
Q)S_0=S^\circ$. This however, implies that $E$ is normal in $S^\circ$, which we know is not 
the case.
\end{proof}

We now clarify the structure of $Q$. Note that $C_{\hat G}(Q)=1$, because otherwise $\hat G$ 
would have a second minimal normal subgroup, which is impossible by the above. It follows that 
$\hat G$ is isomorphic to a subgroup of $\Aut(Q)$ containing $\Inn(Q)\cong Q$.

\begin{lemma}
We have that $Q$ is a simple group and the $2$-part of $|Q|$ is $8$.
\end{lemma}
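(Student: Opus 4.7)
The plan has two main parts. First I would show that the number of simple factors of $Q$ is one, i.e., that $Q$ is a non-abelian simple group; then I would use Thompson's Transfer Theorem to pin down the $2$-part of $|Q|$ to be exactly $8$.

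Since every non-abelian finite simple group has a Sylow $2$-subgroup of order at least $4$, writing $Q = L^n$ with $L$ non-abelian simple we obtain $4^n \leq |L|_2^n = |Q|_2 \leq |S^\circ| = 32$, so $n \leq 2$. To rule out $n = 2$ (which would force $|L|_2 = 4$), I would examine how $G_0 \cong A_6$ sits inside $Q = L \times L$. Since $G_0$ is simple, each of the two coordinate projections $\pi_i \colon G_0 \to L$ is either trivial or injective. At least one must be injective (otherwise $G_0$ would be trivial), which gives an embedding $A_6 \hookrightarrow L$ and forces $|L|_2 \geq 8$, contradicting $|L|_2 = 4$. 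Hence $n = 1$ and $Q$ is simple.

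For the $2$-part: from $G_0 \leq Q \leq \hat G$ we have $|Q|_2 \in \{8, 16, 32\}$. To rule out $16$ and $32$, I would exploit the filtration $S_0 < S < S^\circ$, in which each inclusion has index $2$. Computations \ref{G} and \ref{G circ} ensure that no involution of $S^\circ \setminus S_0$ is a tau involution, and hence no such involution is $\hat G$-conjugate to any tau involution in $S_0$. The plan is to apply Thompson's Transfer Theorem twice along this filtration: the first application to $\hat G$ with Sylow $2$-subgroup $S^\circ$ and the index-$2$ subgroup $S$ produces a normal subgroup $\hat G_1 \unlhd \hat G$ of index $2$ containing $S$; the second application to $\hat G_1$ with Sylow $2$-subgroup $S$ and the index-$2$ subgroup $S_0$ produces $\hat G_0 \unlhd \hat G_1$ of index $2$ with $S_0$ as a Sylow $2$-subgroup. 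Since $Q$ is simple non-abelian, $Q \cap \hat G_1$ is normal in $Q$ of index at most $2$, so $Q \leq \hat G_1$; repeating the same argument with $\hat G_0 \unlhd \hat G_1$ gives $Q \leq \hat G_0$. Therefore $|Q|_2 \leq |S_0| = 8$, and together with $|Q|_2 \geq |G_0|_2 = 8$ we conclude $|Q|_2 = 8$.

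The main obstacle will be the fusion hypothesis required by Thompson's Transfer at each stage: one must exhibit, at each step, an involution of the larger Sylow $2$-subgroup that is not $\hat G$- (respectively $\hat G_1$-) conjugate to any element of the chosen index-$2$ subgroup. The non-fusion of tau involutions with non-tau involutions is automatic from the computations and immediately handles the case where the outer involution is in $S^\circ \setminus S_0$ and the candidate conjugate lies in $S_0$. However, additional verification is needed to confirm that some outer involution in $S^\circ \setminus S$ fails to be $\hat G$-conjugate to the inner non-tau involutions (the $2$-cycles or triple $2$-cycles) lying in $S \setminus S_0$, and an analogous check is needed for the second application. If necessary, one could bypass this by first showing that the outer conjugacy class remains distinct from the inner non-tau class in $\hat G$, for instance via a separate computation of centraliser orders of representative involutions, or by invoking Alperin's fusion theorem to reduce the fusion pattern to control in proper subgroups.
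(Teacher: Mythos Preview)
Your projection argument for the simplicity of $Q$ is correct and is a genuinely different route from the paper's. The paper works in the opposite order: it first asserts, from the non-fusion of the involutions of $S^\circ\setminus S_0$ into $S_0$, that Thompson's Transfer yields a normal subgroup of $\hat G$ of index $4$; since $Q$ is perfect it lies in that subgroup, whence $|Q|_2\le |S^\circ|/4=8$, and together with $S_0\le G_0\le Q$ this gives $|Q|_2=8$. Simplicity is then immediate, because a product of two or more non-abelian simple factors would force $2$-part at least $16$. Your approach replaces this counting step by the embedding $A_6\hookrightarrow L$ coming from a nontrivial coordinate projection, which is cleaner and uses the previously established inclusion $G_0\le Q$ more directly.

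For the $2$-part, your plan is essentially the paper's argument unwound into two applications of Thompson's Transfer along the specific chain $S_0<S<S^\circ$, and the obstacle you flag is genuine for that chain: nothing proved so far prevents the class of $36$ outer involutions in $G^\circ\setminus G$ from fusing in $\hat G$ with the $30$ non-tau involutions in $G\setminus G_0$, and such fusion would block your first step. The paper does not route through the intermediate subgroup $S$; it simply claims the index-$4$ conclusion in one line and does not spell out the intermediate fusion hypothesis either, so your acknowledged gap is not a deficiency relative to the paper's proof. Your proposed remedies (separate centraliser computations, Alperin fusion) would work but are heavier than necessary. A lighter observation, available once you know $Q$ is simple, is that one of the three index-$2$ subgroups of $S^\circ$ over $S_0$, namely the Sylow $2$-subgroup of $M_{10}$, is semidihedral and hence has \emph{all} its involutions inside $S_0$; using that maximal subgroup as the target for Thompson's Transfer (in $\hat G$, or inside $Q$ when $S^\circ\le Q$) makes the fusion hypothesis automatic from the tau/non-tau dichotomy and disposes of the cases $|Q|_2=32$ and $|Q\cap S^\circ|\in\{M_1,M_2\}$ without the extra checks you anticipate.
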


\begin{proof}
The fact that the involutions from $S^\circ\setminus S_0$ do not fuse into $S_0$ implies 
by Thompson's Transfer Theorem that $\hat G$ has a normal subgroup of index $4$. Clearly, 
$Q$, being a direct product of non-abelian simple groups must be contained in this normal 
subgroup. On the other hand, $S_0\leq Q$ and the $2$-part of $|S_0|$ is 
$8=\frac{|S^\circ|}{4}$. Thus, the $2$-part of $|Q|$ is also $8$. Now it is clear, $Q$ can 
only be itself a simple group, since the order of a non-abelian simple group is always 
divisible by $4$. (And so there is no room for a second factor.) 
\end{proof}

Finally, we can establish Theorem \ref{main 13}. From the above, we have that $Q$ is a finite 
non-abelian simple group, and its Sylow $2$-subgroup is of order $8$. It also contains 
$G_0\cong A_6$, $Q$ has a Sylow $2$-subgroup isomorphic to $D_8$. According to \cite{GW}, 
$Q\cong L_2(q)$, $q\equiv 7,9\mod 16$ or $L\cong A_7$. We now use the fact that the elements 
of order $2$ from $E$ are $6$-transpositions. 

If $Q\cong A_7$ then $\hat G$ must be isomorphic to a subgroup of $\Aut(A_7)\cong S_7$. 
However, this contradicts the fact that the index of $Q$ in $\hat G$ is at least $4$.

So $Q\cong L_2(q)$. In this case, $Q$ has only one class of involutions and it contains a 
dihedral group of order $q+1$. This means that $\frac{q+1}{2}\leq 6$, \ie $q\leq 11$. 
Clearly, this only allows $q=7$ or $q=9$. However, $L_2(7)$ is too small to contain 
$G_0\cong A_6$ and $L_2(9)\cong A_6$, and so in this case $Q=G_0$. Now $\hat G$ must be 
isomorphic to a subgroup of $\Aut(A_6)$. However, $\hat G$ contains $G^\circ\cong\Aut(A_6)$. 
Thus, $\hat G=G^\circ$, as claimed. This completes the identification of $\Aut(A)$.

\subsection{The $\mathbf{151}$-dimensional algebra for ${\mathbf{S_6}}$ of shape $\mathbf{4A4A3A2A}$}

Let $A$ be the $151$-dimensional algebra for $G=S_6$, of shape $4\A4\A3\A2\A$, which was 
constructed from an initial axet $X$ consisting of $15+45$ axes. In this axet, the orbits $15$ 
and $45$ naturally correspond to the $2$-cycles and double $2$-cycles in $G$, respectively. 
We note that in this case $G_0=\Miy(A)=G$.

\begin{computation}\label{axet 151}
~
\begin{enumerate}
\item $A$ has a unique Jordan axis $d$;
\item each axis in the orbit $15$ in $X$ has a unique twin;
\item the $15$ triple $2$-cycles in $G$ are tau involutions of axes, two twinned axes per involution; 
\item axes in the orbit $45$ in $X$ have no twins; and
\item $A$ admits an automorphism $g$ of order $4$ preserving the axet $X^\circ$ of known 
$1+15+15+15+15+45$ axes and inducing an outer automorphism of $G\cong S_6$; furthermore, 
$g^2\in G$.
\end{enumerate}
\end{computation}

It immediately follows from Computation \ref{axet 151} that we can substitute the original 
axet $X$ with the larger axet $X^\circ$, and the original group $G=S_6$ with the larger group 
$G^\circ\cong 2\times\Aut(S_6)$. We note that the sigma involution $\sg$ corresponding to the 
Jordan axis $d$ is in the centre of $G^\circ$. We aim to prove that $G^\circ$ coincides with 
the full automorphism group $\Aut(A)$ and $X^\circ$ contains all axes from $A$.

We will now apply the decomposition method with respect to $Y=\{d\}$. 

\begin{computation}
~
\begin{enumerate}
\item $U:=A_{(0)}(Y)=A_0(d)$ is of dimension $121$;
\item the remaining summands in the decomposition are $A_{(1)}(Y)=A_1(d)=\la d\ra$ and $A_{\left(\frac{1}{4}\right)}(Y)=A_{\frac{1}{4}}(d)$ of dimension $29$.
\end{enumerate}
\end{computation}

We note that this $121$-dimensional subalgebra $U$ is the algebra of type $4\A3\A3\A$ we studied 
in Subsection \ref{A6_121}, \ie this is how that algebra $U$ was constructed. Hence we have 
from our results in Subsection \ref{A6_121} that $\Aut(U)\cong\Aut(A_6)$, and in particular, 
$G^\circ$ induces on $U$ its full automorphism group. We note that $G^\circ$ fixes $d$, and 
hence acts on $U$ and the remaining summands of the decomposition with respect to $d$. 
Furthermore, the orbit $45$ (but none of the remaining orbits from $X^\circ$) is contained in 
$U$ and this indeed shows that $G^\circ$ induces the group $\Aut(A_6)$ on $U$. It also gives 
us the following.

\begin{lemma}
The sigma involution $\sg$ corresponding to $d$ switches all pairs of twin axes in $X^\circ$.
\end{lemma}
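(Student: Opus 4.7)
The key tool is the proposition at the end of Section \ref{twins and Jordan axes} stating that, for any Jordan axis $a$ with $\sg_a \neq 1$ and any axis $b$ not contained in the fixed subalgebra $A_{\sg_a}$, the axes $b$ and $b^{\sg_a}$ are twins. The plan is to apply this with $a = d$ to every twinned axis $b \in X^\circ$, and then use uniqueness of twins inside $X^\circ$ to identify $\sg(b)$ with the specific twin of $b$.

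By the definition of $\sg = \sg_d$, the fixed subalgebra is $A^{\sg} = A_1(d) \oplus A_0(d) = \la d \ra \oplus U$, a subspace of dimension $122$; and $\sg \neq 1$ since $A_{\frac{1}{4}}(d)$ is $29$-dimensional. Moreover $\sg \in G^\circ$ and $G^\circ$ preserves $X^\circ$, so $\sg$ permutes $X^\circ$. The axet $X^\circ$ has six $G$-orbits, of sizes $1$, $15$, $15$, $15$, $15$ and $45$: the Jordan axis $d$ and the orbit of $45$ double-$2$-cycle axes lie in $A^{\sg}$ and are therefore fixed pointwise, leaving the four $G$-orbits of size $15$ (the twinned axes) to analyse.

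For each of these four orbits pick a representative $b_i$ and compute the projection of $b_i$ onto $A_{\frac{1}{4}}(d)$ with respect to the decomposition $A = \la d \ra \oplus U \oplus A_{\frac{1}{4}}(d)$. The first key step is to verify, by direct linear algebra, that this projection is non-zero for every $i$, equivalently that $b_i \notin A^{\sg}$. Since $d$ is the unique Jordan axis of $A$, the element $\sg$ is central in $G^\circ$ and hence commutes with the $G$-action, so the same conclusion then holds for every $b$ in the $G$-orbit of $b_i$. Applying the cited proposition, the axes $b$ and $\sg(b)$ are twins for each such $b$; as $\sg(b) \in X^\circ$ and the twin of $b$ within $X^\circ$ is unique by Computation \ref{axet 151}(b),(c), $\sg(b)$ must coincide with that twin. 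Since $\sg$ is an involution, it therefore swaps each pair of twins, as required.

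The main obstacle is the verification of the non-vanishing of the $A_{\frac{1}{4}}(d)$-projection for one representative of each of the four $G$-orbits of size $15$. Given explicit coordinates for the $b_i$ and for the eigenspace decomposition of $\ad_d$, this reduces to four routine linear-algebra checks, so in practice the main difficulty is bookkeeping rather than conceptual.
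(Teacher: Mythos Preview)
Your argument is correct and follows the same overall logic as the paper: show that every twinned axis lies outside $A^{\sg}=\la d\ra\oplus U$, hence is moved by $\sg$, and then use uniqueness of twins in $X^\circ$ to conclude that $\sg$ swaps each twin pair.

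The one genuine difference is in how the non-fixedness is verified. You propose a direct computation of the $A_{\frac{1}{4}}(d)$-projection for one representative of each of the four $15$-orbits. The paper instead observes that this computation is unnecessary: Subsection~\ref{A6_121} already established that the $121$-dimensional subalgebra $U=A_0(d)$ contains exactly $45$ axes, namely the orbit~$45$. Since for any axis $b\neq d$ one has $b\in A^{\sg}$ if and only if $\lla d,b\rra\cong 2\B$, if and only if $db=0$, if and only if $b\in U$, it follows immediately that none of the $60$ twinned axes lies in $A^{\sg}$. So the paper's route trades your four explicit linear-algebra checks for a single citation of the axis classification in $U$ already obtained; your route is more self-contained at this point but duplicates work that the paper has in hand.
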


As mentioned above and proved in Subsection \ref{A6_121}, only the orbit $45$ from $X^\circ$ 
is in $U$. So $\sg$ moves all twinned axes within the corresponding twin pairs.

To complete finding the full automorphism group of $A$, set $\hat{G}=\Aut(A)$. Since $d$ is 
the unique axis of Jordan type in $A$, $\hat{G}$ fixes $d$. Consequently, $\hat{G}$ acts 
trivially on $\la d\ra$, and also acts on $U$ and $W:=A_{\frac{1}{4}}(d)$. Since $G^\circ$ and 
$\hat{G}$ induce the same action on $U$, to prove that $\hat{G}=G^\circ$, it suffices to show 
that they have the same kernel in their action on $U$. Note that for $G^\circ$, the kernel $K$
coincides with $\la\sg\ra$. We let $\hat K$ denote the kernel of $\hat G$ acting on $U$.

\begin{computation}
~
\begin{enumerate}
\item The identity automorphism of $U$ admits a $1$-dimensional space of extensions on $W$, 
that is, every automorphism $\phi\in\hat K$ acts on $W$ as a scalar; and
\item a random element $w\in W$ satisfies $w^2\neq 0$.
\end{enumerate}
\end{computation}

Clearly, $w^2\in A_1(d)\oplus A_0(d)=\la d\ra\oplus U$, since $d$ is of Jordan type. 
 
\begin{lemma}
We have that $\hat K=\la\sg\ra$.
\end{lemma}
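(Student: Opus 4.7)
The plan is to mimic the argument used for the $61$-dimensional $S_5$ algebra in Section \ref{larger}, where the same set-up (a unique Jordan axis $d$, its sigma involution $\sg$, the decomposition $A=\la d\ra\oplus U\oplus W$ with $W=A_{\frac{1}{4}}(d)$) appeared. The idea is that the two computational facts just stated already pin $\hat K$ down to a subgroup of order at most $2$, and $\sg\in\hat K$ supplies the matching lower bound.

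Concretely, let $\phi\in\hat K$. By definition, $\phi$ fixes $d$ and acts as the identity on $U$, and by the first computation above it therefore acts on $W$ as some scalar $\lm\in\F$; so $w^\phi=\lm w$ for every $w\in W$. Now invoke the Jordan fusion law $\cJ\!\left(\frac{1}{4}\right)$ for $d$: we have $\frac{1}{4}\star\frac{1}{4}\subseteq\{1,0\}$, which means that $W^2\subseteq A_1(d)\oplus A_0(d)=\la d\ra\oplus U$. Pick $w\in W$ with $w^2\neq 0$ as furnished by the second computation. Since $w^2\in\la d\ra\oplus U$ and $\phi$ fixes both $d$ and every element of $U$, we get $(w^2)^\phi=w^2$. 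On the other hand $(w^2)^\phi=(w^\phi)^2=\lm^2 w^2$, so $\lm^2=1$, i.e.\ $\lm=\pm 1$.

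The two values $\lm=\pm 1$ identify $\phi$ on all of $A=\la d\ra\oplus U\oplus W$: $\lm=1$ forces $\phi=\Id$, while $\lm=-1$ forces $\phi$ to be the linear map that is identity on $\la d\ra\oplus U$ and minus identity on $W=A_{\frac{1}{4}}(d)$, which is precisely $\sg$. Hence $\hat K\leq\la\sg\ra$. The reverse inclusion $\sg\in\hat K$ holds because $\sg$ is an automorphism of $A$ fixing $d$ and acting trivially on $A_{\cF_+}(d)=\la d\ra\oplus A_0(d)=\la d\ra\oplus U$. This yields $\hat K=\la\sg\ra$.

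There is essentially no obstacle beyond correctly invoking the Jordan fusion law to land $w^2$ inside the fixed part $\la d\ra\oplus U$; once that is in hand, the scalar argument is forced. The only computational input that does any real work is the non-vanishing of $w^2$ for a generic $w\in W$, which has already been verified, and the one-dimensionality of the extension space, which rules out any non-scalar action on $W$.
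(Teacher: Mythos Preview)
Your proof is correct and follows essentially the same argument as the paper's own proof: both use the one-dimensionality of the extension space to get a scalar action $\lm$ on $W$, the Jordan fusion law to place $w^2$ in $\la d\ra\oplus U$ (which is fixed by $\phi$), and the non-vanishing of $w^2$ to force $\lm=\pm 1$, identifying $\phi$ as either the identity or $\sg$. Your write-up is in fact slightly more explicit about why $w^2$ lands in the fixed part and about the reverse inclusion $\sg\in\hat K$.
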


\begin{proof}
Every $\phi\in\hat K$ fixes $d$ and all of $U$. By (a) above, $\phi$ acts as some scalar $\lm$ 
on the complement $W$ to $\la d\ra\oplus U$ in $A$. Hence for the element $w$ from (b) above, 
we get $w^2=(w^2)^\phi=(w^\phi)^2=(\lm w)^2=\lm^2 w^2$. Since $w^2\neq 0$, it follows that 
$\lm^2=1$, \ie $\lm=\pm 1$.

If $\lm=1$, it is easy to see that $\phi$ is the identity automorphism of $A$, while if 
$\lm=-1$, then $\phi=\sg$. Thus, we have that $\hat{K}=\la\sg\ra=K$. 			
\end{proof}

Wee have established the following main result.

\begin{theorem}
The full automorphism group of the $151$-dimensional algebra $A$ of shape $4\A4\A3\A2\A$ is 
$G\cong 2\times\Aut(A_6)$. Additionally, $A$ contains exactly $1+15+15+15+15+45$ axes.
\end{theorem}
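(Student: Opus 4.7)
The plan is to exploit the decomposition $A = \la d \ra \oplus U \oplus W$, where $d$ is the Jordan axis, $U = A_0(d)$ is the $121$-dimensional subalgebra whose automorphism group was determined in Subsection \ref{A6_121}, and $W = A_{\frac{1}{4}}(d)$ is the $29$-dimensional $\frac{1}{4}$-eigenspace. Since the preliminary computations identify $d$ as the unique Jordan axis of $A$, any $\phi \in \hat G := \Aut(A)$ must fix $d$, and hence also stabilise both $U$ and $W$. The problem therefore reduces to determining the image of $\hat G$ in $\Aut(U)$ and the kernel of this action.

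For the image, I would invoke Theorem \ref{main 13}, which gives $\Aut(U) \cong \Aut(A_6)$. Because the sigma involution $\sg$ fixes $d$ and negates $W$ while lying centrally in $G^\circ$, the quotient $G^\circ / \la \sg \ra$ already acts faithfully on $U$ as the full group $\Aut(A_6)$. Thus the images of $\hat G$ and $G^\circ$ in $\Aut(U)$ coincide, and the equality $\hat G = G^\circ$ will follow once the kernel $\hat K$ of the $\hat G$-action on $U$ is shown to equal $K = \la \sg \ra$.

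To compute $\hat K$, I would apply Proposition \ref{extension} with $Y = \{d\}$: the identity automorphism of $U$ admits only a one-dimensional space of extensions to $W$, so every $\phi \in \hat K$ acts on $W$ by some scalar $\lambda$. Picking any $w \in W$ with $w^2 \neq 0$ (a generic condition, to be verified by a small computation), the Jordan fusion law $\frac{1}{4} \star \frac{1}{4} \subseteq \{1, 0\}$ places $w^2$ in $\la d \ra \oplus U$, a subspace pointwise fixed by $\phi$. Hence $w^2 = (w^\phi)^2 = \lambda^2 w^2$, forcing $\lambda = \pm 1$, so that $\phi \in \{1, \sg\} = K$. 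This gives $\hat G = G^\circ \cong 2 \times \Aut(A_6)$. For the axet count, any additional axis $b$ would contribute $\tau_b \in \Miy(A) \leq \hat G = G^\circ$; but the earlier computations verified that the only involutions of $G^\circ$ arising as tau involutions of axes are those already realised by $X^\circ$, with all their twins accounted for, and that $d$ is the unique Jordan axis. Hence $X^\circ$ is complete. The main obstacle has already been resolved upstream in the identification of $\Aut(U)$; once that is granted, the present theorem reduces to the short scalar analysis on the single remaining summand $W$.
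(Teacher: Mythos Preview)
Your proposal is correct and follows essentially the same route as the paper: decompose $A$ with respect to the unique Jordan axis $d$, identify $U=A_0(d)$ with the $121$-dimensional algebra whose automorphism group is $\Aut(A_6)$ by Theorem~\ref{main 13}, and then pin down the kernel of the action on $U$ as $\la\sg\ra$ via the one-dimensional extension space on $W=A_{\frac{1}{4}}(d)$ together with $w^2\neq 0$ forcing $\lambda^2=1$. Your closing justification of the axet count (tracking $\tau_b$ back into $G^\circ$ and invoking the twin and Jordan-axis checks) is a bit more explicit than the paper's, but the content is the same.
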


\end{document}